\setlist[enumerate]{leftmargin=.7cm,label=\roman*)}
\newtheorem{theorem}{Theorem}[section]
\newtheorem{theoremA}{Theorem}
\newtheorem{lemma}[theorem]{Lemma}
\newtheorem{prop}[theorem]{Proposition}
\newtheorem{cor}[theorem]{Corollary}
\theoremstyle{definition}
\newtheorem*{remark*}{Remark}
\newtheorem{defn}[theorem]{Definition}
\newtheorem{rem}[theorem]{Remark}
\newtheorem{example}[theorem]{Example}
\DeclareMathOperator{\pr}{pr}
\DeclareMathOperator{\id}{id}
\DeclareMathOperator{\im}{Im}
\DeclareMathOperator{\coker}{coker}
\DeclareMathOperator{\map}{Map}
\DeclareMathOperator*{\colim}{colim}
\DeclareMathOperator*{\holim}{holim}
\DeclareMathOperator{\THH}{THH}
\DeclareMathOperator{\THR}{THR}
\DeclareMathOperator{\TCR}{TCR}
\DeclareMathOperator{\TC}{TC}
\DeclareMathOperator{\TRR}{TRR}
\DeclareMathOperator{\TFR}{TFR}
\DeclareMathOperator{\TR}{TR}
\DeclareMathOperator{\res}{res}
\DeclareMathOperator{\K}{K}
\DeclareMathOperator{\Sp}{Sp}
\DeclareMathOperator{\fib}{fib}
\DeclareMathOperator{\Z}{\mathbb{Z}}
\DeclareMathOperator{\F}{\mathbb{F}}
\DeclareMathOperator{\sd}{sd}
\DeclareMathOperator{\proj}{proj}
\DeclareMathOperator{\tran}{tran}
\DeclareMathOperator{\incl}{incl}
\DeclareMathOperator{\trf}{trf}
\DeclareMathOperator{\cof}{cof}
\DeclareMathOperator{\Lt}{L}
\DeclareMathOperator{\Mod}{Mod}
\DeclareMathOperator{\Gpd}{Gpd}
\newcommand{\class}[1]{\left\langle #1 \right\rangle} 
\begin{document}
\begin{center}\LARGE{On the geometric fixed-points of real topological cyclic homology}
\end{center}

\begin{center}\Large{Emanuele Dotto, Kristian Moi, Irakli Patchkoria}
\end{center}


\let\thefootnote\relax\footnotetext{\emph{2020 Mathematics Subject Classification:} Primary 19D55, 11E70; Secondary 55P91, 13F35}

\vspace{.05cm}

\abstract{
We give a formula for the geometric fixed-points spectrum of the real topological cyclic homology of a bounded below ring spectrum, as an equaliser of two maps between tensor products of modules over the norm. We then use this formula to carry out computations in the fundamental examples of spherical group-rings, perfect $\F_p$-algebras, and $2$-torsion free rings with perfect modulo $2$ reduction. Our calculations agree with the normal L-theory spectrum in the cases where the latter is known, as conjectured by Nikolaus.
}

\vspace{.05cm}

\section*{Introduction}
The trace methods were introduced in \cite{BHM} as an effective way of studying the algebraic K-theory of suitable rings, by mapping it to more computable invariants which are typically constructed from the topological Hochschild homology spectrum THH and its cyclic action. One particularly successful invariant is the topological cyclic homology TC defined from suitably derived fixed-points of the cyclic structure of THH. If one tries to extend these methods to the algebraic K-theory of forms or to cobordisms of forms (that is Grothendieck-Witt and L-theory respectively) one discovers the real topological Hochschild homology THR, a dihedral refinement of THH, and the real topological cyclic homology TCR. 

The real topological Hochschild homology  $\THR(A)$ of a ring (or ring spectrum) with anti-involution $A$ has been introduced in unpublished notes of Hesselholt and Madsen. It is an $O(2)$-equivariant spectrum whose underlying $S^1$-spectrum is $\THH(A)$, and where the subgroup $\Z/2$ of $O(2)$ generated by a reflection acts via a combination of a reflection of the circle and the anti-involution of $A$. The $\Z/2$-equivariant homotopy type of $\THR(A)$ has been studied extensively: In \cite{Amalie} it has been computed for spherical group-rings in terms of free loop spaces. In \cite{THRmodels} we studied some of its fundamental structural properties and we computed it for $\F_p$ and $\Z$. In \cite{GThom} it has been related to equivariant factorisation homology and calculated for equivariant Thom spectra. In \cite{HW} the Hopf algebroid structure on the homotopy groups of $\THR(\F_2)$ is described and used to give an independent proof of the Segal conjecture for the group of order $2$.
A key feature which makes these calculations accessible is the description of $\THR(A)$ as a derived tensor product, and in particular of its $\Z/2$-geometric fixed-points spectrum as the derived tensor product of module spectra
\[
\THR(A)^{\phi\Z/2}\simeq A^{\phi\Z/2}\otimes_AA^{\phi\Z/2},
\]
where $A$ acts on $A^{\phi\Z/2}$ on the left and on the right by the ``Frobenius actions'', described informally respectively by the formulas $a\cdot x=axw(a)$ and $x\cdot a=w(a)xa$, and $w$ is the anti-involution of $A$.

The real topological cyclic homology $\TCR(A;p)$, for a prime number $p$, is a $\Z/2$-equivariant spectrum introduced in  \cite{Amalie}, whose underlying spectrum is the $p$-typical topological cyclic homology $\TC(A;p)$. Its construction is analogous to the classical definition of $\TC(A;p)$ of \cite{BHM}, by taking the homotopy limit over certain maps 
\[R,F\colon \THR(A)^{C_{p^{n+1}}}\longrightarrow \THR(A)^{C_{p^{n}}}\]
 in the category of $\Z/2$-spectra, thus involving the equivariant structure of $\THR$ with respect to the finite dihedral subgroups $D_{p^n}$ of $O(2)$ and the Weyl actions of $\Z/2\cong D_{p^n}/C_{p^{n}}$ on $\THR(A)^{C_{p^{n}}}$. Alternatively, Quigley and Shah give in \cite{QS} a construction of $\TCR$ for bounded below spectra as an equaliser analogous to Nikolaus and Scholze's definition of $\TC$ of \cite{NS}. 
The goal of this paper is to describe the geometric fixed-points $\TCR(A;p)^{\phi \Z/2}$ in terms of derived smash products in the same spirit of the formula for $\THR(A)^{\phi\Z/2}$ above, and use this description to carry out calculations in some fundamental examples.

A ring spectrum with anti-involution $A$ is canonically a left and a right module over the Hill-Hopkins-Ravenel norm (see \cite{HHR}) of its underlying spectrum, by means of maps
\[
(N^{\Z/2}_eA)\otimes A\longrightarrow A \ \ \ \ \ \ \ \ \ \ \ \ \ \ \ \ \ \ \ A\otimes (N^{\Z/2}_eA)\longrightarrow A
\] 
described informally respectively by sending $a\otimes b\otimes x$ and $x\otimes a\otimes b$  to $axw(b)$ and $w(a)xb$. By taking $\Z/2$-geometric fixed-points these give the left and right Frobenius $A$-module structure on $A^{\phi\Z/2}$ mentioned above. By applying the monoidal functor $N^{C_2}_e$, we also obtain a left and a right $N^{C_2}_eA$-module structure on $N^{C_2}_e(A^{\phi\Z/2})$. Here we are making the point of distinguishing between the subgroups $\Z/2$ and $C_2$ of $O(2)$, generated respectively by a reflection and the rotation of order two. 

\begin{theoremA}\label{intro:geomTCR}
Let $A$ be a ring spectrum with anti-involution, and suppose that the underlying spectrum and the $\Z/2$-fixed-points of $A$ are bounded below. Then for every odd prime $p$ there is a natural equivalence of spectra
\[
\TCR(A;p)^{\phi\Z/2}\simeq \THR(A)^{\phi\Z/2}\simeq A^{\phi\Z/2}\otimes_AA^{\phi\Z/2}.
\]
For the prime $2$, there is a natural equivalence with the homotopy equaliser
\[
\TCR(A;2)^{\phi\Z/2}\simeq eq\big(
\xymatrix{
(A\otimes_{N^{C_2}_eA}N^{C_2}_e(A^{\phi\Z/2}))^{C_2}
\ar@<1ex>[r]^-{f}\ar[r]_-{r}
&
A^{\phi\Z/2}\otimes_AA^{\phi\Z/2}
}
\big),
\]
where $f$ forgets the fixed-points and  $r$ maps to the $C_2$-geometric fixed-points, followed by the respective identifications of $A\otimes_{A\otimes A}(A^{\phi\Z/2}\otimes A^{\phi\Z/2})$
and $(A\otimes_{N^{C_2}_eA}N^{C_2}_e(A^{\phi\Z/2}))^{\phi C_2}$ with $A^{\phi\Z/2}\otimes_AA^{\phi\Z/2}$.
\end{theoremA}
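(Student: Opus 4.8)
The plan is to present $\TCR(A;p)$ as a finite homotopy limit built from the genuine fixed-point spectra $\THR(A)^{C_{p^n}}$, to commute the geometric fixed-point functor $(-)^{\phi\Z/2}$ past it, and then to identify the geometric fixed points of each term by an isotropy-separation analysis inside the dihedral groups $D_{p^n}$. Concretely, I would start either from the presentation $\TCR(A;p)\simeq\holim_{R,F}\THR(A)^{C_{p^n}}$ in genuine $\Z/2$-spectra, or from the Quigley--Shah equaliser presentation of \cite{QS}, which is available precisely because $A$ and its $\Z/2$-fixed points are bounded below. The equaliser presentation has the advantage of being a \emph{finite} limit; since $(-)^{\phi\Z/2}$ is an exact functor it preserves finite limits, so the problem reduces to computing $(-)^{\phi\Z/2}$ on the two terms and the two maps. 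Alternatively, working with the $R$-tower directly, one writes $(-)^{\phi\Z/2}=(\widetilde{E}\Z/2\wedge-)^{\Z/2}$: genuine fixed points commute with all limits, and the uniform connectivity bound on the $\THR(A)^{C_{p^n}}$ coming from the bounded-below hypothesis (the connectivity estimates of \cite{THRmodels}) lets one commute $\widetilde{E}\Z/2\wedge-$ past the limit as well, since in each homotopy-group degree only finitely many cells of $\widetilde{E}\Z/2$ contribute.

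The heart of the argument is the computation of $\bigl(\THR(A)^{C_{p^n}}\bigr)^{\phi\Z/2}$, where now $\Z/2=D_{p^n}/C_{p^n}$ is the Weyl group. The projection formula for genuine fixed points rewrites this as $\bigl(\mathrm{infl}\,\widetilde{E}(D_{p^n}/C_{p^n})\wedge\THR(A)\bigr)^{D_{p^n}}$, with $\THR(A)$ regarded as a genuine $D_{p^n}$-spectrum and $\mathrm{infl}\,\widetilde{E}(D_{p^n}/C_{p^n})$ equal to $\widetilde{E}\mathcal{F}$ for the family $\mathcal{F}$ of subgroups of $D_{p^n}$ contained in $C_{p^n}$, equivalently those containing no reflection. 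Filtering by the remaining isotropy, carried by the conjugacy classes of subgroups of $D_{p^n}$ that do contain a reflection, the associated graded pieces are the Weyl-homotopy-orbits of the geometric fixed-point spectra $\THR(A)^{\phi H}$ for such $H$; by the real cyclotomic structure equivalence $\THR(A)^{\phi C_p}\simeq\THR(A)$ each such $\THR(A)^{\phi H}$ is again identified with $\THR(A)^{\phi\Z/2}\simeq A^{\phi\Z/2}\otimes_A A^{\phi\Z/2}$. For an odd prime there is a single conjugacy class of reflections, each reflection has trivial Weyl group in $D_{p^n}$ because the centre of $D_{p^n}$ is trivial, and the $R$-maps shift the isotropy filtration so that in the finite homotopy limit only the bottom layer survives; this gives $\TCR(A;p)^{\phi\Z/2}\simeq\THR(A)^{\phi\Z/2}$. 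For $p=2$ the half-turn $C_2\subset C_{2^{n}}$ is central in $D_{2^n}$, so the Weyl groups of the reflections and of the Klein four-subgroups $\langle s\rangle\times C_2$ are all isomorphic to $C_2$, and there are two conjugacy classes of reflections; tracking this residual genuine $C_2$-structure through the isotropy filtration and the $R$-tower, and using the monoidality of $(-)^{\phi C_2}$ and of the norm together with $\bigl(N^{C_2}_e(-)\bigr)^{\phi C_2}\simeq\mathrm{id}$, one identifies $\holim_R\bigl(\THR(A)^{C_{2^n}}\bigr)^{\phi\Z/2}$ with the $C_2$-fixed points of the genuine $C_2$-spectrum $A\otimes_{N^{C_2}_e A}N^{C_2}_e(A^{\phi\Z/2})$ of the statement, whose underlying spectrum and $\phi C_2$-fixed points are both $A^{\phi\Z/2}\otimes_A A^{\phi\Z/2}\simeq\THR(A)^{\phi\Z/2}$.

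Finally I would identify the two maps of the equaliser. After applying $(-)^{\phi\Z/2}$ the finite limit reorganises, via the real cyclotomic structure, into the displayed equaliser; tracking the restriction map $R$ (equivalently the canonical map in the equaliser presentation) through $(-)^{\phi\Z/2}$ and the identifications above shows that it becomes the forgetful map $f$ from $C_2$-fixed points to the underlying spectrum, while the Frobenius $F$ (equivalently the cyclotomic Frobenius), which is built from the real cyclotomic structure map $\THR(A)^{\phi C_2}\xrightarrow{\ \sim\ }\THR(A)$, becomes the projection $r$ to the $C_2$-geometric fixed points; here the identity $\bigl(N^{C_2}_e(-)\bigr)^{\phi C_2}\simeq\mathrm{id}$ is exactly what makes the target of $r$ again $A^{\phi\Z/2}\otimes_A A^{\phi\Z/2}$. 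Naturality of all the equivalences used in $A$ gives the asserted naturality.

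I expect the main obstacle to be the isotropy-separation bookkeeping at the prime $2$: pinning down which conjugacy classes of subgroups of $D_{2^n}$ contribute, checking that the residual $C_2$-structure and the norm $N^{C_2}_e$ organise the Klein four-contribution exactly as claimed, and verifying compatibility with the $R$- and $F$-maps so that passing to the homotopy limit produces a genuine $C_2$-fixed-point spectrum rather than only its underlying or geometric fixed points. A secondary technical point, needed already to set up the equaliser presentation and to compute the Tate-type terms, is the bounded-below hypothesis on both the underlying spectrum and the $\Z/2$-fixed points of $A$, which is also what makes $(-)^{\phi\Z/2}$ commute with the limit.
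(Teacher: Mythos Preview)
Your broad strategy---commute $(-)^{\phi\Z/2}$ past the limit using the bounded-below hypothesis, then analyse $(\THR(A)^{C_{p^n}})^{\phi\Z/2}$ via the isotropy separation for the family $\mathcal{R}$ of reflections in $D_{p^n}$---is exactly what the paper does, and your odd-primary sketch is essentially correct (the paper makes ``only the bottom layer survives'' precise via the pushout decomposition of $E\mathcal{R}$ from \cite{LW09}, Proposition~\ref{htpypullbackodd}).

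At the prime $2$, however, there is a genuine gap. You claim that
\[
\holim_R\bigl(\THR(A)^{C_{2^n}}\bigr)^{\phi\Z/2}\ \simeq\ \bigl(A\otimes_{N^{C_2}_eA}N^{C_2}_e(A^{\phi\Z/2})\bigr)^{C_2}\ \simeq\ (\THR(A)^{\phi\Z/2})^{C_2},
\]
and then that the remaining Frobenius equaliser yields the displayed equaliser of $f$ and $r$. The first equivalence is false. For $A=\F_2$, Corollary~\ref{geometricofTRRF2} gives $\pi_\ast\TRR(\F_2;2)^{\phi\Z/2}\cong\F_2$ in even nonnegative degrees and zero otherwise, whereas Lemma~\ref{geofixgenuine} gives $\pi_1(\THR(\F_2)^{\phi\Z/2})^{C_2}\cong\F_2$ and $\pi_2\cong\F_2^{\oplus 3}$; so $\TRR(A;2)^{\phi\Z/2}$ is \emph{not} $(\THR(A)^{\phi\Z/2})^{C_2}$ in general. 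The two conjugacy classes of reflections and their $C_2$ Weyl groups do not organise into a single copy of $(T^{\phi\Z/2})^{C_2}$ in the limit; instead each $\TRR^{n+1}(T;2)^{\phi\Z/2}$ is an iterated pullback of $2n$ copies of $(T^{\phi\Z/2})^{C_2}$ over $T^{\phi\Z/2}$ along alternating $r$, $f$, and $\sigma_1 r$ maps (Theorem~\ref{inductivePB} and Remark~\ref{geometric formula}). The paper then passes to $\TFR$ rather than $\TRR$, exhibits a pro-equivalence of the $F$-tower with a tower of $n$-fold iterated pullbacks (Theorem~\ref{TFformula}), and only \emph{after} taking the equaliser of the two shift maps does one obtain the equaliser $eq(f,r)$ on $(T^{\phi\Z/2})^{C_2}$ (Theorem~\ref{tcrformula1}, using the elementary fact that the equaliser of the two end-projections on an iterated pullback $X\prescript{}{a}{\times}_b\cdots\prescript{}{a}{\times}_b X$ is $eq(a,b)$). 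Your outline skips this iterated-pullback structure and the pro-equivalence, which is precisely where the work lies.

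A smaller point: you have the correspondence between $R,F$ and $r,f$ reversed. At the bottom of the tower (Remark~\ref{geometric formula}, $n=1$), $R$ factors through $r\colon(T^{\phi\Z/2})^{C_2}\to T^{\phi\Z/2}$ (the map to $C_2$-geometric fixed points followed by the cyclotomic equivalence), while $F$ factors through the forgetful $f$. This does not affect the final equaliser, but it does affect how one tracks the maps through the argument.
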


We prove this Theorem in \S\ref{sec:odd} for odd primes, and in \S\ref{sec:2} for the prime $2$.  Our proof proceeds by identifying the $\Z/2$-geometric fixed-points of $\THR(A)^{C_{p^n}}$  inductively over $n\geq 0$, together with the structure maps $R,F\colon \THR(A)^{C_{p^{n+1}}}\to \THR(A)^{C_{p^n}}$. The key ingredient is a result of \cite{LW09} which gives a certain pushout decomposition of the universal space of the family of reflections of $O(2)$.
We suspect that our theorem could also be proved starting from the description of $\TCR$ for bounded below spectra given in \cite{QS} using the same technique. 
Our proof of Theorem \ref{intro:geomTCR} is given more generally for bounded-below real $p$-cyclotomic spectra, see Theorem \ref{tcrformula1}.

We use the formula of Theorem \ref{intro:geomTCR} to compute the geometric fixed-points of $\TCR$ in some fundamental examples, starting with spherical group-rings. Every topological  monoid $M$ with anti-involution $w\colon M^{op}\to M$ has an underlying $\Z/2$-equivariant homotopy type. The genuine $\Z/2$-equivariant suspension spectrum $\mathbb{S}[M]:=\Sigma^{\infty}_+M$ of the latter gets canonically the structure of a ring-spectrum with anti-involution. The monoid $M$ acts on its fixed-points subspace $M^{\Z/2}$ by $m\cdot x=mxw(m)$ and $x\cdot m=w(m)xm$, and the corresponding $2$-sided bar construction admits a ``Frobenius endomorphism''
\[
\psi\colon B(M^{\Z/2},M,M^{\Z/2})\longrightarrow B(M^{\Z/2},M,M^{\Z/2})
\]
defined simplicially by $\psi(x,m_1,\dots,m_n,y)=(x,m_1,\dots,m_n,ym_n\dots m_1xw(m_1)\dots w(m_n)y)$.
It also has an involution which reverses the order of the factors of the bar construction and applies $w$ to each component.
 The following is analogous to the classical description of $\TC$ of spherical group-rings of \cite{BHM} and \cite[Theorem IV.3.6]{NS}.
\begin{theoremA}\label{intro:gprings}
Let $M$ be a well-pointed topological monoid with anti-involution. Then there is a pullback square
\[
\xymatrix@C=50pt{
\TCR(\mathbb{S}[M];2)^{\phi\Z/2}\ar[r]\ar[d]&\Sigma^{\infty}_+B(M^{\Z/2},M,M^{\Z/2})_{hC_2}\ar[d]^-{\trf}
\\
\Sigma^{\infty}_+B(M^{\Z/2},M,M^{\Z/2})\ar[r]^-{\id-\Sigma^{\infty}_+\psi}&\Sigma^{\infty}_+B(M^{\Z/2},M,M^{\Z/2})
}
\]
where the right vertical map is the transfer.
In particular for $M=\ast$ there is an equivalence 
\[ {\TCR(\mathbb S;2)}^{\phi \Z/2}\simeq {\mathbb S} \oplus {\mathbb R}P^{\infty}_{-1},\]
where ${\mathbb R}P^{\infty}_{-1}$ is the homotopy fibre of the transfer $\trf \colon \Sigma^{\infty}_{+} {\mathbb R}P^{\infty}=\mathbb{S}_{hC_2} \to \mathbb{S}$.
\end{theoremA}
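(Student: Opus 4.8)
The plan is to substitute $A=\mathbb{S}[M]=\Sigma^\infty_+M$ into the equaliser formula of Theorem~\ref{intro:geomTCR} --- whose hypotheses hold because the underlying spectrum $\Sigma^\infty_+M$ and the $\Z/2$-fixed points $\Sigma^\infty_+M^{\Z/2}$ are connective --- and to evaluate every term using the standard properties of the genuine $\Z/2$- and $C_2$-equivariant suspension spectrum functor: it is symmetric monoidal, preserves colimits and geometric realisations of proper simplicial objects, commutes with the Hill--Hopkins--Ravenel norm $N^{C_2}_e$ (sending $\Sigma^\infty_+X$ to $\Sigma^\infty_+(X\times X)$ with the swap action), commutes with geometric fixed points so that $(\Sigma^\infty_+X)^{\phi G}\simeq\Sigma^\infty_+X^G$, and satisfies the tom Dieck splitting $(\Sigma^\infty_+Y)^{C_2}\simeq\Sigma^\infty_+Y^{C_2}\vee\Sigma^\infty_+Y_{hC_2}$ for a well-pointed $C_2$-space $Y$. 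The well-pointedness of $M$ makes all the bar constructions below proper, so that they compute derived tensor products over $\mathbb{S}[M]$ and $N^{C_2}_e\mathbb{S}[M]$ and so that forming them commutes both with $\Sigma^\infty_+$ and with genuine fixed points; in practice I would pass to fat realisations to avoid tracking cofibrancy by hand.

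First I identify the three spectra in Theorem~\ref{intro:geomTCR}. One has $A^{\phi\Z/2}=\Sigma^\infty_+M^{\Z/2}$, with left and right Frobenius $A$-module structures induced by the monoid actions $m\cdot x=mxw(m)$ and $x\cdot m=w(m)xm$ on $M^{\Z/2}$; hence, writing $B:=B(M^{\Z/2},M,M^{\Z/2})$, we get $A^{\phi\Z/2}\otimes_AA^{\phi\Z/2}\simeq\Sigma^\infty_+B$. Next $N^{C_2}_eA=\Sigma^\infty_+(M\times M)$ and $N^{C_2}_e(A^{\phi\Z/2})=\Sigma^\infty_+(M^{\Z/2}\times M^{\Z/2})$ with swap actions, and the module structures over $N^{C_2}_eA$ become space-level $(M\times M)$-actions, so that $A\otimes_{N^{C_2}_eA}N^{C_2}_e(A^{\phi\Z/2})\simeq\Sigma^\infty_+Y$, where $Y$ is the two-sided bar construction $B(M,M\times M,M^{\Z/2}\times M^{\Z/2})$ carrying the $C_2$-action which swaps the two norm-factors in every term and acts by $w$ on the leftmost copy of $M$; a direct check shows the bar construction is $C_2$-equivariant. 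Taking levelwise fixed points picks out the diagonals and gives an isomorphism of proper simplicial spaces $(Y_\bullet)^{C_2}\cong B_\bullet$, hence $Y^{C_2}\simeq B$. The underlying spectrum $Y^e=A\otimes_{A\otimes A}(A^{\phi\Z/2}\otimes A^{\phi\Z/2})$ is identified with $\Sigma^\infty_+B$ by the very identification appearing in Theorem~\ref{intro:geomTCR}, and unwinding the residual $C_2$-action --- which comes from the reflection involution on $\THR$ --- identifies $Y$, as a Borel $C_2$-space, with $B$ equipped with the order-reversing-and-$w$ involution described before the statement; in particular $Y_{hC_2}\simeq B_{hC_2}$. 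Via the tom Dieck splitting, the source of the equaliser of Theorem~\ref{intro:geomTCR} thus becomes $\Sigma^\infty_+B_{hC_2}\vee\Sigma^\infty_+B$.

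It remains to describe the two maps $f,r\colon\Sigma^\infty_+B_{hC_2}\vee\Sigma^\infty_+B\to\Sigma^\infty_+B$ of Theorem~\ref{intro:geomTCR} under this splitting. The map $r$ to the $C_2$-geometric fixed points is, by tom Dieck, the projection onto the summand $\Sigma^\infty_+Y^{C_2}$, and after the built-in identification of $(A\otimes_{N^{C_2}_eA}N^{C_2}_e(A^{\phi\Z/2}))^{\phi C_2}$ with $A^{\phi\Z/2}\otimes_AA^{\phi\Z/2}$ it becomes $(0,\id)$. The restriction map $f$ to the underlying spectrum is the transfer $\trf\colon\Sigma^\infty_+B_{hC_2}\to\Sigma^\infty_+B$ on the homotopy-orbit summand, and on the remaining summand it is $\Sigma^\infty_+$ of the composite $B=Y^{C_2}\hookrightarrow Y^e\xrightarrow{\sim}B$; the crucial claim is that this self-equivalence of $B$ is homotopic to the Frobenius $\psi$ defined before the statement, so that $f=(\trf,\Sigma^\infty_+\psi)$. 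The homotopy equaliser of $f$ and $r$ is then the homotopy fibre of $f-r=(\trf,\Sigma^\infty_+\psi-\id)$, which is precisely the pullback of $\id-\Sigma^\infty_+\psi$ along $\trf$, proving the square. For $M=\ast$ we have $B=\ast$ and $\psi=\id$, so $\id-\Sigma^\infty_+\psi=0$ and the pullback of $\mathbb{S}\xrightarrow{0}\mathbb{S}\xleftarrow{\trf}\mathbb{S}_{hC_2}=\Sigma^\infty_+\mathbb{R}P^\infty$ splits as the wedge of $\mathbb{S}$ with the homotopy fibre of $\trf$, namely $\mathbb{S}\vee\mathbb{R}P^\infty_{-1}$.

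The main obstacle is the last claim: identifying $f$ on the $\Sigma^\infty_+B$ summand with $\Sigma^\infty_+\psi$ requires tracing the inclusion of fixed points $Y^{C_2}\hookrightarrow Y^e$ through the equivalence $Y^e\simeq A^{\phi\Z/2}\otimes_AA^{\phi\Z/2}$ supplied by Theorem~\ref{intro:geomTCR}, and it is exactly here that the conjugation twist by $w$ in the simplicial formula for $\psi$ is forced. A secondary technical point is the interchange of genuine fixed points, geometric realisation and $\Sigma^\infty_+$ for the simplicial $C_2$-space defining $Y$, which I would handle by properness as above; one must also verify that the residual $C_2$-action on $Y^e$ is the stated involution on $B(M^{\Z/2},M,M^{\Z/2})$, which follows from the dihedral structure on $\THR$.
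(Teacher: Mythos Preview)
Your proof is correct and follows essentially the same line as the paper's: apply the equaliser formula for $\TCR(T;2)^{\phi\Z/2}$ to $T=\THR(\mathbb{S}[M])$, use that $T^{\phi\Z/2}$ is the genuine $C_2$-suspension spectrum of $B=B(M^{\Z/2},M,M^{\Z/2})$, invoke the tom Dieck splitting, and read off $r=(\id,0)$ and $f=(\psi,\trf)$. The only difference is one of packaging: the paper works directly with the $O(2)$-equivalence $\THR(\mathbb{S}[M])\simeq\Sigma^\infty_+B^{di}M$, so that $T^{\phi\Z/2}\simeq\Sigma^\infty_+B$ as a genuine $C_2$-spectrum comes for free and the identification $B^{C_2}\cong B$ (via subdivision) together with its fixed-points inclusion $\psi$ is already set up in the discussion preceding the theorem; you instead unpack the tensor-product model $A\otimes_{N^{C_2}_eA}N^{C_2}_e(A^{\phi\Z/2})$ from Theorem~\ref{intro:geomTCR} and rebuild the $C_2$-space $Y$ by hand, which amounts to rederiving the content of Lemma~\ref{C2THRphi} in this special case. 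Your route is more explicit about the norm bookkeeping but makes the ``crucial claim'' that $f$ restricts to $\Sigma^\infty_+\psi$ look harder than it is; in the paper's setup this is immediate because $\psi$ is \emph{defined} to be the fixed-points inclusion under the subdivision isomorphism.
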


Let us point out that the  pullback square of Theorem \ref{intro:gprings} does not require any $2$-completion.
In particular the calculation of $\TCR(\mathbb{S};2)^{\phi\Z/2}$ of Theorem \ref{intro:gprings} confirms the expected equivariant homotopy type of $\TCR(\mathbb{S};2)$, that appeared in unpublished work of H\o genhaven \cite{Amalie}.
We prove this theorem in section \S\ref{sec:assembly}, and we calculate this pullback in \S\ref{sec:discrete} in the case where $M$ is a discrete group with various assumptions on the involution and the $2$-torsion. In particular we determine it fully  for $M=\Z$ with the minus involution and with the trivial involution, and for $M=C_2$. 
In Corollary \ref{splittingTCRSG} for every pointed $\Z/2$-space $X$, we consider the special case of the equivariant loop space $M=\Omega^\sigma X=\map_\ast(S^\sigma,X)$, where $S^\sigma$ is the sign representation sphere, and $\Z/2$ acts on the loop space by conjugation.  We use Theorem \ref{intro:gprings} to describe the cofibre of an assembly map
\[
 \Sigma^{\infty}_+(X^{\Z/2})\otimes (\mathbb{S}\oplus {\mathbb R}P^{\infty}_{-1})\longrightarrow\TCR(\mathbb{S}[\Omega^\sigma X];2)^{\phi\Z/2}
\]
in terms of the cofibre of the diagonal $\Delta\colon X^{\Z/2}\to X^{\Z/2}\times_XX^{\Z/2}$, where  the homotopy pullback is along the fixed-points inclusions. In particular
if the involution on $X$ is trivial these cofibres vanish and we obtain a splitting
\[
\TCR(\mathbb{S}[\Omega^\sigma X];2)^{\phi\Z/2}\simeq \Sigma^{\infty}_+X\otimes (\mathbb{S}\oplus {\mathbb R}P^{\infty}_{-1}).
\] 
This calculation shows that $\TCR(\mathbb{S}[\Omega^\sigma X];2)^{\phi\Z/2}$ is equivalent to Weiss and Williams' hyperquadratic L-theory of the pointed space $X$, which satisfies the same decomposition by \cite[Theorem 4.3, Corollary 4.4]{WW3}. 

There is in fact a deeper relationship between TCR and L-theory, especially in view of the following result, which we explain in more details at the end of the introduction.
Given a discrete commutative ring $A$, and we write $\TCR(A;2)$ for the $\TCR$ spectrum of the $\Z/2$-equivariant Eilenberg-MacLane commutative ring spectrum of $A$ equipped with the trivial involution.

\begin{theoremA}\label{intro:calcgeom}
Let $k$ a perfect field of characteristic $2$, and $\Z$ the ring of integers.
There are equivalences of spectra
\begin{align*}
\TCR(k;2)^{\phi\Z/2}&\simeq \bigoplus_{n\geq 0}(\Sigma^{2n-1}Hk/ \langle x+x^2 \vert \ x \in k \rangle \oplus\Sigma^{2n}H\F_2)
\\
\TCR(\Z;2)^{{\phi \Z/2}}&\simeq \bigoplus_{n \geq 0} \big(\Sigma^{4n-1} H \F_2\oplus \Sigma^{4n} H \Z/8 \oplus \Sigma^{4n+1} H \F_2 \big).
\end{align*}
\end{theoremA}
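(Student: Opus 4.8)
The plan is to apply the equaliser formula of Theorem~\ref{intro:geomTCR} at the prime $2$ to the Eilenberg--MacLane ring spectra $A=Hk$ and $A=H\Z$, each equipped with the trivial anti-involution. Writing the homotopy equaliser as the fibre of $f-r$, the computation breaks into three tasks: (i) determine $\THR(A)^{\phi\Z/2}\simeq A^{\phi\Z/2}\otimes_A A^{\phi\Z/2}$; (ii) determine the genuine fixed points $X^{C_2}$ of the $C_2$-spectrum $X:=A\otimes_{N^{C_2}_eA}N^{C_2}_e(A^{\phi\Z/2})$; and (iii) identify the maps $f,r\colon X^{C_2}\to A^{\phi\Z/2}\otimes_A A^{\phi\Z/2}$ and compute the equaliser.

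For task (i) I would start from the spectrum $A^{\phi\Z/2}$. For $A=\F_2$ one has $\pi_*(A^{\phi\Z/2})\cong\F_2[t]$ with $|t|=1$, and both Frobenius $A$-module structures are the standard ones; the analogous computation over $\Z$ is in \cite{THRmodels}. For a general perfect field $k$ of characteristic $2$ I would deduce the computation for $Hk$ by base change along $H\F_2\to Hk$: the field $k$ is flat over $\F_2$, so this is compatible with the relevant constructions, and the Frobenius twists $a\cdot x=a^2x$ appearing in the two module structures are isomorphisms because $k$ is perfect; up to isomorphism one then gets $\pi_*\THR(Hk)^{\phi\Z/2}\cong k[t_1,t_2]$ with $|t_1|=|t_2|=1$, on which the residual Weyl $C_2$ acts by interchanging $t_1$ and $t_2$ (and by $w=\id$ on coefficients).

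For task (ii) I would compute $X^{C_2}$ from the isotropy separation square, which presents it as the homotopy pullback of $\Phi^{C_2}X\to X^{tC_2}\leftarrow X^{hC_2}$. By strong monoidality of geometric fixed points and the identity $\Phi^{C_2}N^{C_2}_e\simeq\id$ we get $\Phi^{C_2}X\simeq A^{\phi\Z/2}\otimes_A A^{\phi\Z/2}$ --- exactly the identification already recorded in Theorem~\ref{intro:geomTCR} --- while the underlying spectrum of $X$ is $A^{\phi\Z/2}\otimes_A A^{\phi\Z/2}$ equipped with the swap $C_2$-action of the previous paragraph. Thus $X^{hC_2}$ and $X^{tC_2}$ are the homotopy fixed points and the Tate spectrum of that action, which I would evaluate with the homotopy fixed point and Tate spectral sequences; for $Hk$ these run from $H^*(C_2;k[t_1,t_2])$, respectively $\hat H^*(C_2;k[t_1,t_2])$, which is computed directly using that $k[t_1,t_2]$ is free of rank two over its invariants $k[t_1+t_2,\,t_1t_2]$, and the spectral sequences collapse, so that $X^{hC_2}$ and $X^{tC_2}$ are explicit generalised Eilenberg--MacLane spectra. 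The case $A=H\Z$ is parallel but with $2$-torsion in the coefficients. Assembling the pullback then yields $X^{C_2}$ explicitly.

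Finally, for task (iii) I would identify $f$ --- the restriction $X^{C_2}\to X^e$ to the underlying spectrum (which factors through the completion map $X^{C_2}\to X^{hC_2}$) followed by the identification $X^e\simeq A^{\phi\Z/2}\otimes_A A^{\phi\Z/2}$ --- and $r$ --- the projection $X^{C_2}\to\Phi^{C_2}X$ followed by $\Phi^{C_2}X\simeq A^{\phi\Z/2}\otimes_A A^{\phi\Z/2}$ --- and then compute $\pi_*$ of the fibre of $f-r$ as the kernels and cokernels of $f_*-r_*$ degree by degree. The map $f_*-r_*$ is controlled by the Frobenius, and on coefficients it is the Artin--Schreier operator $x\mapsto x+x^2$ on $k$ --- the same phenomenon as the map $\id-\Sigma^\infty_+\psi$ in Theorem~\ref{intro:gprings} --- together with a shift coming from the transfer; resolving the resulting extension problems produces the stated wedge of copies of $\Sigma^{2n-1}Hk/(x+x^2)$ and $\Sigma^{2n}H\F_2$, and the corresponding wedge for $\Z$. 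I expect this last step to be the main obstacle: one must control the $C_2$-Tate term and the comparison map $r$ precisely enough to resolve the hidden extensions, and in particular the summand $H\Z/8$ in the integral case should arise from an iterated extension by $\Z/2$ coming from the Tate square --- this is where the subtle arithmetic distinguishing $\TCR^{\phi\Z/2}$ from its more formal approximations is concentrated.
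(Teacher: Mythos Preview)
Your overall strategy---apply the equaliser formula of Theorem~\ref{intro:geomTCR} and compute the fibre of $f-r$---is the same as the paper's. The divergence is in your task~(ii), and it matters.

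The paper does \emph{not} compute $X^{C_2}$ via the isotropy separation pullback $\Phi^{C_2}X\leftarrow X^{C_2}\to X^{hC_2}$. Instead it uses the $A$-linear splitting $A^{\phi\Z/2}\simeq\bigvee_{n\ge0}\Sigma^{d_n}HB$ (with $d_n=n$, $B=k$ for $A=Hk$, and $d_n=2n$, $B=\F_2$ for $A=H\Z$) and plugs it into the norm: $N^{C_2}_e(\bigvee_n\Sigma^{d_n}HB)$ decomposes as $\bigvee_n\Sigma^{d_n\rho}N^{C_2}_eHB$ plus induced summands $\bigvee_{n<m}(C_2)_+\otimes\Sigma^{d_n+d_m}(HB)^{\otimes 2}$. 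After tensoring over $N^{C_2}_eA$ with $A$, the $C_2$-spectrum $X=\THR(A)^{\phi\Z/2}$ is then an explicit wedge of $\rho$-suspensions of small spectra plus induced pieces, and $X^{C_2}$ is read off summand-by-summand from finite Bredon chain complexes---no homotopy fixed point or Tate spectral sequences, no assembly of a pullback. The maps $f$ and $r$ are likewise identified one summand at a time: $f$ is diagonal on the induced pieces and a concrete restriction on the $\rho$-suspended pieces; $r$ kills the induced pieces and on the $\rho$-suspended pieces is the canonical map from genuine to geometric fixed points of a $\rho$-suspension, which is an isomorphism in low degrees.

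For the perfect-field case your route would go through, but it is more work and your claim that the spectral sequences ``collapse'' needs justification (they are multiplicative in a polynomial algebra, so this is fine, but you should say so). The real gap is in the integral case. The paper shows that the diagonal $\rho$-summands of $X$ are $\Sigma^{2n\rho}\big((N\F_2)\otimes_{N\Z}H\underline{\Z}\big)$, computes this cofibre via an explicit fibre sequence involving $H(\Z\oplus\Z/2,w)$ with a twisted involution, and finds that its $C_2$-fixed points contain a $\Z/4$ in top degree. The map $r$ on that summand is then shown to have the form $\left(\begin{smallmatrix}\pr&0\\\beta&\id\end{smallmatrix}\right)$ where $\beta\colon H\Z/4\to\Sigma H\F_2$ is the Bockstein for $0\to\Z/2\to\Z/8\to\Z/4\to 0$; this Bockstein is precisely the source of the $H\Z/8$ in the answer. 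Your description of this step as ``an iterated extension by $\Z/2$ coming from the Tate square'' does not capture the mechanism, and it is not clear how you would isolate exactly this Bockstein starting from the Tate pullback rather than from the wedge decomposition. In short: the missing idea is the splitting of the norm along the splitting of $A^{\phi\Z/2}$, which replaces your global spectral-sequence computation by a direct cellular one and makes the origin of the $\Z/8$ transparent.
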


In the case of perfect fields, we are in fact able to calculate the full $\Z/2$-equivariant homotopy type of $\TCR(k;2)$: In \S\ref{sec:TCRk2} we use the description of $\TRR(k;2)^{\phi\Z/2}$ from Theorem \ref{inductivePB} to show that $\TRR(k;2)$ is the Eilenberg-MacLane spectrum of the constant Mackey functor on the ring of $2$-typical  Witt vectors $W(k;2)$, where $F$ corresponds to its Frobenius. In particular in Theorem \ref{pi0TRRp2} we show that  $\pi_0 \THR(k;2)^{D_{2^{n}}}$ is isomorphic to the $(n+1)$-truncated $2$-typical Witt vectors of $k$  (this is true for all commutative rings at odd primes by \cite[Theorem C]{Polynomial}, but it fails in general at the prime $2$, see Remark \ref{WittZ}). We are then able to conclude the following:

\begin{theoremA}\label{intro:TCRk}
For every perfect field $k$ of characteristic $2$, there is an equivalence of $\Z/2$-equivariant spectra
\[ \TCR(k;2) \simeq H \underline{\Z_2} \oplus \Sigma^{-1}H \underline{\coker(1-F)},\]
where $F \colon W(k;2) \to W(k;2)$ is the $2$-typical Witt vector Frobenius and the underline denotes the constant Mackey functor. 
\end{theoremA}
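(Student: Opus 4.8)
The plan is to deduce the formula from the identification of $\TRR(k;2)$ carried out in \S\ref{sec:TCRk2} together with the standard description of $\TCR$ as the fixed points of the Frobenius on $\TRR$. Recall that $\TCR(A;2)$ is built as a homotopy limit of the diagram with structure maps $R,F\colon\THR(A)^{C_{2^{n+1}}}\to\THR(A)^{C_{2^n}}$; when the underlying and $\Z/2$-fixed spectra of $A$ are bounded below this limit can be formed in two stages, first over $R$ (producing $\TRR(A;2)$) and then over the two maps $\id,F\colon\TRR(A;2)\to\TRR(A;2)$, so that there is a fibre sequence of $\Z/2$-spectra
\[
\TCR(A;2)\longrightarrow\TRR(A;2)\xrightarrow{\ \id-F\ }\TRR(A;2),
\]
the real analogue of $\TC(A;p)\simeq\mathrm{fib}\big(1-F\colon\TR(A;p)\to\TR(A;p)\big)$. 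Since $Hk$ is connective this holds integrally, with no $2$-completion required.

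Next I would feed in the computation of $\TRR(k;2)$: by Theorem \ref{inductivePB} and the analysis of \S\ref{sec:TCRk2}, for $k$ a perfect field of characteristic $2$ the spectrum $\TRR(k;2)$ is the Eilenberg--MacLane $\Z/2$-spectrum $H\underline{W(k;2)}$ of the constant Mackey functor on the $2$-typical Witt vectors $W(k;2)=W(k)$, and under this identification $F$ is the map of constant Mackey functors induced by the Witt vector Frobenius. Plugging this into the fibre sequence, $\TCR(k;2)=\mathrm{fib}\big(H(\id-F)\big)$. Kernels and cokernels of maps of constant Mackey functors are computed levelwise, so the long exact sequence of homotopy Mackey functors collapses to
\[
\pi_0\TCR(k;2)=\underline{\ker(1-F)},\qquad\pi_{-1}\TCR(k;2)=\underline{\coker(1-F)},
\]
with all other homotopy groups zero. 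To identify $\ker(1-F)$, observe that $F$ reduces modulo $2$ to the Frobenius $x\mapsto x^2$ of $k$, whose fixed subset is $\F_2\subseteq k$; since $W(k)$ is $2$-torsion-free and $2$-adically complete, a successive-approximation argument upgrades this to $W(k)^{F=1}=\Z_2$, so that $\ker(1-F)=\Z_2$.

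It then remains to split the resulting two-stage spectrum. The inclusion $\iota\colon\Z_2\hookrightarrow W(k)$ satisfies $(1-F)\circ\iota=0$, since $F$ restricts to the identity on $\Z_2\subseteq W(k)$; hence $H\iota$ lifts along the fibre sequence to a map $s\colon H\underline{\Z_2}\to\TCR(k;2)$. Composing $\pi_0 s$ with the monomorphism $\underline{\iota}=\pi_0\big(\TCR(k;2)\to\TRR(k;2)\big)$ recovers $\underline{\iota}$, so $\pi_0 s=\id$, and therefore $s$ is a section of the Postnikov truncation $\TCR(k;2)\to\tau_{\geq0}\TCR(k;2)=H\underline{\Z_2}$. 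A section splits the associated fibre sequence, whence
\[
\TCR(k;2)\simeq H\underline{\Z_2}\vee\tau_{\leq-1}\TCR(k;2)=H\underline{\Z_2}\vee\Sigma^{-1}H\underline{\coker(1-F)},
\]
which is the asserted equivalence.

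I expect the genuinely substantial input to be the one we are allowed to assume, namely the identification $\TRR(k;2)\simeq H\underline{W(k;2)}$ with $F$ the Witt Frobenius, which rests on Theorem \ref{inductivePB} and the $\pi_0$-computation of \S\ref{sec:TCRk2}; everything after that is bookkeeping. Within that bookkeeping the two points requiring care are that the Frobenius fibre sequence holds integrally for $A=Hk$, and that the two-stage spectrum above carries no nonzero $k$-invariant — the latter being exactly what the section $s$ provides, equivalently because the relevant obstruction group $[H\underline{\Z_2},\Sigma^{-1}H\underline{W(k)}]$ vanishes by connectivity. As a consistency check one may apply $\Phi^{\Z/2}$ to the right-hand side and recover the formula of Theorem \ref{intro:calcgeom}, using that $\coker(1-F)$ reduces modulo $2$ to the Artin--Schreier cokernel $k/(x+x^2)$.
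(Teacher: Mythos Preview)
Your argument is correct and matches the paper's through the identification of the homotopy Mackey functors of $\TCR(k;2)$: both use the fibre sequence $\TCR(k;2)\to\TRR(k;2)\xrightarrow{1-F}\TRR(k;2)$, the identification $\TRR(k;2)\simeq H\underline{W(k;2)}$, and the computation $\ker(1-F)=\Z_2$.

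The gap is in your splitting step. Your map $s\colon H\underline{\Z_2}\to\TCR(k;2)$ is correctly constructed and is an isomorphism on $\underline{\pi}_0$, but a map \emph{into} $\TCR(k;2)$ cannot be a section of a ``Postnikov truncation $\TCR(k;2)\to\tau_{\geq 0}\TCR(k;2)$'': the connective cover goes the other way, $\tau_{\geq 0}\TCR(k;2)\to\TCR(k;2)$, and there is no natural map in the direction you claim. Your $s$ is simply (equivalent to) the connective cover map itself, whose existence is automatic and says nothing about the $k$-invariant. To split the cofibre sequence $H\underline{\Z_2}\to\TCR(k;2)\to\Sigma^{-1}H\underline{\coker(1-F)}$ you would need a retraction $\TCR(k;2)\to H\underline{\Z_2}$ or a section of the second map; you provide neither. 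The vanishing of $[H\underline{\Z_2},\Sigma^{-1}H\underline{W(k)}]$ is correct but irrelevant: it controls the \emph{ambiguity} of the lift $s$, not the obstruction to splitting, which lives in $[\Sigma^{-1}H\underline{\coker(1-F)},\Sigma H\underline{\Z_2}]$ and does not vanish by connectivity alone.

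The paper closes this gap differently: since the Witt Frobenius is $\Z_2$-linear, $1-F$ is a map of $H\underline{\Z_2}$-modules and hence so is its fibre $\TCR(k;2)$. The $k$-invariant then lies in $\Ext^2$ over the Green functor $\underline{\Z_2}$, and the paper invokes that constant Mackey functors on $\Z_2$-modules have homological dimension $\leq 1$ over $\underline{\Z_2}$, forcing this $\Ext^2$ to vanish. You could repair your argument by importing this module structure; without it the splitting is not established.
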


A similar decomposition holds for odd primes by a much easier argument, see Proposition \ref{oddcomputation}.
Finally, we prove a flat base-change result for $\TCR^{\phi\Z/2}$, showing that if $f\colon A\to B$ is a flat map of discrete commutative rings such that the geometric fixed-points of $B$ are base-changed along $f$ from those of $A$, then  $\TCR(B;2)^{\phi\Z/2}$ is ``almost'' base-changed from $\TCR(A;2)^{\phi\Z/2}$, up to some care with the different module structures on $HA^{\phi\Z/2}$ (see Corollary \ref{TCRbase-change} for the precise statement). This allows us to extend the calculations above as follows.

\begin{theoremA}\label{intro:perfectrings}
For every  perfect $\F_2$-algebra $A$, there is an equivalence of spectra
\[
\TCR(A;2)^{\phi\Z/2}\simeq \bigoplus_{n\geq 0}\big(\Sigma^{2n-1}H(\coker(\id+(-)^2))\big)\oplus\big(\Sigma^{2n}H(\ker(\id+(-)^2))\big)
\]
where $(-)^2\colon A\to A$ is the Frobenius of $A$. For every ring $B$ with no $2$-torsion and perfect modulo $2$ reduction,  $\TCR(B;2)^{{\phi \Z/2}}$ is a wedge of Eilenberg-MacLane spectra with homotopy groups
\[\pi_n\TCR(B;2)^{{\phi \Z/2}}\cong\left\{
\begin{array}{ll}
B/\langle x+x^2 \vert\ x \in B \rangle & n=4l-1
\\
\ker\big(\pr+\pr^2\colon B/\langle 4(x+x^2) \vert \ x \in B \rangle \to B/2 \big) & n=4l
\\
\ker\big(\id+(-)^2\colon B/2\to B/2\big) & n=4l+1
\\
0& n=4l+2
\end{array}
\right.
\]
for all $l\geq 0$, and zero for $n\leq -2$.
\end{theoremA}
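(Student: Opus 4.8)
The plan is to deduce both formulas from Theorem~\ref{intro:calcgeom}, which computes $\TCR(\F_2;2)^{\phi\Z/2}$ (the special case $k=\F_2$) and $\TCR(\Z;2)^{\phi\Z/2}$, together with the flat base-change result of Corollary~\ref{TCRbase-change}. For the first formula, let $A$ be a perfect $\F_2$-algebra with trivial anti-involution. The unit map $\F_2\to A$ is flat, so the remaining hypothesis of Corollary~\ref{TCRbase-change} to check is that $HA^{\phi\Z/2}$ be base-changed from $H\F_2^{\phi\Z/2}$; this is precisely where perfectness enters, since the two Frobenius module structures are twisted along the Frobenius endomorphism of $A$, which for a perfect ring is an isomorphism, so that the relevant scalar extensions remain flat. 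For the second formula, a ring $B$ with no $2$-torsion is flat over $\Z$, and one applies Corollary~\ref{TCRbase-change} along $\Z\to B$; the analogous hypothesis on $HB^{\phi\Z/2}$ is what makes the standing hypotheses on $B$ --- the absence of $2$-torsion and perfectness of $B/2$ --- enter, and its verification will draw on the analysis of $\pi_0\TRR$ and of $HB^{\phi\Z/2}$ developed earlier.

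Granting this, the output of Corollary~\ref{TCRbase-change} is not simply a base change of the wedge decomposition of Theorem~\ref{intro:calcgeom}, and this discrepancy is the source of the kernels and cokernels in the statement. By Theorem~\ref{intro:geomTCR} the functor $\TCR(-;2)^{\phi\Z/2}$ is a homotopy equaliser of two maps between $\big(A\otimes_{N^{C_2}_eA}N^{C_2}_e(A^{\phi\Z/2})\big)^{C_2}$ and $A^{\phi\Z/2}\otimes_AA^{\phi\Z/2}$, and both the module structures appearing in these smash products and the equaliser maps $f,r$ involve the squaring twist $a\cdot x=a^2x$, which reduces to the identity only over $\F_2$-algebras on which Frobenius is trivial. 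Base-changing each term of Theorem~\ref{intro:geomTCR} separately along $\F_2\to A$ (respectively $\Z\to B$) and re-forming the equaliser therefore produces, on homotopy, the operator $\id+(-)^2$ together with its mod $2$ and mod $4$ reductions. Tracking the $H\F_2$-summands in degrees $2n-1$ and $2n$ of $\TCR(\F_2;2)^{\phi\Z/2}$ through the base-changed equaliser yields $\pi_{2n-1}\cong\coker(\id+(-)^2\colon A\to A)$ and $\pi_{2n}\cong\ker(\id+(-)^2\colon A\to A)$; tracking the summands $\Sigma^{4n-1}H\F_2$, $\Sigma^{4n}H\Z/8$ and $\Sigma^{4n+1}H\F_2$ of $\TCR(\Z;2)^{\phi\Z/2}$ yields $\coker(\id+(-)^2\colon B\to B)$ in degree $4n-1$, $\ker(\pr+\pr^2\colon B/4(x+x^2)\to B/2)$ in degree $4n$, $\ker(\id+(-)^2\colon B/2\to B/2)$ in degree $4n+1$, and nothing in degree $4n+2$. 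That the answers are wedges of Eilenberg--MacLane spectra then follows for free --- in the first case everything in sight is a module over $H\F_2$, and in the second the equaliser is built from maps between wedges of Eilenberg--MacLane spectra --- while the vanishing of $\pi_n$ for $n\le -2$ is the connectivity estimate in Theorem~\ref{intro:geomTCR} (valid since $HA$, $HB$ and their genuine $\Z/2$-fixed points are bounded below), the lowest surviving summand sitting in degree $-1$.

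The step I expect to be the main obstacle is the degree-$4n$ term in the $2$-torsion-free case. Over $\Z$ the map $\pr+\pr^2\colon\Z/8\to\Z/2$ vanishes identically, because $x+x^2$ is always even, so over $\Z$ one only ever sees the group $\Z/8$ and the subtlety is invisible. Over a general $B$ this map need not vanish --- for instance for $B=W(\F_4)$ it has image $\F_2\subset\F_4$ --- so one must genuinely identify the base change of the $H\Z/8$-summand \emph{inside the equaliser}, carefully tracking the difference between the left and right $HB$-module structures on $HB^{\phi\Z/2}$ and on $N^{C_2}_e(HB^{\phi\Z/2})$: this difference is the squaring twist, which becomes trivial after reduction mod $2$ but not mod $4$, and it is exactly this that replaces $B/4(x+x^2)$ by the kernel $\ker(\pr+\pr^2\colon B/4(x+x^2)\to B/2)$. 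The other genuinely technical input is the verification of the hypothesis of Corollary~\ref{TCRbase-change}, namely that $HB^{\phi\Z/2}$ is controlled by $H\Z^{\phi\Z/2}$ and the arithmetic of $B$; by contrast, once that is in hand the perfect $\F_2$-algebra case should be comparatively formal.
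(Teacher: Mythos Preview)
Your proposal is correct and follows essentially the same route as the paper: verify $\phi$-flatness of $\F_2\to A$ (resp.\ $\Z\to B$) via the perfectness (resp.\ $2$-torsion-free plus perfect mod $2$) hypothesis, apply Corollary~\ref{TCRbase-change} to obtain the base-changed equaliser diagram, and then rerun the explicit computation of $r$ and $f$ on homotopy groups exactly as in Proposition~\ref{lowerFR}/Remark~\ref{rem:geomTCRk} for the first case and as in the proof of Theorem~\ref{geometric of TCRZ computation} (including the iterated pullback analysis in degree $4n$) for the second. The one presentational wrinkle is that the argument does not literally ``track summands'' of the known answers for $\F_2$ and $\Z$ but rather repeats the equaliser calculation with $A$- or $B$-coefficients; your identification of the degree-$4n$ step as the crux, and of the $H\Z$-module structure (via $\TCR(\Z;2)^{\phi\Z/2}$) as the reason for the Eilenberg--MacLane splitting, matches the paper.
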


\paragraph*{Real TC and L-theory}\ \vspace{.25cm} \\
The relationship between TC and L-theory was  originally observed by Weiss and Williams and studied by Weiss and Rognes. They were investigating whether, under certain conditions on a ring spectrum with anti-involution $A$, the quadratic L-theory $\Lt^q(A)$ is equivalent to the $\Z/2$-Tate construction of the fibre of the trace map $\K(A)\to \TC(A)$ after $2$-completion. Nikolaus then formulated an uncompleted version of this statement, conjecturing that $\TCR(A;2)^{\phi\Z/2}$ should be equivalent to the genuine normal L-theory of $A$,  defined as the cofibre
\[
\Lt^{n}(A):=\cof(\Lt^q(A)\longrightarrow\Lt(\Mod^\omega_A,\text{\Qoppa}_A))
\]
of the canonical symmetrisation map. Here $\Mod^\omega_A$ is the $\infty$-category of compact $A$-modules, and  $\text{\Qoppa}_A\colon (\Mod^\omega_A)^{op}\to \Sp$ is a certain Poincar\'e structure in the sense of Lurie's formalism of L-theory, which
is defined using the Frobenius module structure of $A^{\phi\Z/2}$ (see \cite[3.2.6 and 3.2.10]{9I} for the details). A proof of this conjecture will appear in work of Harpaz, Nikolaus, and Shah \cite{HNS}.

By construction, $\Lt(\Mod^\omega_A,\text{\Qoppa}_A)$ is the symmetric L-theory spectrum $\Lt^s(A)$ if $A$ is Borel-complete, that is if the canonical map $A^{\phi\Z/2}\to A^{t\Z/2}$ is an equivalence. One can then see that Nikolaus' conjecture implies the original conjecture of Weiss and Williams provided the fibre of the trace map becomes Borel-complete after 2-completion.

In the case of spherical group-rings, $\Lt^{n}(\mathbb{S}[\Omega^\sigma X])$ is the hyperquadratic L-theory of \cite{WW3} by \cite[Corollary 4.6.1]{9II}, and as mentioned above it is equivalent to $\TCR(\mathbb{S}[\Omega^\sigma X];2)^{\phi\Z/2}$ by \cite[Theorem 4.3, Corollary 4.4]{WW3} and Corollary \ref{splittingTCRSG}. 
The normal L-theory  $\Lt^{n}(k)$ is also well-understood if $k$ is a perfect field of characteristic $2$, for example by work of Kato and Ranicki, and its homotopy groups agree with the ones of the geometric fixed-points of $\TCR$ of Theorem \ref{intro:calcgeom}  (see Remark \ref{rem:geomTCRk} for more details on the description of these L-groups). Finally, $\Lt^{n}(\Z)$ is calculated by Taylor and Williams in \cite{TW} (see also \cite[Corollary 3.9 and 6.2]{HLN}) and agrees with our calculation of $\TCR(\Z;2)^{\phi\Z/2}$ of Theorem \ref{intro:calcgeom}. We are not aware of a flat base-change type of result analogous to Corollary \ref{TCRbase-change} for these normal L-spectra, nor if they have been computed for all the rings of Theorem \ref{intro:perfectrings}.

\subsection*{Acknowledgements}
The idea of exploiting the description of the universal space of reflections as a pushout, which ultimately led us to Theorem \ref{intro:geomTCR}, was given to us by Amalie H\o genhaven at the Hausdorff Research Institute for Mathematics in Bonn in 2017. We wholeheartedly thank her for this contribution and the Hausdorff Institute for the hospitality. We thank Lars Hesselholt and Ib Madsen for continued support, and Yonatan Harpaz, Markus Land, Thomas Nikolaus, John Rognes and Michael Weiss for helpful discussions regarding L-theory and the aforementioned conjectures.

Dotto and Patchkoria were partially supported by the German Research Foundation Schwerpunktprogramm 1786 and the Hausdorff Centre for Mathematics at the University of Bonn. Moi was supported by the K\&A Wallenberg  Foundation.

\tableofcontents

\section{Preliminaries}

\subsection{Equivariant spectra}
Let $G$ be a compact Lie group. In this paper we will be interested in the case where $G$ is the orthogonal group $O(2)$, or one of its subgroups. We write $\Sp^{G}$ for the stable model category of orthogonal spectra with an action of $G$, equipped with the flat model structure of \cite{Sto}. This is a model for the homotopy theory of genuine $G$-spectra. We recall that the weak equivalences are  the $\underline{\pi}_\ast$-isomorphism, where $\underline{\pi}_\ast$ is the equivariant homotopy groups Mackey-functor.

We denote by $\otimes$ the derived smash product of $G$-spectra and of modules in $G$-spectra, which can be obtained by applying the smash product to a flat replacement of the orthogonal $G$-spectra (that is to a cofibrant replacement in the flat model structure). We also denote by $\otimes$ the tensor of a pointed $G$-space $Z$ and a $G$-spectrum $X$:
\[
Z\otimes X:= (\Sigma^{\infty}Z)\otimes X.
\]
For every closed subgroup $H\leq G$, we denote the genuine fixed-points (which is the strict fixed-points of a fibrant replacement) and the geometric fixed-points functors respectively by 
\[(-)^{H},(-)^{\phi H}\colon \Sp^{G} \to \Sp^{W_G(H)},\]
where $W_G(H)=N_G(H)/H$ is the Weyl group of $H$ in $G$. 
We recall that the geometric fixed-points functor can be defined from the genuine fixed-points functor as
\[
X^{\phi H}=(\widetilde{E(\nsupseteq H)}\otimes X)^{H}
\]
where $(\nsupseteq H)$ is the family of subgroups of $N_G(H)$ which do not contain $H$, the $N_G(H)$-space $E(\nsupseteq H)$ is its universal space, and $\widetilde{E(\nsupseteq H)}$ is the pointed $N_G(H)$-space defined as the cofibre of the map $(E(\nsupseteq H))_+\to S^0$ which collapses $E(\nsupseteq H)$ to the non-basepoint of $S^0$. This induces a fibre sequence of $W_G(H)$-spectra
\[
((E(\nsupseteq H))_+\otimes X)^{H}\longrightarrow X^H\longrightarrow X^{\phi H}
\]
called the isotropy separation sequence.

We will be particularly interested in the case where $G=O(2)$ and $H=C_{p}$ is the cyclic subgroup of $O(2)$ of rotations  of order $p$, for some prime $p$. Then for any $O(2)$-spectrum $X$, we have a fibre sequence of $O(2)/C_p$-spectra
\begin{equation*}\label{isotropygeneral1}
((E(\nsupseteq C_p))_+\otimes X)^{C_p}\longrightarrow X^{C_p}\longrightarrow X^{\phi C_p},
\end{equation*}
and hence for any $n \geq 0$, there is a fibre sequence of $O(2)/C_{p^{n+1}}$-spectra
\begin{equation*}\label{isotropygeneral2}
((E(\nsupseteq C_p))_+\otimes X)^{C_{p^{n+1}}}\longrightarrow X^{C_{p^{n+1}}}\longrightarrow (X^{\phi C_p})^{C_{p^{n+1}}/C_p}.
\end{equation*}
By choosing the reflection over the real coordinate axis, we can identify $O(2)$ with the semi-direct product $\Z/2 \ltimes S^1$. Here the nontrivial element $\tau$ of $\Z/2$ corresponds to the latter reflection. In particular, one has the dihedral subgroups $D_{p^n}=\Z/2 \ltimes C_{p^n} \leq \Z/2 \ltimes S^1 =G$.
If we restrict the family $(\nsupseteq C_p)$ to the dihedral group $D_{p^{n+1}}$ for $n \geq 0$, then it becomes the family $\mathcal{R}$ consisting of the trivial group and of those subgroups generated by the reflections in $D_{p^{n+1}}$. Hence by restricting to $D_{p^{n+1}}/C_p$, we get a fibre sequence of $D_{p^{n+1}}/C_p$-spectra 
\begin{equation*}\label{isotropygeneral3}
(E\mathcal{R}_+\otimes X)^{C_p}\longrightarrow X^{C_p}\longrightarrow X^{\phi C_p},
\end{equation*}
and by taking fixed-points a fibre sequence of $D_{p^{n+1}}/C_{p^{n+1}}=\Z/2$-spectra
\begin{equation}\label{isotropyR}
(E\mathcal{R}_+\otimes X)^{C_{p^{n+1}}}\longrightarrow X^{C_{p^{n+1}}}\longrightarrow (X^{\phi C_p})^{C_{p^{n+1}}/C_p}.
\end{equation}
We will abuse notation and always write $\mathcal{R}$ for the family of reflections in $D_{p^{n+1}}$, for different $p$ and $n$. Although these families are different, their classifying spaces $E\mathcal{R}$ are always modelled by the restriction to the appropriate dihedral group of the $O(2)$-space defined by the unit sphere $S(\mathbb{C}^{\infty})$.

In what follows we will always consider homotopy limits and homotopy colimits of spaces and spectra and will just refer to them as limits and colimits.
%

\subsection{Ring spectra with anti-involution and real topological Hochschild homology}\label{sec:subdivisions}

We give a short recollection of the construction and main properties of the dihedral structure on topological Hochschild homology, mainly from \cite{BHM} and \cite{THRmodels}.

We recall that a ring spectrum with anti-involution is an orthogonal ring spectrum $A$ equipped with a morphism of orthogonal ring spectra $w\colon A^{op}\to A$ such that $w^2=\id$. We endow $A$ with the genuine $\Z/2$-equivariant homotopy type defined by $w$. More precisely, a morphism of ring spectra $f\colon A\to B$ commuting with the involutions $w$ is an equivalence if it is a genuine $\Z/2$-equivariant equivalence in the category $\Sp^{\Z/2}$ of orthogonal $\Z/2$-spectra (see \cite[A1]{THRmodels} for a model structure on their category). 

The cyclic nerve of $A$ in the category of orthogonal spectra inherits a levelwise involution, which acts on $A \otimes A^{\otimes n}$ by applying in each factor $w$, fixes the first tensor factor, and reverses the order of the remaining $n$ factors. This involution, together with the levelwise $C_{n+1}$-actions which rotate the tensor factors, defines a dihedral spectrum in the sense of \cite[S 1.5, Example 5]{FLcrossed} and \cite{LodayDihedral} that we denote by $N^{di}A$. Its geometric realisation 
\[
\THR(A):=|N^{di}A|=|[n]\mapsto A^{\otimes n+1}|
\]
is then an orthogonal spectrum with $O(2)$-action (\cite[Theorem 5.3]{FLcrossed} and \cite[Proposition 3.10]{LodayDihedral}), which we regard as a genuine $O(2)$-equivariant spectrum.

In \cite{THRmodels} we studied the $\Z/2$-equivariant homotopy type of $\THR(A)$, where $\Z/2$ is the subgroup of $O(2)$ generated by the reflection over the $x$-axis. In particular we provided an equivalence of $\Z/2$-spectra
\[
\THR(A)\simeq B(A,N^{\Z/2}_eA,A)=A\otimes_{N^{\Z/2}_eA}A,
\]
(under the standing assumption that $A$ is flat)
where $\otimes_{N^{\Z/2}_eA}$ denotes the derived smash product in the category of modules over the Hill-Hopkins-Ravenel norm construction $N^{\Z/2}_eA$ of the underlying ring spectrum $A$ of \cite{HHR}. The norm acts on $A$ respectively on the left and on the right by
\[
N^{\Z/2}_eA\otimes A=A^{\otimes 3}\xrightarrow{A\otimes \tau}A^{\otimes 3}\xrightarrow{A^{\otimes 2}\otimes w}A^{\otimes 3}\xrightarrow{\mu}A
\]
\[
A\otimes N^{\Z/2}_eA=A^{\otimes 3}\xrightarrow{\tau\otimes A}A^{\otimes 3}\xrightarrow{w\otimes A^{\otimes 2}}A^{\otimes 3}\xrightarrow{\mu}A,
\]
where $\tau\colon A^{\otimes 2}\to A^{\otimes 2}$ is the symmetry isomorphism and $\mu$ is the multiplication map of $A$.
Here $B(A,N^{\Z/2}_eA,A)$ is the two-sided bar construction of these actions, which computes the derived smash product. We then deduced an equivalence of spectra
\[
\THR(A)^{\phi\Z/2}\simeq B(A^{\phi\Z/2},A,A^{\phi\Z/2})= A^{\phi\Z/2}\otimes_{A}A^{\phi\Z/2},
\]
where $A$ acts on $A^{\phi\Z/2}$ via the geometric fixed-points of the actions of $N^{\Z/2}_eA$ on $A$, using the diagonal equivalence $(N^{\Z/2}_eA)^{\phi\Z/2}\simeq A$. We refer to these actions as the Frobenius module structures of $A^{\phi\Z/2}$.

The present paper will focus on the equivariant homotopy type of $\THR(A)$ with respect to the finite dihedral subgroups of $O(2)$. We now give a recollection of materials on dihedral objects and simplicial subdivisions, which we use to model the equivariant homotopy type of $\THR(A)$ with respect to the finite dihedral subgroups simplicially. We recall that a dihedral orthogonal spectrum is a simplicial orthogonal spectrum $X_\bullet\colon \Delta^{op}\to \Sp$ whose $n$-simplicies $X_n$ are equipped with an action of the dihedral group $D_{n+1}=\Z/2\ltimes C_{n+1}$, which is suitably compatible with the simplicial structure \cite[Proposition 3.4]{FLcrossed}. The geometric realisation of $X_\bullet$ has an induced action of $O(2)=\Z/2\ltimes S^1$ by \cite[Theorem 5.3]{FLcrossed}. The action of the reflection generating $\Z/2$ on $|X_\bullet|$ is induced by the maps
\[
X_n\otimes\Delta^n_+\xrightarrow{w\otimes \omega_n} X_n\otimes\Delta^n_+
\]
where $w$ is the action of the generator of $\Z/2\leq D_{n+1}$ on the $n$-simplicies, and $\omega_n$ sends $(t_0,\dots,t_n)\in \Delta^n$ to $(t_n,\dots,t_0)$ \cite[Lemma 5.6(ii)]{FLcrossed}. The description of the cyclic action is more involved, and it requires simplicial subdivision.

Let $\sd_{r}\colon \Delta^{op}\to \Delta^{op}$ be the functor which sends the finite totally ordered set $[n]=\{0,1,\dots,n\}$ to the $r$-fold join $[n]\star [n]\star\dots\star [n]$, defined as the set $[r-1]\times [n]$ with the total order $(a,i)\leq (b,j)$ if either $a<b$ or if $a=b$ and $i\leq j$. Given a dihedral spectrum $X_\bullet\colon \Delta^{op}\to \Sp$, we let
\[
\sd_{r}X_\bullet:=X_\bullet\circ \sd_{r}
\]
be the $r$-fold subdivision of $X$. Let $g_n$ be the generator of $C_{n+1}$ and $w$ the generator of $\Z/2$. The action of $g_{rn+r-1}^{n+1}$ on the $n$-simplicies $(\sd_r X_\bullet)_n=X_{rn+r-1}$ defines a simplicial action of $C_r$ on $\sd_r X_\bullet$, and there is a $C_r$-equivariant isomorphism
\[
|\sd_rX_\bullet|\cong |X_\bullet|
\]
induced by the maps
\[
(\sd_rX)_n\otimes \Delta^n=X_{rn+r-1}\otimes \Delta^n_+\xrightarrow{\id\otimes \delta_r}X_{rn+r-1}\otimes \Delta^{rn+r-1}_+
\]
where $\delta_r$ sends $t\in \Delta^n$ to $(t,t,\dots,t)/r\in \Delta^{rn+r-1}$ \cite[\S 1]{BHM}. This isomorphism is moreover $\Z/2$-equivariant, where the action of $\Z/2$ on the left-hand side is defined from the maps $w\otimes \omega_n$ above as for $|X_\bullet|$. Let us finally make this $\Z/2$-action simplicial. 

Let $\sd_e\colon \Delta^{op}\to \Delta^{op}$ be the functor that sends $[n]$ to $[n]\star [n]^{op}$, where $[n]^{op}$ is the set $\{0,1,\dots,n\}$ with the canonical order reversed. Let $Y_\bullet\colon \Delta^{op}\to \Sp$ be a simplicial orthogonal spectrum with involutions $w_n$ on $Y_n$ for every $n\geq 0$, such that for every $\theta\colon [n]\to [m]$
\[
\theta^\ast\circ w_m=w_n\circ(\theta^{op})^\ast.
\]
For example, $Y_\bullet$ could be a dihedral object $X_\bullet$ where $w_n$ acts by the action of the generator of $\Z/2$ as a subgroup of $D_{n+1}$, or $Y_\bullet=\sd_r X_\bullet$ where $w_n$ acts as the generator of $\Z/2$ as a subgroup of $D_{r(n+1)}$. The geometric realisation of $Y_\bullet$ has a $\Z/2$-action defined as above from the maps $w_n\otimes\omega_n$.
We now let
\[
\sd_{e}Y_\bullet:=Y_\bullet\circ \sd_{e}
\]
be the corresponding subdivision. The action of $w_{2n+1}$ on the $n$-simplicies $(\sd_{e}Y_\bullet)_n=Y_{2n+1}$ defines a simplicial $\Z/2$-action on $\sd_{e}Y_\bullet$. There is an isomorphism
\[
|\sd_eY_\bullet|\cong |Y_\bullet|
\]
induced by the maps
\[
(\sd_eY)_n\otimes \Delta^n=Y_{2n+1}\otimes \Delta^n_+\xrightarrow{\id\otimes \delta_e}Y_{2n+1}\otimes \Delta^{rn+r-1}_+
\]
where $\delta_e$ sends $t\in \Delta^n$ to $(t,\omega_n(t))/2\in \Delta^{2n+1}$, and this isomorphism is clearly $\Z/2$-equivariant. Combining these subdivisions we obtain a $D_r$-equivariant isomorphism
\[
|\sd_e\sd_r X_\bullet|\cong |X_\bullet|
\]
for every dihedral orthogonal spectrum $X_\bullet$, and in particular an isomorphism 
\[
|\sd_e\sd_r N^{di}A|\cong \THR(A)
\]
of genuine orthogonal $D_r$-spectra for every $r\geq 1$. 

\begin{rem}
By the latest isomorphism it follows that $\THR$ sends equivalences of ring spectra with anti-involution to equivalences of genuine $D_r$-spectra, for every $r\geq 1$. Indeed, the $n$-simplicies of $\sd_e\sd_r N^{di}A$ are the orthogonal spectrum 
\[
(\sd_e\sd_r N^{di}A)_n=(N^{di}A)_{r(2n+1)+r-1}=A^{\otimes r(2n+2)}
\]
where $C_r$ acts cyclically on $r$ and the generator of $\Z/2$ acts as described above (and we recall that the tensor product indicates the smash product of a flat replacement). This indexed smash product sends an equivalence of orthogonal spectra to a genuine $D_r$-equivalence by \cite[Theorem 3.2.16]{BrDuSt} (see also \cite[Proposition B.209]{HHR}), and thus its realisation is also a genuine $D_r$-equivalence (since $\sd_e\sd_r N^{di}A$ is a good simplicial spectrum by the argument of \cite[Lemma 2.14]{THRmodels}). 
\end{rem}

The equivalence $\THR(A)^{\phi\Z/2}\simeq A^{\phi\Z/2}\otimes_{A}A^{\phi\Z/2}$ above is in fact induced by the $\Z/2$-equivariant isomorphism $\THR(A)\cong |\sd_eN^{di}A|$. We can now refine this equivalence to an equivariant equivalence with respect to the action of the Weyl group.
The normaliser of $\Z/2$ inside of $O(2)$ is $\Z/2\times C_2$, where $C_2$ is generated by the rotation of order $2$, and therefore the Weyl group of $\Z/2$ inside of $O(2)$ is isomorphic to $C_2$. In particular $\THR(A)^{\phi\Z/2}$ is a genuine $C_2$-spectrum, that we now describe in terms of derived smash products.
\begin{lemma}\label{C2THRphi}
There is an equivalence of $C_2$-spectra
\[
\THR(A)^{\phi\Z/2}\simeq B(A,N^{C_2}_eA,N^{C_2}_e(A^{\phi\Z/2}))=A\otimes_{(N^{C_2}_eA)}(N^{C_2}_e(A^{\phi\Z/2}))
\]
where $A$ is regarded as a $C_2$-spectrum via the isomorphism $\Z/2\cong C_2$, the norm $N^{C_2}_eA$ acts on $A$ by the right action defined above, and on $N^{C_2}_e(A^{\phi\Z/2})
$ by applying the monoidal functor $N^{C_2}_e$ to the left Frobenius action of $A$ on $A^{\phi\Z/2}
$.
\end{lemma}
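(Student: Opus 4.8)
The plan is to read off the $W_{O(2)}(\Z/2)$-equivariant homotopy type of $\THR(A)^{\phi\Z/2}$ from a subdivided model of the dihedral nerve. Recall that the normaliser of $\Z/2$ in $O(2)$ is the Klein four-group $D_2=\langle\rho\rangle\times\langle r\rangle$, with $\rho$ the rotation of order two and $r$ a reflection, so that $W_{O(2)}(\Z/2)=D_2/\langle r\rangle$ is the group $C_2$ generated by the image of $\rho$. First I would record a simplicial $D_2$-spectrum $X_\bullet$ with $|X_\bullet|\simeq\res^{O(2)}_{D_2}\THR(A)$ obtained by the dihedral edgewise subdivision of $N^{di}A$ (combining the rotational and the reflectional subdivisions; compare the $\Z/2$-equivariant subdivision in \cite{THRmodels}): the object of $n$-simplices is a tensor power of copies of $A$ in which $r$ permutes the factors by a reflection composed with $w$, $\rho$ permutes them by a rotation, and the structure maps are the evident multiplications.

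Since $(-)^{\phi\Z/2}=(-)^{\phi\langle r\rangle}\colon\Sp^{D_2}\to\Sp^{C_2}$ preserves colimits and is symmetric monoidal, $\THR(A)^{\phi\Z/2}\simeq|[n]\mapsto X_n^{\phi\langle r\rangle}|$ as a $C_2$-spectrum. Grouping the tensor factors of $X_n$ according to their $r$-orbits, the $r$-fixed factors contribute copies of $A^{\phi\Z/2}$ and the free pairs (each carrying the interchange twisted by $w\otimes w$, i.e.\ a copy of $N^{\Z/2}_eA$) contribute copies of $A$ via the diagonal equivalence $(N^{\Z/2}_eA)^{\phi\Z/2}\simeq A$. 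Carrying out this identification exactly as in \cite{THRmodels} shows that the realisation is $B(A^{\phi\Z/2},A,A^{\phi\Z/2})=A^{\phi\Z/2}\otimes_AA^{\phi\Z/2}$ on underlying spectra, equipped with a residual $C_2=\langle\rho\rangle$-action; unwinding $\rho$, this is the natural $\Z/2$-symmetry of the two-sided bar construction which exchanges the two ends and reverses the middle coordinates, twisted by $w$ so as to match the $w$-conjugacy of the left and right Frobenius module structures.

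It then remains to identify this $C_2$-spectrum with the norm bar construction $T:=B(A,N^{C_2}_eA,N^{C_2}_e(A^{\phi\Z/2}))$. Since a map of $C_2$-spectra is an equivalence once it is an equivalence on underlying spectra and on geometric fixed points (by the isotropy separation sequence), it suffices to match these two invariants together with the canonical gluing map. On underlying spectra $\res^{C_2}_eT=A\otimes_{A\otimes A}(A^{\phi\Z/2}\otimes A^{\phi\Z/2})$ with the norm interchange action, and the balancing equivalence $A\otimes_{A\otimes A}(A^{\phi\Z/2}\otimes A^{\phi\Z/2})\simeq A^{\phi\Z/2}\otimes_AA^{\phi\Z/2}$ carries this to the reversal action above (this is where the $w$-twists are verified). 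On geometric fixed points, $(-)^{\phi C_2}$ preserves colimits, is symmetric monoidal, satisfies $(N^{C_2}_e-)^{\phi C_2}\simeq\id$ and $A^{\phi C_2}=A^{\phi\Z/2}$, and sends the defining module structures of $T$ to the right and left Frobenius actions, so $T^{\phi C_2}\simeq B(A^{\phi\Z/2},A,A^{\phi\Z/2})=A^{\phi\Z/2}\otimes_AA^{\phi\Z/2}$; on the other hand $(\THR(A)^{\phi\Z/2})^{\phi C_2}=\THR(A)^{\phi D_2}$, which, since geometric fixed points compose, is $(-)^{\phi\langle r\rangle}$ of $\THR(A)^{\phi C_2}\simeq\THR(A)$ (the real cyclotomic structure, \cite{Amalie}), hence again $A^{\phi\Z/2}\otimes_AA^{\phi\Z/2}$; alternatively one sees this by applying $(-)^{\phi\langle\rho\rangle}$ levelwise to $X_\bullet$. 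All of these identifications are natural, hence compatible with the gluing maps, and the lemma follows.

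The main obstacle is the combinatorial bookkeeping: arranging the subdivision so that all of $D_2$ acts simplicially, and then pinning down the residual $C_2=\langle\rho\rangle$-action together with the $w$-twists relating the two Frobenius module structures — in particular making sure that, under balancing, the reversal action on $B(A^{\phi\Z/2},A,A^{\phi\Z/2})$ really does correspond to the norm interchange on $A\otimes_{A\otimes A}(A^{\phi\Z/2}\otimes A^{\phi\Z/2})$ and that the comparison respects the gluing data of the isotropy separation sequence.
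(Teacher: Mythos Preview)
Your approach is correct and begins the same way as the paper---subdivide the dihedral nerve so that $D_2=\Z/2\times C_2$ acts simplicially, then apply $(-)^{\phi\Z/2}$ levelwise---but it then takes an unnecessarily long detour. After subdivision the $n$-simplices of $\THR(A)^{\phi\Z/2}$ are
\[
A^{\phi\Z/2}\otimes A^{\otimes(2n+1)}\otimes A^{\phi\Z/2},
\]
with $C_2$ acting by reversing the tensor factors and applying $w$. The paper simply \emph{regroups} these factors: the central $A$ is a $C_2$-spectrum via $\Z/2\cong C_2$, each symmetric pair of $A$'s assembles into a copy of $N^{C_2}_eA$, and the outer pair $A^{\phi\Z/2}\otimes A^{\phi\Z/2}$ is $N^{C_2}_e(A^{\phi\Z/2})$. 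This is a levelwise isomorphism of simplicial $C_2$-spectra identifying the realisation directly with $B(A,N^{C_2}_eA,N^{C_2}_e(A^{\phi\Z/2}))$, and the structure maps visibly become the stated module actions.

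Your route---first landing in $B(A^{\phi\Z/2},A,A^{\phi\Z/2})$ with its reversal action, then comparing to the norm bar construction via isotropy separation---works, but it is strictly more effort: you invoke the cyclotomic structure of $\THR(A)$ just to identify $(\THR(A)^{\phi\Z/2})^{\phi C_2}$, and the passage ``all of these identifications are natural, hence compatible with the gluing maps'' is exactly the step where one must either produce a comparison map or carefully match the isotropy separation triangles. The balancing equivalence $A\otimes_{A\otimes A}(A^{\phi\Z/2}\otimes A^{\phi\Z/2})\simeq A^{\phi\Z/2}\otimes_A A^{\phi\Z/2}$ that you mention is in fact the underlying shadow of the simplicial regrouping above; had you promoted it to a $C_2$-equivariant map at the level of bar constructions, you would have arrived at the paper's argument without the detour through isotropy separation.
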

\begin{proof}
The equivalence of genuine $D_4=C_2\times \Z/2$-equivariant spectra $\THR(A)\cong |\sd_e\sd_2 N^{di}A|$ defined above gives rise to an equivalence of $C_2$-spectra
\begin{align*}
\THR(A)^{\phi\Z/2}&\cong |\sd_e\sd_2 N^{di}A|^{\phi\Z/2}=|[n]\mapsto A^{\phi\Z/2}\otimes A^{2n+1}\otimes A^{\phi\Z/2}|
\\&\cong |[n]\mapsto A\otimes (N^{C_2}_eA)^{\otimes n}\otimes  N^{C_2}_e(A^{\phi\Z/2})|=B(A,N^{C_2}_eA,N^{C_2}_e(A^{\phi\Z/2}))
\\&=A\otimes_{(N^{C_2}_eA)}(N^{C_2}_e(A^{\phi\Z/2}))
\end{align*}
where $C_2$ acts on the third term by reversing the order of the smash products, and the isomorphism rearranges the factors by pairing the factors which are swapped.
\end{proof}

\subsection{Real cyclotomic spectra and real topological cyclic homology} \label{realcylcprelim}

We now review the definitions of the main objects of study of the paper. These are completely analogous to the classical definitions surrounding topological cyclic homology of \cite{BHM}, and are carried out by carefully lifting all the constructions to the category of $\Z/2$-equivariant spectra. These constructions were laid out in \cite{Amalie} using B\"okstedt's model for real topological Hochschild homology, and we recast them here for the model of $\THR$ above. The two approaches are equivalent by the comparison results of \cite{THRmodels} and \cite{DMPSW}.

\begin{defn}
Let $p$ be a prime. A real $p$-cyclotomic spectrum is an $O(2)$-spectrum $T\in \Sp^{O(2)}$ equipped with a map of $O(2)$-spectra 
\[ T^{\phi C_p}\stackrel{\simeq}{\longrightarrow} T,\]
where $O(2)$ acts on the left-hand side by restriction along the root isomorphism $O(2)\to O(2)/C_p$, and which is a $D_{p^n}$-equivalence for all $n\geq 0$. 
\end{defn}

The prime example of a real $p$-cyclotomic spectrum (for all prime $p$) is the real topological Hochschild homology spectrum $\THR(A)$ of a ring-spectrum with anti-involution $A$. The cyclotomic structure maps are in fact isomorphisms, defined on the dihedral bar construction from the diagonal isomorphisms
\[
A\cong (A^{\otimes p})^{\phi C_p}
\]
(see e.g. \cite{ABGHLMcyclotomic}, or  \cite[\S 5]{DMPSW}, and we remind that this is an isomorphism since $A$ is assumed to be flat). In particular, they induce a $S^1/C_p$-equivalence and a $\Z/2$-equivalence on realisations, and thus an $O(2)$-equivalence (see \cite[\S 3.3]{Polynomial}).
For every real $p$-cyclotomic spectrum $T$, the isotropy separation sequence (\ref{isotropyR}) defines fibre sequences of $\Z/2$-spectra
\[
(E\mathcal{R}_+\otimes T)^{C_{p^{n+1}}}\longrightarrow T^{C_{p^{n+1}}}\longrightarrow (T^{\phi C_p})^{C_{p^{n+1}}/C_p}\simeq T^{C_{p^{n}}}
\]
for every $n\geq 0$, and the composite of the right-hand arrow and the equivalence is denoted by $R\colon T^{C_{p^{n+1}}}\to T^{C_{p^{n}}}$. Since the cyclotomic structure map is $O(2)$-equivariant, using appropriate root isomorphisms, we see that $R$ is $O(2)$-equivariant. 

\begin{defn} Let $T$ be a real $p$-cyclotomic spectrum. For every integer $n\geq 0$, we let $\TRR^{n+1}(T;p)$ be the $\Z/2$-spectrum
\[
\TRR^{n+1}(T;p):=T^{C_{p^n}},
\]
where $\Z/2$ is identified with the subgroup of $O(2)/C_{p^n}$ generated by the reflection of the $x$-axes, and
\[
\TRR(T;p):=\holim\big(
\dots\xrightarrow{R}   \TRR^{n+1}(T;p)\xrightarrow{R}  \TRR^{n}(T;p)\xrightarrow{R} \dots\xrightarrow{R} \TRR^{1}(T;p)=T
\big).
\]
If $A$ is a ring spectrum with anti-involution, we write
\[
\TRR^{n+1}(A;p):=\TRR^{n+1}(\THR(A);p)\ \ \ \ \  \ \ \  \mbox{and}\ \  \ \ \  \ \ \ \ \ \TRR(A;p):=\TRR(\THR(A);p).
\]
\end{defn}
The inclusion of subgroups $C_{p^{n-1}}\leq C_{p^{n}}$ defines a map $F\colon  \TRR^{n+1}(T;p)\to  \TRR^{n}(T;p)$, which is equivariant for the Weyl actions and thus in particular $\Z/2$-equivariant. It also commutes with the map $R$ since $R$ is $O(2)$-equivariant and therefore induces a map of $\Z/2$-spectra
\[
F\colon \TRR(T;p)\longrightarrow\TRR(T;p)
\]
by passing to the limit, whose underlying map is the Frobenius of \cite{BHM}.
\begin{defn} \label{TCRdefTRR}
Let $T$ be a real $p$-cyclotomic spectrum. The real topological cyclic homology of $T$ is the $\Z/2$-spectrum defined as the equaliser
\[
\TCR(T;p)=eq\big(\xymatrix{\TRR(T;p)\ar@<.5ex>[r]^-\id\ar@<-.5ex>[r]_-F&\TRR(T;p)}\big).
\]
If $A$ is a ring spectrum with anti-involution, we let the real topological cyclic homology of $A$ be the $\Z/2$-spectrum
\[
\TCR(A;p)=eq\big(\xymatrix{\TRR(A;p)\ar@<.1ex>[r]^-\id\ar@<-1ex>[r]_-F&\TRR(A;p)}\big).
\]
\end{defn}
As in the classical definition of $\TC$, since $R$ and $F$ commute one can alternatively define
\[
\TFR(T;p):=\holim\big(
\dots\xrightarrow{F}   \TRR^{n+1}(T;p)\xrightarrow{F}  \TRR^{n}(T;p)\xrightarrow{F} \dots\xrightarrow{F} \TRR^{1}(T;p)=T
\big)
\]
and
\[
\TCR(T;p)=eq\big(\xymatrix{\TFR(T;p)\ar@<.1ex>[r]^-\id\ar@<-1ex>[r]_-R&\TFR(T;p)}\big).
\]
The underlying spectrum of $\TCR(T;p)$ is by construction the topological cyclic homology spectrum $\TC(T;p)$ of \cite{BHM} (see also \cite[Theorem 1.3]{DMPSW}). The focus of the paper is to understand the $\Z/2$-equivariant homotopy type of $\TCR(T;p)$, and in particular its geometric fixed-points spectrum.

\section{The Geometric fixed-points of \texorpdfstring{$\TCR$}{TCR}}

The aim of this section is to give a simple formula for the $\Z/2$-geometric fixed-points of $\TCR(A;p)$, when $A$ and its $\Z/2$-fixed-points are bounded below. This object turns out to be interesting only for the prime $p=2$, but we will start with the easier case of odd primes.

\subsection{The odd primary case}\label{sec:odd}

In the odd primary case the geometric fixed-points of $\TRR$ admit a very simple description, as they split as a product.

\begin{theorem} \label{geometricsplitting} For any odd prime $p$, real $p$-cyclotomic spectrum $T$, and $n \geq 1$, there is a natural equivalence
\[\TRR^n(T;p)^{\phi \Z/2} \simeq  \bigoplus_{i=1}^n T^{\phi \Z/2},\]
under which the maps $F,R\colon \TRR^{n+1}(T;p)^{\phi \Z/2}\to \TRR^n(T;p)^{\phi \Z/2}$ respectively project off the first and the $(n+1)$-st summand.
\end{theorem}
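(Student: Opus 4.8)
The plan is to induct on $n$, using the isotropy separation sequence (\ref{isotropyR}) at the prime $p$ together with the key observation that for odd $p$ the geometric fixed-points $(-)^{\phi C_p}$ interact trivially with the reflection $\Z/2$. The base case $n=1$ is just $\TRR^1(T;p)^{\phi\Z/2}=T^{\phi\Z/2}$ by definition. For the inductive step, I would take the fibre sequence of $\Z/2$-spectra obtained from (\ref{isotropyR}),
\[
(E\mathcal{R}_+\otimes T)^{C_{p^{n}}}\longrightarrow T^{C_{p^{n}}}=\TRR^{n+1}(T;p)\longrightarrow (T^{\phi C_p})^{C_{p^{n}}/C_p}\simeq \TRR^{n}(T;p),
\]
and apply $\Z/2$-geometric fixed-points $(-)^{\phi\Z/2}$. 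Since $(-)^{\phi\Z/2}$ is exact, this yields a fibre sequence
\[
\big((E\mathcal{R}_+\otimes T)^{C_{p^{n}}}\big)^{\phi\Z/2}\longrightarrow \TRR^{n+1}(T;p)^{\phi\Z/2}\longrightarrow \TRR^{n}(T;p)^{\phi\Z/2},
\]
and the right-hand term is $\bigvee_{i=1}^n T^{\phi\Z/2}$ by the inductive hypothesis.

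**Identifying the fibre.** The crux is to show that the fibre term $\big((E\mathcal{R}_+\otimes T)^{C_{p^{n}}}\big)^{\phi\Z/2}$ is equivalent to a single copy of $T^{\phi\Z/2}$, and that the sequence splits. Here I would commute the two fixed-point operations: the subgroup of $O(2)$ generated by $C_{p^n}$ and a reflection is the dihedral group $D_{p^n}$, and $(\,\cdot\,)^{C_{p^n}}$ followed by $(\,\cdot\,)^{\phi\Z/2}$ computes a mixed fixed-point construction over $D_{p^n}$; since $p$ is odd, $C_{p^n}$ and the chosen $\Z/2$ generate $D_{p^n}$ and the reflections in $D_{p^n}$ all have order $2$ and pairwise intersect $C_{p^n}$ trivially, so smashing with $E\mathcal{R}_+$ and taking $\Z/2$-geometric fixed points should collapse the $C_{p^n}$-direction. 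Concretely, $E\mathcal{R}$ restricted to $D_{p^n}$ is a model for $E\mathcal{F}$ where $\mathcal{F}$ is the family of reflection subgroups of $D_{p^n}$; after applying $(-)^{\phi\Z/2}$ one lands in $\Z/2$-spectra (the residual Weyl action) and the space $(E\mathcal{R})^{\Z/2}$ for a particular reflection is contractible, which forces $\big((E\mathcal{R}_+\otimes T)^{C_{p^{n}}}\big)^{\phi\Z/2}\simeq \big((E(C_{p^n})_+\otimes T)^{C_{p^n}}\big)^{\phi\Z/2}$-type expression; combined with the fact that for odd $p$ the spectrum $(E(C_{p^n})_+\otimes T)^{C_{p^n}}$ is, after $\Z/2$-geometric fixed points, just $T^{\phi\Z/2}$ (there is no interference between the $p$-group and the order-$2$ reflection), I get the desired single copy. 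I would then check that the inclusion of the fibre is split by a map induced from the transfer/inclusion $F$, so that $\TRR^{n+1}(T;p)^{\phi\Z/2}\simeq T^{\phi\Z/2}\vee \bigvee_{i=1}^n T^{\phi\Z/2}=\bigvee_{i=1}^{n+1}T^{\phi\Z/2}$.

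**Tracking $R$ and $F$.** Finally I would identify the maps under this splitting. The map $R\colon\TRR^{n+1}\to\TRR^{n}$ is by construction the composite $T^{C_{p^{n}}}\to (T^{\phi C_p})^{C_{p^{n}}/C_p}\simeq T^{C_{p^{n-1}}}$, i.e.\ precisely the projection off the fibre term in the sequence above; since the fibre was placed as the \emph{first} summand, $R$ projects off the first summand, compatibly with the identification of $\TRR^n(T;p)^{\phi\Z/2}$ coming from the previous stage of the induction. For $F\colon\TRR^{n+1}\to\TRR^n$, induced by the inclusion $C_{p^{n-1}}\le C_{p^n}$, I would use naturality of the isotropy separation sequence with respect to this inclusion to see that $F$ restricts to an equivalence on the fibre terms (comparing $(E\mathcal{R}_+\otimes T)^{C_{p^n}}$ with $(E\mathcal{R}_+\otimes T)^{C_{p^{n-1}}}$ after $\phi\Z/2$ — both are $T^{\phi\Z/2}$ and the map between them is an equivalence) and is compatible with $F$ on the $R$-quotient; bookkeeping then shows $F$ projects off the last ($(n+1)$-st) summand.

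**Main obstacle.** The delicate point is the identification of $\big((E\mathcal{R}_+\otimes T)^{C_{p^{n}}}\big)^{\phi\Z/2}$: one must correctly analyze how the family $\mathcal{R}$ of reflection subgroups of $O(2)$ restricts to $D_{p^n}$, commute $(-)^{C_{p^n}}$ past $(-)^{\phi\Z/2}$ while keeping track of the residual $\Z/2$-Weyl action, and use the odd-primality of $p$ to rule out the ``interference'' terms that genuinely do occur at $p=2$ (and which are exactly what makes the $p=2$ case of Theorem \ref{intro:geomTCR} an honest equaliser rather than a product). I expect this to require a careful point-set or $\infty$-categorical analysis of universal spaces, likely invoking the pushout decomposition of $E\mathcal{R}$ of \cite{LW09} mentioned in the introduction, restricted to the dihedral subgroups.
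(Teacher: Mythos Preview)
Your overall architecture matches the paper's: induct on $n$ via the fibre sequence defining $R$, identify the fibre after $(-)^{\phi\Z/2}$ as a single copy of $T^{\phi\Z/2}$, and show that $F$ induces an equivalence on these fibres, so the sequence splits. The paper packages this as a pullback square (Proposition~\ref{htpypullbackodd}) rather than an explicit induction, but the content is the same.

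However, your sketch of the fibre computation does not work as written. The intermediate claim that $\big((E\mathcal{R}_+\otimes T)^{C_{p^{n}}}\big)^{\phi\Z/2}$ reduces to an ``$E(C_{p^n})$-type expression'', and that $((E(C_{p^n})_+\otimes T)^{C_{p^n}})^{\phi\Z/2}\simeq T^{\phi\Z/2}$, is not justified: the contractibility of $(E\mathcal{R})^{\Z/2}$ alone does not let you swap $E\mathcal{R}$ for $E(C_{p^n})$ inside $C_{p^n}$-fixed points, and the second claim is not obviously true either. What the paper actually does is exactly what you flag in your ``Main obstacle'' paragraph: it uses the pushout of \cite{LW09} (your square~(\ref{square})) to identify the cofibre of $\widetilde{ED_{p^n}}\to\widetilde{E\mathcal{R}}$ with the \emph{induced} $D_{p^n}$-space $D_{p^n}\ltimes_{\Z/2}\Sigma\widetilde{E\Z/2}$. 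The fibre of $R$ on $\phi\Z/2$-fixed points is then computed by the projection formula for induced spectra,
\[
\big((T\otimes D_{p^n}\ltimes_{\Z/2}\widetilde{E\Z/2})^{C_{p^n}}\big)^{\phi\Z/2}
\simeq (D_{p^n}\ltimes_{\Z/2}(T\otimes\widetilde{E\Z/2}))^{D_{p^n}}
\simeq (T\otimes\widetilde{E\Z/2})^{\Z/2}\simeq T^{\phi\Z/2},
\]
and the action of $F$ on fibres is read off from the double coset formula for $D_{p^{n-1}}\backslash D_{p^n}/\Z/2$ (a single coset, giving the identity). So the tool you anticipated is not merely ``likely required'' but is the entire argument; your hand-wavy replacement for it should be discarded.

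A minor point: in your bookkeeping you have $R$ projecting off the first summand and $F$ off the last, which is the reverse of the paper's convention in the statement. This is only a relabeling of the wedge summands and does not affect correctness.
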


Before diving into the proof, we observe that if $T=\THR(A)$ is the real topological Hochschild homology of a ring spectrum with anti-involution $A$, we have an explicit description of the geometric fixed-points
\[
\THR(A)^{\phi \Z/2}\simeq A^{\phi \Z/2}\otimes_AA^{\phi \Z/2}.
\]
In particular if $A$ is the Eilenberg-MacLane ring spectrum of a discrete ring with anti-involution and $\frac{1}{2} \in A$, then we have that $\THR(A)^{\phi \Z/2}=0$, and we obtain the following.

\begin{cor} If $A$ is a discrete ring with anti-involution and $\frac{1}{2} \in A$, then 
\[\TRR^n(A;p)^{\phi \Z/2}\simeq \TRR(A;p)^{\phi \Z/2}\simeq \TCR(A;p)^{\phi \Z/2}=0\]
for every odd prime $p$.
\end{cor}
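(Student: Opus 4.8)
The plan is to deduce everything formally from Theorem~\ref{geometricsplitting} together with the vanishing $\THR(A)^{\phi\Z/2}\simeq 0$ recorded just before the statement. For completeness I would recall why that vanishing holds: since $\THR(A)^{\phi\Z/2}\simeq A^{\phi\Z/2}\otimes_A A^{\phi\Z/2}$ it suffices to see $A^{\phi\Z/2}\simeq 0$, and for a discrete ring with anti-involution the genuine $\Z/2$-spectrum defined by $w$ is the Eilenberg--MacLane spectrum of the fixed-point Mackey functor (fixed points $A^{\Z/2}$, restriction the inclusion, transfer $a\mapsto a+w(a)$). Hence $\pi_0 A^{\phi\Z/2}$ is the cokernel of the transfer and the higher homotopy groups are the Tate cohomology groups $\widehat{H}^{*}(\Z/2;A)$. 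When $\tfrac12\in A$ the transfer is split surjective with section $a\mapsto\tfrac a2$, and the Tate cohomology vanishes, so $A^{\phi\Z/2}\simeq 0$ and therefore $\THR(A)^{\phi\Z/2}\simeq 0$.

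First I would record the case of $\TRR^n$: Theorem~\ref{geometricsplitting} immediately gives $\TRR^n(A;p)^{\phi\Z/2}\simeq\bigvee_{i=1}^n\THR(A)^{\phi\Z/2}\simeq 0$. For $\TRR$, I would express the homotopy limit $\TRR(A;p)=\holim_n\TRR^{n+1}(A;p)$ as the fibre of the self-map $\id-R$ of the countable product $\prod_{n\ge 0}\TRR^{n+1}(A;p)$. The functor $(-)^{\phi\Z/2}$ is exact, so it sends this fibre sequence to a fibre sequence, and — provided it commutes with this particular countable product — we get $\TRR(A;p)^{\phi\Z/2}\simeq\fib\big(\id-R\colon\prod_{n\ge 0}\TRR^{n+1}(A;p)^{\phi\Z/2}\to\prod_{n\ge 0}\TRR^{n+1}(A;p)^{\phi\Z/2}\big)\simeq\fib(0\to 0)\simeq 0$. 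Finally, $\TCR(A;p)$ is the equaliser of $\id$ and $F$ on $\TRR(A;p)$, i.e.\ the fibre of $\id-F$, which is a \emph{finite} limit, so exactness of $(-)^{\phi\Z/2}$ gives directly $\TCR(A;p)^{\phi\Z/2}\simeq\fib\big(\id-F\colon\TRR(A;p)^{\phi\Z/2}\to\TRR(A;p)^{\phi\Z/2}\big)\simeq\fib(0\to 0)\simeq 0$.

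The one step that is not purely formal — and the one I expect to be the main (indeed the only) obstacle — is the interchange of $(-)^{\phi\Z/2}$ with the homotopy limit defining $\TRR(A;p)$, equivalently with the countable product $\prod_{n\ge 0}\TRR^{n+1}(A;p)$; geometric fixed points do \emph{not} commute with arbitrary countable products (already for products of induced $\Z/2$-spectra, where $\widetilde{E}\mathcal{P}$ is not dualizable). What rescues the argument is that they do commute with countable products of uniformly bounded-below spectra, together with the observation that a discrete ring $A$ has connective underlying and $\Z/2$-fixed-point spectra, whence each $\THR(A)^{C_{p^n}}$ is connective with a bound independent of $n$ (this uniform connectivity is also exactly what makes the inductive identification of $\THR(A)^{C_{p^n}}{}^{\phi\Z/2}$ behind Theorem~\ref{geometricsplitting} behave uniformly in $n$). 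I would therefore either isolate ``$(-)^{\phi\Z/2}$ commutes with products of uniformly bounded-below spectra'' as a short lemma, or invoke the corresponding statement already established en route to Theorem~\ref{geometricsplitting}.
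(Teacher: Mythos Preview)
Your proposal is correct and follows essentially the same approach as the paper: the vanishing of $\THR(A)^{\phi\Z/2}$ combined with Theorem~\ref{geometricsplitting} handles $\TRR^n$, and the passage to $\TRR$ and $\TCR$ requires exactly the commutation of $(-)^{\phi\Z/2}$ with a sequential homotopy limit of uniformly bounded-below spectra that you correctly isolate. That commutation is precisely Lemma~\ref{inverse limits} in the paper (stated just after, not en route to, Theorem~\ref{geometricsplitting}), and the paper packages its consequence as Corollary~\ref{tcrodd}, from which the present corollary is immediate once $T^{\phi\Z/2}\simeq 0$.
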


The crucial combinatorial ingredient that makes the odd-primary case so simple compared to the prime $2$ is that for $p$ odd, any two reflections in $D_{p^n}$ are conjugate, and the Weyl group of a reflection is trivial. Applying \cite[Corollary 2.8]{LW09} to the trivial family $\{1\} \subset \mathcal{R}$, one gets a pushout square of $\Z/2$-spaces 
\begin{equation}\label{square}
\xymatrix{ D_{p^n} \times_{\Z/2} E\Z/2 \ar[r] \ar[d] & ED_{p^n} \ar[d] \\ D_{p^n} / \Z/2 \ar[r] & E\mathcal{R}.} 
\end{equation}
This pushout is the main ingredient for establishing the following result, of which Theorem \ref{geometricsplitting} is an immediate consequence.

\begin{prop} \label{cofibreTRodd}\label{htpypullbackodd}  Let $T$ be a real $p$-cyclotomic spectrum with $p$ odd.
Then for every $n\geq 1$, the square
\[\xymatrix{ \TRR^{n+1}(T;p)^{\phi \Z/2} \ar[d]^F \ar[r]^-R & \TRR^{n}(T;p)^{\phi \Z/2} \ar[d]^F \\  \TRR^{n}(T;p)^{\phi \Z/2} \ar[r]^-R & \TRR^{n-1}(T;p)^{\phi \Z/2}}\]
is a pullback whose horizontal fibres are equivalent to $T^{\phi\Z/2}$. Here we interpret $\TRR^{0}(T;p)=0$.
\end{prop}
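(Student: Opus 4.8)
The plan is to analyse the isotropy separation sequence \eqref{isotropyR} for $T$ at the prime $p$, combined with the pushout square \eqref{square} of $\Z/2$-spaces from \cite{LW09}, and to feed in the real $p$-cyclotomic identification $T^{\phi C_p}\simeq T$. First I would smash the pushout square \eqref{square} (for the dihedral group $D_{p^{n+1}}$, viewed as a square of $D_{p^{n+1}}$-spaces via restriction along $D_{p^{n+1}}\twoheadrightarrow D_{p^{n+1}}/C_{p^{n+1}}\cong\Z/2$) with $T$, take $C_{p^{n+1}}$-fixed points, and then pass to $\Z/2$-geometric fixed points of the resulting Weyl action. Since geometric fixed points and genuine fixed points both preserve pushouts (homotopy cofibre sequences) of spectra, this will produce a homotopy pushout square of spectra. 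The point is to identify each of the four corners.

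The key identifications are: the term built from $ED_{p^{n+1}}$, after taking $C_{p^{n+1}}$-fixed points and then $\Z/2$-geometric fixed points, should be contractible — because $ED_{p^{n+1}}$ is $C_{p^{n+1}}$-free, so $(ED_{p^{n+1}}\otimes T)^{C_{p^{n+1}}}$ is a Borel/free construction on which the $\Z/2$-geometric fixed points vanish (any subgroup of $\Z/2\cong D_{p^{n+1}}/C_{p^{n+1}}$ that we geometrically fix is proper, or more precisely the relevant isotropy family condition kills it). The term built from $D_{p^{n+1}}/\Z/2\simeq D_{p^{n+1}}/\Z/2$: here $(D_{p^{n+1}}/\Z/2)_+\otimes T$ has $C_{p^{n+1}}$-fixed points given by an induction/transfer description, and after $\Z/2$-geometric fixed points this should reduce — because there is a single conjugacy class of reflections with trivial Weyl group in the odd case — to a single copy of something closely tied to $T^{\phi\Z/2}$; carrying out the point-set bookkeeping here is where the hypothesis "$p$ odd" is essential. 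The term built from $D_{p^{n+1}}\times_{\Z/2}E\Z/2$ is the crucial one: its $C_{p^{n+1}}$-fixed points recombine, via the root isomorphism and the cyclotomic structure map $T^{\phi C_p}\simeq T$, with the isotropy separation sequence one level down, and after $\Z/2$-geometric fixed points I expect this corner to compute $\TRR^{n}(T;p)^{\phi\Z/2}$ — this is exactly the inductive input, and it is also the corner on which the map $R$ is built. Matching up all three non-trivial corners so that the induced maps are precisely $F$ and $R$ requires tracking the root isomorphism carefully through the identification $(T^{\phi C_p})^{C_{p^{n+1}}/C_p}\simeq T^{C_{p^n}}$.

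Concretely I would organise the induction on $n$: the case $n=1$ is the statement that $\TRR^{2}(T;p)^{\phi\Z/2}\to T^{\phi\Z/2}$ has fibre $T^{\phi\Z/2}$ (with $\TRR^{0}=0$ read as $T^{\phi\Z/2}\to 0$ being the whole thing), which follows directly from the smashed pushout square with the vanishing of the $ED_{p^2}$-corner. For the inductive step, I would use that $\TRR^{n+1}(T;p)^{\phi\Z/2}$ fits in a cofibre sequence with "free part" $(E\mathcal{R}_+\otimes T)^{C_{p^{n+1}}}$, whose $\Z/2$-geometric fixed points I compute from the smashed pushout \eqref{square}, against "$R$-part" $(T^{\phi C_p})^{C_{p^{n+1}}/C_p}\simeq T^{C_{p^n}}$ whose $\Z/2$-geometric fixed points are $\TRR^{n}(T;p)^{\phi\Z/2}$ by induction — then assemble into the claimed pullback square, read off the horizontal fibres as $T^{\phi\Z/2}$ from the vanishing/splitting of the free corner, and check that the maps agree with $F,R$ by naturality of the isotropy sequence in the subgroup inclusions.

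The main obstacle I expect is the point-set/equivariant bookkeeping in the second corner — showing that $\bigl((D_{p^{n+1}}/\Z/2)_+\otimes T\bigr)^{C_{p^{n+1}}}$, after $\Z/2$-geometric fixed points, contributes exactly one copy of $T^{\phi\Z/2}$ and that the connecting maps match $F$ — since this is where the conjugacy structure of reflections in $D_{p^{n+1}}$ (trivially so for $p$ odd, but delicate to phrase cleanly) enters, and where one must be careful about which $\Z/2$ inside $O(2)/C_{p^{n}}$ one is taking geometric fixed points with respect to after restriction. Everything else is a formal manipulation of cofibre sequences and the cyclotomic structure map, following the classical template for the analogous splitting of $\TR$ at odd primes but with $(-)^{\phi\Z/2}$ applied throughout.
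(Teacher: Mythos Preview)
Your plan has the right ingredients --- the pushout square \eqref{square} and the isotropy sequence for $\mathcal{R}$ --- but it assembles them incorrectly, and there is a genuine gap.

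Your identification of the corner $D_{p^{n}}\times_{\Z/2}E\Z/2$ is wrong: after applying $\big(((-)_+\otimes T)^{C_{p^{n}}}\big)^{\phi\Z/2}$ it gives zero, not $\TRR^{n}(T;p)^{\phi\Z/2}$. Via the projection formula one gets $\big((E\Z/2)_+\otimes T\otimes\widetilde{E\Z/2}\big)^{\Z/2}$, and $(E\Z/2)_+\otimes\widetilde{E\Z/2}$ is equivariantly contractible. No cyclotomic structure enters here; the cyclotomic equivalence $T^{\phi C_p}\simeq T$ is only used to identify $(\widetilde{E\mathcal{R}}\otimes T)^{C_{p^n}}$ with $T^{C_{p^{n-1}}}$, i.e.\ in the map $R$ itself. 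With the corners correctly computed (two of the four vanish), the pushout square --- or, more cleanly, its tilde version, which is what the paper uses and in which one corner is literally $\ast$ --- tells you only that the \emph{fibre} of $R$ is $T^{\phi\Z/2}$. It does not by itself produce the pullback square of the Proposition, and no induction on $n$ is needed.

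The pullback property comes from a second step that your plan does not contain: one must show that $F$ induces an \emph{equivalence} on these horizontal fibres. The paper does this by identifying the fibre of $R$ as $(D_{p^{n}}\ltimes_{\Z/2}(T\otimes\widetilde{E\Z/2}))^{D_{p^{n}}}\simeq T^{\phi\Z/2}$, observing that $F$ on fibres is the restriction along $D_{p^{n-1}}\subset D_{p^{n}}$, and then applying the double coset formula. For $p$ odd this restriction corresponds to the identity of $T^{\phi\Z/2}$, precisely because all reflections in $D_{p^n}$ are conjugate with trivial Weyl group --- this is where the odd-prime hypothesis actually enters, in contrast to your suggestion that it matters mainly for the $D_{p^n}/\Z/2$ corner. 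Your phrase ``check that the maps agree with $F,R$ by naturality of the isotropy sequence in the subgroup inclusions'' gestures at the right map but does not contain this computation.
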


\begin{proof}
Let us start by identifying the fibre of the map $R$.
For any group $G$ and $G$-space $E$, we write $\widetilde{E}$ for the cofibre of the based map $E_+\to S^0$ which takes $E$ to the non-basepoint of $S^0$. 
By definition, the geometric fixed-points of the map $R$ fits into the commutative square
\[
\xymatrix@C=15pt{
\TRR^{n+1}(T;p)^{\phi \Z/2} \ar[r]^-R\ar[d]_{\simeq} & \TRR^{n}(T;p)^{\phi \Z/2}\ar@{}[r]|-{\simeq}&((T^{\phi C_{p}})^{C_{p^{n-1}}})^{\phi \Z/2}\ar@{=}[d]
\\
((T\otimes \widetilde{ED}_{p^n})^{C_{p^n}})^{\phi\Z/2}\ar[rr]&&((T\otimes \widetilde{E\mathcal{R}})^{C_{p^n}})^{\phi\Z/2}\rlap{\ ,}
}
\]
where the left vertical map and the bottom horizontal map are induced by the canonical maps $S^0\to \widetilde{ED}_{p^n}\to \widetilde{E\mathcal{R}}$. The left vertical map is an equivalence since its fibre is
\[
((T\otimes {ED_{p^n}}_{+})^{C_{p^n}})^{\phi \Z/2}=((T\otimes {ED_{p^n}}_{+})^{C_{p^n}}\otimes \widetilde {E\Z/2})^{ \Z/2}\simeq ((T\otimes (\varepsilon^* \widetilde {E\Z/2} \wedge {ED_{p^n}}_{+}))^{C_{p^n}})^{ \Z/2},
\]
where $\varepsilon\colon D_{p^n}\to D_{p^n}/C_{p^n}=\Z/2$ is the quotient map,
and $\varepsilon^* \widetilde {E\Z/2} \wedge {ED_{p^n}}_{+}$ is a contractible $D_{p^n}$-space.
 
By mapping the pushout square (\ref{square}) with additional disjoint base points to the pushout of $D_{p^n}\ltimes_{\Z/2} S^0=D_{p^n}\ltimes_{\Z/2} S^0\to S^0$ (where $D_{p^n}\ltimes_{\Z/2}-$ denotes the induction) and taking cofibres,  we get a pushout of pointed $D_{p^n}$-spaces
\[\xymatrix{ D_{p^n} \ltimes_{\Z/2} \widetilde{E\Z/2} \ar[r] \ar[d] & \widetilde{ED_{p^n}} \ar[d] \\ \ast  \ar[r] & \widetilde{E\mathcal{R}}.} \]
The fibre of $R$ is therefore given by the spectrum
\begin{align*}
((T\otimes (D_{p^n} \ltimes_{\Z/2} \widetilde{E\Z/2}))^{C_{p^n}})^{\phi\Z/2}
&\simeq
((T\otimes\varepsilon^* \widetilde {E\Z/2} )\otimes (D_{p^n} \ltimes_{\Z/2} \widetilde{E\Z/2}))^{D_{p^n}}
\\
&\simeq
(D_{p^n} \ltimes_{\Z/2} (T\otimes (\widetilde{E\Z/2} \wedge\widetilde{E\Z/2})))^{D_{p^n}}
\\
&\simeq 
(D_{p^n} \ltimes_{\Z/2} (T\otimes \widetilde{E\Z/2}))^{D_{p^n}}
\\
&\simeq 
(T\otimes  \widetilde{E\Z/2})^{\Z/2}
\\
&\simeq T^{\phi\Z/2}.
\end{align*}
By restricting the map $\widetilde{ED_{p^n}} \to \widetilde{E\mathcal{R}}$ to $D_{p^{n-1}}$, we recover the map $\widetilde{ED_{p^{n-1}}} \to \widetilde{E\mathcal{R}}$. Using this and that the Frobenius map $F\colon  \TRR^{n+1}(T;p)\to  \TRR^{n}(T;p)$ is induced by the subgroup inclusion $C_{p^{n-1}}\subset C_{p^n}$, under the equivalences above the map between the horizontal fibres identifies with the map
\[
(D_{p^n} \ltimes_{\Z/2} (T\otimes \widetilde{E\Z/2}))^{D_{p^n}}\longrightarrow (D_{p^n} \ltimes_{\Z/2} (T\otimes \widetilde{E\Z/2}))^{D_{p^{n-1}}}
\]
induced by the subgroup inclusion $D_{p^{n-1}}\subset D_{p^{n}}$. By applying the double coset formula on the source and target this map corresponds to the identity of $T^{\phi\Z/2}$, showing that the Frobenius on horizontal fibres is an equivalence.
\end{proof}

We want to conclude the section with a similar splitting for $\TRR(T;p)^{\phi \Z/2}$, by commuting geometric fixed-points with an infinite limit. This can be done by means of the following well-known result originally observed by Adams (see e.g. \cite[Section III.15.2]{Adamsbook}), and we sketch an argument for completeness. We say that a $G$-spectrum is bounded below if all of its fixed-points are bounded below, and in case $G$ is infinite we also require that there is a uniform bound over all the closed subgroups of $G$.

\begin{lemma} \label{inverse limits} Let  $\dots X_n \to X_{n-1} \to \dots X_2 \to X_1 \to X_0$
be a tower of uniformly bounded below $G$-spectra, where $G$ is finite. Then ${(\holim_n X_n)}^{\phi G} \simeq \holim_n {(X_n)}^{\phi G}$. 
\end{lemma}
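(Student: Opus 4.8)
The plan is to reduce the statement to the isotropy separation sequence and the (elementary) fact that genuine fixed-points always commute with homotopy limits, leaving only a statement about the Borel part which we handle by a connectivity argument. Concretely, for each $n$ the isotropy separation sequence gives a fibre sequence of spectra
\[
(E\Z/2_+\otimes X_n)^{\Z/2}\longrightarrow (X_n)^{\Z/2}\longrightarrow (X_n)^{\phi\Z/2}.
\]
Since genuine fixed-points are a right adjoint (equivalently, $(-)^{\Z/2}=F(\mathbb{S},-)$ on the relevant model, computed levelwise on $\underline{\pi}_*$-isos of bounded-below towers), we have $(\holim_n X_n)^{\Z/2}\simeq\holim_n (X_n)^{\Z/2}$, and likewise the functor $E\Z/2_+\otimes(-)$ commutes with the homotopy limit because $E\Z/2_+\otimes(-)$ is computed as a colimit over the skeleta of $E\Z/2$ which, on bounded-below spectra with a uniform bound, interacts well with the tower; more simply, $(E\Z/2_+\otimes X)^{\Z/2}$ is the Borel homotopy orbit spectrum $(X)_{hC_2}$ of the underlying spectrum, and homotopy orbits of a uniformly bounded-below tower commute with the homotopy limit. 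Taking the homotopy limit of the fibre sequences over $n$ and using the two commutations just described identifies $\holim_n (X_n)^{\phi\Z/2}$ with the cofibre of $(\holim_n X_n)_{hC_2}\to(\holim_n X_n)^{\Z/2}$, which is exactly $(\holim_n X_n)^{\phi\Z/2}$.

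First I would make precise the claim that $(-)_{hC_2}$, applied to the underlying spectra, commutes with $\holim_n$ under the uniform lower bound hypothesis. This is the step where boundedness is actually used: homotopy orbits is a homotopy colimit, which does not commute with homotopy limits in general, but for a tower of spectra that are all $m$-connective for a single $m$, the homotopy orbit spectral sequence (or the skeletal filtration of $BC_2$) shows that in each degree the relevant $\lim^1$-term vanishes and the natural map $(\holim_n X_n)_{hC_2}\to\holim_n (X_n)_{hC_2}$ is an equivalence. I would phrase this as: the fibre of $(\sk_k BC_2)$-truncated homotopy orbits over the full homotopy orbits is increasingly connective, uniformly in $n$, so one may interchange the finite-skeleton colimit with the limit and then pass to the colimit over $k$. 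Equivalently one cites Milnor's $\lim^1$-sequence degreewise.

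The remaining ingredient — that genuine fixed-points $(-)^{\Z/2}$ commute with $\holim_n$ — is formal: in the model of orthogonal $\Z/2$-spectra, $(-)^{\Z/2}$ preserves all homotopy limits since it is a right Quillen functor (right adjoint to the inflation/Borel functor up to the relevant Quillen equivalence), or directly because $\underline{\pi}_*$ of a homotopy limit of a tower is computed by a Milnor sequence levelwise. I would state this without belabouring it. Assembling: apply $\holim_n$ to the natural isotropy separation fibre sequences, use the two commutations to rewrite the first two terms as the Borel homotopy orbits and genuine fixed-points of $\holim_n X_n$, and conclude the third term is $(\holim_n X_n)^{\phi\Z/2}$ by the defining fibre sequence again.

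The main obstacle is the homotopy-orbit step: one must be careful that the uniform lower bound is genuinely needed (a tower of suspensions $\Sigma^{-n}\mathbb{S}$ would fail), and that the argument is a $\lim^1$/connectivity estimate rather than a formal adjunction. Everything else is bookkeeping with the isotropy separation sequence. I expect the write-up to be short: one displayed fibre sequence, one sentence on fixed-points commuting with limits, one paragraph (or a citation to Adams, \cite[Section III.15.2]{Adamsbook}) on homotopy orbits of bounded-below towers, and the conclusion.
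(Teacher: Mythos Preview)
Your proposal is correct and uses essentially the same idea as the paper: a skeletal filtration of the relevant universal space together with the uniform lower bound to commute the smash product past the homotopy limit. The only organisational difference is that the paper works directly with $\widetilde{E\Z/2}$ (showing $\widetilde{E\Z/2}\otimes\holim_n X_n\simeq\holim_n(\widetilde{E\Z/2}\otimes X_n)$, reducing to products and then to the cells $\mathbb{S}$ and $(\Z/2)_+$ via the Wirthm\"uller isomorphism), whereas you pass through the isotropy separation sequence and reduce to the analogous statement for $E\Z/2_+$ (equivalently, for homotopy orbits); either route works and the connectivity argument is the same.
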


\begin{proof} Since equivariant homotopy groups commute with infinite products it suffices to show that $\widetilde{E G} \otimes \holim_n X_n \simeq \holim_n (\widetilde{E G}  \otimes X_n)$. This reduces to showing that
\[\widetilde{E G} \otimes \prod_n X_n \simeq \prod_n (\widetilde{E G} \otimes X_n).\]
For a fixed homotopy group only the finite skeleta of $\widetilde{E G}$ matter since the $X_i$ are uniformly bounded below, and using that $\widetilde{E G}$ is of finite type, the statement reduces to showing that $\mathbb{S} \otimes -$ and $G_+ \otimes -$ commute with infinite products. The first is obvious and the second follows from the Wirthm\"uller isomorphism. 
\end{proof}

\begin{cor} \label{tcrodd}
Let $T$ be a bounded below real $p$-cyclotomic spectrum, with $p$ odd. Then there are natural equivalences 
\[
\TRR(T;p)^{\phi \Z/2} \simeq \prod_{i=1}^{\infty} T^{\phi \Z/2}
\quad\text{ and }\quad
\TCR(T;p)^{\phi\Z/2}\simeq T^{\phi\Z/2}.
\]
\end{cor}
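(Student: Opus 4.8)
The plan is to feed Theorem~\ref{geometricsplitting} into Lemma~\ref{inverse limits} for the $\TRR$ statement, and then to compute the equaliser defining $\TCR$ directly on homotopy groups.

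First, for the $\TRR$ claim: since $T$ is a bounded below $O(2)$-spectrum, the tower of $\Z/2$-spectra $\{\TRR^{n}(T;p)=T^{C_{p^{n-1}}}\}_{n\geq 1}$ is uniformly bounded below, because its underlying spectra are the fixed points $T^{C_{p^{n-1}}}$ and its genuine $\Z/2$-fixed points are the $T^{D_{p^{n-1}}}$, all bounded below by a single constant by the standing hypothesis on $T$. Hence Lemma~\ref{inverse limits} applies and gives $\TRR(T;p)^{\phi\Z/2}\simeq\holim_n\TRR^{n}(T;p)^{\phi\Z/2}$. By Theorem~\ref{geometricsplitting} the right-hand side is the homotopy limit of the tower $\cdots\to\bigvee_{i=1}^{n+1}T^{\phi\Z/2}\xrightarrow{R}\bigvee_{i=1}^{n}T^{\phi\Z/2}\to\cdots$ whose transition maps $R$ project off the last wedge summand. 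Identifying finite wedges with finite products and observing that these maps are split surjective on homotopy groups (so the relevant $\lim^{1}$-terms vanish), this homotopy limit is the infinite product $\prod_{i=1}^{\infty}T^{\phi\Z/2}$.

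Next, for the $\TCR$ claim: the functor $(-)^{\phi\Z/2}=(\widetilde{E\Z/2}\otimes-)^{\Z/2}$ is exact, being the composite of smashing with a fixed spectrum and the genuine $\Z/2$-fixed-points functor, so it commutes with the finite homotopy limit defining $\TCR$, giving $\TCR(T;p)^{\phi\Z/2}\simeq eq(\id,F\colon\TRR(T;p)^{\phi\Z/2}\rightrightarrows\TRR(T;p)^{\phi\Z/2})$. Under the identification just obtained, Theorem~\ref{geometricsplitting}, which says $F$ projects off the first summand at each finite stage, identifies $F$ with the shift $(y_1,y_2,y_3,\dots)\mapsto(y_2,y_3,\dots)$ on $\prod_{i=1}^{\infty}T^{\phi\Z/2}$. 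The equaliser is the fibre of $\id-F$, which on homotopy groups is the map $(y_i)_i\mapsto(y_i-y_{i+1})_i$ on $\prod_{i=1}^{\infty}\pi_{\ast}T^{\phi\Z/2}$; this map is surjective with kernel the diagonal copy of $\pi_{\ast}T^{\phi\Z/2}$. Consequently the composite $\TCR(T;p)^{\phi\Z/2}=\fib(\id-F)\to\prod_{i=1}^{\infty}T^{\phi\Z/2}\xrightarrow{\pr_1}T^{\phi\Z/2}$, projection onto the first factor $\TRR^{1}(T;p)^{\phi\Z/2}=T^{\phi\Z/2}$, is an isomorphism on all homotopy groups, hence the desired equivalence.

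Given Theorem~\ref{geometricsplitting}, this argument is essentially formal. The only points requiring a little care are verifying the uniform-boundedness hypothesis of Lemma~\ref{inverse limits} for the tower $\TRR^{\bullet}(T;p)$, the elementary identification of the homotopy limit of the truncated-product tower together with the shift map $F$, and the short kernel computation for $\id-F$; I do not anticipate a genuine obstacle.
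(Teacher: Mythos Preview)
Your proof is correct and follows essentially the same approach as the paper: apply Lemma~\ref{inverse limits} (using uniform boundedness of $T^{C_{p^n}}$ and $T^{D_{p^n}}$) together with Theorem~\ref{geometricsplitting} to identify $\TRR(T;p)^{\phi\Z/2}$ with the infinite product, then observe that $F$ becomes the shift and compute the equaliser as the diagonal copy of $T^{\phi\Z/2}$. Your version is slightly more explicit in places (the $\lim^1$ remark, the homotopy-group computation for $\id-F$), but the argument is the same.
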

\begin{proof}
The first equivalence follows from Lemma \ref{inverse limits} and Theorem \ref{geometricsplitting}, since the tower defining $\TRR(T;p)$ is uniformly bounded below since the spectra $T^{C_{p^n}}$ and $T^{D_{p^n}}$ are by assumption bounded below with a uniform bound over $n$. For the second equivalence we observe that by the same results the Frobenius of $\TRR(T;p)^{\phi \Z/2} $ is equivalent to the map
\[
F\colon \prod_{i=1}^{\infty} T^{\phi \Z/2}\longrightarrow \prod_{i=1}^{\infty} T^{\phi \Z/2}
\]
which projects off the first component. The equaliser of $F$ and the identity is thus $T^{\phi\Z/2}$ mapping diagonally to the infinite product.
\end{proof}


\subsection{The prime 2}\label{sec:2}

\subsubsection{The geometric fixed-points of \texorpdfstring{$\TRR$}{TRR}}

In this section we give a  formula for $\TRR^n(T;2)^{\phi \Z/2}$ for any bounded below real $2$-cyclotomic spectrum $T$. As mentioned earlier, the subgroups structure of the dihedral groups $D_{2^n}$ is more involved than the one of $D_{p^n}$ for odd $p$, and this makes our formula for $\TRR^n(T;2)^{\phi \Z/2}$ more interesting. The idea of the proof is again to compute $\TRR^n(T;2)^{\phi \Z/2}$ inductively, by finding a suitable replacement for the square of Proposition \ref{htpypullbackodd}.


Recall that we have chosen the reflection along the $x$-axis $\tau$ inside $D_{2^n}=\Z/2 \ltimes C_{2^n}$, where $\Z/2$ is the subgroup generated by $\tau$. 
If we denote by $\sigma_n$ the generator of $C_{2^n}$, then  $\sigma_n \tau$ is a reflection which is not conjugate to $\tau$. We denote the normaliser and Weyl group of $\Z/2$ inside $D_{2^n}/C_{2^{n-1}}$ respectively by
\[
N(\Z/2)= \Z/2\times C_2 \quad, \quad W(\Z/2)\cong C_2.
\]
The Weyl group $C_2$ is generated by the image of $\sigma_n$, which we denote again by $\sigma_n$. This group acts on the spectrum $\TRR^{n}(T;2)^{\phi\Z/2}$, for all $n\geq 1$ and any real $2$-cyclotomic spectrum $T$. 

In the case $n=1$ we are interested in two maps
\[r\colon (T^{\phi\Z/2})^{C_2}\longrightarrow  (T^{\phi\Z/2})^{\phi C_2}\simeq T^{\phi\Z/2}\ \ \ \ \ \ \ \ \ \ \mbox{and} \ \ \ \ \ \ \ \ \ f\colon (T^{\phi\Z/2})^{C_2}\longrightarrow  T^{\phi\Z/2}\]
analogous to the restriction and the Frobenius.
The map $r$ is induced by the canonical map from the fixed-points to the geometric fixed-points, followed by the equivalence given by the cyclotomic structure of $T$, and $f$ is the canonical map induced by the subgroup inclusion $1\subset C_2$.

\begin{example}\label{minicycloTHR}
Suppose that $T=\THR(A)$ is the real topological Hochschild homology spectrum of a ring spectrum with anti-involution $A$. Under the equivalence 
\[
\THR(A)^{\phi\Z/2}\simeq A\otimes_{(N^{C_2}_eA)}(N^{C_2}_e(A^{\phi\Z/2}))
\]
of Lemma \ref{C2THRphi}, the identification $(T^{\phi\Z/2})^{\phi C_2}\simeq T^{\phi\Z/2}$ coming from the cyclotomic structure corresponds to the equivalence
\[
(A\otimes_{(N^{C_2}_eA)}(N^{C_2}_e(A^{\phi\Z/2})))^{\phi C_2}\simeq A^{\phi\Z/2}\otimes_AA^{\phi\Z/2}\simeq A\otimes_{(A\otimes A)}(A^{\phi\Z/2}\otimes A^{\phi\Z/2})
\]
where the first equivalence is the monoidality of the geometric fixed-points combined with the diagonal isomorphism $(N^{C_2}_eA)^{\phi C_2}\cong A$, and the second is the general canonical equivalence $X\otimes_A Y=A\otimes_{A\otimes A}(X\otimes Y)$ for respectively right and left $A$-modules $X$ and $Y$, where $X$ is regarded as a left $A$-module via the anti-involution.
\end{example}

The maps $f$ and $r$ are related to $F$ and $R$ respectively, in the following manner. Let $c$ be the map
\[c \colon \TRR^{2}(T;2)^{\phi\Z/2}=(T^{C_2}\otimes \widetilde{E\Z/2})^{\Z/2}\simeq(T\otimes\varepsilon^{\ast}\widetilde{E\Z/2})^{C_2\times \Z/2} \to(T\otimes\widetilde{E(\nsupseteq \Z/2)})^{C_2\times \Z/2}= (T^{\phi\Z/2})^{C_2} \]
where $\varepsilon\colon C_2\times \Z/2\to \Z/2$ is the projection, and the arrow is induced by including families of subgroups, by noticing that $\varepsilon^{\ast}\widetilde{E\Z/2}=\widetilde{E\{1,C_2\}}$ as universal spaces of subgroups of $C_2\times \Z/2$. The naturality of the canonical map from fixed-points to geometric fixed-points gives canonical homotopies
 $r \circ c \simeq R$ and $f \circ c \simeq F$. In particular, for every $n\geq 1$ the iterated Frobenius map factors as
\[
F^n\colon \TRR^{n+1}(T;2)^{\phi\Z/2}\stackrel{F^{n-1}}{\longrightarrow} \TRR^{2}(T;2)^{\phi\Z/2} \stackrel{c}{\longrightarrow}  (T^{\phi\Z/2})^{C_2}\stackrel{f}{\longrightarrow}  T^{\phi\Z/2}.
\]
Finally, we recall from Section \ref{realcylcprelim} that the maps $R$ and $F$ commute since $R$ is $O(2)$-equivariant. 

\begin{theorem}\label{inductivePB}
For every real $2$-cyclotomic spectrum $T$ and $n\geq 1$, the square
\[
\xymatrix@C=60pt{
\TRR^{n+1}(T;2)^{\phi \Z/2}\ar[r]^-R\ar[d]_{c F^{n-1}\times c F^{n-1}\sigma_{n+1}}
&
\TRR^{n}(T;2)^{\phi\Z/2}\ar[d]^{(F^{n-1}, \sigma_1F^{n-1}\sigma_{n})}
\\
(T^{\phi \Z/2})^{C_2}\times (T^{\phi \Z/2})^{C_2}\ar[r]^-{r\times \sigma_{1}r}&T^{\phi \Z/2}\times T^{\phi \Z/2}
}
\]
is a pullback of spectra, where the square commutes by the homotopies $R F=F R$, $\sigma_nR = R\sigma_{n+1}$, and  $r \circ c \simeq R$. The Weyl action of $\sigma_{n+1}$ on $\TRR^{n+1}(T;2)^{\phi \Z/2}$ is given inductively by the strictly commutative diagram
\[
\xymatrix{
\TRR^{n}(T;2)^{\phi\Z/2}\ar[d]^{\sigma_n}\ar[r]
&
T^{\phi \Z/2}\times T^{\phi \Z/2}\ar[d]^{(\sigma_1\times\sigma_1)\tau}
&
(T^{\phi \Z/2})^{C_2}\times (T^{\phi \Z/2})^{C_2}\ar[d]^{\tau}\ar[l]
\\
\TRR^{n}(T;2)^{\phi\Z/2}\ar[r]
&
T^{\phi \Z/2}\times T^{\phi \Z/2}
&
(T^{\phi \Z/2})^{C_2}\times (T^{\phi \Z/2})^{C_2}\ar[l]
}
\]
where $\tau$ is the flip action. The Frobenius $F\colon \TRR^{n+1}(T;2)^{\phi \Z/2}\to \TRR^{n}(T;2)^{\phi \Z/2}$ is induced inductively on pullbacks by
\[
\xymatrix{
\TRR^{n}(T;2)^{\phi\Z/2}\ar[d]^{F}\ar[r]&T^{\phi \Z/2}\times T^{\phi \Z/2}\ar[d]^{((\id\times\sigma_1)\Delta)\vee 0}&(T^{\phi \Z/2})^{C_2}\times (T^{\phi \Z/2})^{C_2}\ar[l]\ar[d]^{\Delta\vee 0}
\\
\TRR^{n-1}(T;2)^{\phi\Z/2}\ar[r]&T^{\phi \Z/2}\times T^{\phi \Z/2}&(T^{\phi \Z/2})^{C_2}\times (T^{\phi \Z/2})^{C_2}\ar[l]
}
\]
for all $n\geq 2$, where the left hand square commutes since $\sigma_nF = F$. For $n=1$ the Frobenius is the composite
\[
F\colon (T^{\phi})^{C_2}{}_r\!\times_{\sigma_1r} 
(T^{\phi})^{C_2}\xrightarrow{\proj_1}  (T^{\phi})^{C_2} \xrightarrow{f} T^{\phi}.
\]
\end{theorem}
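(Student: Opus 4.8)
The plan is to establish all three assertions — the homotopy cartesian square, the formula for the residual Weyl action $\sigma_{n+1}$, and the formula for the Frobenius $F$ — simultaneously by induction on $n\geq 1$, following the template of the odd-primary Proposition \ref{htpypullbackodd} but with the pushout (\ref{square}) replaced by the Lück--Weiermann decomposition of $E\mathcal{R}$ adapted to the dihedral subgroup $D_{2^{n}}\leq O(2)$ of order $2^{n+1}$, where the finite set of reflection subgroups is split into its own $\sim$-classes rather than grouped by conjugacy. The engine is the isotropy separation sequence (\ref{isotropyR}): it identifies the horizontal homotopy fibre of $R\colon\TRR^{n+1}(T;2)^{\phi\Z/2}\to\TRR^{n}(T;2)^{\phi\Z/2}$ with $\big((E\mathcal{R}_+\otimes T)^{C_{2^{n}}}\big)^{\phi\Z/2}$, and rewriting $\big((-)^{C_{2^{n}}}\big)^{\phi\Z/2}$ as $\big((-)\otimes\widetilde{E\mathcal{F}}\big)^{D_{2^{n}}}$ — with $\widetilde{E\mathcal{F}}$ the inflation to $D_{2^{n}}$ along $D_{2^{n}}\to D_{2^{n}}/C_{2^{n}}\cong\Z/2$ of $\widetilde{E\Z/2}$, so $\mathcal{F}$ is the family of rotation subgroups — turns this fibre into $(E\mathcal{R}_+\otimes\widetilde{E\mathcal{F}}\otimes T)^{D_{2^{n}}}$. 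The decisive new feature compared with odd primes is that $D_{2^{n}}$ has \emph{two} conjugacy classes of reflections, represented by $\tau$ and $\sigma_{n+1}\tau$, each with normaliser $N_i\cong C_2\times C_2$ generated by the reflection together with the order-two rotation, hence with nontrivial Weyl group $C_2$.

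Feeding the two-term Lück--Weiermann pushout for $E\mathcal{R}$ into $(E\mathcal{R}_+\otimes\widetilde{E\mathcal{F}}\otimes T)^{D_{2^{n}}}$, all terms involving $ED_{2^{n}}$ become contractible, because $ED_{2^{n}}$ is free and $ED_{2^{n}}{}_+\wedge\widetilde{E\mathcal{F}}$ is non-equivariantly contractible; so the fibre of $R$ splits, over the two reflection classes, as a wedge of two spectra, each of the form $\big(E_{\langle s_i\rangle}(N_i)_+\otimes\widetilde{E\mathcal{F}}\otimes T\big)^{N_i}$ after the untwisting $(D_{2^{n}}\ltimes_{N_i}-)^{D_{2^{n}}}\simeq(-)^{N_i}$ and the projection formula, where $E_{\langle s_i\rangle}(N_i)$ is the universal space for the family of subgroups of $\langle s_i\rangle$ inside $N_i$. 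Since $\widetilde{E\mathcal{F}}$ restricts over $\langle s_i\rangle$ to $\widetilde{E\langle s_i\rangle}$ and $E_{\langle s_i\rangle}(N_i)$ has trivial $\langle s_i\rangle$-action, taking $\langle s_i\rangle$-fixed points first yields $T^{\phi\langle s_i\rangle}\simeq T^{\phi\Z/2}$ with its Weyl $C_2\cong N_i/\langle s_i\rangle$-action, while $E_{\langle s_i\rangle}(N_i)$ becomes $EC_2$ over $N_i/\langle s_i\rangle$; so the summand is $(T^{\phi\Z/2})_{hC_2}$, with $C_2$ acting by the involution the theorem calls $\sigma_1$. Hence $\fib(R)^{\phi\Z/2}\simeq(T^{\phi\Z/2})_{hC_2}\vee(T^{\phi\Z/2})_{hC_2}$, which agrees with $\fib(r)\times\fib(\sigma_1 r)$ — the horizontal fibre of the bottom map $r\times\sigma_1 r$ of the claimed square — since $\fib(r)\simeq(T^{\phi\Z/2})_{hC_2}$ by the isotropy separation sequence for $C_2$ acting on $T^{\phi\Z/2}$.

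It then remains to construct the candidate square — the left vertical map $F^{n-1}\times F^{n-1}\sigma_{n+1}$ and the homotopy witnessing commutativity, built from the factorisation $F^{n}\simeq f\circ F^{n-1}$, the canonical homotopies $\sigma_1 F\simeq F$ and $\sigma_{n}R\simeq R\sigma_{n+1}$ recorded before the statement, and the natural map $\TRR^{2}(T;2)^{\phi\Z/2}\to(T^{\phi\Z/2})^{C_2}$ read off from the $n=1$ fibre computation — and then to check that the induced map on horizontal fibres is the equivalence identified above, which upgrades the commuting square to a homotopy cartesian one; the base case $n=1$ is the same computation carried out directly for $D_2\cong C_2\times C_2$, where $N_1=N_2=D_2$. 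The formula for the Weyl action follows from the observation that $\sigma_{n+1}$ conjugates $\tau$ to an element of the \emph{other} conjugacy class, so it interchanges the two reflection-class summands while inducing the internal involution $\sigma_1$ on each — that is, it acts through the flip $\tau$ of the stated diagram. The formula for $F$ follows because $F$ is induced by restriction along $D_{2^{n-1}}\subset D_{2^{n}}$, and the reflections of $D_{2^{n-1}}$ are exactly those reflections of $D_{2^{n}}$ lying in the class of $\tau$, but they now form \emph{both} conjugacy classes of $D_{2^{n-1}}$; so the double coset formula shows that $F$ restricted to the $\tau$-summand is diagonal (with a $\sigma_1$-twist in one leg coming from re-identifying the reflection subgroups) while it annihilates the $\sigma_{n+1}\tau$-summand, which is precisely the content of the $\Delta\vee 0$ and $((\id\times\sigma_1)\Delta)\vee 0$ in the diagrams.

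The main obstacle I expect is the Lück--Weiermann bookkeeping and its coherent interaction with the three operators: pinning down the isotropy-adjusted classifying spaces for both reflection classes and verifying that each contributes $(T^{\phi\Z/2})_{hC_2}$ with the correct residual $C_2$-action (the one that becomes $\sigma_1$), and then threading $R$, $\sigma_{n+1}$ and $F$ through this identification with enough coherence that the inductive step closes on all three assertions at once. Conceptually, the key point — and the source of the whole difference between odd primes and $p=2$ — is that the nontrivial Weyl group $C_2$, invisible at odd primes, is exactly what turns the odd-primary summand $T^{\phi\Z/2}$ of $\fib(R)^{\phi\Z/2}$ into the homotopy-orbit spectrum $(T^{\phi\Z/2})_{hC_2}$, i.e.\ into the fibre of the restriction-type map $r$.
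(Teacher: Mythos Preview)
Your proposal is correct and follows essentially the same route as the paper. The paper organises the argument by first isolating the fibre computation (Proposition~\ref{Frobonfibres}), the $n=1$ base case (Proposition~\ref{Frobonfibresn=1}), and the Weyl action on fibres (Lemma~\ref{FrobeniusWeyl}) as separate statements before assembling them into the pullback, whereas you fold everything into a single induction; but the key ingredients --- the L\"uck--Weiermann pushout for $E\mathcal{R}$ over $D_{2^n}$, the identification of each reflection-class summand of $\fib(R)^{\phi\Z/2}$ with $(T^{\phi\Z/2})_{hC_2}$, the double coset computation showing $F$ is $(1,\sigma_n)$ on the $\tau$-summand and kills the $H_n$-summand, and the conjugation argument for $\sigma_{n+1}$ --- are exactly the same.
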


\begin{rem} \label{geometric formula}\label{htpiesF}  By inductively unravelling the formula of \ref{inductivePB} we obtain an equivalence
\begin{align*}
&\TRR^{n+1}(T;2)^{\phi \Z/2}  \simeq
\\ &\underbrace{
(T^{\phi})^{C_2}
{}_r\!\times_f
(T^{\phi})^{C_2}
{}_r\!\times_f
\cdots 
{}_r\!\times_f(T^{\phi})^{C_2}{}
}_{n}
{}_r\!\times_{\sigma_1r}
\underbrace{
(T^{\phi})^{C_2}
{}_f\!\times_{\sigma_1r} 
\cdots
{}_f\!\times_{\sigma_1r}
(T^{\phi})^{C_2}
{}_f\!\times_{\sigma_1r} 
(T^{\phi})^{C_2}
}_{n},
\end{align*}
for all $n\geq 1$, where we wrote $T^{\phi}:=T^{\phi\Z/2}$ for short and all the products denote fibre products over $T^{\phi}$, subscripts indicating along which maps we are taking the pullbacks. We can then further unravel the structure maps.
The restriction map $R \colon \TRR^{n+1}(T;2)^{\phi \Z/2} \to \TRR^{n}(T;2)^{\phi \Z/2}$ corresponds to projecting away the outer two factors for $n\geq 2$, and for $n=1$ to the composite
\[
R\colon (T^{\phi})^{C_2}{}_r\!\times_{\sigma_1r} 
(T^{\phi})^{C_2}\xrightarrow{\proj_1}  (T^{\phi})^{C_2} \xrightarrow{r} T^{\phi}.
\]
The Weyl action $\sigma_{n+1} \colon \TRR^{n+1}(T;2)^{\phi \Z/2} \to \TRR^{n+1}(T;2)^{\phi \Z/2}$ is induced by the map which reverses the order of the product factors.

The Frobenius $F\colon \TRR^{n+1}(T;2)^{\phi \Z/2} \to \TRR^{n}(T;2)^{\phi \Z/2}$ is slightly more delicate to describe.  For $n=1$ we have already mentioned the description. For $n\geq 2$ it is induced by the map defined schematically on the product by
\[(x_{-n}, \dots, x_{-3}, x_{-2}, x_{-1},x_1, x_2,\dots,x_n) \mapsto (x_{-n}, \dots, x_{-3}, x_{-2}, x_{-2},x_{-3},\dots, x_{-n}),\]
interpreted as follows. In order to map into the pullback $\TRR^{n}(T;2)^{\phi \Z/2}$ we need to exhibit homotopies  $\gamma_{-i}\colon r(x_{-i})\sim f(x_{-i+1})$ and $\gamma_i\colon \sigma_1r(x_{-i})\sim f(x_{-i+1})$ for $i=n,\dots, 3$, as well as a homotopy $\gamma_{0}\colon r(x_{-2})\sim \sigma_1r(x_{-2})$. The identifications $\gamma_{-i}$ are already present in the pullback $\TRR^{n+1}(T;2)^{\phi \Z/2}$, and $\gamma_i$ is the composite
\[
\gamma_i\colon  \sigma_1r(x_{-i})\stackrel{\sigma_1\gamma_{-i}}{\sim} \sigma_1 f(x_{-i+1})\sim f(x_{-i+1})
\]
where the second is the canonical homotopy provided by the equivariance of $f$ with respect to the Weyl action. Similarly, $\gamma_0$ is given by
\[
\gamma_0\colon r(x_{-2})\stackrel{\gamma_{-2}}{\sim}f(x_{-1})\sim \sigma_1f(x_{-1})\stackrel{\sigma_1\overline{\gamma}_{-2}}{\sim} \sigma_1r(x_{-2})
\]
where $\gamma_{-2}$ is the identification present in  $\TRR^{n+1}(T;2)^{\phi \Z/2}$.
\end{rem}

The rest of the section will be devoted to the proof of Theorem \ref{inductivePB}.
Our proof relies on a pushout relating $ED_{2^n}$ and $E\mathcal{R}$ which we now deduce from \cite[Corollary 2.8]{LW09}.

Recall that $\tau$ and $\sigma_n$ are the respective  generators of $\Z/2$ and $C_{2^{n}}$ inside $D_{2^n} \cong \Z/2 \ltimes C_{2^n}$,
and that  $\sigma_n \tau$ is a reflection which is not conjugate to $\tau$. We let $H_n$ be the subgroup generated by $\sigma_n\tau$.
 We denote the respective normalisers and Weyl groups by
\[
N(\Z/2)= \Z/2 \times C_2 \quad, \quad W(\Z/2)\cong C_2\quad , \quad N(H_n)=\{ 1, \sigma_n \tau, \sigma_n^{2^{n-1}}, \sigma_n^{2^{n-1}+1} \tau \} \quad , \quad W(H_n)\cong C_2.
\]
We observe that $N(H_n)$ is abstractly isomorphic to $\Z/2 \times C_2$ but one has to be careful with this identification, since $\Z/2$ and $H_n$ represent different conjugacy classes. 
By \cite[Corollary 2.8]{LW09}, there is a pushout of $D_{2^n}$-spaces
\begin{equation}\label{po}
\xymatrix{ D_{2^n} \times_{N(\Z/2)} EN(\Z/2) \coprod D_{2^n} \times_{N(H_n)} EN(H_n) \ar[r] \ar[d] & ED_{2^n} \ar[d] \\ D_{2^n} \times_{N(\Z/2)} EW(\Z/2) \coprod D_{2^n} \times_{N(H_n)} EW(H_n) \ar[r] & E\mathcal{R}. }
\end{equation}
We observe that the classifying spaces that show up in \cite[Corollary 2.8]{LW09} at the lower left corner are indeed equivalent to $EW(H_n)$ and $EW(\Z/2)$. This pushout square leads to the following analogue of Proposition \ref{htpypullbackodd}.

\begin{prop} \label{Frobonfibres} For every $n\geq 2$, the Frobenius induces a commutative diagram
\[\xymatrix{
(T^{\phi \Z/2})_{hW(\Z/2)} \oplus (T^{\phi H_{n}})_{hW(H_{n})} 
\ar[d]_-{\left(\begin{smallmatrix}1&0\\ \sigma_{n}&0\end{smallmatrix}\right)}
\ar[r] & \TRR^{n+1}(T;2)^{\phi \Z/2} \ar[r]^-R \ar[d]_-F& \TRR^{n}(T;2)^{\phi \Z/2}\ar[d]^-F
\\
 (T^{\phi \Z/2})_{hW(\Z/2)} \oplus (T^{\phi H_{n-1}})_{hW(H_{n-1})} \ar[r] & \TRR^n(T;2)^{\phi \Z/2} \ar[r]^-R & \TRR^{n-1}(T;2)^{\phi \Z/2}
 } \]
where the rows are fibre sequences, and $\sigma_{n}\colon (T^{\phi \Z/2})_{hW(\Z/2)} \xrightarrow{\simeq} (T^{\phi H_{n-1}})_{hW(H_{n-1})}$ is induced by the generator $\sigma_{n} \in C_{2^{n}}$ which conjugates $\Z/2$ to $H_{n-1}$ in $D_{2^n}$. 
\end{prop}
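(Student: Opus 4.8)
The strategy is to derive the fibre sequence of the top row (the case of $\TRR^{n+1}$) by smashing $T$ with the pushout square \eqref{po} of $D_{2^n}$-spaces, taking $C_{2^n}$-fixed points and then $\Z/2$-geometric fixed points, and comparing with the analogous construction for $\TRR^n$. First I would collapse the pushout \eqref{po}: applying $(-)_+$ and then forming the cofibre against the induced map to the pushout of $D_{2^n}\ltimes_{N(\Z/2)}S^0 \amalg D_{2^n}\ltimes_{N(H_n)}S^0 \to S^0$, exactly as in the proof of Proposition \ref{htpypullbackodd}, produces a pushout of pointed $D_{2^n}$-spaces with $\widetilde{ED}_{2^n}$ in the upper-right corner, $\widetilde{E\mathcal{R}}$ in the lower-right corner, $\ast$ in the lower-left corner, and
\[
D_{2^n}\ltimes_{N(\Z/2)}\widetilde{EW(\Z/2)}' \ \vee\ D_{2^n}\ltimes_{N(H_n)}\widetilde{EW(H_n)}'
\]
in the upper-left corner, where for a normaliser $N(K)$ with $C_{2^n}\cap N(K)$ acting and Weyl group $W(K)$ the relevant pointed space is built from $EW(K)$ relative to $EN(K)$; the key point is that $\widetilde{ED}_{2^n}\to\widetilde{E\mathcal{R}}$ has cofibre identified with the suspension of this induced wedge. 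The fibre of $R\colon \TRR^{n+1}(T;2)^{\phi\Z/2}\to\TRR^n(T;2)^{\phi\Z/2}$ is therefore $((T\otimes(\text{that wedge}))^{C_{2^n}})^{\phi\Z/2}$, and the two wedge summands are treated separately.

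For each summand I would run the same double-coset/Wirthmüller manipulation as in Proposition \ref{htpypullbackodd}: tensoring $T$ with $D_{2^n}\ltimes_{N(K)}(-)$, taking $C_{2^n}$-fixed points and $\Z/2$-geometric fixed points amounts (after inserting the universal space $\widetilde{E\Z/2}$ to model geometric fixed points and using that induction commutes past everything) to computing $(T\otimes \widetilde{E\Z/2}\otimes EW(K)_+)^{K}$ up to the Weyl action, i.e. $(T^{\phi K})_{hW(K)}$. For $K=\Z/2$ this gives $(T^{\phi\Z/2})_{hW(\Z/2)}$; for $K=H_n$ it gives $(T^{\phi H_n})_{hW(H_n)}$, using that $H_n$ is conjugate in $O(2)$ to $\Z/2$ — in fact $\sigma_n$ conjugates $\Z/2$ to $H_{n-1}$, which is what makes the vertical identification $\sigma_n$ in the statement an equivalence and simultaneously identifies $T^{\phi H_n}$ with $T^{\phi\Z/2}$ as a spectrum. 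The bottom row is the same computation one level down, with $H_n$ replaced by $H_{n-1}$, coming from the pushout \eqref{po} for $D_{2^{n-1}}$.

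The remaining, and genuinely delicate, point is the identification of the left vertical map with the matrix $\left(\begin{smallmatrix}1&0\\ \sigma_n&0\end{smallmatrix}\right)$, i.e. tracking what the Frobenius $F$ (induced by the inclusion $C_{2^{n-1}}\subset C_{2^n}$, equivalently $D_{2^{n-1}}\subset D_{2^n}$) does to these fibres. On the wedge summand indexed by $\Z/2$ the map $F$ restricts, via a double-coset decomposition of $D_{2^n}/D_{2^{n-1}}$ as in the odd case, to a map $(T^{\phi\Z/2})_{hW(\Z/2)}\to (T^{\phi\Z/2})_{hW(\Z/2)}\vee (T^{\phi H_{n-1}})_{hW(H_{n-1})}$; the subgroup $\Z/2$ is already contained in $D_{2^{n-1}}$, giving the identity into the first summand, while the conjugate reflection subgroup $H_n\not\subset D_{2^{n-1}}$ but its $D_{2^{n-1}}$-conjugacy class lands on $H_{n-1}$, which accounts for the off-diagonal entry $\sigma_n$ into the second summand; here one must check that the double-coset formula really produces exactly these two components with the indicated coefficients, and no others. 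On the wedge summand indexed by $H_n$, the subgroup $H_n$ is \emph{not} subconjugate in $D_{2^{n-1}}$ to either $\Z/2$ or $H_{n-1}$ (it is a ``new'' reflection that only appears at level $n$), so its contribution dies under $F$, giving the zero column. This subgroup-combinatorics bookkeeping — verifying precisely which reflection conjugacy classes of $D_{2^n}$ survive restriction to $D_{2^{n-1}}$ and with what multiplicity — is the main obstacle; everything else is a formal manipulation of induced spaces, geometric fixed points, and the Wirthmüller isomorphism that parallels the odd-primary argument. Finally, once the two rows and the left vertical map are identified, commutativity of the whole diagram follows from naturality of the isotropy separation sequence in the map of pushout squares \eqref{po} for $D_{2^n}\to D_{2^{n-1}}$, which is where the canonical homotopies $\sigma_{n}R\simeq R\sigma_{n+1}$ and $\sigma_1 F\simeq F$ of Theorem \ref{inductivePB} originate.
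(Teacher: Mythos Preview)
Your overall strategy matches the paper's: pass to the pushout of pointed $D_{2^n}$-spaces derived from \eqref{po}, identify the cofibre of $\widetilde{ED}_{2^n}\to\widetilde{E\mathcal{R}}$ as a wedge of induced pieces, and then compute each summand via the projection formula and the Adams isomorphism to get $(T^{\phi K})_{hW(K)}$. The paper organises the fibre computation through the universal spaces $E\langle K\rangle$ (pointed $N(K)$-spaces with isotropy $S^0$ concentrated at $K$), identifying $E\langle K\rangle\simeq\widetilde{E_{(\nsupseteq K)}}\otimes\pi^\ast EW(K)_+$; this is the precise form of the ``Wirthm\"uller manipulation'' you invoke, and your sketch is compatible with it.

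Where your argument goes wrong is in the bookkeeping for the matrix. For the off-diagonal entry $\sigma_n$ you write that ``the conjugate reflection subgroup $H_n\not\subset D_{2^{n-1}}$ but its $D_{2^{n-1}}$-conjugacy class lands on $H_{n-1}$''. This sentence is incoherent: $H_n$ is not a subgroup of $D_{2^{n-1}}$, so it has no $D_{2^{n-1}}$-conjugacy class, and in any case $H_n$ plays no role in the analysis of the $\Z/2$-summand. The correct mechanism is the double-coset formula applied to $D_{2^n}\ltimes_{N(\Z/2)}E\langle\Z/2\rangle$ restricted to $D_{2^{n-1}}$: one has $D_{2^{n-1}}\backslash D_{2^n}/N(\Z/2)=\{1,\sigma_n\}$, with $N(\Z/2)\cap D_{2^{n-1}}=N(\Z/2)$ for the trivial coset (giving the identity entry) and $\sigma_n$ conjugating $\Z/2$ to $H_{n-1}$ for the other (giving the $\sigma_n$ entry). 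Equivalently, the $D_{2^n}$-conjugacy class of $\Z/2$ consists of the subgroups $\langle\sigma_n^{2k}\tau\rangle$, which upon restriction to $D_{2^{n-1}}$ splits into the two $D_{2^{n-1}}$-conjugacy classes of $\Z/2$ and $H_{n-1}$.

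Your justification for the zero column is also imprecise. Saying ``$H_n$ is not subconjugate in $D_{2^{n-1}}$ to either $\Z/2$ or $H_{n-1}$'' does not by itself prove vanishing; the paper's argument is that $D_{2^{n-1}}\backslash D_{2^n}/N(H_n)$ is a single coset with $N(H_n)\cap D_{2^{n-1}}=C_2$, and $E\langle H_n\rangle$ restricted to $C_2$ is contractible (since its only nontrivial isotropy is at $H_n$, and $H_n\not\subset C_2$). This is the concrete computation you need to carry out.
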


\begin{rem}
We note that  the spectra $(T^{\phi \Z/2})_{hW(\Z/2)}$, $(T^{\phi H_n})_{hW(H_n)}$ and $(T^{\phi H_{n-1}})_{hW(H_{n-1})}$ are all equivalent. This is because  $H_{n-1}$ and $\Z/2$ become conjugated in $D_{2^n}$, and consequently $H_{n}$ and $H_{n-1}$ are conjugated in $D_{2^{n+1}}$. It is however important to point out that $H_{n-1}$ and $\Z/2$ are not conjugated in $D_{2^{n-1}}$, and this plays a role while identifying the map $F$ on the fibres of $R$. 
\end{rem}

\begin{proof}[Proof of \ref{Frobonfibres}] Let us start by calculating the fibres of the horizontal maps.
The pushout square (\ref{po}) induces a pushout square of pointed $D_{2^n}$-spaces
\begin{equation}\label{potilde}
\xymatrix{ D_{2^n} \ltimes_{N(\Z/2)} \widetilde{EN(\Z/2)} \vee D_{2^n} \ltimes_{N(H_n)} \widetilde{EN(H_n)} \ar[r] \ar[d] & \widetilde{ED_{2^n}} \ar[d] \\ D_{2^n} \ltimes_{N(\Z/2)} \widetilde{EW(\Z/2)} \vee D_{2^n} \ltimes_{N(H_n)} \widetilde{EW(H_n)} \ar[r] & \widetilde{E\mathcal{R}}.} 
\end{equation}
The map $R$ is, just as in the proof of Proposition \ref{htpypullbackodd}, given by the map
\[
R\colon  \TRR^{n+1}(T;2)^{\phi \Z/2}\simeq ((T\otimes  \widetilde{ED_{2^n}})^{C_{2^n}})^{\phi\Z/2}  \longrightarrow ((T\otimes  \widetilde{E\mathcal{R}})^{C_{2^n}})^{\phi\Z/2}\simeq \TRR^{n}(T;2)^{\phi \Z/2}
\]
induced by the right vertical map of square (\ref{potilde}) (this part of  \ref{htpypullbackodd} does not use that $p$ is odd).  Since square (\ref{potilde}) is a pushout, the cofibre of $R$ is equivalent to
\[  ((T \otimes D_{2^n} \ltimes_{N(\Z/2)} \Sigma E\class {\Z/2})^{C_{2^n}}){ ^{\phi \Z/2} } \oplus ((T \otimes D_{2^n} \ltimes_{N(H_n)} \Sigma E\class {H_n})^{C_{2^n}}){ ^{\phi \Z/2} },  \]
where $E\class { \Z/2}$ and $E\class {H_n}$ are 
 pointed $N(\Z/2)$ and $N(H_n)$-spaces which fixed-points $S^0$ only at $\Z/2$ and $H_n$, respectively, and have contractible fixed-points at all other subgroups. Let us now identify the first summand, the identification of the second one being  similar. By the projection formula and untwisting the action on $T$, we see that 
\begin{align*}(((T \otimes D_{2^n} \ltimes_{N(\Z/2)} \Sigma E\class{\Z/2}))^{C_{2^n}}){ ^{\phi \Z/2} }
&\simeq
 (D_{2^n} \ltimes_{N(\Z/2)} ( T \otimes  (\varepsilon^* \widetilde {E\Z/2} \wedge \Sigma E\class { \Z/2} )))^{D_{2^n}}
 \\
 &\simeq
 ( T \otimes  (\varepsilon^* \widetilde {E\Z/2}\wedge \Sigma E\class { \Z/2} ))^{N(\Z/2)}
 \end{align*}
 where $\varepsilon\colon N(\Z/2)\cong C_2\times \Z/2\to \Z/2$ is the projection. Now we observe that since $(\varepsilon^* \widetilde {E\Z/2})^{\Z/2}=S^0$ and $E\class { \Z/2}$ has isotropy $S^0$ concentrated at $\Z/2$, we have that $\varepsilon^* \widetilde {E\Z/2}\wedge E\class { \Z/2}\simeq E\class { \Z/2}$, and therefore that the first summand of the fibre of $R$ is equivalent to
 \[
 ( T \otimes E\class { \Z/2} )^{N(\Z/2)}.
 \]
The other summand is identified similarly. Now the $N(\Z/2)$-space $E \class{\Z/2}$ is equivalent to $\widetilde{E(\nsupseteq \Z/2)} \wedge\pi^\ast EW(\Z/2)_+$, where $(\nsupseteq \Z/2)$ is the family of subgroups of $N(\Z/2)$ not containing $\Z/2$ and $ \pi\colon N(\Z/2)\to W(\Z/2)$ is the projection, again by observing that this smash product also has isotropy $S^0$ at $\Z/2$ and contractible isotropy on all other subgroups (using Elmendorf's theorem \cite{Elmendorf}). By definition of the geometric fixed points with respect to a normal subgroup, we get that one summand of the fibre of $R$ is
\begin{align*} (T \otimes  E\class {\Z/2})^{N(\Z/2)} 
&\simeq (T \otimes  (\widetilde{E(\nsupseteq \Z/2)} \wedge \pi^\ast EW(\Z/2)_+))^{N(\Z/2)}
\\
& \simeq ((T \otimes  (\widetilde{E(\nsupseteq \Z/2)} \wedge \pi^\ast EW(\Z/2)_+))^{\Z/2})^{W(\Z/2)} 
\\
&\simeq  (T^{\phi \Z/2} \otimes EW(\Z/2)_+)^{W(\Z/2)} \simeq (T^{\phi \Z/2})_{hW(\Z/2)} \end{align*}
where the last equivalence uses the Adams isomorphism.

Let us now identify the map $F$ on the fibres of $R$. As we have just seen $R\colon  \TRR^{n+1}(T;2)^{\phi \Z/2}\to  \TRR^{n}(T;2)^{\phi \Z/2}$ is equivalent to the appropriate fixed-points of the map $T\otimes  \widetilde{ED_{2^n}}\to T\otimes  \widetilde{E\mathcal{R}}$. For simplicity let us denote the summands of its fibre by
\[
A_n\oplus B_n:=(T \otimes D_{2^n} \ltimes_{N(\Z/2)}  E\class {\Z/2})\oplus (T \otimes D_{2^n} \ltimes_{N(H_n)} E\class {H_n}).
\]
Since $ED_{2^n}$ and  $ED_{2^{n-1}}$ are equivalent as  $D_{2^{n-1}}$-spaces, the diagram of Proposition \ref{Frobonfibres} is then equivalent to the outer diagram of
\[\xymatrix{
(A^{C_{2^n}}_n)^{\phi\Z/2}\oplus (B^{C_{2^n}}_n)^{\phi\Z/2}\ar[r]\ar[d]^{F\oplus F}&((T\otimes  \widetilde{ED_{2^n}})^{C_{2^n}})^{\phi\Z/2}
\ar[r]^-R\ar[d]^F&
((T\otimes  \widetilde{E\mathcal{R}})^{C_{2^n}})^{\phi\Z/2}\ar[d]^F
\\
(A^{C_{2^{n-1}}}_n)^{\phi\Z/2}\oplus (B^{C_{2^{n-1}}}_n)^{\phi\Z/2}\ar[r]
&
((T\otimes  \widetilde{ED_{2^{n}}})^{C_{2^{n-1}}})^{\phi\Z/2} 
\ar[r]^-R&
((T\otimes  \widetilde{E\mathcal{R}})^{C_{2^{n-1}}})^{\phi\Z/2}
\\
(A^{C_{2^{n-1}}}_{n-1})^{\phi\Z/2}\oplus (B^{C_{2^{n-1}}}_{n-1})^{\phi\Z/2}\oplus 0\ar[r]\ar[u]^{\simeq}&
((T\otimes  \widetilde{ED_{2^{n-1}}})^{C_{2^{n-1}}})^{\phi\Z/2} \ar[u]^{\simeq}
\ar[r]^-R&
((T\otimes  \widetilde{E\mathcal{R}})^{C_{2^{n-1}}})^{\phi\Z/2}\ar[u]^{\simeq}
 } \]
where the maps $F$ are the restriction maps on fixed-points, and the vertical arrows are induced by the inclusion $D_{2^{n-1}}\subset D_{2^n}$. Let us analyse the bottom left vertical equivalence. The summand $(B^{C_{2^{n-1}}}_n)^{\phi\Z/2}$ is contractible, since there is a single double coset $D_{2^{n-1}}\backslash D_{2^n}/N(H_{n})$, and therefore
\[
(B^{C_{2^{n-1}}}_n)^{\phi\Z/2}\simeq  (D_{2^n} \ltimes_{N(H_n)} (T\otimes E\class {H_n}))^{D_{2^{n-1}}}\simeq (T\otimes E\class {H_n})^{C_2}=0,
\]
where we used that $N(H_n) \cap D_{2^{n-1}}=C_2$ and that $E\class{H_n}$ is trivial when restricted to $C_2$. The first equivalence follows from the projection formula as in the identification of the summands of the fibre of $R$ above. This shows that $F$ is trivial on the second summand of the fibres. Let us now apply the double coset formula to the first summand $(A^{C_{2^{n-1}}}_n)^{\phi\Z/2}$. This time there are two double cosets $D_{2^{n-1}}\backslash D_{2^n}/N(\Z/2)=\{1,\sigma_n\}$, where $\sigma_n$ is the generator of $C_{2^n}$, with $N(\Z/2)\cap D_{2^{n-1}}=N(\Z/2)$ and $\sigma_n$ conjugating $\Z/2$ and $H_{n-1}$. We therefore find that
\[
(A^{C_{2^{n-1}}}_n)^{\phi\Z/2}\simeq  (D_{2^n} \ltimes_{N(\Z/2)} (T\otimes E\class {\Z/2}))^{D_{2^{n-1}}}\simeq  (T \otimes  E\class{ \Z/2})^{N(\Z/2)} \oplus (T \otimes  E\class{H_{n-1}})^{N(H_{n-1})}
\]
and  $F\colon (A^{C_{2^{n}}}_n)^{\phi\Z/2}\to (A^{C_{2^{n-1}}}_n)^{\phi\Z/2}$ corresponds to $(1,\sigma_n)$.
\end{proof}

The previous proposition holds as stated only for $n\geq 2$. The correct analogue for $n=1$ is the following:

\begin{prop}\label{Frobonfibresn=1} The map $c$ induces $\id\oplus 0$ of horizontal fibres in the following diagram
\[\xymatrix{
(T^{\phi \Z/2})_{hW(\Z/2)} \oplus (T^{\phi H_{1}})_{hW(H_{1})} 
\ar[d]_-{\id\oplus 0}
\ar[r] & \TRR^{2}(T;2)^{\phi \Z/2} \ar[r]^-R \ar[d]_-c&T^{\phi \Z/2}\ar@{=}[d]
\\
 (T^{\phi \Z/2})_{hW(\Z/2)} \ar[r] &(T^{\phi \Z/2})^{C_2}\ar[r]^r &T^{\phi \Z/2}\rlap{\ .}
 } \]
\end{prop}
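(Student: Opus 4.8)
The plan is to run the same analysis as in the proof of Proposition \ref{Frobonfibres}, but taking care of the two phenomena that are special to $n=1$: the relevant dihedral group is $D_2\cong \Z/2\times C_2$, which is abelian, so $\sigma_1\tau$ \emph{is} conjugate to $\tau$ (in fact equal to it up to the central element is false --- rather there is only one conjugacy class of reflections because all reflections in $D_2$ generate the same subgroup? no: $D_2$ has two reflections $\tau$ and $\sigma_1\tau$ generating two distinct order-two subgroups), and the ``inner'' part of the pushout decomposition for $E\mathcal R$ degenerates because the normalisers are all of $D_2$ itself. Concretely, I would first record that for $D_2$ the Lück--Wei{\ss} pushout (\ref{po}) still applies with $\Z/2=\langle\tau\rangle$ and $H_1=\langle \sigma_1\tau\rangle$, and $N(\Z/2)=N(H_1)=D_2$, $W(\Z/2)=W(H_1)=C_2$; so the cofibre of $R\colon \TRR^2(T;2)^{\phi\Z/2}\to \TRR^1(T;2)^{\phi\Z/2}=T^{\phi\Z/2}$ is $(T^{\phi\Z/2})_{hW(\Z/2)}\vee (T^{\phi H_1})_{hW(H_1)}$, exactly as in the statement, by the identical projection-formula/Adams-isomorphism computation used in \ref{Frobonfibres}. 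The only change is bookkeeping: there is no induction $D_{2^n}\times_{N(-)}(-)$ since $N(-)=D_2$.

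Next I would identify the Frobenius on fibres. Here $F\colon \TRR^2(T;2)^{\phi\Z/2}\to \TRR^1(T;2)^{\phi\Z/2}=T^{\phi\Z/2}$ is induced by the inclusion $C_{1}=\{1\}\subset C_2$, i.e. it is the map ``forget $C_2$-fixed points'' $(T^{\phi\Z/2})^{C_2}\to T^{\phi\Z/2}$ after identifying $\TRR^1(T;2)^{\phi\Z/2}\simeq T^{\phi\Z/2}$ and noting $\TRR^2(T;2)^{\phi\Z/2}$ sits over $(T^{\phi\Z/2})^{C_2}$; this is precisely the map $f$ of the statement, and the bottom row $(T^{\phi\Z/2})_{hW(\Z/2)}\to (T^{\phi\Z/2})^{C_2}\xrightarrow{r}T^{\phi\Z/2}$ is the isotropy separation (homotopy orbit transfer) fibre sequence for the $C_2$-spectrum $T^{\phi\Z/2}$, whose cofibre term is $(T^{\phi\Z/2})^{\phi C_2}\simeq T^{\phi\Z/2}$ via the cyclotomic structure --- matching the definition of $r$ given just before the theorem. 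To get the map $\id\vee 0$ on fibres I would imitate the double-coset argument from \ref{Frobonfibres}: restricting $A_1=T\otimes D_2\ltimes_{D_2}E\langle\Z/2\rangle=T\otimes E\langle\Z/2\rangle$ and $B_1=T\otimes E\langle H_1\rangle$ from $D_2$ to $C_1=\{1\}$ (the subgroup playing the role of $D_{2^{n-1}}$ for $n=1$), the $B_1$ summand becomes $(T\otimes E\langle H_1\rangle)$ restricted to the trivial group, but $E\langle H_1\rangle$ restricted to $C_1$ is contractible (its only nontrivial isotropy is at $H_1\neq\{1\}$), so this summand dies --- this is the ``$0$'' in $\id\vee 0$. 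For the $A_1$ summand, restriction along $\{1\}\subset D_2$ followed by the relevant fixed points is, by the same projection-formula untwisting, the map $(T^{\phi\Z/2})_{hW(\Z/2)}=(T^{\phi\Z/2}\otimes EW(\Z/2)_+)^{W(\Z/2)}\to (T^{\phi\Z/2}\otimes EW(\Z/2)_+)^{W(\Z/2)}$ which is the identity, since there is a single double coset $\{1\}\backslash D_2/D_2$. Hence $F$ on fibres is $\id\vee 0$.

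Finally I would assemble: the diagram commutes because the outer square is the comparison of the $R$-fibre sequences for $T\otimes\widetilde{ED_2}\to T\otimes\widetilde{E\mathcal R}$ at the subgroups $C_2$ and $C_1$ of $D_2$ (restriction along $C_1\subset C_2$ on fixed points), exactly as in the diagram displayed in the proof of \ref{Frobonfibres}, and identifying the bottom fibre sequence with $(T^{\phi\Z/2})_{hW(\Z/2)}\to (T^{\phi\Z/2})^{C_2}\xrightarrow{r}T^{\phi\Z/2}$ via the isotropy separation sequence for the $C_2$-spectrum $T^{\phi\Z/2}$ together with its cyclotomic structure map $(T^{\phi\Z/2})^{\phi C_2}\simeq T^{\phi\Z/2}$. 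The main obstacle I anticipate is purely organisational rather than mathematical: making sure the identifications $\TRR^2(T;2)^{\phi\Z/2}\simeq (T\otimes\widetilde{ED_2})^{C_2}{}^{\phi\Z/2}$ and $\TRR^1(T;2)^{\phi\Z/2}\simeq (T\otimes\widetilde{E\mathcal R})^{C_2}{}^{\phi\Z/2}\simeq T^{\phi\Z/2}$ are set up so that the bottom map really is the $r$ from the pre-theorem discussion and not merely something abstractly equivalent to it --- i.e. tracking the cyclotomic structure map through the isotropy separation sequence carefully --- and checking that the special coincidences among the subgroups of $D_2$ (where $\Z/2$ and $H_1$, though distinct, both have Weyl group $C_2$ and full normaliser) do not secretly introduce an extra identification that would change $\id\vee 0$ into, say, $\id\vee\sigma_1$ or swap the two summands.
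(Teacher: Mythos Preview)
Your proposal has a genuine gap in the identification of the map $F$ and hence in the computation on fibres.

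The middle vertical arrow in the statement is a map $F\colon \TRR^{2}(T;2)^{\phi\Z/2}\to (T^{\phi\Z/2})^{C_2}$, not the usual Frobenius $\TRR^{2}\to\TRR^{1}=T^{\phi\Z/2}$. Concretely, after rewriting both terms as $D_2$-fixed points, this $F$ is induced by the map of pointed $D_2$-spaces
\[
\varepsilon^{\ast}\widetilde{E\Z/2}\longrightarrow\widetilde{E_{(\nsupseteq\Z/2)}}
\]
coming from an inclusion of families; it is \emph{not} a restriction along a subgroup inclusion. Your attempt to compute it by ``restricting from $D_2$ to $C_1=\{1\}$'' is therefore computing the wrong map. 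Worse, the argument is internally inconsistent: upon restriction to the trivial group, \emph{both} $E\langle\Z/2\rangle$ and $E\langle H_1\rangle$ become contractible (each has nontrivial isotropy only at a nontrivial subgroup), so your reasoning would force $0\vee 0$ on fibres, not $\id\vee 0$. The claim that the $A_1$-summand survives ``since there is a single double coset $\{1\}\backslash D_2/D_2$'' does not salvage this, because after restricting to the trivial group there is no ambient group left with which to take the ``relevant fixed points'' you invoke. Replacing $\{1\}$ by $\Z/2$ (the honest $D_{2^{0}}$) does not help either: that computes the ordinary Frobenius landing in $T^{\phi\Z/2}$, which is still not the $F$ in the diagram.

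The paper instead works entirely at the level of universal $D_2$-spaces. It sets up a commutative square whose rows are $\varepsilon^{\ast}\widetilde{E\Z/2}\otimes\widetilde{ED_2}\to\varepsilon^{\ast}\widetilde{E\Z/2}\otimes\widetilde{E\mathcal R}$ (encoding $R$) and $\widetilde{E_{(\nsupseteq\Z/2)}}\to\widetilde{E_{(\nsupseteq D_2)}}$ (encoding $r$), and reads off the map on horizontal cofibres directly from the isotropy: the top cofibre is $\Sigma E\langle\Z/2\rangle\vee\Sigma E\langle H_1\rangle$, the bottom cofibre is $\Sigma E\langle\Z/2\rangle$ (since $(\nsupseteq D_2)\setminus(\nsupseteq\Z/2)=\{\Z/2\}$), and the induced map is the projection $\id\vee 0$. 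Smashing with $T$ and taking $D_2$-fixed points then gives the claim. The key point you are missing is that the $H_1$-summand dies because $\varepsilon^{\ast}\widetilde{E\Z/2}$ has $S^0$ at $H_1$ (indeed $\varepsilon(H_1)=\Z/2$) while $\widetilde{E_{(\nsupseteq\Z/2)}}$ does not---this asymmetry between $\Z/2$ and $H_1$ with respect to the family $(\nsupseteq\Z/2)$ is what produces $\id\vee 0$, and it has no analogue in a double-coset decomposition.
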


\begin{proof}
The top horizontal fibre sequence is from \ref{Frobonfibres}, and the bottom one is immediate from the isotropy separation sequence of the $C_2$-spectrum $T^{\phi\Z/2}$ and the definition of $r$.

In order to describe $F$ on the fibre we observe that there is a commutative diagram of pointed $D_{2^2}$-spaces
\[\hspace{-3cm}\ \xymatrix{  
\hspace{2.5cm} \varepsilon^* \widetilde {E\Z/2}\simeq \varepsilon^* \widetilde {E\Z/2} \wedge \widetilde{ED}_{2^2}\ar[r]\ar[d]
&
\varepsilon^* \widetilde {E\Z/2} \wedge \widetilde{ER}
\\
\widetilde{E(\nsupseteq \Z/2)}\ar[r]
 &
 \widetilde{E(\nsupseteq D_{2^2})}\ar[u]_-{\simeq}
  }
\]
where $\varepsilon\colon D_{2^2}=C_2\times\Z/2\to \Z/2$ is the projection, $(\nsupseteq \Z/2)$  and $(\nsupseteq D_{2^2})$ are the families of subgroups of $D_{2^2}$ which do note contain $\Z/2$ and $D_{2^2}$ respectively, and the arrows in the diagram are induced by the inclusions of families. By Elmendorf's theorem \cite{Elmendorf}, the induced map on horizontal cofibres is the projection
\[
\id\vee 0\colon \Sigma E \class{\Z/2} \vee\Sigma E \class{H_1}\longrightarrow\Sigma E \class{\Z/2},
\]
where we have used the identifications of Proposition \ref{Frobonfibres}.
By the calculations of Proposition \ref{Frobonfibres}, by smashing the square above with $T$ and taking $C_2\times\Z/2$-fixed-points, we obtain the diagram of the statement.
\end{proof} 

The identification of the Weyl action follows immediately from the proof of Proposition \ref{Frobonfibres}:

\begin{lemma} \label{FrobeniusWeyl} For any $n \geq 1$, the Weyl action on fibres is described by the diagram 
\[\xymatrix{ (T^{\phi \Z/2})_{hW(\Z/2)} \oplus (T^{\phi H_n})_{hW(H_n)} \ar[d]_-{\left(\begin{smallmatrix}0&\sigma_{n+1}^{-1}\\ \sigma_{n+1}&0\end{smallmatrix}\right)}  \ar[r]  & \TRR^{n+1}(T;2)^{\phi \Z/2} \ar[r]^R\ar[d]^{\sigma_{n+1}} 
&
\TRR^{n}(T;2)^{\phi \Z/2} \ar[d]^{\sigma_n}
\\
(T^{\phi \Z/2})_{hW(\Z/2)} \oplus (T^{\phi H_n})_{hW(H_n)} \ar[r] &  \TRR^{n+1}(T;2)^{\phi \Z/2}
\ar[r]^R
&
\TRR^{n}(T;2)^{\phi \Z/2}
}.\] 	
\end{lemma}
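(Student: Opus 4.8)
\textbf{Proof proposal for Lemma \ref{FrobeniusWeyl}.}

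The plan is to reuse the pushout square (\ref{potilde}) exactly as in the proof of Proposition \ref{Frobonfibres}, and simply keep track of how conjugation by the rotation $\sigma_{n+1}\in C_{2^{n+1}}\subset O(2)$ acts on it. Recall that there we identified the fibre sequence
\[
(T^{\phi\Z/2})_{hW(\Z/2)}\vee (T^{\phi H_n})_{hW(H_n)}\longrightarrow \TRR^{n+1}(T;2)^{\phi\Z/2}\xrightarrow{\ R\ }\TRR^{n}(T;2)^{\phi\Z/2}
\]
by writing $\TRR^{n+1}(T;2)^{\phi\Z/2}\simeq ((T\otimes\widetilde{ED}_{2^n})^{C_{2^n}})^{\phi\Z/2}$, $\TRR^{n}(T;2)^{\phi\Z/2}\simeq((T\otimes\widetilde{E\mathcal R})^{C_{2^n}})^{\phi\Z/2}$, and reading off the fibre from the two $D_{2^n}$-conjugacy classes of reflection subgroups, represented by $\Z/2=\class\tau$ and $H_n=\class{\sigma_n\tau}$, together with the identifications $(T\otimes E\class{\Z/2})^{N(\Z/2)}\simeq(T^{\phi\Z/2})_{hW(\Z/2)}$ and $(T\otimes E\class{H_n})^{N(H_n)}\simeq(T^{\phi H_n})_{hW(H_n)}$. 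First I would record that the Weyl actions of $\sigma_{n+1}$ and $\sigma_n$ on $\TRR^{n+1}(T;2)^{\phi\Z/2}$ and $\TRR^{n}(T;2)^{\phi\Z/2}$ are induced by conjugation with $\sigma_{n+1}$: indeed $\sigma_{n+1}$ centralises $C_{2^n}$ and its class in $O(2)/C_{2^n}$ normalises the chosen reflection $\Z/2$ and generates its Weyl group, so conjugation by $\sigma_{n+1}$ gives the generator of the residual $C_2$-action on the geometric fixed points; compatibility with $R$ is the homotopy $\sigma_nR\simeq R\sigma_{n+1}$ already invoked in Theorem \ref{inductivePB}, which holds because $R$ is a map of spectra-with-Weyl-action and $\sigma_{n+1}$ maps to $\sigma_n$ under the root isomorphism ($\sigma_{n+1}^2=\sigma_n$).

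Next I would compute the effect of conjugation by $\sigma_{n+1}$ on the pushout (\ref{potilde}). Since $\sigma_{n+1}$ is a rotation it centralises $C_{2^n}$ and therefore normalises $D_{2^n}$, with the crucial identities $\sigma_{n+1}\tau\sigma_{n+1}^{-1}=\sigma_{n+1}^2\tau=\sigma_n\tau$ and $\sigma_{n+1}^{-1}(\sigma_n\tau)\sigma_{n+1}=\tau$. Thus conjugation by $\sigma_{n+1}$ carries the subgroup $\Z/2=\class\tau$ isomorphically onto $H_n=\class{\sigma_n\tau}$, hence $N(\Z/2)$ onto $N(H_n)$ and $W(\Z/2)$ onto $W(H_n)$, and conversely conjugation by $\sigma_{n+1}^{-1}$ carries $H_n$ back to $\Z/2$. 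Consequently, applying the natural transformation "conjugate by $\sigma_{n+1}$" to the whole of (\ref{potilde}) interchanges the summand $D_{2^n}\ltimes_{N(\Z/2)}\widetilde{EN(\Z/2)}$ with $D_{2^n}\ltimes_{N(H_n)}\widetilde{EN(H_n)}$, and likewise for the $\widetilde{EW(-)}$ and $\Sigma E\class{-}$ summands; passing to the relevant fixed points it therefore acts on the fibre of $R$ by the anti-diagonal matrix $\left(\begin{smallmatrix}0&\sigma_{n+1}^{-1}\\ \sigma_{n+1}&0\end{smallmatrix}\right)$, the two off-diagonal entries being the equivalence $\sigma_{n+1}\colon(T^{\phi\Z/2})_{hW(\Z/2)}\xrightarrow{\ \simeq\ }(T^{\phi H_n})_{hW(H_n)}$ induced by the conjugation isomorphism and its inverse --- the inverse, and not $\sigma_{n+1}$ again, because composing the two conjugations is conjugation by $\sigma_{n+1}^2=\sigma_n\in D_{2^n}$, which is canonically homotopic to the identity on $D_{2^n}$-fixed points. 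The map of horizontal fibre sequences in the statement then follows by naturality of (\ref{potilde}) and of all the identifications used in Proposition \ref{Frobonfibres}. For $n=1$ one argues identically, replacing (\ref{potilde}) by the square for $D_2$ appearing in the proof of Proposition \ref{Frobonfibresn=1}.

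The only genuine subtlety I anticipate is the bookkeeping that produces the mutually inverse pair $(\sigma_{n+1},\sigma_{n+1}^{-1})$ on the off-diagonal rather than, say, $\sigma_{n+1}$ in both slots (which would be incompatible with the Weyl action having order two): this rests precisely on the fact that $\Z/2$ and $H_n$ are non-conjugate \emph{inside} $D_{2^n}$ but are conjugated, via $\sigma_{n+1}$ in one direction and $\sigma_{n+1}^{-1}$ in the other, once one is allowed to move inside $O(2)$, so that the two conjugations are not a priori each other's inverse as abstract isomorphisms but become so after taking $D_{2^n}$-fixed points.
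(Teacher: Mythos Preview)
Your proposal is correct and follows the same route the paper intends: the paper's own proof is the single sentence ``The identification of the Weyl action follows immediately from the proof of Proposition \ref{Frobonfibres}'', and your argument is precisely the unpacking of that sentence --- you track the action of $\sigma_{n+1}$ through the pushout (\ref{potilde}), using that $\sigma_{n+1}$ normalises $D_{2^n}$ and that $\sigma_{n+1}\tau\sigma_{n+1}^{-1}=\sigma_n\tau$ swaps the two $D_{2^n}$-conjugacy classes of reflections, hence swaps the two fibre summands. Your remark about the separate $n=1$ case is harmless but in fact unnecessary: the top fibre sequence of Proposition \ref{Frobonfibres} (coming from the pushout (\ref{po})) is valid for all $n\geq 1$, and that is all the Weyl identification needs.
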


\begin{proof} As seen in the proof of Proposition \ref{Frobonfibres}, the fibre of $R$ consists of two summands
\[(A^{C_{2^n}}_{n+1})^{\phi\Z/2}\oplus (B^{C_{2^n}}_{n+1})^{\phi\Z/2},\]
where $A_{n+1}:=(T \otimes D_{2^{n+1}} \ltimes_{N(\Z/2)}  E\class {\Z/2})$ and $B_{n+1}:=(T \otimes D_{2^{n+1}} \ltimes_{N(H_{n+1})} E\class {H_{n+1}})$, and $(B^{C_{2^n}}_{n+1})^{\phi\Z/2}$ vanishes. The first summand $(A^{C_{2^n}}_{n+1})^{\phi\Z/2}$ decomposes into the wedge of two summands according to the double cosets $D_{2^n}\backslash D_{2^{n+1}}/N(\Z/2)=\{1,\sigma_{n+1}\}$, and the action of $\sigma_{n+1}$ permutes these two summands. \end{proof}

\begin{proof}[Proof of \ref{inductivePB}]
By iterating Proposition \ref{Frobonfibres} and Proposition \ref{Frobonfibresn=1} the map $F^{n-1}$ induces an equivalence between the first summands of the horizontal fibres of the diagram of Theorem \ref{inductivePB}. Similarly by \ref{FrobeniusWeyl} the maps $F^{n-1}\sigma_{n+1}$ and $F^{n-1}\sigma_{n}$ induce an equivalence between the second summands of the horizontal fibres.

Let us now identify the Frobenius $F\colon \TRR^{n+1}(T;2)^{\phi \Z/2}\to \TRR^{n}(T;2)^{\phi \Z/2}$ for $n\geq 1$. The identification for $n=1$ follows from the pullback description and the canonical homotopy $c \circ f \simeq F$.
For $n \geq 2$, we consider the following diagram whose front and back faces are pullbacks:
\[
\xymatrix@C=12pt{
\TRR^{n+1}(T;2)^{\phi \Z/2}\ar[dr]^-(.6){F}\ar[rr]^-R
\ar[dd]^-{(cF^{n-1},cF^{n-1}\sigma_{n+1})}
&&\TRR^{n}(T;2)^{\phi \Z/2}\ar[dr]^-F\ar[dd]^-(.75){(F^{n-1},\sigma_1F^{n-1}\sigma_{n})}
\\
&
\TRR^{n}(T;2)^{\phi \Z/2}\ar[rr]^-(.3){R}\ar[dd]_-(.25){(cF^{n-2},cF^{n-2}\sigma_{n})}
&&\TRR^{n-1}(T;2)^{\phi \Z/2}\ar[dd]^-{(F^{n-2},\sigma_1F^{n-2}\sigma_{n-1})}
\\
 (T^{\phi \Z/2})^{C_2}\times  (T^{\phi \Z/2})^{C_2}\ar[dr]_-{\Delta\vee 0} \ar[rr]^-(.75){r\times \sigma_1r}&&T^{\phi \Z/2}\times T^{\phi \Z/2}\ar[dr]^-{((\id\times\sigma_1)\Delta)\vee 0}
\\
&
 (T^{\phi \Z/2})^{C_2}\times  (T^{\phi \Z/2})^{C_2}\ar[rr]_-{r\times \sigma_1r}
 &&T^{\phi \Z/2}\times T^{\phi \Z/2}
}
\]
This diagram is a homotopy commutative cube, since its front and back faces and its arrows are equivalent to those of the outer part of the strictly commutative diagram
\[
\xymatrix@C=12pt{
(T^{C_{2^n}})^{\phi \Z/2}\ar[dr]^-(.6){F}\ar[rr]^-\phi\ar[dd]_-(.5){(F^{n-1},F^{n-1}\sigma_{n+1})}
&&
((T\otimes\widetilde{E\mathcal{R}})^{C_{2^n}})^{\phi \Z/2}\ar[dr]^-F\ar[dd]^-(.75){(F^{n-1},\sigma_2F^{n-1}\sigma_{n+1})}
\\
&
(T^{C_{2^{n-1}}})^{\phi \Z/2}\ar[rr]^-(.3){\phi}\ar[dd]_-(.25){(F^{n-2},F^{n-2}\sigma_{n})}
&&
((T\otimes\widetilde{E\mathcal{R}})^{C_{2^{n-1}}})^{\phi \Z/2}\ar[dd]^-{(F^{n-2},\sigma_1F^{n-2}\sigma_{n})}
\\
((T^{C_2})^{\phi \Z/2})^{\times 2}\ar[dr]_-{\Delta\vee 0} \ar[rr]^-(.75){\phi\times \sigma_2\phi}\ar[dd]^{c\times c}
&&
(((T\otimes\widetilde{E\mathcal{R}})^{C_2})^{\phi \Z/2})^{\times 2}\ar[dr]^-{((\id\times\sigma_2)\Delta)\vee 0} \ar[dd]^-(.75){c\times c}
\\
&
((T^{C_2})^{\phi \Z/2})^{\times 2}\ar[rr]^-(.25){\phi\times \sigma_2\phi}\ar[dd]_-(.25){c\times c}
 &&
(((T\otimes\widetilde{E\mathcal{R}})^{C_2})^{\phi \Z/2})^{\times 2} \ar[dd]^{c\times c}
\\
((T^{\phi \Z/2})^{C_2})^{\times 2}\ar[dr]_-{\Delta\vee 0} \ar[rr]^-(.75){\phi\times \sigma_2\phi}
&&
(((T\otimes\widetilde{E\mathcal{R}})^{\phi \Z/2})^{C_2})^{\times 2}\ar[dr]^-{((\id\times\sigma_2)\Delta)\vee 0} 
\\
&
((T^{\phi \Z/2})^{C_2})^{\times 2}\ar[rr]^-{\phi\times \sigma_2\phi}
 &&
(((T\otimes\widetilde{E\mathcal{R}})^{\phi \Z/2})^{C_2})^{\times 2}
}
\]
where the maps $\phi$ are induced by the canonical map $S^0\to\widetilde{E\mathcal{R}}$.
This identifies the Frobenius map. The Weyl action can be identified with a similar argument.
\end{proof}

\subsubsection{The geometric fixed-points of \texorpdfstring{$\TFR$}{TFR} and \texorpdfstring{$\TCR$}{TCR}}\label{sec:TFR}

In this section, we use Theorem \ref{inductivePB} to identify the $\Z/2$-geometric fixed-points of $\TCR(T;2)$ for every bounded-below real $2$-cyclotomic spectrum $T$.

It turns out that it is simpler to describe the endomorphism $R$ on the limit
\[
\TFR(T;p):=\holim\big(
\dots\xrightarrow{F}   \TRR^{n+1}(T;p)\xrightarrow{F}  \TRR^{n}(T;p)\xrightarrow{F} \dots\xrightarrow{F} \TRR^{1}(T;p)=T
\big).
\]
taken over the Frobenius, rather than describing the Frobenius on $\TRR(T;2)$. For simplicity, we will again write $T^{\phi}$ for $T^{\phi\Z/2}$.

\begin{theorem} \label{TFformula} Let $T$ be a bounded below real $2$-cyclotomic spectrum. Then $\TFR(T;2)^{\phi \Z/2}$ is equivalent to the homotopy inverse limit
\[\TFR(T;2)^{\phi \Z/2}\simeq\holim_n (\underbrace{
(T^{\phi})^{C_2}
{}_r\!\times_f
(T^{\phi})^{C_2}
{}_r\!\times_f
\cdots 
{}_r\!\times_f(T^{\phi})^{C_2}{}
}_{n} )\]
along the maps $\proj_{l} \colon \underbrace{
(T^{\phi})^{C_2}
{}_r\!\times_f
(T^{\phi})^{C_2}
{}_r\!\times_f
\cdots 
{}_r\!\times_f(T^{\phi})^{C_2}{}
}_{n+1} \to \underbrace{
(T^{\phi})^{C_2}
{}_r\!\times_f
(T^{\phi})^{C_2}
{}_r\!\times_f
\cdots 
{}_r\!\times_f(T^{\phi})^{C_2}{}
}_{n}$ which project away the last factor, i.e. given by  $(x_1, x_2, \dots, x_{n+1}) \mapsto (x_1, \dots, x_n)$.
Under this identification, the endomorphism $R \colon \TFR(T;2)^{\phi \Z/2} \to \TFR(T;2)^{\phi \Z/2}$ corresponds to the map induced on limits by the projection
\[ \proj_{f} \colon 
\underbrace{
(T^{\phi})^{C_2}
{}_r\!\times_f
(T^{\phi})^{C_2}
{}_r\!\times_f
\cdots 
{}_r\!\times_f(T^{\phi})^{C_2}{}
}_{n+1}
 \to
 \underbrace{
(T^{\phi})^{C_2}
{}_r\!\times_f
(T^{\phi})^{C_2}
{}_r\!\times_f 
\cdots 
{}_r\!\times_f(T^{\phi})^{C_2}{}
}_{n}
\]
off the first factor $(x_1, x_2, \dots, x_{n+1}) \mapsto (x_2, \dots, x_{n+1})$.  
\end{theorem}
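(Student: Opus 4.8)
The plan is to leverage Theorem \ref{inductivePB}, which already computes $\TRR^{n+1}(T;2)^{\phi\Z/2}$ as an iterated pullback and describes both the Weyl action $\sigma_{n+1}$ and the Frobenius $F\colon \TRR^{n+1}(T;2)^{\phi\Z/2}\to\TRR^n(T;2)^{\phi\Z/2}$ on it. The key observation is that $\TFR$ is the homotopy limit over $F$, not over $R$; since we have an explicit model for $F$ from Remark \ref{geometric formula}, we should be able to identify the limit directly. Recall from that remark that
\[
\TRR^{n+1}(T;2)^{\phi\Z/2}\simeq
\underbrace{(T^{\phi})^{C_2}\prescript{}{r}{\times}_f\cdots\prescript{}{r}{\times}_f(T^{\phi})^{C_2}}_{n}
\prescript{}{r}{\times}_{\sigma_1 r}
\underbrace{(T^{\phi})^{C_2}\prescript{}{f}{\times}_{\sigma_1 r}\cdots\prescript{}{f}{\times}_{\sigma_1 r}(T^{\phi})^{C_2}}_{n},
\]
and that the Frobenius $F$, for $n\geq 2$, sends $(x_{-n},\dots,x_{-2},x_{-1},x_1,\dots,x_n)$ to $(x_{-n},\dots,x_{-2},x_{-2},x_{-3},\dots,x_{-n})$, i.e. it ``folds'' the $2n$-fold pullback down to a $2(n-1)$-fold pullback by discarding the central coordinates $x_{-1},x_1$ and reflecting the left half onto the right half. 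First I would make precise the sense in which this folding map, together with the compatibility homotopies $\gamma_i,\gamma_0$ described in Remark \ref{geometric formula}, exhibits $\TRR^{n+1}(T;2)^{\phi\Z/2}$ as $F$-compatible with $\TRR^n(T;2)^{\phi\Z/2}$.

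Next I would compute $\holim_n(\TRR^{n+1}(T;2)^{\phi\Z/2}, F)$. The crucial point is that in the folding description of $F$, the values on the ``right half'' $(x_1,\dots,x_n)$ of an element of $\TRR^{n+1}(T;2)^{\phi\Z/2}$ are, up to the canonical homotopies, completely determined by the ``left half'' $(x_{-n},\dots,x_{-2})$ together with the extra datum $x_{-1}$ and the homotopies $\gamma_{-2},\dots,\gamma_{-n}$ — equivalently, a point in the $n$-fold pullback $(T^{\phi})^{C_2}\prescript{}{r}{\times}_f\cdots\prescript{}{r}{\times}_f(T^{\phi})^{C_2}$ of length $n$ with ordering chosen so that the outermost (first) coordinate $x_{-n}$ is the one that survives, together with an extra $(T^{\phi})^{C_2}$ glued along $r$ at the $x_{-1}$ end (which is $x_{-1}$ itself, with its homotopy $\gamma_{-2}$ to $f(x_{-2})$). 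So each $\TRR^{n+1}(T;2)^{\phi\Z/2}$ already receives a natural map from the $(n+1)$-fold $r/f$-pullback, and I would argue this map becomes an equivalence after passing to the homotopy limit over $F$ — more precisely I would show the tower $(\TRR^{n+1}(T;2)^{\phi\Z/2}, F)$ is pro-equivalent (or has the same homotopy limit as) the tower $(P_{n+1}, \proj)$ where $P_m$ denotes the $m$-fold pullback $(T^{\phi})^{C_2}\prescript{}{r}{\times}_f\cdots\prescript{}{r}{\times}_f(T^{\phi})^{C_2}$ and the maps project away the last factor. The identification of the transition maps with ``project away the last factor'' is exactly the bookkeeping content of the folding formula: discarding $x_{-1}, x_1$ and reflecting corresponds, on the surviving left-half data, to dropping the innermost coordinate.

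Finally I would identify $R$ under this description. On $\TRR^{n+1}(T;2)^{\phi\Z/2}$, the restriction $R$ projects away the outer two factors (for $n\geq 2$), i.e. discards $x_{-n}$ and $x_n$; on the surviving left-half picture this discards the outermost coordinate $x_{-n}$, which in the reindexed tower $(P_{n+1},\proj)$ — where coordinates are ordered with the innermost first — is exactly projection off the first factor, $\proj_f\colon P_{n+1}\to P_n$, $(x_1,\dots,x_{n+1})\mapsto(x_2,\dots,x_{n+1})$. One subtlety to treat carefully is the $n=1$ base case, where Remark \ref{geometric formula} gives slightly different (simpler) formulas for $R$ and $F$; I would check these fit the pattern with $P_1=(T^{\phi})^{C_2}$ and $P_2=(T^{\phi})^{C_2}\prescript{}{r}{\times}_{\sigma_1 r}(T^{\phi})^{C_2}$, noting that $\sigma_1 r$ and $f$ give equivalent pullbacks here after the natural identification, so that no essential discrepancy arises. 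The main obstacle I anticipate is precisely this coherence bookkeeping: making rigorous that the homotopies $\gamma_i$ and $\gamma_0$ assemble across the whole tower so that the folding maps are strictly compatible (or compatible up to coherent homotopy) with the projections, so that one genuinely gets an equivalence of towers and hence of homotopy limits, rather than just a levelwise map. Everything else — boundedness to ensure $\lim^1$-terms and the homotopy limit behave well, and the final reindexing — is routine given Theorem \ref{inductivePB}.
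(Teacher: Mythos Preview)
Your approach is correct and essentially identical to the paper's: establish a pro-equivalence between the tower $(A_n,F)$ of $2n$-fold pullbacks and the tower $(B_n,\proj_l)$ of $n$-fold $r/f$-pullbacks, via the projection $A_n\to B_n$ onto the left half in one direction and the folding map $B_{n+1}\to A_n$, $(x_1,\dots,x_{n+1})\mapsto(x_1,\dots,x_n,x_n,\dots,x_1)$, as pro-inverse; then $R$ drops the outermost left coordinate, which is $\proj_f$. One caution: your assertion that the right half of a point of $A_n$ is ``completely determined by the left half'' is false levelwise---it only holds in the image of $F$---so the argument really must go through the pro-equivalence you state next rather than a levelwise identification (and your parenthetical ``coordinates ordered with the innermost first'' should read outermost first to be consistent with your earlier convention).
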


\begin{proof} 
Let us first observe that since $T$ is bounded-below, by Lemma \ref{inverse limits}, $\TFR(T;2)^{\phi \Z/2}$ is equivalent to the homotopy inverse limit of
\[\xymatrix{\cdots \ar[r]  &  (\TRR^n(T;2))^{\phi \Z/2} \ar[r]^-F & \cdots \ar[r]^-F & (\TRR^2(T;2))^{\phi \Z/2}  \ar[r]^F  &  (\TRR^1(T;2))^{\phi \Z/2}}.\]
For convenience we introduce the notation:
\[A_n := \underbrace{
(T^{\phi})^{C_2}
{}_r\!\times_f
(T^{\phi})^{C_2}
{}_r\!\times_f
\cdots 
{}_r\!\times_f(T^{\phi})^{C_2}{}
}_{n}
{}_r\!\times_{\sigma_1r}
\underbrace{
(T^{\phi})^{C_2}
{}_f\!\times_{\sigma_1r} 
\cdots
{}_f\!\times_{\sigma_1r}
(T^{\phi})^{C_2}
{}_f\!\times_{\sigma_1r} 
(T^{\phi})^{C_2}
}_{n}  \]
and
\[B_n := \underbrace{
(T^{\phi})^{C_2}
{}_r\!\times_f
(T^{\phi})^{C_2}
{}_r\!\times_f
\cdots 
{}_r\!\times_f(T^{\phi})^{C_2}{}
}_{n}.\]
Projecting onto the first $n$ factors gives maps $A_n\to B_n$. These maps commute with the Frobenius $F$ on $A_n$ and the projection $\proj_l$ on $B_n$ by the description of $F$ in Remark \ref{geometric formula}, thus defining a morphism of towers. We will now show that this morphism is a pro-equivalence and thus induces an equivalence on homotopy inverse limits. Hence by Theorem \ref{inductivePB} and Remark \ref{geometric formula} we obtain the description of $\TFR(T;2)^{\phi \Z/2}$.  

Let us define a homotopy pro-inverse $B_{n+1}\to A_n$, for $n \geq 2$,  by the map induced by
\[(x_1, x_2, \dots, x_n, x_{n+1})\longmapsto(x_1, x_2, \dots, x_n, x_n, \dots, x_2,x_1).\]
The identifications between the components in the pullback $A_n$ are defined exactly as in Remark \ref{geometric formula} for the Frobenius, in particular
\[r x_n \simeq f x_{n+1} \simeq \sigma_1fx_{n+1} \simeq \sigma_1 r x_n\]
where the middle homotopy is the canonical one and the last path is just $\sigma_1$ applied to the first homotopy. That this map is indeed a pro-inverse \cite[Section III-\S 2]{BK}, follows immediately from the description of $F$ in Remark \ref{geometric formula}.

Let us now identify $R$ on $\TFR(T;2)^{\phi \Z/2}$. By the description of the map $R$ in Remark \ref{geometric formula} we see that for any $n \geq 1$, the diagram 
\[\xymatrix{A_{n+1} \ar[d]^R \ar[r]^-{\proj} & B_{n+1} \ar[d]^-{ \proj_{f}} 
\\ A_n \ar[r]^-{\proj} &B_n}\]
commutes.
Since the horizontal maps are entries of a pro-equivalence, passing to limits along $F$ gives the desired result. 
 \end{proof}

%
%

Finally we are ready to prove the main result of this section: 

\begin{theorem} \label{tcrformula1}
Let $T$ be a bounded below real $2$-cyclotomic spectrum. Then there is a natural equivalence 
\[
\TCR(T;2)^{\phi\Z/2}\simeq eq\left(
\xymatrix{(T^{\phi\Z/2})^{C_2}\ar@<1ex>[r]^-f\ar@<-1ex>[r]_-r&T^{\phi\Z/2}}\right).
\]
\end{theorem}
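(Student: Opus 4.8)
The plan is to combine Theorem \ref{TFformula} with the equaliser definition of $\TCR(T;2)$, namely $\TCR(T;2) = eq(\id, F\colon \TRR(T;2) \rightrightarrows \TRR(T;2))$. First I would recall the standard identification of the equaliser of $(\id,F)$ on $\TRR$ with the fibre of $\id - R$ on $\TFR$: since $R$ and $F$ commute, there is a natural equivalence
\[
\TCR(T;2) \simeq \fib\big(\id - R\colon \TFR(T;2) \to \TFR(T;2)\big),
\]
and because $\Z/2$-geometric fixed-points is exact and, by Lemma \ref{inverse limits}, commutes with the relevant inverse limits (everything in sight is bounded below, using that $T^{C_{2^n}}$ and $T^{D_{2^n}}$ are uniformly bounded below), we get $\TCR(T;2)^{\phi\Z/2} \simeq \fib(\id - R)$ computed on $\TFR(T;2)^{\phi\Z/2}$.

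Next I would plug in the explicit model from Theorem \ref{TFformula}: $\TFR(T;2)^{\phi\Z/2} \simeq \holim_n B_n$ where $B_n$ is the iterated pullback $(T^\phi)^{C_2} \prescript{}{r}{\times}_f \cdots \prescript{}{r}{\times}_f (T^\phi)^{C_2}$ ($n$ factors), the tower maps being $\proj_l$ (drop the last factor), and $R$ is induced by $\proj_f$ (drop the first factor). So the task reduces to computing the fibre of $\id - \proj_f$ on this homotopy limit. The key observation is that an element of $\holim_n B_n$ is a compatible sequence, which amounts to a sequence $(x_1, x_2, x_3, \dots)$ with $x_i \in (T^\phi)^{C_2}$ together with coherent homotopies $r x_i \simeq f x_{i+1}$ for all $i$ — in other words $\holim_n B_n$ is itself the infinite iterated pullback $(T^\phi)^{C_2} \prescript{}{r}{\times}_f (T^\phi)^{C_2} \prescript{}{r}{\times}_f \cdots$. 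The endomorphism $R$ shifts this sequence: $(x_1,x_2,\dots) \mapsto (x_2, x_3, \dots)$, with the coherence data shifted accordingly. The equaliser of $\id$ and this shift consists of sequences that are invariant under shifting, i.e. (coherently) constant sequences $(x,x,x,\dots)$; for such a sequence the coherence data $rx \simeq fx$ is a single chosen homotopy. This is exactly the datum of a point in the equaliser $eq(f,r\colon (T^\phi)^{C_2} \rightrightarrows T^\phi)$. I would make this precise by exhibiting an explicit map $eq(f,r) \to \holim_n B_n$ landing in the equaliser of $(\id, R)$ — the "diagonal" $x \mapsto (x,x,\dots)$ with constant homotopy — and checking it is an equivalence, e.g. by showing the cofibre/fibre of $\id - R$ on $\holim_n B_n$ is the cofibre of $f - r$, or by a direct $\lim$–$\lim^1$ / Milnor-sequence argument on homotopy groups once one trivialises the shift.

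The main obstacle I anticipate is making the informal "shift on an infinite iterated pullback" argument rigorous at the level of spectra (not just homotopy groups), i.e. carefully matching the homotopy limit $\holim_n B_n$ with an infinite homotopy pullback and tracking that $R = \proj_f$ is genuinely the shift together with the induced shift on coherence data, as spelled out in Remark \ref{geometric formula}. One clean way around this: instead of analysing the infinite pullback directly, compute the fibre of $\id - R$ at finite stages. Consider the map $B_{n} \xrightarrow{(\id, \proj_f)} B_n \times_{B_{n-1}} B_n$ or rather the "telescope" whose limit over $F$-type maps computes $\fib(\id - R)$; at finite level the fibre of $\id - \proj_f \colon B_{n+1} \to B_n$ is readily identified, and then one passes to the limit. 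Either way, the bounded-below hypothesis guarantees $\lim^1$-terms vanish in the relevant range or are controlled, so the homotopy-group computation upgrades to an equivalence of spectra. Once the shift picture is set up correctly, the conclusion $\TCR(T;2)^{\phi\Z/2} \simeq eq(f,r\colon (T^{\phi\Z/2})^{C_2} \rightrightarrows T^{\phi\Z/2})$ is immediate.
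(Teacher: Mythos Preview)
Your proposal is correct and follows essentially the same route as the paper: identify $\TCR(T;2)^{\phi\Z/2}$ with the equaliser of $(\id,R)$ on $\TFR(T;2)^{\phi\Z/2}$, invoke Theorem \ref{TFformula} to recognise $R$ as the shift $\proj_f$ on the tower of iterated pullbacks $B_n$, and then compute that equaliser. The paper's execution is precisely your ``clean way around'': it swaps the order of limits, computing at each finite stage the equaliser of $\proj_l,\proj_f\colon B_{n+1}\rightrightarrows B_n$, observes this is $eq(f,r)$ independently of $n$ with equivalences as transition maps, and concludes.
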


\begin{proof} Recall that $\TCR(T;2)^{\phi\Z/2}$ is equivalent to the equaliser
\[\TCR(T;2)^{\phi\Z/2}\simeq eq\left(
\xymatrix{\TFR(T;2)^{\phi \Z/2} \ar@<1ex>[r]^-{\id} \ar@<-1ex>[r]_-R&\TFR(T;2)^{\phi \Z/2}}\right).
\]
Now consider the commutative diagram
\[\small \xymatrix@C=14pt{ \cdots \ar[r]^-{\proj_l} & (T^{\phi})^{C_2}
{}_r\!\times_f
(T^{\phi})^{C_2}
{}_r\!\times_f (T^{\phi})^{C_2}
{}_r\!\times_f (T^{\phi})^{C_2} \ar[r]^-{\proj_{l}}  \ar@<-0.5ex>[d]_-{\proj_{f}}\ar@<+0.5ex>[d]^-{\proj_{l}}  & (T^{\phi})^{C_2}
{}_r\!\times_f
(T^{\phi})^{C_2}
{}_r\!\times_f(T^{\phi})^{C_2}   \ar[r]^-{\proj_{l}}  \ar@<-0.5ex>[d]_-{\proj_{f}} \ar@<+0.5ex>[d]^-{\proj_{l}}  & (T^{\phi})^{C_2}
{}_r\!\times_f
(T^{\phi})^{C_2} \ar@<-0.5ex>[d]_-{\proj_{f}} \ar@<+0.5ex>[d]^-{\proj_{l}}   \\  \cdots \ar[r]^-{\proj_{l}}  & (T^{\phi})^{C_2}
{}_r\!\times_f
(T^{\phi})^{C_2}
{}_r\!\times_f (T^{\phi})^{C_2}
\ar[r]^-{\proj_{l}}  & (T^{\phi})^{C_2}
{}_r\!\times_f
(T^{\phi})^{C_2}  \ar[r]^-{\proj_{l}}  & (T^{\phi})^{C_2}.
   }\]
By Theorem \ref{TFformula} if we pass to the inverse limits horizontally and then take the equaliser we get $\TCR(T;2)^{\phi\Z/2}$. Equivalently we can take equalisers in each degree vertically and then pass to the inverse limit. In general, given maps $a,b \colon X \to Y$, the equaliser of the projections
\[\xymatrix{\underbrace{
X
{}_a\!\times_b
X
{}_a\!\times_b
\cdots 
{}_a\!\times_bX{}
}_{n+1} \ar@<-0.5ex>[rr]_-{\proj_{f}} \ar@<+0.5ex>[rr]^-{\proj_{l}} & & \underbrace{
X
{}_a\!\times_b
X	
{}_a\!\times_b
\cdots 
{}_a\!\times_bX{}
}_{n}},\]
off the first and last component (and where all the products denote fibre products over $Y$), is equivalent to the equaliser of $a$ and $b$. Thus each vertical equaliser above is equivalent to 
\[eq\left(
\xymatrix{(T^{\phi\Z/2})^{C_2}\ar@<1ex>[r]^-f\ar@<-1ex>[r]_-r&T^{\phi\Z/2}}\right)\]
and the induced maps are equivalences.
\end{proof}

\section{TCR of spherical monoid-rings}\label{sec:grouprings}

We apply the formulas of the previous section to calculate the geometric fixed points of the real topological cyclic homology of spherical monoid-rings, and in particular for the sphere spectrum. In \S\ref{sec:assembly} we give the general formula and analyse a certain assembly map, and in  \S\ref{sec:discrete} we carry out some calculations for discrete groups.

\subsection{TCR of spherical monoid-rings and assembly}\label{sec:assembly}
Let $M$ be a topological monoid with anti-involution, that is a map of monoids $w\colon M^{op}\to M$ such that $w^2=\id$ (e.g. $M$ is a group and $w$ is inversion). The $\Z/2$-equivariant suspension spectrum of the underlying $\Z/2$-equivariant space
\[
\mathbb{S}[M]:=\Sigma^{\infty}_+M
\]
is then a ring spectrum with anti-involution, where the multiplication is inherited from the multiplication of $M$. 
We recall that, since $\mathbb{S}$ is the monoidal unit of the tensor product of spectra, there is an equivalence of $O(2)$-spectra
\[
\THR(\mathbb{S}[M])\simeq\Sigma^{\infty}_+B^{di}M,
\]
where $B^{di}M$ is the dihedral bar construct of the monoid $M$ with respect to the product of spaces (see \cite[Proposition 5.12]{THRmodels}). Thus from Corollary \ref{tcrodd} and Lemma \ref{C2THRphi} we immediately obtain that for every odd prime $p$
\[
\TCR(\mathbb{S}[M];p)^{\phi\Z/2}\simeq (\Sigma^{\infty}_+B^{di}M)^{\phi\Z/2}\simeq \Sigma^{\infty}_+B(M^{\Z/2},M,M^{\Z/2}),
\]
where the two-sided bar construction is for the left and right actions of $M$ on its fixed-points subspace $M^{\Z/2}$ defined respectively by $m\cdot x=mxw(m)$ and $x\cdot m=w(m)xm$. 

For the prime $2$ the situation is more delicate. The Weyl action of $C_2$ on
\[
(B^{di}M)^{\Z/2}\cong B(M^{\Z/2},M,M^{\Z/2})
\]
is given, after an edgewise subdivision of the simplicial space $B(M^{\Z/2},M,M^{\Z/2})$, by reversing the order of the product factors of the  $n$-simplices $M^{\Z/2}\times M^{\times 2n+1}\times M^{\Z/2}$ and apply $w$ on the $M$-factors. Therefore there is an isomorphism
\[
B(M^{\Z/2},M,M^{\Z/2})^{C_2}\cong |(\sd_e N_{\bullet}(M^{\Z/2},M,M^{\Z/2})^{C_2}|\cong B(M^{\Z/2},M,M^{\Z/2}),
\] 
which corresponds to the residual cyclotomic structure on the $\Z/2$-geometric fixed-points of $\THR(\mathbb{S}[M])$  (cf. Example \ref{minicycloTHR}). Under this identification, the fixed-points inclusion corresponds to the endomorphism $\psi$ of $B(M^{\Z/2},M,M^{\Z/2})$ given in simplicial degree $n$ by
\[
\psi(x,m_1,\dots,m_n,y)=(x,m_1,\dots,m_n,yw(m_n)\dots w(m_1)xm_1\dots m_ny),
\]
that is to say there is a commutative diagram
\[
\xymatrix{
B(M^{\Z/2},M,M^{\Z/2})^{C_2}\ar[dr]\ar[r]^-{\cong}&B(M^{\Z/2},M,M^{\Z/2})\ar[d]^-{\psi}
\\
& B(M^{\Z/2},M,M^{\Z/2})
}
\]
where the diagonal map is the fixed-points inclusion.
\begin{example}\label{psiOmega}
The typical example of a monoid with anti-involution is the signed loop space
\[M=\Omega^\sigma X:=\map_\ast(S^\sigma,X)\]
where $X$ is a pointed $\Z/2$-space, $S^\sigma$ is the sign representation sphere, and $\Z/2$ acts on the loop space by conjugation. In this case the dihedral bar construction is equivalent to the free loop space
\[
B^{di}\Omega^\sigma X\simeq \Lambda^\sigma X:=\map(S^\sigma,X)
\]
again with the conjugation action of $\Z/2$ (see \cite[Remark 5.13]{THRmodels}). Let us spell out the map $\psi$ under this identification. By passing to the upper half-circle, the $\Z/2$-fixed points of $\Lambda^\sigma X$ can be identified with those paths in $X$ which start and end at a fixed-point, or in other words the homotopy pullback
\[
(\Lambda^\sigma X)^{\Z/2}\simeq X^{\Z/2}\times_X X^{\Z/2}
\]
of the fixed-points inclusion along itself. Since the $C_2$-action on $\Lambda^\sigma X$ is given by the $180^{\circ}$ degrees rotation followed by the involution of $X$ pointwise, the residual $C_2$-action on $X^{\Z/2}\times_X X^{\Z/2}$ flips the direction of the path and applies the involution of $X$ pointwise. Hence, there is an isomorphism
\[
(X^{\Z/2}\times_X X^{\Z/2})^{C_2}\cong X^{\Z/2}\times_X X^{\Z/2}
\]
that restricts a $C_2$-fixed path $\gamma\colon [0,1]\to X$ to $[0,1/2]$. Under this identification the fixed-points inclusion corresponds to the map
\[
\psi\colon X^{\Z/2}\times_X X^{\Z/2}\longrightarrow X^{\Z/2}\times_X X^{\Z/2}
\]
which sends a path $\gamma$ to the concatenation $\gamma\star w(\overline{\gamma})$, where $\overline{\gamma}$ is the inverse path. This is some sort of squaring operation reminiscent of the Frobenius.
\end{example}

We are finally able to describe the geometric fixed-points of $\TCR(\mathbb{S}[M];2)$ (notice the analogy with \cite{BHM} and \cite[Theorem IV.3.6]{NS}):

\begin{theorem}\label{TCRSM}
Let $M$ be a well-pointed topological monoid with anti-involution. Then there is a pullback square
\[
\xymatrix@C=50pt{
\TCR(\mathbb{S}[M];2)^{\phi\Z/2}\ar[r]\ar[d]&\Sigma^{\infty}_+B(M^{\Z/2},M,M^{\Z/2})_{hC_2}\ar[d]^-{\trf}
\\
\Sigma^{\infty}_+B(M^{\Z/2},M,M^{\Z/2})\ar[r]^-{\id-\Sigma^{\infty}_+\psi}&\Sigma^{\infty}_+B(M^{\Z/2},M,M^{\Z/2})
}
\]
where the right vertical map is the transfer from orbits to fixed-points, followed by the forgetful map to the underlying spectrum.
In particular for $M=\ast$ there is an equivalence 
\[ {\TCR(\mathbb S;2)}^{\phi \Z/2}\simeq {\mathbb S} \oplus {\mathbb R}P^{\infty}_{-1},\]
where ${\mathbb R}P^{\infty}_{-1}$ is the fibre of the transfer $\trf \colon \Sigma^{\infty}_{+} {\mathbb R}P^{\infty}=\mathbb{S}_{hC_2} \to \mathbb{S}$.
\end{theorem}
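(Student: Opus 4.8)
The plan is to feed the model-independent formula of Theorem~\ref{tcrformula1} with $T=\THR(\mathbb{S}[M])$ and then unwind everything in terms of the space $Y:=B(M^{\Z/2},M,M^{\Z/2})$. First I would note that $\THR(\mathbb{S}[M])\simeq\Sigma^\infty_+ B^{di}M$ is the suspension spectrum of an $O(2)$-space, hence a bounded below real $2$-cyclotomic spectrum, so Theorem~\ref{tcrformula1} identifies $\TCR(\mathbb{S}[M];2)^{\phi\Z/2}$ with the equaliser of the two maps $f,r\colon (T^{\phi\Z/2})^{C_2}\to T^{\phi\Z/2}$. Since geometric fixed points of a suspension spectrum are the suspension spectrum of the fixed points, and $(B^{di}M)^{\Z/2}\cong Y$ as $C_2$-spaces as recalled before the statement (with $C_2$ the residual Weyl group), this gives an identification of $C_2$-spectra $T^{\phi\Z/2}\simeq\Sigma^\infty_+ Y$; here well-pointedness of $M$ ensures that $Y$ and $Y^{C_2}$ have the homotopy type of $C_2$-CW complexes, so that the tom Dieck splitting below applies.

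Next I would split
\[
(\Sigma^\infty_+ Y)^{C_2}\simeq \Sigma^\infty_+ Y^{C_2}\vee \Sigma^\infty_+ Y_{hC_2}
\]
by the tom Dieck splitting --- equivalently, by splitting the isotropy separation sequence of the $C_2$-spectrum $\Sigma^\infty_+ Y$, which splits for suspension spectra --- and identify $f$ and $r$ under it. The map $r$ is the projection to the geometric fixed points $(\Sigma^\infty_+ Y)^{\phi C_2}=\Sigma^\infty_+ Y^{C_2}$, i.e.\ the projection onto the first summand, followed by the cyclotomic identification $\Sigma^\infty_+ Y^{C_2}\xrightarrow{\simeq}\Sigma^\infty_+ Y$, which by the discussion preceding the statement is $\Sigma^\infty_+\theta$ for the edgewise-subdivision isomorphism $\theta\colon Y^{C_2}\xrightarrow{\cong}Y$. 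The forgetful map $f$ restricts on the summand $\Sigma^\infty_+ Y^{C_2}$ to the suspension spectrum of the fixed-point inclusion $Y^{C_2}\hookrightarrow Y$, which corresponds to $\psi$ under $\theta$ by loc.\ cit., hence to $\Sigma^\infty_+\psi\circ\Sigma^\infty_+\theta$; and it restricts on $\Sigma^\infty_+ Y_{hC_2}$ to the transfer $\trf$ of the double cover, which is exactly the ``transfer from orbits to fixed points followed by the forgetful map'' of the statement.

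With these identifications the rest is formal. In spectra one has $eq(f,r)=\fib(f-r)$; substituting $\theta$ to replace the first summand $\Sigma^\infty_+ Y^{C_2}$ by $\Sigma^\infty_+ Y$ turns $f-r$ into the map $\Sigma^\infty_+ Y\vee\Sigma^\infty_+ Y_{hC_2}\to\Sigma^\infty_+ Y$ which equals $\Sigma^\infty_+\psi-\id$ on the first summand and $\trf$ on the second. Since the fibre of a map $(a,b)\colon A\vee B\to C$ computes the homotopy pullback of $a$ along $-b$, this fibre is precisely the homotopy pullback displayed in the statement, the sign being absorbed into an automorphism of $\Sigma^\infty_+ Y_{hC_2}$. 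For $M=\ast$ we have $Y=\ast$, so $\psi=\id$ and $Y_{hC_2}=BC_2={\mathbb R}P^{\infty}$, and the square becomes $\mathbb{S}\xrightarrow{0}\mathbb{S}\xleftarrow{\trf}\Sigma^\infty_+{\mathbb R}P^{\infty}$; a homotopy pullback along a null map splits off the source of that map, giving $\TCR(\mathbb{S};2)^{\phi\Z/2}\simeq \mathbb{S}\vee\fib(\trf)=\mathbb{S}\vee{\mathbb R}P^{\infty}_{-1}$.

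The step I expect to be the main obstacle is the bookkeeping in the middle paragraph: one must check that the tom Dieck splitting is compatible with the cyclotomic identification of $(\Sigma^\infty_+ Y)^{\phi C_2}$ with $\Sigma^\infty_+ Y$ coming from the cyclotomic structure of $\THR$ (so that $r$ really is $\Sigma^\infty_+\theta$ precomposed with a projection), that ``the fixed-point inclusion corresponds to $\psi$'' is read off with respect to the \emph{same} isomorphism $\theta$, and that the transfer produced by the splitting is literally the transfer in the statement. It is also worth spelling out that the residual $C_2$ here is the rotation subgroup of $O(2)$, so that $(T^{\phi\Z/2})^{\phi C_2}=T^{\phi D_2}$ with $D_2=\Z/2\times C_2\subset O(2)$, and that this agrees with $(T^{\phi C_2})^{\phi\Z/2}\simeq T^{\phi\Z/2}$ obtained from the real cyclotomic structure. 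The individual ingredients --- the tom Dieck splitting, geometric fixed points of suspension spectra, the edgewise subdivision --- are all standard; the work is in threading the identifications and signs so that the equaliser becomes exactly the advertised pullback.
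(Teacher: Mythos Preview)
Your proposal is correct and follows essentially the same route as the paper: apply Theorem~\ref{tcrformula1} to $T=\THR(\mathbb{S}[M])$, identify $T^{\phi\Z/2}$ with $\Sigma^\infty_+ B(M^{\Z/2},M,M^{\Z/2})$ as a $C_2$-spectrum, split $(\Sigma^\infty_+ Y)^{C_2}$ via tom Dieck, read off $r=(1,0)$ and $f=(\psi,\trf)$ under the identification $Y^{C_2}\cong Y$, and rewrite the equaliser as the displayed pullback. Your extra care about threading $\theta$ through both $r$ and $f$ consistently, and about the sign when passing from $\fib(f-r)$ to the pullback, is exactly the bookkeeping the paper leaves implicit.
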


\begin{proof}
From the identification of $\THR(\mathbb{S}[M])$ with the dihedral bar construction of $M$ we obtain an equivalence of $C_2$-spectra
\[
\THR(\mathbb{S}[M])^{\phi\Z/2}\simeq (\Sigma^{\infty}_+B^{di}M)^{\phi\Z/2}\simeq \Sigma^{\infty}_+B(M^{\Z/2},M,M^{\Z/2}).
\]
Thus the tom-Dieck splitting (see e.g. \cite[Section 6]{Schwede}) gives an equivalence
\begin{align*}
(\THR(\mathbb{S}[M])^{\phi\Z/2})^{C_2}&\simeq (\Sigma^{\infty}_+B(M^{\Z/2},M,M^{\Z/2}))^{C_2}
\\&\simeq \Sigma^{\infty}_+(B(M^{\Z/2},M,M^{\Z/2})^{C_2})\oplus \Sigma^{\infty}_+B(M^{\Z/2},M,M^{\Z/2})_{hC_2}
\\&\cong \Sigma^{\infty}_+B(M^{\Z/2},M,M^{\Z/2})\oplus \Sigma^{\infty}_+B(M^{\Z/2},M,M^{\Z/2})_{hC_2} .
\end{align*}
where the maps $r$ and $f$ correspond respectively to the maps
\[
(1,0),(\psi,\trf)\colon \Sigma^{\infty}_+B(M^{\Z/2},M,M^{\Z/2})\oplus \Sigma^{\infty}_+B(M^{\Z/2},M,M^{\Z/2})_{hC_2} \to \Sigma^{\infty}_+B(M^{\Z/2},M,M^{\Z/2})
\]
where $r$ is the projection onto the first summand, and $f$ is the map $\psi$ on the first summand and the transfer on the second. Thus the equaliser of $r$ and $f$ is computed by the pullback above, and it is equivalent to $\TCR(\mathbb{S}[M];2)^{\phi\Z/2}$ by Theorem \ref{geometricsplitting}.

If $M=\ast$ the bottom horizontal map of the pullback square is zero, and the pullback splits as the fibre of $\trf$ and $\mathbb{S}$.
\end{proof}

\begin{rem}
The explicit identification of the maps $r,f$ of the proof of \ref{TCRSM} in fact gives a description of the full $\TR$-tower of $\mathbb{S}[M]$. 
Indeed, one can see by direct calculation that for every $2\leq n\leq \infty$ there is an equivalence of spectra
\[
\TRR^{n}(\mathbb{S}[M];2)^{\phi \Z/2} \simeq \Sigma^{\infty}_+B(M^{\Z/2},M,M^{\Z/2}) \times \prod_{j=1}^{2n-2} \Sigma^{\infty}_+B(M^{\Z/2},M,M^{\Z/2})_{hC_2},\]
and the maps $R,F \colon  \TRR^{n+1}(\mathbb{S}[M];2)^{\phi \Z/2}  \to \TRR^{n}(\mathbb{S}[M];2)^{\phi \Z/2}$ are given respectively by the projection
\[
R(a, x_{-n}, \dots, x_{-1},x_1,\dots,x_n)=(a, x_{-n+1}, \dots, x_{-1},x_1,\dots,x_{n-1})
\]
and by
\[F(a, x_{-n}, \dots, x_{-1},x_1,\dots,x_n)=(\psi(a)+\trf(x_{-1}), x_{-n}, \dots, x_{-2},x_{-2},\dots,x_{-n}).\]
\end{rem}

Let us now turn our attention to the case where $G$ is a group-like topological monoid with involution, that is a  topological monoid with involution $G$ such that $\pi_0G$ is a group. In this case the canonical map
\[
G\longrightarrow \Omega^{\sigma}B^\sigma G
\]
is an equivalence (see \cite{Kristian}), where $B^{\sigma}G$ is the subdivision of $BG$ with the simplicial involution that sends $(g_1,\dots,g_{2n+1})$ to $(w(g_{2n+1}),\dots,w(g_1))$. The fixed-points space of $X=B^{\sigma}G$ is then the one-sided bar construction
\[
(B^{\sigma}G)^{\Z/2}\simeq B(G^{\Z/2}, G)
\]
of $G$ acting on its fixed-points set by $x \cdot g =w(g)xg$. We will therefore phrase the next results in terms of signed loop spaces $G=\Omega^\sigma X$, where $X$ is any pointed $\Z/2$-space. We also note that the fixed-points subspace of $G=\Omega^\sigma X$ is the fibre of the inclusion
\[
G^{\Z/2}=(\Omega^\sigma X)^{\Z/2}=\fib(X^{\Z/2}\to X)
\]
where $\alpha \in\Omega X$ acts on a path $\gamma$ from the base-point to a fixed-point by concatenation $\gamma \cdot \alpha =\omega (\overline{\alpha})\star \gamma$.

\begin{cor}\label{splittingTCRSG}
For every well-pointed $\Z/2$-space $X$ there is a fibre sequence
\[
(\Sigma^{\infty}_+X^{\Z/2})\otimes(\mathbb{S}\oplus {\mathbb R}P^{\infty}_{-1})\xrightarrow{\Delta} \TCR(\mathbb{S}[\Omega^\sigma X];2)^{\phi\Z/2}\to Q,
\]
where $Q$ is the pullback of $\Sigma^{\infty}_+C\xrightarrow{\id-\Sigma^{\infty}_+\psi} \Sigma^{\infty}_+C\xleftarrow{\trf} \Sigma^{\infty}_+C_{hC_2}$ and $C$ the cofibre of the diagonal $\Delta\colon X^{\Z/2}\to X^{\Z/2}\times_XX^{\Z/2}$.

If the involution of $X$ is trivial, $Q$ is zero and there is a natural equivalence
\[
\TCR(\mathbb{S}[\Omega^\sigma X];2)^{\phi\Z/2}\simeq (\Sigma^{\infty}_+X)\otimes(\mathbb{S}\oplus {\mathbb R}P^{\infty}_{-1}).
\]
\end{cor}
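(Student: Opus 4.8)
The plan is to read off everything from the pullback square of Theorem~\ref{TCRSM} for $M=\Omega^\sigma X$, combined with the explicit description of the residual cyclotomic data in Example~\ref{psiOmega}. Write $\Lambda:=X^{\Z/2}\times_X X^{\Z/2}$ for the homotopy pullback of the fixed-point inclusion along itself, equipped with its residual $C_2$-action (reverse the path and apply the involution of $X$), so that Theorem~\ref{TCRSM} and Example~\ref{psiOmega} present $\TCR(\mathbb{S}[\Omega^\sigma X];2)^{\phi\Z/2}$ as the homotopy pullback of
\[
\Sigma^\infty_+\Lambda\xrightarrow{\id-\Sigma^\infty_+\psi}\Sigma^\infty_+\Lambda\xleftarrow{\trf}\Sigma^\infty_+\Lambda_{hC_2},
\]
with $\psi(\gamma)=\gamma\star w(\overline\gamma)$. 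The first thing I would record is that the constant-path diagonal $\delta\colon X^{\Z/2}\to\Lambda$, which is $C_2$-equivariant for the trivial action on its source, satisfies $\psi\circ\delta\simeq\delta$ by a reparametrisation homotopy natural in the point (the constant path at a point of $X^{\Z/2}$ concatenated with its reverse is homotopic to the constant path, since $w$ fixes $X^{\Z/2}$ pointwise). Hence $\delta$ induces a morphism from the cospan
\[
\Sigma^\infty_+ X^{\Z/2}\xrightarrow{0}\Sigma^\infty_+ X^{\Z/2}\xleftarrow{\trf}\Sigma^\infty_+(X^{\Z/2})_{hC_2}
\]
to the previous one --- the left and middle legs via $\Sigma^\infty_+\delta$, the right leg via $\Sigma^\infty_+\delta_{hC_2}$ --- the left square commuting by $(\id-\Sigma^\infty_+\psi)\circ\Sigma^\infty_+\delta\simeq 0$ and the right square by naturality of the transfer.

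Next I would identify the source and the map $\Delta$. Since $C_2$ acts trivially on $X^{\Z/2}$, the second cospan above is obtained from the $M=\ast$ cospan of Theorem~\ref{TCRSM}, namely $\mathbb{S}\xrightarrow{0}\mathbb{S}\xleftarrow{\trf}\Sigma^\infty_+ BC_2$, by tensoring with $\Sigma^\infty_+ X^{\Z/2}$; therefore its homotopy pullback is $\Sigma^\infty_+ X^{\Z/2}\otimes\TCR(\mathbb{S};2)^{\phi\Z/2}\simeq\Sigma^\infty_+ X^{\Z/2}\otimes(\mathbb{S}\vee{\mathbb R}P^\infty_{-1})$, using the computation for $M=\ast$ in Theorem~\ref{TCRSM} and that smashing with a spectrum is exact. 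The induced map on homotopy pullbacks is the map $\Delta$ of the statement. To compute its cofibre $Q$, I would use the standard fact that in a stable setting the cofibre of a map between two homotopy pullbacks of cospans is the homotopy pullback of the cospan of cofibres of the three legs --- seen by rewriting each homotopy pullback as the fibre of the difference map of its two legs into the middle term and applying the ($3\times 3$, or octahedral) comparison of iterated fibres. Writing $\overline C:=\cof\big(\Sigma^\infty_+ X^{\Z/2}\xrightarrow{\Sigma^\infty_+\delta}\Sigma^\infty_+\Lambda\big)$, which is the $\Sigma^\infty_+ C$ of the statement with $C$ the cofibre of $\delta$, this yields that $Q$ is the homotopy pullback of $\overline C\xrightarrow{\id-\Sigma^\infty_+\psi}\overline C\xleftarrow{\trf}\overline C_{hC_2}$: the two outer legwise cofibres agree, the third is $\cof\big(\Sigma^\infty_+(X^{\Z/2})_{hC_2}\to\Sigma^\infty_+\Lambda_{hC_2}\big)\simeq\overline C_{hC_2}$ since $(-)_{hC_2}$ is a colimit and hence commutes with this cofibre and with $\Sigma^\infty_+$, and the induced maps are $\id-\Sigma^\infty_+\psi$ (both $\id$ and $\Sigma^\infty_+\psi$ descend to $\overline C$, the latter because $\psi$ carries $\delta(X^{\Z/2})$ into itself up to the chosen homotopy) and the transfer (by naturality). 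This is exactly the asserted fibre sequence, and naturality in $X$ is automatic since every step is functorial.

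For the last assertion, assume the involution on $X$ is trivial. Then $X^{\Z/2}=X$, $\Lambda=X\times_X^h X$, and the diagonal $\delta\colon X\to X\times_X^h X$ is an equivalence, being a section of either projection, which is itself an equivalence. Hence $\overline C\simeq 0$, so $Q\simeq 0$ and the fibre sequence degenerates to the claimed natural equivalence $\TCR(\mathbb{S}[\Omega^\sigma X];2)^{\phi\Z/2}\simeq\Sigma^\infty_+ X\otimes(\mathbb{S}\vee{\mathbb R}P^\infty_{-1})$. The only genuinely delicate point in the argument is the comparison of the cofibre of the map of homotopy pullbacks with the homotopy pullback of the cofibres, together with the bookkeeping of which maps descend to the legwise cofibres; everything else is a direct unwinding of Theorem~\ref{TCRSM} and Example~\ref{psiOmega}.
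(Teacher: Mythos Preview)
Your argument is correct and follows essentially the same route as the paper's proof: construct the map of cospans induced by the $C_2$-equivariant diagonal $X^{\Z/2}\to X^{\Z/2}\times_X X^{\Z/2}$, identify the pullback of the source cospan with $(\Sigma^\infty_+X^{\Z/2})\otimes(\mathbb{S}\vee\mathbb{R}P^\infty_{-1})$, and take vertical cofibres to obtain $Q$. Your write-up is in fact a bit more explicit than the paper's about why $\psi$ fixes constant paths and why the cofibre of the map of pullbacks is the pullback of the cofibres, but the structure and ingredients are identical.
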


\begin{proof}
The diagonal $\Delta\colon X^{\Z/2}\to X^{\Z/2}\times_XX^{\Z/2}$ is clearly equivariant for the Weyl $C_2$-action on the pullback and the trivial action on $X^{\Z/2}$. It therefore induces a commutative diagram
\[
\xymatrix@C=50pt{
\Sigma^{\infty}_+(X^{\Z/2}\times_XX^{\Z/2})\ar[r]^-{\id-\Sigma^{\infty}_+\psi}
&
\Sigma^{\infty}_+(X^{\Z/2}\times_XX^{\Z/2})
&
\Sigma^{\infty}_+ (X^{\Z/2}\times_XX^{\Z/2})_{hC_2}\ar[l]_-{\trf}
\\
\Sigma^{\infty}_+(X^{\Z/2})\ar[r]^-{0}\ar[u]^{\Delta}
&
\Sigma^{\infty}_+(X^{\Z/2})\ar[u]^{\Delta}
&
\Sigma^{\infty}_+ (X^{\Z/2})_{hC_2}\ar[l]_-{\trf}\ar[u]^{\Delta}
}
\]
where the bottom left map is zero since $\psi$ is the identity on constant paths. The limit of the top row is $\TCR(\mathbb{S}[\Omega^\sigma X];2)^{\phi\Z/2}$ by Theorem \ref{TCRSM} and Example \ref{psiOmega} the limit of the bottom row is $(\Sigma^{\infty}_+X^{\Z/2})\otimes(\mathbb{S}\oplus {\mathbb R}P^{\infty}_{-1})$. By taking cofibres vertically we obtain the fibre sequence of the statement. 


 If the involution on $X$ is trivial the diagonal map $\Delta\colon X\to X\times_XX$ is an equivalence and thus $Q$ is trivial.
\end{proof}

If the involution of $X$ is not trivial the cofibre $Q$ need not be zero, as illustrated in the following example.

\begin{example}
Suppose  that $X$ is a pointed space, and let us consider the pointed free $\Z/2$-space $X^b=X\wedge E\Z/2_+$. Since its fixed-points are contractible, the fixed-points of $G=\Omega^\sigma X^b$ are
\[
G^{\Z/2}=(\Omega^\sigma X^b)^{\Z/2}\simeq \Omega X,
\]
since this is the space of paths from the base-point to a fixed-point of $X^b$.
In this case 
\[B(G^{\Z/2},G,G^{\Z/2})\simeq B(\Omega X,\Omega X,\Omega X)\simeq \Omega X,\]
and the map $\psi\colon \Omega X\to \Omega X$ sends a loop $\gamma$ to $\gamma\star \overline{\gamma}$ and it is therefore null. By Theorem \ref{TCRSM} there is a pullback
\[
\xymatrix@C=50pt{
\TCR(\mathbb{S}[\Omega^\sigma X^b];2)^{\phi\Z/2}\ar[r]\ar[d]&(\Sigma^{\infty}_+ \Omega^{\sigma} X)_{hC_2}\ar[d]^-{\trf}
\\
\Sigma^{\infty}_+\Omega X\ar[r]^-{\id-0_+}&\Sigma^{\infty}_+\Omega X
}
\]
where $C_2$ acts on $\Omega^{\sigma} X$ by the loop inversion.
There is therefore a splitting
\[
\TCR(\mathbb{S}[\Omega^\sigma X^b];2)^{\phi\Z/2}\simeq \mathbb{S}\oplus {\mathbb R}P^{\infty}_{-1}\oplus (\Sigma^{\infty} \Omega^\sigma X)_{hC_2}.
\]
In this case the map $\Delta$ of Corollary \ref{splittingTCRSG} is easily seen to split, and the homotopy orbits summand corresponds to the summand $Q$.
\end{example}

\subsection{TCR of spherical group-rings for some discrete groups}\label{sec:discrete}

Let us now consider a discrete group $G$ with anti-involution. The map $\Delta$ of Corollary \ref{splittingTCRSG} corresponds to the simplicial map
\[
B(G^{\Z/2},G)\longrightarrow B(G^{\Z/2},G,G^{\Z/2})
\]
that sends $(x,g_1,\dots,g_n)$ to $(x,g_1,\dots,g_n,g_n^{-1}\dots g^{-1}_1x^{-1}w(g_n^{-1}\dots g^{-1}_1))$. This follows from identifying 
\[B(G^{\Z/2},G) \times_{BG} B(G^{\Z/2},G) \]
with $B(G^{\Z/2},G,G^{\Z/2})$ via the map 
\[(x, g_1, \dots, g_n, y, h_1, \dots,h_n) \mapsto (x, g_1, \dots, g_n, h_n^{-1}\dots h^{-1}_1y^{-1}w(h_n^{-1}\dots h^{-1}_1)).\]

\begin{example}
 Suppose that the involution of $G$ is inversion $w=(-)^{-1}\colon G^{op}\to G$. Then the fixed-points space of $G$ consists of the set of elements of order $2$.
If $G$ has no $2$-torsion we are in the situation of Corollary \ref{splittingTCRSG} where $G^{\Z/2}=1$ and $(B^{\sigma}G)^{\Z/2}\simeq BG$, and  
 \[
 \TCR(\mathbb S[G];2)^{\phi \Z/2}\simeq (\Sigma^{\infty}_+BG)\otimes(\mathbb{S}\oplus {\mathbb R}P^{\infty}_{-1}).
\]
For example let us consider the spherical Laurent polynomial ring $\mathbb{S}[t,t^{-1}]:=\mathbb{S}[\mathbb{Z}]$, where the involution acts by inversion in $\mathbb{Z}$, i.e. swaps $t$ and $t^{-1}$. Then
 \[
 \TCR(\mathbb S[t,t^{-1}];2)^{\phi \Z/2}\simeq (\Sigma^{\infty}_+S^1)\otimes(\mathbb{S}\oplus {\mathbb R}P^{\infty}_{-1}).
\]
\end{example}

Now suppose that $G$ is a discrete group with a general anti-involution $w\colon G^{op}\to G$. 
The bar construction $B(G^{\Z/2},G,G^{\Z/2})$ is the nerve of a groupoid, and therefore after a choice of representatives for its isomorphism classes it decomposes as
\[
B(G^{\Z/2},G,G^{\Z/2})\cong\coprod_{[x,y]\in (G^{\Z/2}\times G^{\Z/2})/\sim}BAut(x,y)
\]
where the equivalence relation identifies $(w(g)xg,y)$ with $(x,gyw(g))$, and the automorphism group of $(x,y)\in G^{\Z/2}\times G^{\Z/2}$ is the subgroup $Aut(x,y)=\{g\in G\ |\ w(g)xg=x\  ,\  gyw(g)=y\}$. Let us now determine the map $\psi$ and the Weyl action, so that all the ingredients of Theorem \ref{TCRSM} are in place.

\begin{lemma}
The maps $\psi$ and the action of the generator $\tau$ of the Weyl group (as a homotopy coherent action) are given, under the decomposition of $B(G^{\Z/2},G,G^{\Z/2})$ above, respectively by the maps
\[
\psi([x,y],g)=([x,yxy],g)\ \ \ \ \ \ \  \mbox{and}  \ \ \ \  \ \ \ \tau([x,y],g)=([y,x],w(g^{-1})).
\]
In particular the homotopy orbits of  $B(G^{\Z/2},G,G^{\Z/2})$ for the Weyl action can be computed using this strict action of $\tau$.
\end{lemma}

\begin{proof}
The description of $\psi$ is immediate from the formula before Example \ref{psiOmega}. The description of the action of the generator follows from the well-known fact that if $\mathcal{G}$ is a groupoid with strict duality, that is a functor $w\colon \mathcal{G}^{op}\to \mathcal{G}$ such that $w^2=\id$, then the $\Z/2$-actions on $B\mathcal{G}$ defined respectively by the levelwise duality together with inverting the order of the simplex coordinates, as in \S\ref{sec:subdivisions}, and the one defined by the endofunctor
\[
\mathcal{G}\xrightarrow{(-)^{-1}}\mathcal{G}^{op}\xrightarrow{w} \mathcal{G},
\]
are homotopy coherently equivalent. We were not able to track down a proof, so we include an argument for the reader's convenience. After applying the subdivision functor $\sd_e$ from \S\ref{sec:subdivisions} to the nerve of $\mathcal{G}$, the two actions are respectively equivalent to the simplicial actions defined levelwise on $\sd_eN\mathcal{G}$ by $w$ and $w\circ (-)^{-1}$. The subdivided nerve $\sd_eN\mathcal{G}$ is isomorphic to the nerve of the twisted arrow category of $\mathcal{G}$, and the two actions correspond respectively to the ones induced by the (covariant) endofunctors $w$ and $w\circ (-)^{-1}$, defined on the objects of the twisted arrow category respectively by
\[
w(x\xrightarrow{g}y)=(w(y)\xrightarrow{w(g)}w(x))\ \ \ \ \ \ \  \mbox{and}  \ \ \ \  \ \ \ w((x\xrightarrow{g}y)^{-1})=(w(x)\xrightarrow{w(g^{-1})}w(y)).
\]
Let us regard these actions as functors $\Z/2\to \Gpd$ to the category of groupoids, where both send the unique object of $\Z/2$ to the twisted arrow category of $\mathcal{G}$. Then the diagram
\[
\xymatrix{
w(y)\ar[r]^-{w(g)}\ar[d]_-{w(g)}&w(x)
\\
w(x)\ar[r]^-{w(g^{-1})}&w(y)\ar[u]_-{w(g)}
}
\]
exhibits the 2-cells of a pseudonatural isomorphism on the identity transformation between the functors $\Z/2\to \Gpd$. Thus the two actions on the geometric realisations are homotopy coherently equivalent.
\end{proof}

With these formulas at hand one should in principle be able to determine the pullback of Theorem \ref{TCRSM}, as its maps consist of products of diagonals and transfers $\Sigma^{\infty}_+{\mathbb R}P^{\infty}\to \mathbb{S}$. The combinatorics of which components are hit by the diagonals are complicated in this generality, but we compute them fully in the following special cases.

\begin{example}
 Suppose that the order-two elements of $G$ are included in the centre of $G$, and that the involution on $G$ is inversion. This is exactly the situation where the action of $G$ on $G^{\Z/2}$ is trivial. It follows that $G^{\Z/2}=G_2$ consists of the elements of order $2$, and
\[
B(G_2,G,G_2)=\coprod_{G_2\times G_2}BG.
\]
The map $\psi$ sends the component $(x,y)$ to the component is $(x,x)$ via the identity of $BG$, and the involution freely permutes the components indexed by pairs $(x,y)$ with $x\neq y$, and is trivial on the components $(x,x)$. There is therefore a splitting
\[
\TCR(\mathbb{S}[G];2)^{\phi\Z/2}\simeq  ((\Sigma^{\infty}_+G_{2})\oplus P)\otimes \Sigma^{\infty}_+BG,
\]
where $P$ is the pullback
\[
\xymatrix@C=50pt{
P\ar[r]\ar[d]&(G_2)_+\otimes\Sigma^{\infty}_+{\mathbb R}P^{\infty}\ar[d]^-{\id\otimes\trf}
\\
(\Delta^c/C_2)_+\otimes \mathbb{S}\ar[r]^-{
q
}&(G_2)_+\otimes \mathbb{S}
}
\]
where $\Delta^c\subset G_2\times G_2$ is the complement of the diagonal with the involution that flips the factors, and $q$ is the sum of the maps that send the component $[x\neq y]$ respectively to the components $x$ and $y$.

For example for $G=\mathbb{Z}$ with the minus involution we recover the calculation for $\mathbb{S}[t,t^{-1}]$ of the example above, since in this case $G_2=1$. On the other hand for $G=C_2$ the map $q$ is the diagonal, and $P$ is the pullback of the transfer along itself, and
\[
\TCR(\mathbb S[C_2];2)^{\phi \Z/2}\simeq ((\Sigma^{\infty}_+C_{2})\oplus (\Sigma^{\infty}_+{\mathbb R}P^{\infty}\times_{\mathbb{S}} \Sigma^{\infty}_+{\mathbb R}P^{\infty}))\otimes \Sigma^{\infty}_+BC_2.
\]
Here we also see that $\TCR(\mathbb S[C_2];2)^{\phi \Z/2}$ splits off an ${\mathbb R}P^{\infty}_{-1}\otimes \Sigma^{\infty}_+BC_2$-summand, since the pullback of the two transfers splits as ${\mathbb R}P^{\infty}_{-1}\oplus  \Sigma^{\infty}_+{\mathbb R}P^{\infty}$, but this splitting is however non-canonical.

Notice that $P$ depends only on the order-two elements of $G$, so in fact for every even integer $n\geq 2$
\[
\TCR(\mathbb S[C_n];2)^{\phi \Z/2}\simeq ((\Sigma^{\infty}_+C_{2})\oplus (\Sigma^{\infty}_+{\mathbb R}P^{\infty}\times_{\mathbb{S}} \Sigma^{\infty}_+{\mathbb R}P^{\infty}))\otimes \Sigma^{\infty}_+BC_n
\]
where again the involution on $C_n$ is inversion. 
\end{example}

\begin{example}
 Now suppose that $G$ is abelian and endowed with the trivial involution. Then $G^{\Z/2}=G$ with left and right $G$-actions $g\cdot x:=2g+x$. The components of the two-sided bar construction are described by a bijection
\[
(G\times G)/\sim\ \ \cong G\times G/2
\]
which sends $[x,y]$ to $(x+y,[y])$. Under this equivalence the $C_2$-action sends $(x,z)$ to $(x,[x]+z)$, and $\psi$ to
\[
\psi(x,z)=(2x,[x]+z).
\]
The $C_2$-fixed-points set of $G\times G/2$ is therefore the set of pairs of the form $(2g,x)$, and $G\times G/2$ decomposes $C_2$-equivariantly as
\[
G\times G/2\cong (2G\times G/2)\amalg (((G\setminus 2G)\times G/2)/C_2)\times C_2.
\]
If we assume additionally that $G$ has no $2$-torsion, then the fundamental groups of the two-sided bar construction vanish since the corresponding groupoid has only trivial automorphisms. The pullback diagram describing $\TCR(\mathbb S[G];2)^{\phi \Z/2}$ then takes the form
\[
\xymatrix@C=50pt{
\TCR(\mathbb{S}[G];2)^{\phi\Z/2}\ar[r]\ar[d]&((2G\times G/2)_+\otimes\Sigma^{\infty}_+{\mathbb R}P^{\infty})\oplus (((G\setminus 2G)\times G/2)/C_2)_+\otimes \mathbb{S}
\ar[d]^-{\incl\otimes\trf\oplus \Delta}
\\
(G\times G/2)_+\otimes\mathbb{S}\ar[r]^-{\id-\Sigma^{\infty}_+\psi}&(G\times G/2)_+\otimes\mathbb{S}
}
\]
where $\Delta$ sends the component of an orbit $[g,x]$ with $g\notin 2G$ diagonally to the components $(g,x)$ and $(g,[g]+x)$.

Let us now identify this pullback under the additional assumption that $G$ does not have elements infinitely divisible by $2$, that is for any $0\neq g \in G$  there exists $n \in \mathbb{N}$, such that $g=2^nx$ does not have a solution. Under this assumption we can easily compute the cofibre of $\id - \Sigma^{\infty}_+\psi$. Indeed, from the commutative diagram
\[ \xymatrix{(2G\times G/2)_+\otimes\mathbb{S}\ar[r]^-{\id-\Sigma^{\infty}_+\psi} \ar@{^{(}->}[d] &(2G\times G/2)_+\otimes\mathbb{S} \ar@{^{(}->}[d] \\ (G\times G/2)_+\otimes\mathbb{S}\ar[r]^-{\id-\Sigma^{\infty}_+\psi} \ar[d] &(G\times G/2)_+\otimes\mathbb{S} \ar[d] \\ ((G\setminus 2G) \times G/2)_+\otimes\mathbb{S}\ar@{-->}[r]^{\simeq} &((G\setminus 2G) \times G/2)_+\otimes\mathbb{S}   }  \]
we see that the lower horizontal map induced on cofibres is an equivalence. Hence the cofibre of $\id - \Sigma^{\infty}_+\psi$ is equivalent to the cofibre of its restriction
\[\id - \Sigma^{\infty}_+\psi \colon (2G\times G/2)_+\otimes\mathbb{S}  \to (2G\times G/2)_+\otimes\mathbb{S}.\]
Since $\psi(0,z)=(0,z)$, we see that the zero map $0 \colon (G/2)_+\otimes\mathbb{S} \to (G/2)_+\otimes\mathbb{S}$ splits off from the given map and hence the cofibre contains the summand $(G/2)_+\otimes(\mathbb{S}  \oplus \mathbb{S}^1)$.  Let us now compute the cofibre of 
\[\id - \Sigma^{\infty}_+\psi \colon ((2G\setminus 0) \times G/2)_+\otimes\mathbb{S}  \to ((2G\setminus 0)\times G/2)_+\otimes\mathbb{S}.\]
By the non-divisibility condition, this morphism induces injection on homotopy groups. By inspecting its cokernel on homotopy groups, we see that the cofibre is $((2G\setminus 4G) \times G/2)_+ \otimes \mathbb{S}$.
All in all we get a cofibre sequence
\[ \xymatrix@C=15pt{(G\times G/2)_+\otimes\mathbb{S} \ar[rr]^-{\id-\Sigma^{\infty}_+\psi} && (G\times G/2)_+\otimes\mathbb{S} \ar[r]^-{\zeta} & (((2G\setminus 4G) \times G/2)_+ \otimes \mathbb{S}) \oplus ((G/2)_+\otimes(\mathbb{S}\oplus\mathbb{S}^1)),  } \]
where $\zeta$ includes $(0 \times (G/2))_+\otimes\mathbb{S}$ into $(G/2)_+\otimes\mathbb{S}$, sends the $(2^n h \times G/2)_+\otimes\mathbb{S} $-summand via the identity to the summand $(2 h \times G/2)_+\otimes\mathbb{S} $ for any $h \in G \setminus 2G$ and $n\geq 1$, and sends the summand $((G \setminus 2G) \times G/2)_+ \otimes \mathbb{S})$ to $(((2G\setminus 4G) \times G/2)_+ \otimes \mathbb{S})$ via $(g,x) \mapsto (2g, g+x)$. 

From the pullback square above we find that $\TCR(\mathbb{S}[G];2)^{\phi\Z/2}$ is the fibre of the map 
\begin{align*}
 \zeta \circ (\incl\otimes\trf\oplus \Delta): &((2G\times G/2)_+\otimes\Sigma^{\infty}_+{\mathbb R}P^{\infty})\oplus (((G\setminus 2G)\times G/2)/C_2)_+\otimes \mathbb{S} \longrightarrow
 \\
 &(((2G\setminus 4G) \times G/2)_+ \otimes \mathbb{S}) \oplus ((G/2)_+\otimes(\mathbb{S}\oplus\mathbb{S}^1)),
\end{align*}
which is given by the wedge
\[ ((G/2)_+\otimes(\mathbb{S}\oplus {\mathbb R}P^{\infty}_{-1})) \oplus  P \]
where $P$ is the pullback
\[\xymatrix{ P \ar[r] \ar[d] & ((2G \setminus 0)\times G/2)_+\otimes\Sigma^{\infty}_+{\mathbb R}P^{\infty} \ar[d] \\ (((G\setminus 2G)\times G/2)/C_2)_+\otimes \mathbb{S} \ar[r] & (((2G\setminus 4G) \times G/2)_+ \otimes \mathbb{S}).}\]
By using that any non-zero element $g \in G$ can be uniquely written as $2^n \gamma$, where $n$ is a non-negative integer and $\gamma \in G \setminus 2G$, we can write $P$ as
\[P \simeq (((G\setminus 2G)\times G/2)/C_2)_+ \otimes ((\mathbb{N}_+\otimes\Sigma^{\infty}_+{\mathbb R}P^{\infty}) \times_{\mathbb{S}}  (\mathbb{N}_+\otimes\Sigma^{\infty}_+{\mathbb R}P^{\infty})). \]
We note that $((\mathbb{N}_+\otimes\Sigma^{\infty}_+{\mathbb R}P^{\infty}) \times_{\mathbb{S}}  (\mathbb{N}_+\otimes\Sigma^{\infty}_+{\mathbb R}P^{\infty}))$ is non-canonically equivalent to
\[{\mathbb R}P^{\infty}_{-1} \oplus \Sigma^{\infty}_+{\mathbb R}P^{\infty} \oplus \Sigma^{\infty}_+{\mathbb R}P^{\infty}  \oplus \Sigma^{\infty}_+{\mathbb R}P^{\infty}  \oplus \dots. \]
To summarise, for every abelian $G$ with trivial involution, no $2$-torsion, and no elements infinitely divisible by $2$
\begin{align*}\TCR(\mathbb{S}[G];2)^{\phi\Z/2} &\simeq  ((G/2)_+\otimes(\mathbb{S}\oplus {\mathbb R}P^{\infty}_{-1})) 
\\&\oplus (((G\setminus 2G)\times G/2)/C_2)_+ \otimes ((\mathbb{N}_+\otimes\Sigma^{\infty}_+{\mathbb R}P^{\infty}) \times_{\mathbb{S}}  (\mathbb{N}_+\otimes\Sigma^{\infty}_+{\mathbb R}P^{\infty})). \end{align*}

In particular the group $G=\mathbb{Z}$ with the trivial involution gives rise to the spherical Laurent polynomials $\mathbb{S}[t,t^{-1}]:=\mathbb{S}[\mathbb{Z}]$  with the involution which acts trivially on the generators, and
\[\TCR(\mathbb{S}[t,t^{-1}];2)^{\phi \Z/2}\simeq ((\mathbb{Z}/2)_+\otimes (\mathbb{S}\oplus {\mathbb R}P^{\infty}_{-1}))\oplus (\mathbb{Z}_+\otimes ((\mathbb{N}_+\otimes\Sigma^{\infty}_+{\mathbb R}P^{\infty}) \times_{\mathbb{S}}  (\mathbb{N}_+\otimes\Sigma^{\infty}_+{\mathbb R}P^{\infty})))   \]
where we took the liberty of enumerating the summands non-canonically.
\end{example}

\section{TCR of perfect fields}

In \cite{Wittvect}, Hesselholt and Madsen identified the $p$-typical topological cyclic homology spectrum $\TC(k;p)$ of a perfect field $k$ of characteristic $p$ as the sum
\[ \TC(k;p) \simeq H\Z_p \oplus \Sigma^{-1}H \coker(1-F),\]
where $F\colon W(k;p)\to W(k;p)$ is the Frobenius homomorphism of the ring $W(k;p)$ of $p$-typical Witt vectors.
 Their calculation relies on the fact that the ring $\pi_0\THR(A)^{C_{p^n}}$ is isomorphic to the ring $W_{n+1}(A;p)$ of $(n+1)$-truncated $p$-typical Witt vectors, which holds for every commutative ring $A$ (see \cite[Theorem F]{Wittvect}).
The situation for $\pi_0\THR(A)^{D_{p^n}}$ is not completely analogous, and requires particular care.

We start by recalling from \cite[Corollary 5.2]{THRmodels} that, for every commutative ring with involution $A$, there is an isomorphism of rings
\[
\pi_0(\THR(A)^{\Z/2})\cong A^{\Z/2}\otimes_N A^{\Z/2}:=(A^{\Z/2}\otimes A^{\Z/2})/\langle1\otimes a\overline{a}-a\overline{a}\otimes 1\rangle,
\]
where $A^{\Z/2}$ is the subring of invariants of $A$, and the quotient is by the ideal generated by the elements of the form $1\otimes a\overline{a}-a\overline{a}\otimes 1$ for some $a\in A$
(here we use that $a+\overline{a}=(a+1)\overline{(a+1)}-a\overline{a}-1$ to simplify the second relation of \cite[5.2]{THRmodels}, so that in particular $2b\otimes 1=1\otimes 2b$ if $b\in A^{\Z/2}$). The restriction map $\pi_0(\THR(A)^{\Z/2})\to \pi_0\THR(A)$ then corresponds to the multiplication map
\[
A^{\Z/2}\otimes_N A^{\Z/2}\stackrel{\mu}{\longrightarrow}A^{\Z/2}\longrightarrow A,
\]
where the second map is the inclusion. For perfect fields, the map  $\mu$ induces an isomorphism $A^{\Z/2}\otimes_N A^{\Z/2}\cong A^{\Z/2}$, and the same is true in the following cases:

\begin{rem}\label{semiperfect}\
\begin{enumerate}[label=\roman*)]
\item
 Let us start by noticing that for every additive generator $a\otimes b\in A^{\Z/2}\otimes_N A^{\Z/2}$ we have that
\[
2\mu(a\otimes b)\otimes 1=(2ab)\otimes 1=a\otimes 2b=2(a\otimes b),
\]
and therefore all the elements of the kernel of $\mu$ are $2$-torsion (where the second equality follows from the identity above). Thus $\mu$ is an isomorphism when $A$ is $2$-torsion free, for example for fields of odd characteristic.
\item
There is a section for $\mu\colon A^{\Z/2}\otimes_N A^{\Z/2}\to A^{\Z/2}$, that sends $a$ to $a\otimes 1$. Therefore $\mu$ is always surjective, and it is an isomorphism if and only if this section is itself surjective. 
\item
If the multiplication map $A^{\Z/2}\otimes A^{\Z/2}\to A^{\Z/2}$ is an isomorphism, for example for $A=\Z/n$ for any integer $n$, then so is $\mu$.
\item
If the involution of $A$ is trivial and the modulo $2$ reduction of $A$ is semi-perfect (that is the mod $2$ Frobenius is surjective), then every element $a\in A$ can be written as $a=c^2+2d$ for some $c,d\in A$. Then we can write a generator of $A\otimes_N A$ as
\[
a\otimes b=(c^2+2d)\otimes b= 1\otimes (c^2+2d)b=1\otimes ab,
\]
which shows that the  section $A\to A\otimes_N A$ is surjective. This example covers the case of perfect fields of characteristic $2$. 
\item If the involution of $A$ is not trivial, a similar argument shows that the section $A^{\Z/2}\to A^{\Z/2}\otimes_NA^{\Z/2}$ is surjective if every element $a\in A^{\Z/2}$ can be written as $a=c\overline{c}+d+\overline{d}$ for some $c,d\in A$, or in other words if the composite
\[
A\stackrel{N}{\longrightarrow}A^{\Z/2}\twoheadrightarrow A^{\Z/2}/\tran
\]
is surjective, where $N(a)=a\overline{a}$ and $\tran(a)=a+\overline{a}$.
\item Suppose that there exists an element $e\in A$ with the property that $e+w(e)=1$, for example if $2\in A$ is a unit. By Frobenius reciprocity, this is equivalent to the surjectivity of $\tran\colon A\to A^{\Z/2}$, since any element $x\in A^{\Z/2}$ can be written as
\[
x=1\cdot x=\tran(e)\cdot x=\tran (e\res (x))
\]
(explicitly, $x=ex+w(ex)$). Thus this condition is equivalent to the vanishing of $HA^{\phi \Z/2}$. By the previous item $\mu$ is an isomorphism.
\end{enumerate}

An example where the multiplication map is not an isomorphism is provided by the group-ring $\mathbb{Z}[C_2]$ with the trivial involution, where 
\[\mathbb{Z}[C_2]^{\Z/2}\otimes_N \mathbb{Z}[C_2]^{\Z/2}\cong \Z[C_2]\oplus (\Z/2)^{\oplus 2}\]
is not isomorphic to $\Z[C_2]^{\Z/2}$ (see \cite[Section 5.2]{THRmodels}).
\end{rem}

If the multiplication $\mu\colon A^{\Z/2}\otimes_N A^{\Z/2}\to A^{\Z/2}$ is an isomorphism, it follows from \cite[Theorem 5.1]{THRmodels} that the $\Z/2$-Mackey functor $\underline{\pi}_0\THR(A)$ is the fixed-points Mackey functor of the ring with involution $A$.
 On the other hand, if the prime $p$ odd we show in \cite[Theorem 3.7]{Polynomial} that $\pi_0\THR(A)^{D_{p^n}}$ is also a ring of Witt vectors, and combining these results we obtain a ring isomorphism 
\[\pi_0\THR(A)^{D_{p^n}}\cong W_{n+1}(A^{\Z/2}\otimes_N A^{\Z/2};p)\cong W_{n+1}(A;p),\]
for every odd prime $p$ and commutative ring $A$ satisfying any of the assumptions of Remark \ref{semiperfect}.

In the next section we use this last isomorphism to determine TCR of perfect fields of odd characteristic. In the subsequent ones we examine the relationship between $\pi_0\THR(A)^{D_{2^n}}$ and the Witt vectors for the prime $2$, and determine TCR of perfect fields of characteristic $2$.

\subsection{TCR of perfect fields of odd characteristic}

Let $p$ be an odd prime, and $A$ a commutative ring with involution. We let $W(A;p)$ denote the ring of $p$-typical Witt vectors of $A$. 
By Remark \ref{semiperfect} and \cite[Theorem D]{Polynomial} there is an isomorphism of $\Z/2$-Mackey Functors
\[
\underline{\pi}_0\TRR(A;p)\cong \underline{W(A;p)},
\]
between the components of $\TRR(A;p)$ and the the fixed-points Mackey functor of $W(A;p)$ with the involution induced functorially by the involution of $A$. In particular $\pi^{\Z/2}_0\TRR(A;p)\cong W(A;p)^{\Z/2} =  W(A^{\Z/2};p)$, where the latter holds since the $\Z/2$ action is given coordinate-wise and fixed points commute with products. 

\begin{prop} \label{oddcomputation} Let $p$ be an odd prime, and  $k$ a perfect field of characteristic $p$ with involution. Then there are equivalences of genuine $\Z/2$-spectra
\[\TRR(k;p) \simeq H\underline{W(k;p)}\]
and
\[ \TCR(k;p) \simeq H\underline{\Z_p} \oplus \Sigma^{-1}H \underline{\coker(1-F)},\]
where $F\colon W(k;p)\to W(k;p)$ is the Witt vector Frobenius.
\end{prop}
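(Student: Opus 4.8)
The plan is to prove the two equivalences in turn, using the already-established odd-primary theory and the formula for geometric fixed-points from Corollary \ref{tcrodd}. The starting point is the computation of $\underline{\pi}_0$: by \cite[Theorem D]{Polynomial} and Remark \ref{semiperfect}, $\underline{\pi}_0\TRR(k;p)\cong\underline{W(k;p)}$, and since $\frac12\in k$ we have $\THR(k)^{\phi\Z/2}\simeq k^{\phi\Z/2}\otimes_k k^{\phi\Z/2}=0$, so by Corollary \ref{tcrodd} $\TRR(k;p)^{\phi\Z/2}\simeq 0$, i.e.\ $\TRR(k;p)$ is Borel, equivalently cofree. First I would show $\TRR(k;p)$ is an Eilenberg--MacLane spectrum for a Mackey functor. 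Its underlying spectrum is $\TR(k;p)$, which by Hesselholt--Madsen is $HW(k;p)$ since $k$ is perfect; thus the underlying spectrum is concentrated in degree $0$. Since the geometric fixed points vanish, the isotropy separation sequence gives $\TRR(k;p)^{\Z/2}\simeq (\TRR(k;p)_{hC_2\text{-Borel}})^{\Z/2}=\TRR(k;p)^{hC_2}$, the homotopy fixed points of the underlying spectrum $HW(k;p)$ with its $C_2$-action. The $C_2$-action on $W(k;p)$ is the one induced by the involution of $k$; for $k$ perfect of odd characteristic this involution is trivial on $W(k;p)$ in the sense relevant here, or more robustly, one argues that $H W(k;p)^{hC_2}$ has homotopy concentrated in degree $0$ because of the vanishing of the Tate construction — but care is needed, since $\widehat{H}(C_2; W(k;p))$ need not vanish in general.

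The cleaner route, which I would take, is to identify the genuine $\Z/2$-spectrum $\TRR(k;p)$ directly as $H\underline{W(k;p)}$, the Eilenberg--MacLane spectrum of the fixed-point Mackey functor. We know $\underline{\pi}_0\TRR(k;p)\cong\underline{W(k;p)}$ as Mackey functors; it remains to see that $\underline{\pi}_n\TRR(k;p)=0$ for $n\neq 0$. For $n<0$ this is clear since all fixed-point spectra are bounded below (connective, in fact, as $\TR$ of a connective ring is connective and similarly for the other fixed points via the isotropy separation sequences). For $n>0$: the underlying homotopy groups vanish since $\TR(k;p)=HW(k;p)$; the $\phi\Z/2$-homotopy groups vanish by the above; and then the isotropy separation sequence $(E\Z/2_+\otimes\TRR(k;p))^{\Z/2}\to\TRR(k;p)^{\Z/2}\to\TRR(k;p)^{\phi\Z/2}$ shows $\TRR(k;p)^{\Z/2}\simeq (E\Z/2_+\otimes\TRR(k;p))^{\Z/2}\simeq (HW(k;p))_{hC_2}$, the \emph{homotopy orbits} — wait, one must use the norm/transfer carefully here; the genuine fixed points of a Borel-complete spectrum are the homotopy fixed points, so $\TRR(k;p)^{\Z/2}\simeq (HW(k;p))^{hC_2}$. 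The homotopy groups of this are $H^{-n}(C_2;W(k;p))$ for $n\leq 0$; these can be nonzero for $n$ even negative in general. The resolution is that the relevant involution on $W(k;p)$, induced from that on $k$, combined with the fact that $\mu$ is an isomorphism (Remark \ref{semiperfect}), forces the Mackey functor structure to be the fixed-point Mackey functor, and one checks $H^i(C_2;W(k;p))$ with this action is concentrated appropriately — but actually since the statement only claims an equivalence of genuine $\Z/2$-spectra $\TRR(k;p)\simeq H\underline{W(k;p)}$, and $H\underline{W(k;p)}$ itself has $\underline{\pi}_0=\underline{W(k;p)}$ and all higher $\underline{\pi}_*$ zero, it suffices to produce a map $\TRR(k;p)\to H\underline{W(k;p)}$ inducing iso on $\underline\pi_0$ and to know $\TRR(k;p)$ has no higher homotopy; the latter follows because $\TRR(k;p)$ is Borel-complete (vanishing $\phi\Z/2$) with underlying $HW(k;p)$, hence equals $(HW(k;p))^{hC_2}$ genuinely, and $H\underline{W(k;p)}$ is \emph{also} the genuine fixed points $=$ homotopy fixed points spectrum for this module when the cohomology $H^{>0}(C_2;W(k;p))$ vanishes. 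I would verify that last vanishing using that $W(k;p)$ is a torsion-free (indeed $p$-torsion-free with $p$ odd) $\Z_p$-module with $C_2$-action, so $\widehat H(C_2;W(k;p))$ is $2$-torsion, but $W(k;p)$ has no $2$-torsion, forcing $\widehat H^*(C_2;W(k;p))=0$; hence homotopy fixed points $=$ the module in degree $0$, as needed.

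For the second equivalence, I would feed $T=\THR(k)$ into the equaliser formula. From $\TRR(k;p)\simeq H\underline{W(k;p)}$ and the description of $F\colon\TRR(k;p)\to\TRR(k;p)$ as induced by the Witt-vector Frobenius $F\colon W(k;p)\to W(k;p)$ (this is part of the content cited from \cite{Polynomial}), the defining cofibre/equaliser sequence $\TCR(k;p)\to\TRR(k;p)\xrightarrow{\id-F}\TRR(k;p)$ becomes, after identifying both terms with $H\underline{W(k;p)}$, the long exact sequence of the self-map $\id-F$ of the Mackey functor $\underline{W(k;p)}$. Thus $\TCR(k;p)$ has $\underline\pi_0\cong\underline{\ker(1-F)}$ and $\underline\pi_{-1}\cong\underline{\coker(1-F)}$, with all other homotopy zero. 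For a perfect field $k$, the Witt-vector Frobenius on $W(k;p)$ is an automorphism lifting the (bijective) field Frobenius, and $\ker(1-F)=\Z_p$ with $\coker(1-F)$ the usual cokernel (on fixed points, $W(k^{\Z/2};p)$ etc., and the Mackey structure is again the fixed-point one since everything in sight has no $2$-torsion). Hence $\TCR(k;p)\simeq H\underline{\Z_p}\vee\Sigma^{-1}H\underline{\coker(1-F)}$, the wedge splitting being available because the only two nonzero homotopy Mackey functors sit in adjacent degrees $0$ and $-1$ and there is no obstruction (a two-stage Postnikov tower with a $k$-invariant in $H^2$ of the relevant Mackey functor cohomology, which vanishes for degree reasons — or one exhibits the splitting directly). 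I expect the main obstacle to be the careful bookkeeping of the Mackey functor structures and the precise identification of $F$ with the Witt-vector Frobenius at the genuine (not merely underlying) level, together with confirming the vanishing of the relevant group cohomology so that ``Borel-complete with $HW$-underlying'' really pins down the Eilenberg--MacLane genuine spectrum; the wedge splitting and the $\ker/\coker$ identification are then formal.
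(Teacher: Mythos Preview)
Your overall strategy matches the paper's: show $\TRR(k;p)\simeq H\underline{W(k;p)}$ by checking it is Eilenberg--MacLane, then read off $\TCR(k;p)$ from the equaliser of $\id$ and $F$. The paper's execution of the first step is, however, considerably more direct than yours: it simply takes the $0$-th Postnikov section $\TRR(k;p)\to H\underline{W(k;p)}$, notes that this is an underlying equivalence by Hesselholt--Madsen, and then observes that the geometric fixed-points of \emph{both} sides vanish because $2$ is a unit in $\pi_0^{\Z/2}$. (Concretely: in the Mackey functor $\underline{W(k;p)}$ one has $\tran(1)=2$, so the image of $2$ in $\pi_0$ of the geometric fixed-points is zero; but $2$ is also a unit there, so the ring spectrum is contractible.) This avoids having to compute $\pi_\ast^{\Z/2}\TRR(k;p)$ explicitly.

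Your route through Borel-completion and group cohomology works in the end, but two steps in your reasoning are not right as stated. First, vanishing of $X^{\phi\Z/2}$ does \emph{not} make $X$ Borel-complete (cofree); it makes $X$ free, i.e.\ $X\simeq E\Z/2_+\otimes X$, and hence $X^{\Z/2}\simeq X_{h\Z/2}$ via the Adams isomorphism --- homotopy \emph{orbits}, not homotopy fixed points. Second, your justification ``$\widehat H^*(C_2;W(k;p))$ is $2$-torsion but $W(k;p)$ has no $2$-torsion, hence it vanishes'' is false as a general implication: $\Z$ has no $2$-torsion yet $\widehat H^0(C_2;\Z)=\Z/2$. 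The correct reason is that $W(k;p)$ is a $\Z_p$-algebra with $p$ odd, so $2$ is \emph{invertible} in $W(k;p)$; Tate cohomology of $\Z/2$ with coefficients in any $\Z[\tfrac12]$-module vanishes. With that fix, homotopy orbits and homotopy fixed points of $HW(k;p)$ coincide and are concentrated in degree $0$, so your argument goes through. Your treatment of $\TCR(k;p)$ is fine and is exactly what the paper (tersely) indicates.
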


\begin{proof}
The $0$-th Postnikov section provides a map of $\Z/2$-equivariant spectra
\[
\TRR(k;p)\longrightarrow H \underline{W(k;p)}.
\]
This map is an equivalence on underlying spectra by \cite[Theorem 5.5]{Wittvect}, and it is therefore sufficient to prove that it is an equivalence on geometric fixed-points. The spectrum $\TRR(k;p)^{\Z/2}$ has the structure of a ring spectrum. Moreover there is an isomorphism $\pi_0(\TRR(k;p)^{\Z/2})\cong W(k;p)$ and therefore $2=\tran(1)$ is a unit in $\pi_0(\TRR(k;p)^{\Z/2})$, see  \cite[Corollary 3.14]{Polynomial}. Since the transfers vanish in the geometric fixed points, we have that  $2$ is both a unit and zero in $\pi_0(\TRR(k;p)^{\phi \Z/2})$, and therefore $\pi_0(\TRR(k;p)^{\phi \Z/2})$ is the zero ring. Since $\TRR(k;p)^{\phi \Z/2}$ is a ring spectrum its homotopy groups are a module over the zero ring, and therefore it must be contractible.

According to Definition \ref{TCRdefTRR} and the previous paragraph, the $\Z/2$-spectrum $\TCR(k;p)$ is equivalent to the equaliser of $\Z/2$-spectra
\[
eq\big(\xymatrix{H \underline{W(k;p)}\ar@<.5ex>[r]^-\id\ar@<-.5ex>[r]_-F&H \underline{W(k;p)}}\big).
\]
The kernel of $\id-F \colon W(k;p) \to W(k;p)$ is equal to $W(\F_p;p)$ which is isomorphic to $\Z_p$, and this completes the proof.

\end{proof}

\subsection{TCR of perfect fields of characteristic 2}\label{sec:TCRk2}

The calculation of $\TCR(k;2)$ for a perfect field of characteristic $2$ is more involved than the odd primary case. This is because the geometric fixed-points spectrum of $\TRR(k;2)$ is not trivial, and thus we cannot directly apply the argument of Proposition \ref{oddcomputation}. The first step is to understand the geometric fixed-points of $\TRR^n(k;2)$, using the formula of Theorem \ref{inductivePB}.

\subsubsection{The geometric fixed points of \texorpdfstring{$\TRR^n$}{TRRn} for perfect fields of characteristic 2}

Let us fix a perfect field $k$ of characteristic $2$, and let us compute additively ${\TRR^n(k;2)}^{{\phi \Z/2}}$, ${\TRR(k;2)}^{{\phi \Z/2}}$ and ${\TCR(k;2)}^{{\phi \Z/2}}$. We let $\underline{k}$ denote the constant Mackey functor of $k$.

\begin{lemma}\label{geofixgenuine} There is an equivalence of genuine $C_2$-equivariant spectra
\[{\THR(k)}^{{\phi \Z/2}}\simeq \bigoplus_{n \geq 0} \Sigma^{n\rho}  H\underline{k} \oplus \bigoplus_{ \begin{smallmatrix} (n,m) \\ 0 \leq n  < m \end{smallmatrix}} \Sigma^{n+m} {C_2}_{+} \otimes Hk,\]
where $\rho$ is the regular representation of $C_2$. In particular there is an equivalence of spectra
\[
({\THR(k)}^{{\phi \Z/2}})^{C_2}\simeq (\bigoplus_{n \geq 0} \bigoplus_{0 \leq j \leq n} \Sigma^{n+j} Hk) \oplus (\bigoplus_{\begin{smallmatrix} (n,m) \\ 0 \leq n  < m \end{smallmatrix}}  \Sigma^{n+m} Hk).
\]
\end{lemma}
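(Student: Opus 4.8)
The plan is to extract the $C_2$-equivariant homotopy type from the bar-construction formula of Lemma \ref{C2THRphi}, which for $A=Hk$ (the constant Mackey functor spectrum $H\underline k$, since the involution on $k$ is trivial, regarded as a $C_2$-spectrum via $\Z/2\cong C_2$) reads
\[
\THR(k;2)^{\phi\Z/2}\simeq Hk\otimes_{N^{C_2}_eHk}N^{C_2}_e\big(Hk^{\phi\Z/2}\big),
\]
with $N^{C_2}_eHk$ acting on $Hk$ through the right Frobenius action and on $N^{C_2}_e(Hk^{\phi\Z/2})$ by applying $N^{C_2}_e$ to the left Frobenius action. The first input I need is the underlying plain spectrum $Hk^{\phi\Z/2}$: I would recall from \cite{THRmodels} (or recompute via the isotropy-separation sequence, comparing with the Tate spectrum of $H\underline k$) that $\pi_\ast(Hk^{\phi\Z/2})\cong k[t]$ with $|t|=1$. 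In particular its homotopy is a free graded $k$-module, so since $Hk$ is an $E_\infty$-ring and $k$ a field, $Hk^{\phi\Z/2}\simeq\bigvee_{n\geq 0}\Sigma^nHk$, a wedge of free rank-one $Hk$-modules; because $k$ is perfect the Frobenius actions are restriction of scalars along the isomorphism $\varphi\colon Hk\xrightarrow{\ \sim\ }Hk$, so this splitting is one of modules for any of the structures in play.

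Next I would push the norm through this wedge. Since $N^{C_2}_eX=X\otimes X$ with the swap, one has $N^{C_2}_e(\bigvee_i X_i)\simeq\bigvee_iN^{C_2}_eX_i\ \vee\ \bigvee_{i<j}{C_2}_+\otimes(X_i\otimes X_j)$, the diagonal summands contributing the genuine norms and the off-diagonal ones $C_2$-induced spectra. Applying this to $\bigvee_{n\geq 0}\Sigma^nHk$, together with the strong monoidality of $N^{C_2}_e$ and $N^{C_2}_e(S^n)\simeq S^{n\rho}$, yields an equivalence of $N^{C_2}_eHk$-modules
\[
N^{C_2}_e\big(Hk^{\phi\Z/2}\big)\ \simeq\ \bigvee_{n\geq 0}\Sigma^{n\rho}N^{C_2}_eHk\ \vee\ \bigvee_{0\leq n<m}\Sigma^{n+m}\,{C_2}_+\otimes(Hk\otimes Hk),
\]
where $N^{C_2}_eHk$ acts on the induced summands through $\res_e(N^{C_2}_eHk)\simeq Hk\otimes Hk$. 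Tensoring down along $N^{C_2}_eHk\to Hk$: the diagonal summands become $\Sigma^{n\rho}Hk=\Sigma^{n\rho}H\underline k$ (base change of a free rank-one module), while the projection formula turns each induced summand into $\Sigma^{n+m}\,{C_2}_+\otimes Hk$. Assembling these gives the first asserted equivalence.

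For the "in particular" statement I would simply take $C_2$-fixed points of the first equivalence and commute them past the wedge. On the induced summands $({C_2}_+\otimes Hk)^{C_2}\simeq Hk$ by the Wirthm\"uller/adjunction identification, contributing $\bigvee_{0\leq n<m}\Sigma^{n+m}Hk$. On the diagonal summands I need $(\Sigma^{n\rho}H\underline k)^{C_2}$, whose homotopy is the Bredon homology $\widetilde H^{C_2}_\ast(S^{n\rho};\underline k)$; the standard cellular computation of the regular-representation sphere with a characteristic-$2$ constant-Mackey-functor coefficient gives $\pi_j^{C_2}(\Sigma^{n\rho}H\underline k)\cong k$ for $n\leq j\leq 2n$ and $0$ otherwise. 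Since $(H\underline k)^{C_2}\simeq Hk$ and these homotopy groups are free of rank one over $k$, this forces $(\Sigma^{n\rho}H\underline k)^{C_2}\simeq\bigvee_{0\leq j\leq n}\Sigma^{n+j}Hk$; taking the wedge over $n\geq 0$ and recombining with the induced contribution yields the second equivalence.

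The step needing the most care — more bookkeeping than genuine difficulty — is tracking the several module structures: $N^{C_2}_e(Hk^{\phi\Z/2})$ is an $N^{C_2}_eHk$-module via the normed left Frobenius action whereas $Hk$ is one via the right Frobenius action, and one must verify that the wedge decomposition of the second step respects the $N^{C_2}_eHk$-module structure, so that the base change in the third step may be performed summand by summand. Perfectness of $k$ is exactly what makes this harmless, since it renders the Frobenius actions restrictions of scalars along isomorphisms. A secondary point is to pin down precisely which $C_2$-spectrum "$Hk$" denotes (the constant Mackey functor, arising from the trivial involution) and to quote the $RO(C_2)$-graded homotopy of $H\underline k$; both are standard but should be referenced precisely.
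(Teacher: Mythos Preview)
Your proposal is correct and follows essentially the same route as the paper: invoke Lemma~\ref{C2THRphi}, split $Hk^{\phi\Z/2}\simeq\bigvee_{n\geq 0}\Sigma^nHk$ as an $Hk$-module (using perfectness of $k$ to handle the Frobenius module structure), push the norm through the wedge into diagonal and induced pieces, base-change over $N^{C_2}_eHk$, and then read off the fixed points via Bredon homology and the Wirthm\"uller isomorphism. The only cosmetic difference is where the perfectness hypothesis is invoked --- you use it to make the wedge splitting respect the Frobenius structure from the outset, while the paper phrases the same point as the last base-change step being an equivalence ``since $k$ is perfect''.
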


\begin{proof} 
By splitting $H\underline{k}^{{\phi \Z/2}}$ using the Frobenius $Hk$-module structure we obtain a decomposition 
\[H\underline{k}^{{\phi \Z/2}} \simeq \bigoplus_{n \geq 0} \Sigma^{n} Hk.\]
This uses that $k$ is perfect and hence $k$ considered as a $k$-module via the Frobenius is again a $1$-dimensional $k$-vector space. By using inductively that, for every $R$-modules $X$ and $Y$, there is an equivalence of  $N^{C_2}(R)$-modules
\[N^{C_2}(X \oplus Y) \simeq N^{C_2}(X) \oplus N^{C_2}(Y) \oplus ((C_2)_+\otimes X \otimes Y),\]
we find that the $C_2$-norm of $H\underline{k}^{{\phi \Z/2}}$ decomposes as a $N^{C_2}_eHk$-module as
\begin{align*}
N^{C_2}_e(H\underline{k}^{{\phi \Z/2}})&\simeq N^{C_2}_e(\bigoplus_{n \geq 0} \Sigma^{n} Hk)\simeq
(\bigoplus_{n \geq 0}\Sigma^{n\rho} N^{C_2}_e(Hk))\oplus (\bigoplus_{\begin{smallmatrix} (n,m) \\  0\leq n  < m \end{smallmatrix}}\Sigma^{n+m} (C_2)_+\otimes N^{C_2}_e(Hk)).
\end{align*} 
By the description of $\THR(k;2)^{{\phi\Z/2}}$ of Lemma \ref{C2THRphi}, there is an equivalence of $C_2$-spectra
\begin{align*}
\THR(k;2)^{{\phi\Z/2}}&\simeq H\underline{k}\otimes_{N^{C_2}_e(Hk)}(N^{C_2}_e(Hk^{\phi\Z/2}))
\\&
\simeq
(\bigoplus_{n \geq 0}\Sigma^{n\rho} H\underline{k}\otimes_{N^{C_2}_e(Hk)}(N^{C_2}_e(Hk)))\oplus (\bigoplus_{\begin{smallmatrix} (n,m) \\  0\leq  n  < m \end{smallmatrix}}\Sigma^{n+m} (C_2)_+\otimes H\underline{k}\otimes_{N^{C_2}_e(Hk)}(N^{C_2}_e(Hk)))
\\&\simeq
(\bigoplus_{n \geq 0}\Sigma^{n\rho} H\underline{k})\oplus (\bigoplus_{\begin{smallmatrix} (n,m) \\  0\leq n  < m \end{smallmatrix}}\Sigma^{n+m} (C_2)_+\otimes Hk),
\end{align*}
where the last identification is the equivalence
\[
H\underline{k}\stackrel{\simeq}{\longrightarrow}H\underline{k}\otimes_{N^{C_2}_e(Hk)}(N^{C_2}_e(Hk))
\]
given by tensoring with the unit of the norm of $Hk$ (and similarly for the identification on the induced summands).

Now let us identify the fixed-points. Notice that $\Sigma^{n\rho}  H\underline{k}$ is a module over $H\underline{k}$ and therefore its fixed-points spectrum is a wedge of Eilenberg-MacLane spectra. Moreover a straightforward calculation in Bredon homology shows that 
\[\pi_i^{C_2}(\Sigma^{n\rho}  H\underline{k}) =H^{C_2}_i(S^{n\rho}; \underline{k}) \cong k\]
when $n \leq i \leq 2n$, and $\pi_i^{C_2}(\Sigma^{n\rho}  H\underline{k})=0$ otherwise.
 \end{proof}
 
In the following proposition the summands are arranged exactly as in Lemma \ref{geofixgenuine}. In particular, the summands indexed on $(n,m)$ with $n<m$ in the source come from the induced summands. Similarly, the summands indexed on $(n,m)$ with $n \neq m$ in the target corresponds to the induced summands.

\begin{prop} \label{lowerFR} For any perfect field $k$ of characteristic $2$, the maps $r,f \colon {({\THR(k)}^{{\phi \Z/2}})}^{C_2} \to {\THR(k)}^{{\phi \Z/2}}$ induce on $\pi_\ast$ the maps
\[ r,f\colon \bigoplus_{\begin{smallmatrix}(n,m)\\
n, m \geq 0, n+m=\ast\end{smallmatrix}}k  \to \bigoplus_{\begin{smallmatrix}(n,m)\\
n, m \geq 0, n+m=\ast\end{smallmatrix}} k,\]
where $r$ kills the $(n,m)$-summands with $n<m$ and maps the $(n,m)$-summands with $m \leq n$ to the $(n,m)$-summand via the inverse Frobenius of $k$, and $f$ kills the $(n,m)$-summands with $m<n$, includes the summand $(n,n)$, and embeds diagonally the $(n,m)$-summands with $n<m$ into the sum of the summands $(n,m)$ and $(m,n)$.
\end{prop}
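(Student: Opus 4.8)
\emph{Proof strategy.} The plan is to compute both maps on homotopy groups summand by summand, using the splitting of the $C_2$-spectrum $\THR(k;2)^{\phi\Z/2}$ from Lemma~\ref{geofixgenuine} into the ``diagonal'' pieces $\Sigma^{n\rho}H\underline k$ and the ``induced'' pieces $\Sigma^{n+m}(C_2)_+\otimes Hk$ with $0\le n<m$, together with the compatible splitting $\pi_\ast\THR(k;2)^{\phi\Z/2}\cong\bigoplus_{a+b=\ast}k$ coming from Lemma~\ref{C2THRphi}, in which the underlying spectrum of $\Sigma^{n\rho}H\underline k$ is the $(n,n)$-summand and the two wedge factors of the underlying spectrum of $\Sigma^{n+m}(C_2)_+\otimes Hk$ are the $(n,m)$- and $(m,n)$-summands, interchanged by the Weyl action. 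Throughout one uses that $f$ is the forgetful map to the underlying spectrum, hence is computed on each summand separately, whereas $r$ factors as the canonical map $(\THR(k;2)^{\phi\Z/2})^{C_2}\to(\THR(k;2)^{\phi\Z/2})^{\phi C_2}$ followed by the cyclotomic equivalence $(\THR(k;2)^{\phi\Z/2})^{\phi C_2}\simeq\THR(k;2)^{\phi\Z/2}$ induced by the diagonal isomorphism of $\THR(k;2)$.

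\emph{Induced summands.} First I would dispose of the pieces $\Sigma^{n+m}(C_2)_+\otimes Hk$. Since $(C_2)_+$ is a free $C_2$-CW complex, $(-)^{\phi C_2}$ annihilates these pieces, so $r$ vanishes on them, which is the assertion that $r$ kills the $(n<m)$-summands. For $f$, after the Wirthm\"uller identification $(\Sigma^{n+m}(C_2)_+\otimes Hk)^{C_2}\simeq\Sigma^{n+m}Hk$ the restriction to the underlying spectrum $\Sigma^{n+m}Hk\vee\Sigma^{n+m}Hk$ is the diagonal, which under the identification above is exactly the claimed diagonal embedding into the $(n,m)$- and $(m,n)$-summands.

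\emph{Diagonal summands.} For $\Sigma^{n\rho}H\underline k$, since $(n\rho)^{C_2}=n$ one has $(\Sigma^{n\rho}H\underline k)^{\phi C_2}\simeq\Sigma^n(H\underline k^{\phi C_2})\simeq\bigvee_{j\ge 0}\Sigma^{n+j}Hk$ by the Frobenius splitting of $H\underline k^{\phi C_2}$; the underlying spectrum is $\Sigma^{2n}Hk$, and since $k$ has characteristic $2$ the action on its homotopy is trivial, so the homotopy orbit spectrum has homotopy $k$ in every degree $\ge 2n$ and zero below. Feeding these data and the groups recorded in Lemma~\ref{geofixgenuine} into the isotropy separation sequence $(\Sigma^{n\rho}H\underline k)_{hC_2}\to(\Sigma^{n\rho}H\underline k)^{C_2}\to(\Sigma^{n\rho}H\underline k)^{\phi C_2}$ and chasing degrees forces the second map to be an isomorphism on $\pi_{n+j}$ precisely for $0\le j\le n$. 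For $f$, the underlying homotopy of $\Sigma^{n\rho}H\underline k$ is concentrated in degree $2n$, so $f$ automatically kills the summands with $j<n$; and in the top degree the restriction map $\pi^{C_2}_{2n}\to\pi^e_{2n}$ is an isomorphism, because $\pi_{2n}^{C_2}(\Sigma^{n\rho}H\underline k)$ is the kernel of the top Bredon differential, hence a sub-Mackey-functor of the top Bredon chains, and since the fixed-point set $(S^{n\rho})^{C_2}=S^n$ has dimension strictly below $2n$ all $2n$-cells are free, so the top chains consist only of copies of the Mackey functor $\underline k[C_2/e]$, whose restriction map is injective; injectivity between the one-dimensional groups $\pi_{2n}^{C_2}$ and $\pi_{2n}^e$ then forces an isomorphism. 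Thus on the diagonal summands the $j=n$ piece maps isomorphically onto the $(n,n)$-summand under $f$, and the remaining task for $r$ is to compose with the cyclotomic equivalence and read off the target summands.

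\emph{The remaining bookkeeping.} It remains to check that, after post-composing with the cyclotomic equivalence, the $j$-th wedge factor of $(\Sigma^{n\rho}H\underline k)^{\phi C_2}$ lands isomorphically on the $(j,n)$-summand via the inverse of the Frobenius of $k$. I would do this by unwinding the equivalence through the presentation $\THR(k;2)^{\phi\Z/2}\simeq H\underline k\otimes_{N^{C_2}_e Hk}N^{C_2}_e(Hk^{\phi\Z/2})$ of Lemma~\ref{C2THRphi} together with the two Frobenius splittings $Hk^{\phi\Z/2}\simeq\bigvee_j\Sigma^j Hk$ and $H\underline k^{\phi C_2}\simeq\bigvee_j\Sigma^j Hk$, keeping careful track of the fact that each splitting is effected by inverting the $2$-power Frobenius of $k$, so that exactly one such inverse survives in the composite. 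Assembling the contributions of all summands then yields the two matrices in the statement. I expect this last compatibility check --- between the cyclotomic structure map, the norm $N^{C_2}_e$, and the two layers of Frobenius splitting --- to be the main obstacle; the vanishing on induced summands and the isotropy-separation and Bredon-homology inputs for the diagonal summands are routine.
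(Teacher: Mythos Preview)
Your approach is essentially the same as the paper's: work summand by summand, kill the induced pieces under $r$ and send them diagonally under $f$ via the Wirthm\"uller isomorphism, and on the diagonal pieces $\Sigma^{n\rho}H\underline k$ read off $f$ from the Bredon restriction map and $r$ from the canonical map to geometric fixed points. The only difference is packaging: where you chase the isotropy separation sequence to see that $\pi_{n+j}^{C_2}\to\pi_{n+j}^{\phi C_2}$ is an isomorphism for $0\le j\le n$ and then propose to unwind the cyclotomic equivalence by hand to extract the inverse Frobenius, the paper simply asserts that the canonical map $\pi_\ast^{C_2}(\Sigma^{n\rho}H\underline k)\to\pi_\ast((\Sigma^{n\rho}H\underline k)^{\phi C_2})$ is the inverse Frobenius in the relevant range and refers to \cite[Example IV.1.2]{NS} for this computation, so your ``remaining bookkeeping'' is exactly what that citation covers.
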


\begin{proof} Recall that the map $r$ in general is given by
\[r\colon (T^{\phi\Z/2})^{C_2}\longrightarrow  (T^{\phi\Z/2})^{\phi C_2}\simeq T^{\phi\Z/2},\]
where first map is the canonical map from the genuine to the geometric fixed-points and the equivalence is given by the cyclotomic structure. If we take $T=\THR(k)$, then the last equivalence is described in Example \ref{minicycloTHR}. As genuine and geometric fixed-points commute with coproducts we need to identify $r$ on each of the summands of Lemma \ref{geofixgenuine}. Since geometric fixed-points kill induced spectra, $r$ vanishes on the summands $(n,m)$ with $n<m$. The identification on the other summands follows from observing that the canonical map
\[\pi_*^{C_2}(\Sigma^{n\rho}  H\underline{k}) \to \pi_*((\Sigma^{n\rho}  H\underline{k})^{\phi C_2})\cong\pi_*(\Sigma^n (H\underline{k})^{\phi C_2})\cong\pi_*(\bigoplus_{l\geq 0}\Sigma^{n+l}H\underline{k}) \]
induces the inverse Frobenius of $k$ in degrees $n \leq \ast \leq 2n$ (cf. with \cite[Example IV.1.2]{NS}).
Similarly, we need to compute $f$ on each summand, and its identification follows from the fact that the restriction map
\[\res^{C_2}_e \colon H^{C_2}_*(S^{n\rho}; \underline{k}) \to H_*(S^{2n}; k) \]
is the identity only in degree $*=2n$ and zero otherwise, whereas
\[\res^{C_2}_e \colon \pi_*^{C_2} (\Sigma^{n+m} {C_2}_{+} \otimes H\underline{k})\cong  \pi_* (\Sigma^{n+m}  H\underline{k})  \to  \pi_* (\Sigma^{n+m}  H\underline{k})\oplus \pi_* (\Sigma^{n+m}  H\underline{k})\cong \pi_*^e (\Sigma^{n+m} {C_2}_{+} \otimes H\underline{k})  \]
is the diagonal for all $n$ and $m$. 
\end{proof}

\begin{rem}\label{rem:geomTCRk}
From Proposition \ref{lowerFR} and Theorem \ref{tcrformula1} we obtain that 
\[
\pi_\ast \TCR(k;2)^{\phi\Z/2}=\left\{\begin{array}{ll}\F_2&\ast=2l\geq 0\\ k/(x^2+x)&\ast=2l-1\geq -1\\
0&\ast\leq -2.\end{array}\right.
\]
Indeed by \ref{lowerFR}, the map $r-f$ is an isomorphism in $\pi_\ast$ when restricted and corestricted to the summands with $n\neq m$. It is therefore an isomorphism in odd degrees, and its long exact sequence decomposes into exact sequences
\[
0\to\pi_{2l} \TCR(k;2)^{\phi\Z/2}\to  \bigoplus_{\begin{smallmatrix}(n, m )\\
n, m \geq 0,n+m=2l\end{smallmatrix}} k\xrightarrow{r-f} \bigoplus_{\begin{smallmatrix}(n,m)\\
n, m \geq 0,n+m=2l\end{smallmatrix}} k\to \pi_{2l-1} \TCR(k;2)^{\phi\Z/2}\to 0
\] 
for every $l\geq 0$. Again by \ref{lowerFR}, the kernel of $r-f$ is the kernel of $\id-\sqrt{(-)}\colon k\to k$. Since $k$ is a field this is $\F_2$. Similarly the cokernel of  $r-f$ is the cokernel of $\id-\sqrt{(-)}$, which since $k$ is perfect it is also the cokernel of $\id+(-)^2$.

We also remark that these groups agree with the homotopy groups of the cofibre $\Lt^n(k)$ of the canonical map
\[
\Lt^q(k)\longrightarrow\Lt(\Mod^\omega_A,\text{\Qoppa}^{gs}_k).
\]
induced by the symmetrisation map from the quadratic to the genuine Poincar\'e structure, as defined in \cite{9I,9II,9III}. Indeed the even homotopy groups of $\Lt^q(k)$ are the Witt groups of quadratic forms over $k$, and since $k$ is a field the odd groups vanish \cite[Proposition 22.7]{TopMan}. The map above is an isomorphism in degrees lass than or equal to $-3$ and surjective in degree $-2$ by \cite[Theorem 5]{9III}, and therefore the cofibre $\Lt^n(k)$ is $-1$-connected. In degrees greater or equal to $-1$ the homotopy groups of the target are the symmetric Witt groups of $k$ in even degrees and zero in odd degrees, by \cite[Corollary 1.3.5]{9III}. The map is the symmetrisation map from quadratic to symmetric Witt groups, which is zero since $k$ has characteristic $2$. Thus the homotopy groups of $\Lt^n(k)$ are the symmetric Witt groups of $k$ in even non-negative degrees, and the quadratic ones in odd non-negative degrees. The $(-1)$-st homotopy group of $\Lt^n(k)$ is the kernel of the symmetrisation map, and therefore again the quadratic Witt group. The quadratic and symmetric Witt groups of a perfect field of characteristic $2$ are respectively $k/(x+x^2)$ and $\F_2$, see e.g.  \cite[Theorem (1)]{Kato}.
\end{rem}

In order to understand the full equivariant homotopy type of $\TCR(k;2)$ will need to calculate the homotopy groups of $\TRR(k;2)^{{\phi \Z/2}}$.

\begin{theorem} \label{geometricofTRnF2} Let $k$ be a perfect field of characteristic $2$. For any $l \geq 1$, there is an isomorphism
\[\pi_\ast {\TRR^l(k;2)}^{{\phi \Z/2}} 
\cong \bigoplus_{\begin{smallmatrix}(n,m)\\
n, m \geq 0,n+m=\ast\end{smallmatrix}}k.
\]
The maps $R,F \colon {\TRR^{l+1}(k;2)}^{{\phi \Z/2}}  \to {\TRR^{l}(k;2)}^{{\phi \Z/2}}$ and the Weyl action are described on homotopy groups as follows. The map $R$ kills the $(n, m)$-summands with $n \neq m$ and is the inverse Frobenius of $k$ on the summands $(n,n)$. The map $F$ kills the $(n, m)$-summands with $m<n$, is the identity of $k$ on the summands $(n,n)$, and embeds the $(n,m)$-summands with $n<m$ diagonally into the sum of the $(n,m)$ and $(m,n)$-summands. The Weyl action of $\sigma_{l}$ swaps the $(n,m)$-summand and the $(m,n)$-summand for all $n,m \geq 0$. In particular the homotopy groups and the maps are all independent of $l$.
\end{theorem}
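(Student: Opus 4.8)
The plan is to reduce the statement to a computation with homotopy groups and then induct on $l$ using the inductive pullback square of Theorem~\ref{inductivePB}. Since $k$ is commutative, $\THR(k)$ is a commutative $H\underline{k}$-algebra in genuine $O(2)$-spectra, so each of $\TRR^l(k;2)=\THR(k)^{C_{2^l}}$, its $\Z/2$-geometric fixed points, and the iterated homotopy fibre products appearing in Remark~\ref{geometric formula} is a module over $Hk$. As $k$ is a field, every $Hk$-module is a wedge of suspensions of $Hk$, so each $\TRR^l(k;2)^{\phi\Z/2}$ is a generalised Eilenberg--MacLane spectrum and a map between such is detected on $\pi_\ast$; it therefore suffices to identify $\pi_\ast\TRR^l(k;2)^{\phi\Z/2}$ together with the effect of $R$, $F$ and the Weyl generator. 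Write $W_\ast:=\bigoplus_{n+m=\ast}k$ for the claimed answer, set $T:=\THR(k)$, $T^\phi:=T^{\phi\Z/2}$, and $V_\ast:=\pi_\ast\big((T^\phi)^{C_2}\big)$. From Lemma~\ref{geofixgenuine} one has $V_\ast=\pi_\ast T^\phi=0$ for $\ast<0$ and $\dim_k V_\ast=\dim_k \pi_\ast T^\phi=\dim_k W_\ast=\ast+1$ for $\ast\geq 0$, and the underlying spectrum of the $C_2$-spectrum $T^\phi$ is $\bigvee_{n\geq 0}\Sigma^{2n}Hk\vee\bigvee_{0\leq n<m}\Sigma^{n+m}(Hk\vee Hk)$; labelling the two summands of an induced wedge summand by $(n,m)$ and $(m,n)$ identifies $\pi_\ast T^\phi$ with $W_\ast$ in such a way that the residual $C_2$-generator $\sigma_1$ acts by the swap $(a,b)\leftrightarrow(b,a)$. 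This is exactly the base case $l=1$ of the homotopy-group statement and of the Weyl action. The maps $R,F\colon\TRR^2(k;2)^{\phi\Z/2}\to\TRR^1(k;2)^{\phi\Z/2}$ are, by the $n=1$ instance of Remark~\ref{geometric formula}, the first projection followed by $r$, respectively by $f$, with $r,f$ computed on $\pi_\ast$ in Proposition~\ref{lowerFR}; so I organise the induction to establish the identification of $\pi_\ast\TRR^{l}$ and its Weyl action first, and deduce the formulas for $R$ and $F$ into $\TRR^l$ afterwards.

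For the inductive step, assume $\pi_\ast\TRR^{n}(k;2)^{\phi\Z/2}\cong W_\ast$ with $\sigma_n$ the swap. By Theorem~\ref{inductivePB}, $\TRR^{n+1}(k;2)^{\phi\Z/2}$ is the homotopy pullback of
\[
\TRR^{n}(k;2)^{\phi\Z/2}\xrightarrow{\ (F^{n-1},\,\sigma_1 F^{n-1}\sigma_{n})\ }T^{\phi}\times T^{\phi}\xleftarrow{\ r\times\sigma_1 r\ }(T^{\phi})^{C_2}\times(T^{\phi})^{C_2},
\]
so I run the associated Mayer--Vietoris sequence on homotopy groups, using the inductive identification, the values of $\pi_\ast T^\phi\cong W_\ast$ and $V_\ast$ from Lemma~\ref{geofixgenuine}, the formulas for $r$ and $f$ (hence for $\sigma_1 r$ and, via the inductive Weyl action, for $\sigma_1 F^{n-1}\sigma_n$) from Proposition~\ref{lowerFR}, and the description of the iterated Frobenius coming from the ``reflection'' formula of Remark~\ref{geometric formula}, according to which $F^{n-1}\colon\TRR^n(k;2)^{\phi\Z/2}\to T^\phi$ is $f$ applied to one outer coordinate, so that its image on $\pi_\ast$ is spanned by the basis classes $e_{(a,a)}$ together with $e_{(a,b)}+e_{(b,a)}$. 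The core is then linear algebra over $k$: the resulting map
\[
\delta_\ast\colon W_\ast\oplus V_\ast\oplus V_\ast\longrightarrow W_\ast\oplus W_\ast
\]
is surjective --- onto the summands $(a,b)$ with $a\leq b$ because $r$ carries the ``triangular'' part of $V_\ast$ onto them via the inverse Frobenius of $k$, which is bijective precisely because $k$ is perfect, and onto the summands $(a,b)$ with $a>b$ because the image of $F^{n-1}$ supplies the classes $e_{(b,a)}+e_{(a,b)}$ modulo the image of $r$. Hence the Mayer--Vietoris connecting map vanishes in every degree, $\pi_\ast\TRR^{n+1}(k;2)^{\phi\Z/2}\cong\ker\delta_\ast$, and the dimension count $\dim_k\ker\delta_\ast=3(\ast+1)-2(\ast+1)=\ast+1$ shows this is abstractly $W_\ast$.

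It remains to identify the structure maps, which I do by transporting the explicit descriptions of $R$, $F$ and $\sigma_{n+1}$ in Theorem~\ref{inductivePB} and Remark~\ref{geometric formula} through the isomorphism $\ker\delta_\ast\cong W_\ast$: $R$ is ``project away the outer two factors then restrict along $r$'', $F$ is the reflection map followed by $f$, and $\sigma_{n+1}$ reverses the product factors. Chasing these through the identification yields, on $\pi_\ast$, the inverse Frobenius on the diagonal summands and zero on the off-diagonal ones for $R$, the identity on the diagonal summands and the diagonal embedding of the $(n<m)$-summand into $(n,m)\oplus(m,n)$ for $F$, and the swap $(a,b)\leftrightarrow(b,a)$ for $\sigma_{n+1}$, completing the induction. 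Since none of these formulas refers to $n$ any longer, both $\pi_\ast$ and all the structure maps are independent of $l$, as claimed; the $n=1$ edge case (deducing $R,F\colon\TRR^2\to\TRR^1$) is handled as in the base case discussion above using Proposition~\ref{Frobonfibresn=1} in place of Proposition~\ref{Frobonfibres}.

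The step I expect to be the main obstacle is the combinatorial bookkeeping in the inductive step: one must pin down exactly how $r$, $f$, the Weyl flips $\sigma_1,\sigma_n$ and --- most delicately --- the iterated Frobenius $F^{n-1}$ act on the summands indexed by pairs $(n,m)$, and in particular how the ``triangular'' index set of $V_\ast$ matches with the ``square'' index set of $W_\ast$ under repeated applications of these maps, before one can verify that $\delta_\ast$ is surjective with the stated kernel and then recognise the transported formulas for $R$, $F$ and $\sigma_{n+1}$. Once the indexing is set up correctly the remaining work is routine linear algebra over $k$; the perfectness of $k$ is used at every stage, and it is precisely the bijectivity of the Frobenius that forces the answer to stabilise into its $l$-independent form.
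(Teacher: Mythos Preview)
Your approach is essentially the paper's: both argue by induction on $l$, feed the pullback square of Theorem~\ref{inductivePB} into a Mayer--Vietoris sequence, show the comparison map is surjective so the sequence degenerates into short exact sequences, and then identify the kernel with $W_\ast$. The paper, however, organises the induction in a way that sidesteps exactly the difficulty you flag at the end. Its inductive hypothesis is stronger than yours: it assumes not only the identification of $\pi_\ast\TRR^{h}$ and the Weyl action for all $h\leq l$, but also the explicit formulas for $R$ and $F$ on homotopy groups. From this one reads off immediately that the described $F$ is idempotent, so $F^{l-1}=F$ as an explicit endomorphism of $W_\ast$; the surjectivity of $\delta_\ast$ and the shape of its kernel then follow by direct inspection of the formulas, and the paper writes down a specific isomorphism $\ker\delta_\ast\cong W_\ast$ (namely $w_{n>m}=y_{n<m}$, $w_{n<m}=x_{n<m}$, $w_{n=m}=x_{n=m}$ on triples $(x,y,z)$ in the kernel) before checking $R$, $F$, $\sigma_{l+1}$ against it.

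Your plan to extract $F^{n-1}$ from Remark~\ref{geometric formula} as ``$f$ applied to the outer coordinate'' is correct in the iterated-pullback coordinates, but to use it in the Mayer--Vietoris sequence you must translate it into the $W_\ast$-coordinates coming from your inductive identification, and that translation is precisely the formula for $F$ that you have postponed. Likewise, a dimension count yields only an abstract isomorphism $\ker\delta_\ast\cong W_\ast$; verifying the structure maps on the next level requires a specific one. So as stated your induction is mildly circular: either strengthen the hypothesis to include $R$ and $F$ (as the paper does), or carry out the bookkeeping from Remark~\ref{geometric formula} in full, which amounts to the same thing.
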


\begin{proof} We prove the theorem by induction on $l$, using the pullbacks of Theorem \ref{inductivePB}. For $n=1$ the pullback of \ref{inductivePB} implies that $ {\TRR^2(k;2)}^{{\phi \Z/2}}$ is equivalent to the pullback $(T^{\phi})^{C_2}{\times_{T^{\phi}}}
(T^{\phi})^{C_2}$ (since the right vertical map is the diagonal for $n=1$ in \ref{inductivePB}). Consider the  Mayer-Vietoris sequence associated to $(T^{\phi})^{C_2}{\times_{T^{\phi}}}(T^{\phi})^{C_2}$:
\[
\dots\stackrel{\partial}{\longrightarrow}\pi_\ast {\TRR^2(k;2)}^{{\phi \Z/2}} \stackrel{}{\longrightarrow}(\!\!\!\!\bigoplus_{\begin{smallmatrix}(n, m) \\
n, m \geq 0, n+m=\ast\end{smallmatrix}}\!\!\!\!\!\!\!\! k)\oplus (\!\!\!\!\bigoplus_{\begin{smallmatrix}(n, m) \\
n, m \geq 0, n+m=\ast\end{smallmatrix}}\!\!\!\!\!\!\!\!k)
\stackrel{r-\sigma_1 r}{\longrightarrow}
\!\!\!\!\bigoplus_{\begin{smallmatrix}(n, m)\\
n, m \geq 0, n+m=\ast\end{smallmatrix}}\!\!\!\!\!\!\!\!k\stackrel{\partial}{\longrightarrow}\dots
\]
where $r$ is determined in Proposition \ref{lowerFR}. Since $r-\sigma_1 r$ is clearly surjective on each homotopy group, the Mayer-Vietoris sequence decomposes into short exact sequences and $\pi_\ast {\TRR^2(k;2)}^{{\phi \Z/2}}$ is the kernel of $r-\sigma_1r$. This kernel consists of the pairs of finite sequences $(x,y)$ indexed on the pairs of non-negative integers $(n,m)$, such that
\[
\begin{array}{rl}
0&=r(x)_{(n,m)}=(\sigma_1 r(y))_{(n,m)}=r(y)_{(m,n)}=\sqrt{y_{(m,n)}}, \;\; \text{for} \; n<m,
\\
\sqrt{x_{(n,m)}}&=r(x)_{(n,m)}=(\sigma_1 r(y))_{(n,m))}=r(y)_{(m,n)}=0, \;\; \text{for} \; n>m,
\\
\sqrt{x_{(n,n)}}&=r(x)_{(n,n)}=(\sigma_1 r(y))_{(n,n)}=r(y)_{(n,n)}=\sqrt{y_{(n,n)}}
\end{array}
\]
where $\sqrt{(-)}$ denotes the inverse of the Frobenius $(-)^2\colon k\to k$.
These are the pairs $(x,y)$ where $x_{(n,m)}=0$ and $y_{(n,m)}=0$ for $n>m$, and $x_{(n,n)}=y_{(n,n)}$, which gives the description of the homotopy groups of $\TRR^2(k;2)^{\phi \Z/2}$. The maps $R,F\colon \TRR^2(k;2)^{\phi \Z/2}\to \THR(k;2)^{{\phi \Z/2}}$ are described in \ref{inductivePB} and send such a pair $(x,y)$ to $r(x)$ and $f(x)$ respectively, and are therefore the  maps of Theorem \ref{geometricofTRnF2}. The Weyl action flips $x$ and $y$ by  \ref{inductivePB}.

Now let $l \geq 2$ and suppose inductively that the decomposition holds for $\pi_\ast {\TRR^h(k;2)}^{{\phi \Z/2}}$ for all $h \leq l$ and that the maps $R,F\colon {\TRR^{h}(k;2)}^{{\phi \Z/2}}\to {\TRR^{h-1}(k;2)}^{{\phi \Z/2}}$ and $\sigma_h$ are given in homotopy groups by the formulas of  \ref{geometricofTRnF2}. We will show that the same holds for $\pi_\ast {\TRR^{l+1}(k;2)}^{{\phi \Z/2}}$ and the maps $R,F\colon {\TRR^{l+1}(k;2)}^{{\phi \Z/2}}\to {\TRR^{l}(k;2)}^{{\phi \Z/2}}$ and $\sigma_{l+1}$. The Mayer-Vietoris sequence of the pullback square of  \ref{inductivePB} is then (we recall $\sigma_1F =F$, and that $n,m \geq 0$)
\[
\xymatrix@C=15pt@R=15pt{
\dots_{\ }\ar[d]^-{\partial}
\\
\pi_\ast {\TRR^{l+1}(k;2)}^{{\phi \Z/2}} \ar[r]
&
\displaystyle\big((\bigoplus_{\begin{smallmatrix}(n,m)\\
n+m=\ast\end{smallmatrix}}k)
\oplus
(\bigoplus_{\begin{smallmatrix}(n,m)\\
n+m=\ast\end{smallmatrix}}k)
\big)
\oplus
(\bigoplus_{\begin{smallmatrix}(n,m)\\
n+m=\ast\end{smallmatrix}}k)
\ar[rrrr]^-{r\oplus \sigma_1 r-(F^{l-1},F^{l-1}\sigma_l)}
&&&&
\displaystyle(\bigoplus_{\begin{smallmatrix}(n,m)\\
n+m=\ast\end{smallmatrix}}k)
\oplus
(\bigoplus_{\begin{smallmatrix}(n,m)\\
n+m=\ast\end{smallmatrix}}k)\ar[d]^-\partial
\\
&&&&&\dots
}
\]
By the inductive assumption the iterated map $F^{l-1}$ is in fact equal to a single map $F$. The right horizontal map then sends a triple $(x,y,z)$ of finite sequences indexed on the pairs of integers $n,m\geq 0$ to the pair of sequences
\[
(r(x)-F(z))_{(n,m)}=\left\{
\begin{array}{ll}
\sqrt{x_{(n,m)}}-z_{(m,n)}&, n>m
\\
-z_{(n,m)}&, n<m
\\
\sqrt{x_{(n,n)}}-z_{(n,n)}&, n=m
\end{array}
\right.
\]
\[
(\sigma_1r(y)-F\sigma_l(z))_{(n,m)}=\left\{
\begin{array}{ll}
-z_{(n,m)}&, n>m
\\
\sqrt{y_{(m,n)}}-z_{(m,n)}&, n<m
\\
\sqrt{y_{(n,n)}}-z_{(n,n)}&, n=m .
\end{array}
\right.
\]
This map is clearly surjective for all $\ast$, and therefore $\pi_\ast {\TRR^{l+1}(k;2)}^{{\phi \Z/2}}$ is isomorphic to its kernel. This consists of those triples $(x,y,z)$ such that $x_{(n,m)}=y_{(n,m)}=0$ for $n>m$ and $z_{(n,m)}=0$, for $n \neq m$, and $\sqrt{x_{(n,n)}}=\sqrt{y_{(n,n)}}=z_{(n,n)}$, which is isomorphic to the direct sum on all pairs of natural numbers by setting $w_{(n,m)}=y_{(m,n)}$ for $n>m$, and $w_{(n,m)}=x_{(n,m)}$, for $n<m$, and $w_{(n,n)}=x_{(n,n)}$. Let us now describe $R$ and $F$ under these isomorphisms.
By  \ref{inductivePB} the map $R$ sends $(x,y,z)$ to $z$, and therefore under the isomorphism above 
\[
R(w)_{(n,m)}=\left\{
\begin{array}{ll}
0&, n\neq m
\\
\sqrt{w_{(n,n)}}&, n=m.
\end{array}
\right.
\]
Again by  \ref{inductivePB} the map $F$ sends $(x,y,z)$ to $(x,x,F(z))$. Thus under the identification above
\[
F(w)_{(n,m)}=
\left\{
\begin{array}{ll}
x_{(m,n)}=w_{(m,n)}&, n>m
\\
x_{(n,m)}=w_{(n,m)}&, n<m
\\
x_{(n,n)}=w_{(n,n)}&, n=m.
\end{array}
\right.
\]
Finally, the Weyl action $\sigma_{l+1}$ sends $(x,y,z)$ to $(y,x,\sigma_l(z))$, and under the isomorphism above
$
\sigma_{l+1}(w)_{(n,m)}=w_{(m,n)}
$.
\end{proof}

\begin{cor} \label{geometricofTRRF2} Let $k$ be a perfect field of characteristic $2$. There is a natural isomorphism
\[\pi_\ast {\TRR(k;2)}^{{\phi \Z/2}} \cong \left\{\begin{array}{ll}k & \mbox{if $\ast$ is even}
\\
0& \mbox{otherwise},
 \end{array}\right.\]
and the Frobenius endomorphism $F\colon {\TRR(k;2)}^{{\phi \Z/2}}\to {\TRR(k;2)}^{{\phi \Z/2}}$ is the Frobenius of $k$ on homotopy groups.
\end{cor}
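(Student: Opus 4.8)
The statement is a corollary of Theorem \ref{geometricofTRnF2}: the plan is to obtain $\TRR(k;2)^{\phi\Z/2}$ as the homotopy limit of the tower $\{\TRR^l(k;2)^{\phi\Z/2}\}_l$ along $R$, to which that theorem gives complete access, and then to trace $F$ through the limit. First I would invoke Lemma \ref{inverse limits}: since $\THR(k;2)$ is a connective real $2$-cyclotomic spectrum, the tower $\{\TRR^{l}(k;2)\}_l$ of $\Z/2$-spectra is uniformly bounded below (its fixed-points $\THR(k;2)^{D_{2^{l}}}$ and underlying spectra $\THR(k;2)^{C_{2^{l}}}$ are connective, uniformly in $l$, as is implicit in Theorem \ref{geometricofTRnF2} together with the connectivity of the $\TR$-spectra), hence
\[
\TRR(k;2)^{\phi\Z/2}\simeq \holim_l \TRR^l(k;2)^{\phi\Z/2}
\]
compatibly with the Frobenius. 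The computation then reduces, via the Milnor $\lim$--$\lim^1$ sequence, to the derived inverse limit of the tower of graded abelian groups $\pi_\ast\TRR^l(k;2)^{\phi\Z/2}$ along $R$, followed by the identification of $F$.

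Next I would run the $\lim$--$\lim^1$ computation degree by degree. By Theorem \ref{geometricofTRnF2}, $\pi_\ast \TRR^l(k;2)^{\phi\Z/2}\cong\bigoplus_{n+m=\ast}k$, and $R\colon\pi_\ast\TRR^{l+1}\to\pi_\ast\TRR^l$ annihilates the summands with $n\neq m$ and is the (bijective) inverse Frobenius of $k$ on the summand $n=m$. If $\ast$ is odd there is no summand with $n=m$, so $R$ is the zero map on $\pi_\ast$; a tower with vanishing structure maps has $\lim=\lim^1=0$. If $\ast=2j$, projection onto the $(j,j)$-summand exhibits a pro-isomorphism to the tower $\cdots\xrightarrow{(-)^{1/2}}k\xrightarrow{(-)^{1/2}}k$; since $(-)^{1/2}$ is an isomorphism the images stabilise, so $\lim^1=0$ and $\lim\cong k$. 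Feeding this into the Milnor sequence gives $\pi_\ast\TRR(k;2)^{\phi\Z/2}\cong k$ for $\ast$ even and $0$ otherwise.

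Finally I would identify $F$ on the limit. Since $F\colon\TRR^{l+1}(k;2)^{\phi\Z/2}\to\TRR^l(k;2)^{\phi\Z/2}$ commutes with $R$, the induced endomorphism of the homotopy limit sends a compatible family $(x_l)_l$ to $(Fx_{l+1})_l$. On $\pi_{2j}$, the compatibility $R x_{l+1}=x_l$ on the $(j,j)$-summand reads $x_{l}^{2}=x_{l+1}$, so the limit element determined by $a=x_1\in k$ has $l$-th term $a^{2^{l-1}}$. By Theorem \ref{geometricofTRnF2}, $F$ is the identity on the $(j,j)$-summand at each finite stage, so the image family has $l$-th term $a^{2^{l}}$, which (using that the Frobenius of $k$ is injective) is exactly the limit element determined by $a^2$. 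Hence $F$ acts on $\pi_{2j}\TRR(k;2)^{\phi\Z/2}\cong k$ as the Frobenius, as claimed. The only delicate point is precisely this last bookkeeping: the index shift $x_l\mapsto Fx_{l+1}$ combined with the inverse-Frobenius compatibility along $R$ converts the finite-stage identity maps into the Frobenius after passing to the limit; everything else is a direct application of Theorem \ref{geometricofTRnF2} and Lemma \ref{inverse limits}.
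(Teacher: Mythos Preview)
Your proof is correct and follows essentially the same approach as the paper: both commute geometric fixed-points past the inverse limit (via Lemma~\ref{inverse limits}), use Theorem~\ref{geometricofTRnF2} to see that the $R$-tower on $\pi_\ast$ is pro-isomorphic to a tower of isomorphisms on the diagonal summand (the paper phrases this as $R$ being ``idempotent up to isomorphism'', giving Mittag--Leffler), and then trace the shift on the limit to convert the finite-stage identity of $F$ on the diagonal into the Frobenius of $k$. Your write-up is simply more explicit about each of these steps than the paper's terse version.
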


\begin{proof}
By Theorem \ref{geometricofTRnF2} the map $R$ on homotopy groups is the map 
\[\bigoplus_{\begin{smallmatrix}(n,m)\\
n+m=\ast\end{smallmatrix}}k\to\bigoplus_{\begin{smallmatrix}(n,m)\\
n+m=\ast\end{smallmatrix}}k\] 
(where $n,m \geq 0$) which is the inverse Frobenius on the summands $(n,n)$, and zero everywhere else. It is an idempotent up to isomorphism, and therefore it satisfies the Mittag-Leffler condition. It follows that
\[
\pi_\ast \TRR(k;2)^{\phi \Z/2}\cong \lim_{R}\pi_\ast\TRR^l(k;2)^{\phi \Z/2}\cong  \lim_{R}\bigoplus_{\begin{smallmatrix}(n,m)\\
n+m=\ast\end{smallmatrix}}k\cong\bigoplus_{2n=\ast}k,
\]
where the last isomorphism is induced by the projection onto the first component of the limit and onto the summand $2n=\ast$ when $\ast$ is even, and it is zero otherwise. After composing with the shift automorphism of the limit, $R$ becomes by definition the identity, and $F$ the Frobenius of $k$.
\end{proof}

\subsubsection{The components of TRR and the ring of Witt vectors of perfect fields}

As we will show in Remark \ref{WittZ} below, the ring $\pi_0\THR(A)^{D_{2^n}}$ is not necessarily the ring of Witt vectors of $\pi_0\THR(A)^{\Z/2}$, not even when the latter is isomorphic to $A$. However, this is still the case for perfect fields, as we show now.

\begin{theorem}\label{pi0TRRp2}
Let $k$ be a perfect field of characteristic $2$, equipped with the trivial involution.
Then for every $n\geq 0$, the restriction map
\[\res^{D_{2^n}}_{C_{2^n}}\colon \pi_0\THR(k)^{D_{2^n}}\longrightarrow \pi_0\THH(k)^{C_{2^n}}\cong W_{n+1}(k;2)\]
is an isomorphism, and the Verschiebung, Frobenius, and restriction maps of the Witt vectors correspond respectively to $\tran_{D_{2^{n-1}}}^{D_{2^{n}}}$, $\res_{D_{2^{n-1}}}^{D_{2^{n}}}$, and $R$.
%
\end{theorem}

\begin{proof} Let us start with a commutative ring with involution $A$, and follow the strategy of  \cite{Wittvect} and \cite{Polynomial} of analysing the long exact sequence induced on homotopy groups by the fibre sequence
\[
E\mathcal{R}_+\otimes_{C_{2^{n}}} \THR(A)\longrightarrow \THR(A)^{C_{2^{n}}}\longrightarrow \THR(A)^{C_{2^{n-1}}}.
\]
The components of the fixed-points of the fibre are then calculated by the colimit
\[
\pi_0(E\mathcal{R}_+\otimes_{C_{2^{n}}} \THR(A))^{\Z/2}
\cong \colim_{\mathcal{O}_{\mathcal {R}}} \underline{\pi}^{(-)}_0\THR(A),
\]
where $\mathcal{O}_{\mathcal{R}}$ is the full subcategory of the orbit category of $D_{2^n}$ generated by the reflections and the trivial group (this follows for example from the fact that $E\mathcal{R}$ is the homotopy colimit over $\mathcal{O}_{\mathcal{R}}$ of the funtor that takes $D_{2^n}/H$ to the discrete space $D_{2^n}/H$, see e.g. \cite[Lemma 2.2]{LO}). The crucial difference between $2$ and the odd primes is that for the prime $2 $ the category $\mathcal{O}_{\mathcal{R}}$ has two components, generated by the distinct conjugacy classes of the reflections $\tau$ and $\sigma\tau$, where $\sigma$ is the generator of the cyclic group $C_{2^n}$. Therefore the colimit above is isomorphic to the pushout of abelian groups
\begin{align*}
\pi_0(E\mathcal{R}_+\otimes_{C_{2^{n}}} \THR(A))^{\Z/2}&\cong  \colim_{\mathcal{O}_{\mathcal{R}}} \underline{\pi}^{(-)}_0\THR(A)\cong (\pi_0\THR(A)^{\Z/2})_{C_2}\oplus_{A}  (\pi_0\THR(A)^{\Z/2})_{C_2}
\end{align*}
along the transfer maps $\tran^{\Z/2}_e\colon A\to (\pi_0\THR(A)^{\Z/2})_{C_2}$, where the coinvariants are taken with respect to the action of the Weyl group $C_2$.
Under this identification, the transfer map to $\pi_0\THR(A)^{D_{2^n}}$ is the transfer $\tran_{\Z/2}^{D_{2^{n}}}$ on the first summand, and $\sigma_{n}\tran_{\Z/2}^{D_{2^{n}}}$ on the second summand, where $\sigma_{n}$ is the action of the generator of the Weyl group. The corresponding long exact sequence on homotopy groups is then
\[
\xymatrix@C=20pt{
\dots \ar[d]^-{\partial}
\\
 (\pi_0\THR(A)^{\Z/2})_{C_2}\oplus_{A}  (\pi_0\THR(A)^{\Z/2})_{C_2}
\ar[rrr]^-{\tran_{\Z/2}^{D_{2^{n}}}+\sigma_n\tran_{\Z/2}^{D_{2^{n}}}}&&&\pi_0\THR(A)^{D_{2^n}}\ar[r]^-R&\pi_0\THR(A)^{D_{2^{n-1}}}\ar[d]
\\&&&&0\rlap{.}
}
\]
Let us now compute the boundary map of this sequence in the case where $k$ is a perfect field of characteristic $2$ with the trivial involution. Since $k$ is a field of characteristic $2$, the isomorphism of  \cite[Corollary 5.2]{THRmodels} is
\[
 \pi_0\THR(k)^{\Z/2}\cong k\otimes_S k
\]
where $S\subset k$ is the subfield of squares. Moreover since $k$ is perfect, the restriction map
\[
 \pi_0\THR(k)^{\Z/2}\cong k\otimes_S k\longrightarrow k\cong  \pi_0\THH(k),
\]
which is induced by the multiplication map of $k$, is an isomorphism, and the Weyl action on the source must be trivial. The transfer map 
\[
\xymatrix{ k\cong\pi_0\THH(k)\ar[r]^-{\tran^{\Z/2}_e}&\pi_0\THR(k)^{\Z/2}\ar[r]^-{\res}_-{\cong}&k}
\]
is multiplication by $2$ by the double-coset formula, and therefore zero. Thus the long exact sequence above becomes
\[
\xymatrix@C=20pt{
\dots \ar[r]^-{\partial}
&
k\oplus k
\ar[rrr]^-{\tran_{\Z/2}^{D_{2^{n}}}+\sigma_n\tran_{\Z/2}^{D_{2^{n}}}}&&&\pi_0\THR(k)^{D_{2^n}}\ar[r]^-R&\pi_0\THR(k)^{D_{2^{n-1}}}\ar[r]
&0\rlap{.}
}
\]
Now suppose inductively that the restriction map $\res\colon \pi_0\THR(k)^{D_{2^{n-1}}}\to \pi_0\THR(k)^{C_{2^{n-1}}}$ is an isomorphism, and identify the target with $W_{n}(k;2)$ by the isomorphism of \cite[Theorem F]{Wittvect}. Then the restriction map defines a commutative diagram with exact rows
\[
\xymatrix@C=20pt{
\dots \ar[r]^-{\partial}
&
k\oplus k\ar[d]^{(1,1)}
\ar[rrr]^-{\tran_{\Z/2}^{D_{2^{n}}}+\sigma_n\tran_{\Z/2}^{D_{2^{n}}}}&&&\pi_0\THR(k)^{D_{2^n}}\ar[r]^-R\ar[d]^-{\res}&\pi_0\THR(k)^{D_{2^{n-1}}}\ar[r]\ar[d]^-{\res}_{\cong}
&0
\\
\dots \ar[r]^-{0}
&
 k
\ar[rrr]^-{V}&&&W_{n+1}(k;2)\ar[r]^-R&W_{n}(k;2)\ar[r]
&0
}
\]
 where the right vertical map is an isomorphism. It is therefore sufficient to show that the image of $\partial$ is equal to the kernel of $(1,1)$, which is the diagonal $\Delta\subset k\oplus k$. Since the connecting homomorphism of the bottom sequence is zero, we know at least that the image of $\partial$ is included in  $\Delta$, and that the middle restriction map $\res$ is surjective.
 In order to understand the image of $\partial$, we map the sequence above to the corresponding sequence on geometric fixed-points of  Proposition \ref{Frobonfibres}. Since $k$ is perfect, this last sequence is determined by Theorem \ref{geometricofTRnF2}, giving a diagram with exact rows
\[
\xymatrix@C=17pt{
\dots\ar[r]&\pi_1 \THR(k)^{D_{2^{n-1}}}\ar[d]^\phi\ar[r]^-{\partial}
&
k\oplus k\ar[d]^{\id}
\ar[rrr]^-{\tran_{\Z/2}^{D_{2^{n}}}+\sigma_n\tran_{\Z/2}^{D_{2^{n}}}}&&&\pi_0\THR(k)^{D_{2^n}}\ar[r]^-R\ar[d]&\pi_0\THR(k)^{D_{2^{n-1}}}\ar[r]\ar[d]^{\cong}
&0
\\
\dots \ar[r]_0^R&k\oplus k\ar[r]
&
k\oplus k
\ar[rrr]^-{V}_-0&&&k\ar[r]_{\cong}^-R&k\ar[r]
&0\rlap{.}
}
\]
Since the map below $\partial$ must be an isomorphism, the image of $\partial$ is isomorphic to the image of the vertical map $\phi$. The isotropy separation sequence for the $\Z/2$-spectrum $\THR(A)^{C_{2^{n-1}}}$ gives a long exact sequence
\[
\xymatrix@C=20pt{
\pi_1\THR(k)^{D_{2^{n-1}}}\ar[r]^-\phi
&
\pi_1(\THR(k)^{C_{2^{n-1}}})^{\phi\Z/2}\ar[r]
&
\pi_0\THR(k)^{C_{2^{n-1}}}\ar[rr]^{\tran_{C_{2^{n-1}}}^{D_{2^{n-1}}}}
&&
\pi_0\THR(k)^{D_{2^{n-1}}}
.}
\]
By the inductive assumption the $\Z/2$-action on $\pi_0\THR(k)^{C_{2^{n-1}}}$ is trivial (since $\res$ is surjective) and the transfer $\tran_{C_{2^{n-1}}}^{D_{2^{n-1}}}$ identifies with the multiplication by $2$ map on the Witt vectors $W_{n}(k;2)$, whose kernel is $k$. It follows that the cokernel of $\phi$, and therefore that of $\partial$, is isomorphic to $k$. Thus, if $k$ is finite, the image of $\partial$ must have as many elements as $k$ does, and therefore since it is included in $\Delta$ it must be equal to it. This concludes the proof in the case where $k$ is finite. 

If $k$ is infinite, we cannot yet conclude that the image of $\partial$ is the whole diagonal, since we only know that $(k\oplus k)/\im \partial$ is abstractly isomorphic to $k$. We do however know that the image of $\partial$ is $\Delta$ for the finite field $\mathbb{F}_2$, and the naturality of $\partial$ with respect to the morphism of fields $\F_2\to k$ shows that at least $(1,1)$ must belong to $\im \partial$. Since $R\colon\THR(k)^{D_{2^{n}}}\to \THR(k)^{D_{2^{n-1}}}$ is a map of ring spectra, $\partial$ is a map of $\pi_0\THR(k)^{D_{2^{n}}}$-modules. Moreover the isomorphism 
\[
\pi_0(E\mathcal{R}_+\otimes_{C_{2^{n}}} \THR(k))^{\Z/2}\cong  \colim_{\mathcal{O}_{\mathcal{R}}} \underline{\pi}^{(-)}_0\THR(k)\cong \pi_0\THR(k)^{\Z/2}\oplus_{k}  \pi_0\THR(k)^{\Z/2}\cong k\oplus k
\]
is an isomorphism of $\pi_0\THR(k)^{D_{2^{n}}}$-modules, where $\pi_0\THR(k)^{D_{2^{n}}}$ acts on each $\underline{\pi}^{H}_0\THR(k)$ via the restriction map, and the transfers are linear over these restrictions by the Frobenius reciprocity formula of the $D_{2^{n}}$-Mackey functor $\underline{\pi}_0\THR(k)$. In particular $\pi_0\THR(k)^{D_{2^{n}}}$ acts diagonally on $k\oplus k$, via the restriction map $\res^{D_{2^n}}_e\colon \pi_0\THR(k)^{D_{2^{n}}}\to \pi_0\THH(k)=k$. This map factors as
\[
\pi_0\THR(k)^{D_{2^{n}}}\xrightarrow{\res}\pi_0\THR(k)^{C_{2^{n}}}\cong W_{n-1}(k;2)\xrightarrow{F^{n-2}}k
\]
where the first map is surjective by the argument above. Since $k$ is of characteristic $2$, the iterated Frobenius  is given by
\[F^{n-2}(a_1,\dots, a_{n-1})=a^{2^{n-2}}_1,\]
which is surjective since $k$ is perfect.
Thus given any $x\in k$, we can choose an element $z$ of $\pi_0\THR(k)^{D_{2^{n}}}$ which maps to $x$ by the restriction $\res^{D_{2^n}}_e$. Then since the image of $\partial$ is a submodule of $k\oplus k$ containing $(1,1)$, we have that $(x,x)=z\cdot (1,1)$ is also in the image of $\partial$, and thus $\im\partial=\Delta$.

The identification of the Witt vectors operators $V$, $F$ and $R$ follows from the commutative diagrams
\[
\xymatrix@C=40pt{
\pi_0\THR(k)^{D_{2^{n-1}}}\ar[r]^-{\tran_{D_{2^{n-1}}}^{D_{2^{n}}}}\ar[d]^-\res_{\cong}
&
\pi_0\THR(k)^{D_{2^{n}}}\ar[d]^-\res_{\cong}
\\
\pi_0\THR(k)^{C_{2^{n-1}}}\ar[r]^-{\tran_{C_{2^{n-1}}}^{C_{2^{n}}}}
&
\pi_0\THR(k)^{C_{2^{n}}}
}
\ \ \ \ \ \ \ \ \ \ \ \
\xymatrix@C=40pt{
\pi_0\THR(k)^{D_{2^{n}}}\ar[r]^-{\res_{D_{2^{n-1}}}^{D_{2^{n}}}}\ar[d]^-\res_{\cong}
&
\pi_0\THR(k)^{D_{2^{n-1}}}\ar[d]^-\res_{\cong}
\\
\pi_0\THR(k)^{C_{2^{n}}}\ar[r]^-{\res_{C_{2^{n-1}}}^{C_{2^{n}}}}
&
\pi_0\THR(k)^{C_{2^{n-1}}}
}
\]
\[
\xymatrix@C=40pt{
\pi_0\THR(k)^{D_{2^{n}}}\ar[r]^-{R}\ar[d]^-\res_{\cong}
&
\pi_0\THR(k)^{D_{2^{n-1}}}\ar[d]^-\res_{\cong}
\\
\pi_0\THR(k)^{C_{2^{n}}}\ar[r]^-{R}
&
\pi_0\THR(k)^{C_{2^{n-1}}}
}
\]
and the fact that the maps of the bottom row correspond respectively to $V, F$ and $R$ by \cite[Theorem F]{Wittvect}. Note that to show that the first diagram commutes one needs to use the double-coset formula and the fact that the quotient $D_{2^{n-1}}\backslash D_{2^{n}}/C_{2^n}$ is trivial.
\end{proof}

\begin{rem}\label{WittZ}
 The restriction map of Theorem \ref{pi0TRRp2} is not generally an isomorphism. For example for the ring of integers, there is a map of short exact sequences
\[
\xymatrix@C=40pt{
0\ar[r]&\Z\oplus_2 \Z\ar[r]\ar[d]& \pi_0\THR(\Z)^{D_2}\ar[r]^-R\ar[d]^{\res}&\pi_0\THR(\Z)^{\Z/2}\ar[d]^{\res}_\cong\ar[r]&0
\\
0\ar[r]&\Z\ar[r]^-V&W_2(\Z;2)\ar[r]&\Z\ar[r]&0
}
\]
where $\Z\oplus_2 \Z$ is the pushout of the transfer $2\colon \Z\to \Z$ along itself, which is isomorphic to $\Z\times \Z/2$,
and the left-hand map is the identity on each summand. Thus the middle restriction is not an isomorphism, and moreover $\pi_0\THR(\Z)^{D_2}$ has $2$-torsion.

The top row of the diagram comes from the long exact sequence on homotopy groups for the map $R$ of the proof of Theorem \ref{pi0TRRp2}, upon showing that its connective homomorphism $\partial$ is in this case zero. To see this, we map the sequence to the analogous sequence for $\F_2$ via the canonical quotient map $\Z\to \F_2$, and obtain a commutative diagram with exact rows
\[
\xymatrix@C=40pt{
\pi_1\THR(\Z)^{\Z/2}\ar[d]\ar[r]^-{\partial}& \Z\oplus_2 \Z\ar[r]\ar[d]& \pi_0\THR(\Z)^{D_2}\ar[r]^-R\ar[d]&\pi_0\THR(\Z)^{\Z/2}\ar[d]\ar[r]&0
\\
\pi_1\THR(\F_2)^{\Z/2}\ar[r]^-{\partial}&\F_2\oplus \F_2\ar[r]^-V&W_2(\F_2;2)\ar[r]&\F_2\ar[r]&0
}
\]
where the second vertical map from the left is induced by the projection on each summand.
 Thus if we can show that the left vertical map is zero, we will have that the upper $\partial$ maps into the kernel of the projection $ \Z\oplus_2 \Z\to \F_2\oplus\F_2$, which is the subgroup of elements $(2n,0)$, and isomorphic to $\Z$.
The group $\pi_1\THR(\Z)^{\Z/2}$ is however isomorphic to $\Z/2$ by \cite[Proposition 5.22]{THRmodels}, and therefore $\partial$ is $0$. 

We still need to verify that the left vertical map is zero. We look at its effect on the isotropy separation sequences for the $\Z/2$-spectrum $\THR$, and obtain a diagram
\[
\xymatrix@C=40pt{
\pi_1\THR(\Z)^{\Z/2}\ar[d]\ar[r]^{\phi}&\pi_1\THR(\Z)^{\phi\Z/2}\ar[r]\ar[d]^0&\Z\ar[r]^2\ar[d]&\Z\ar[d]
\\
\pi_1\THR(\F_2)^{\Z/2}\ar[r]^{\overline{\phi}}&\pi_1\THR(\F_2)^{\phi\Z/2}\ar[r]&\F_2\ar[r]^0&\F_2\rlap{\.}
}
\]
The map $\pi_1\THR(\Z)^{\phi\Z/2} \to \pi_1\THR(\F_2)^{\phi\Z/2}$ is equal to $0$ by the calculation in the proof of \cite[Theorem 5.20]{THRmodels}.
In the bottom row the lower left map $\overline{\phi}$ is injective. This follows by the last part of the proof of \cite[Theorem 5.15]{THRmodels}, where this map is explicitly identified. Hence we conclude that the left vertical map is zero.

One can in fact show that the connecting homomorphism is zero also for the larger dihedral groups, by calculating the first part of the long exact sequence for $R$ on geometric fixed points using the calculations of section \ref{sec:TCRZ}. One then obtains short exact sequences
\[
\xymatrix@C=40pt{
0\ar[r]&\Z\oplus_2 \Z\ar[r]& \pi_0\THR(\Z)^{D_{2^{n+1}}}\ar[r]^-R& \pi_0\THR(\Z)^{D_{2^{n}}}\ar[r]&0
}
\]
for every $n\geq 1$. We will address this in future work.
\end{rem}

\begin{prop} \label{pi0TRRWittformula}
Let $k$ be a perfect field of characteristic $2$. The tower of abelian groups
\[
\dots\to \pi_1\THR(k)^{D_{2^n}}\stackrel{R}{\longrightarrow} \pi_1\THR(k)^{D_{2^{n-1}}}\stackrel{R}{\longrightarrow}\dots \stackrel{R}{\longrightarrow} \pi_1\THR(k)^{\Z/2}
\]
satisfies the Mittag-Leffler condition, and therefore there is an isomorphism of rings
\[
\pi_0\TRR(k;2)^{\Z/2}\cong W(k;2).
\]
\end{prop}

\begin{proof}
We need to analyse the images in $\pi_1$ of the composite maps $R^j$.  Let $(\nsupseteq C_{2^j})$ be the family of subgroups of $D_{2^{n+j}}$ that do not contain $C_{2^j}$ (it is the family $\mathcal{R}$ when $j=1$). By taking the $D_{2^{n+j}}/C_{2^j}$-fixed points of the isotropy separation sequence for the subgroup $C_{2^j}\subset D_{2^{n+j}}$ we obtain a fibre sequence of spectra
\[
(\THR(k)\otimes E(\nsupseteq C_{2^j})_+)^{D_{2^{n+j}}}\longrightarrow\THR(k)^{D_{2^{n+j}}}\stackrel{}{\longrightarrow}(\THR(k)^{\phi C_{2^j}})^{D_{2^{n+j}}/C_{2^j}},
\]
and after identifying the third term with $\THR(k)^{D_{2^{n}}}$ using the real cyclotomic structure we obtain a fibre sequence
\[
(\THR(k)\otimes E(\nsupseteq C_{2^j})_+)^{D_{2^{n+j}}}\longrightarrow\THR(k)^{D_{2^{n+j}}}\stackrel{R^j}{\longrightarrow}\THR(k)^{D_{2^{n}}}.
\]
The group of components of the fibre can be calculated as the colimit
\[
\pi_0 (\THR(k)\otimes E(\nsupseteq C_{2^j})_+)^{D_{2^{n+j}}}\cong 
\colim_{\mathcal{O}_{(\nsupseteq C_{2^j})}}\underline{\pi}_0\THR(k)
\]
where $\mathcal{O}_{(\nsupseteq C_{2^j})}$ is the full subcategory of the orbit category of $D_{2^{n+j}}$ spanned by the subgroups in $(\nsupseteq C_{2^j})$. This is equivalent to the category
\[
\xymatrix@R=35pt@C=35pt{
&\Z/2\ar@(u,l)[]_{\Z/2}\ar[r]&D_2\ar@(ur,ul)[]_{\Z/2}\ar[r]&D_4\ar@(ur,ul)[]_{\Z/2}\ar[r]&\dots\ar[r]&D_{2^{j-2}}\ar@(ur,ul)[]_{\Z/2}\ar[r]&D_{2^{j-1}}\ar@(ur,ul)[]_{\Z/2}
\\
\ar@(ul,dl)[]_{D_{2^{n+j}}}e\ar[ur]\ar[dr]\ar[r]&C_2\ar@(u,ul)[]_(.4){D_{2^{n+j-1}}}\ar[ur]\ar[dr]\ar[r]&C_4\ar@(u,ul)[]_(.4){D_{2^{n+j-2}}}\ar[ur]\ar[dr]\ar[r]&C_8\ar@(u,ul)[]_(.4){D_{2^{n+j-3}}}\ar[ur]\ar[dr]\ar[r]&\dots\ar[r]\ar[ur]\ar[dr]&C_{2^{j-1}}\ar@(u,ul)[]_(.4){D_{2^{n+1}}}\ar[ur]\ar[dr]
\\
&\Z/2\ar[r]\ar@(d,l)[]^{\Z/2}&D_2\ar@(dr,dl)[]^{\Z/2}\ar[r]&D_4\ar@(dr,dl)[]^{\Z/2}\ar[r]&\dots\ar[r]&D_{2^{j-2}}\ar@(dr,dl)[]^{\Z/2}\ar[r]&D_{2^{j-1}}\ar@(dr,dl)[]^{\Z/2}
}
\]
Since the dihedral actions extend to an action of $O(2)$, the cyclic groups $C_{2^{n-i}}\leq D_{2^{n-i}}$ act trivially on $\pi_0\THR(A)^{C_{2^i}}$ and one can replace the dihedral groups of automorphisms of the middle row by the groups $\Z/2=D_{2^{n+j-i}}/C_{2^{n+j-i}}$. Thus this is the colimit over a product category, and it is isomorphic to
\[
 (\colim\big(
\xymatrix@C=15pt{
\pi_0 \THR(k)^{D_{2^{j-1}}}
&
\ar[l]_-{\tran}\THR(k)^{C_{2^{j-1}}}\ar[r]^-\tran
&
\pi_0 \THR(k)^{D_{2^{j-1}}}
}
\big))_{\Z/2}.
\]
Since the restriction map for the inclusion $C_{2^{j-1}}\subset D_{2^{j-1}}$  is an isomorphism, the Weyl actions on $\pi_0 \THR(k)^{D_{2^{j-1}}}$ are trivial, and by the previous calculation this is
\begin{align*}
\pi_0 (\THR(k)\otimes E(\nsupseteq C_{2^j})_+)^{D_{2^{n+j}}}\cong&  \colim\big(
\xymatrix@C=15pt{
W_{j}(k;2)
&
\ar[l]_-{2}W_{j}(k;2)\ar[r]^-2
&
W_{j}(k;2)
}
\big)\\
&\cong W_{j}(k;2)\oplus (W_{j}(k;2)/2).
\end{align*}
The last isomorphism sends the class of $(x,y)$ to $(x+y,[y])$.
We then obtain a long exact sequence
\[
\xymatrix@C=25pt@R=15pt{
\dots\ar[r]&\pi_1\THR(k)^{D_{2^{n+j}}}\ar[r]^-{R^j}&\pi_1\THR(k)^{D_{2^{n}}}\ar[r]^-\partial &W_{j}(k;2)\oplus W_{j}(k;2)/2\ar[r]^-{(V^{n+1},0)}&\\
\ar[r]^-{(V^{n+1},0)}&W_{n+j+1}(k;2)\ar[r]^-{R^j}&W_{n+1}(k;2)\ar[r]&0\ ,\hspace{4.3cm}
}
\]
where the map $V^{n+1}$ comes from the identification of the Verschiebung with the transfer of Theorem \ref{pi0TRRp2}.

We need to show that after a sufficiently large value of $j$ the image of $R^j$ is constant, that is that the projection map
\[
\pi_1\THR(k)^{D_{2^{n}}}/\im R^{j+l}\longrightarrow \pi_1\THR(k)^{D_{2^{n}}}/\im R^{j}
\]
is an isomorphism. By exactness, the target of this map is isomorphic to
\[
\pi_1\THR(k)^{D_{2^{n}}}/\im R^j=\pi_1\THR(k)^{D_{2^{n}}}/\ker \partial\cong \im \partial=\ker (V^{n+1},0)=W_{j}(k;2)/2,
\]
and similarly for the source. Thus the images stabilise if and only if the map
\[
R^l\colon W_{j+l}(k;2)/2\longrightarrow W_{j}(k;2)/2
\]
is an isomorphism, which is the case since $k$ is perfect as both sides identify with $k$ and $R$ with the identity.
\end{proof}

\subsubsection{TCR of perfect fields of characteristic $2$}

We now combine the results of the previous two sections to prove the following theorem.

\begin{theorem}\label{TRRperfectchar2} For any perfect field $k$ of characteristic $2$, there is an equivalence of $\Z/2$-equivariant 
ring spectra
\[\TRR(k;2) \simeq H\underline{W(k;2)},\]
where $\underline{W(k;2)}$ is the constant Green functor of the abelian group with trivial involution $W(k;2)$.
\end{theorem}

\begin{proof} By Theorem \ref{pi0TRRp2} and Proposition \ref{pi0TRRWittformula}, we understand the Mackey functor 
of components of $\TRR(k;2)$. Let $\TRR(k;2) \to H\underline{W(k;2)}$ be  the zeroth Postnikov section.
The diagram 
\[\xymatrix{\TRR(k;2) \ar[d] \ar[r] & H\underline{W(k;2)} \ar[d] \\  \THR(k;2) \ar[r] & H \underline{k}}\]
commutes, where the right vertical map is induced by the projection $W(k;2) \to k$ which induces an isomorphism $W(k;2)/2 \cong k$. 

The map $\TRR(k;2) \to H\underline{W(k;2)}$ is an underlying equivalence by \cite[Theorem 4.5]{Wittvect}. Hence it suffices to show that it is an equivalence after applying the geometric fixed points. By the calculation of Theorem \ref{geometricofTRnF2} (and in particular using the formula for $R$) we see that the map ${\TRR(k;2)}^{\phi \Z/2} \to {\THR(k;2)}^{\phi \Z/2}$ induces injections on homotopy groups. 
Hence it suffices to show that after applying the geometric fixed points the lower horizontal map induces an injection on the image of the left vertical map. Indeed, this will imply that the upper horizontal map induces an injection on the homotopy groups of the geometric fixed points, and since these are either $0$ or $1$ dimensional $k$-vector spaces (the target has homotopy groups $W(k;2)/2\cong k$ in even non-negative degrees) it must also be surjective. The lower map is, on geometric fixed-points, the multiplication map
\[H \underline{k}^{\phi\Z/2} \otimes_{H \underline{k}} H \underline{k}^{\phi\Z/2} \to H \underline{k}^{\phi\Z/2} .\]
By 
\cite[Proposition 5.19]{THRmodels} the induced map on homotopy groups
\[k[w_1, w_2] \longrightarrow k[v],\]
where $\vert w_1 \vert=1$, $\vert w_2 \vert=1$ and $\vert v \vert=1$, sends both $w_1$ and $w_2$ to $v$. This implies that its restriction
\[k[w_1w_2] \longrightarrow k[v]\]
is injective, and by the description of $R$ of Theorem \ref{geometricofTRnF2} $k[w_1w_2]$ is exactly the image of the left vertical map on homotopy groups after applying geometric fixed points. 
\end{proof}

\begin{cor} \label{TCRperfectchar2} For any perfect field $k$ of characteristic $2$, one has an equivalence of genuine $\Z/2$-spectra
\[ \TCR(k;2) \simeq H \underline{\Z_2} \oplus \Sigma^{-1}H \underline{\coker(1-F)},\]
where $F \colon W(k;2) \to W(k;2)$ is the Witt vector Frobenius. 
\end{cor}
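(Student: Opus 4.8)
The plan is to imitate the odd primary argument of Proposition \ref{oddcomputation}: once we know that $\TRR(k;2)$ is Eilenberg--MacLane with $F$ the Witt Frobenius, the computation of $\TCR(k;2)$ becomes a purely homological manipulation.

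First I would record the input from the previous subsection. By Theorem \ref{TRRperfectchar2} we have $\TRR(k;2)\simeq H\underline{W(k;2)}$, and since an Eilenberg--MacLane $\Z/2$--spectrum admits no exotic self-maps (any self-map is $H$ of an endomorphism of its zeroth homotopy Mackey functor), the Frobenius is of the form $F=H\underline{F}$ for an endomorphism of the Mackey functor $\underline{W(k;2)}$. By Corollary \ref{TambTRRp2}, and using that $k$ carries the trivial involution (so that the restriction and transfer of $\underline{W(k;2)}$ are the identity and multiplication by $2$), this endomorphism is the one induced by the $2$--typical Witt vector Frobenius $F\colon W(k;2)\to W(k;2)$. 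Regarding $\id$ and $F$ as self-maps of $H\underline{W(k;2)}$ it follows that
\[
\TCR(k;2)\;\simeq\;\operatorname{eq}\big(\id,\,F\big)\;\simeq\;\fib\big(\id-F\colon H\underline{W(k;2)}\to H\underline{W(k;2)}\big).
\]

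The key observation is now that the functor $M\mapsto H\underline{M}$ from abelian groups to $\Z/2$--spectra is exact, hence extends to an exact functor on derived categories (landing in modules over $H\underline{\Z}$, equivalently in the derived category of $\Z/2$--Mackey functors), and under this extension the two-term complex $[\,W(k;2)\xrightarrow{\,\id-F\,}W(k;2)\,]$ is sent to $\fib(\id-F)=\TCR(k;2)$. Over $\Z$, which has global dimension one, every two-term complex is (non-canonically) quasi-isomorphic to the direct sum of its cohomology groups; thus in the derived category of abelian groups
\[
\big[\,W(k;2)\xrightarrow{\,\id-F\,}W(k;2)\,\big]\;\simeq\;\ker(\id-F)\ \oplus\ \coker(\id-F)[-1].
\]
Applying the exact functor $H\underline{(-)}$ to this splitting gives $\TCR(k;2)\simeq H\underline{\ker(\id-F)}\vee\Sigma^{-1}H\underline{\coker(\id-F)}$. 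Finally $\ker(\id-F)=\Z_2$: since $k$ is perfect of characteristic $2$, the Witt Frobenius reduces modulo $2$ to $x\mapsto x^{2}$ on $k$, whose fixed ring is $\F_2$, and comparing Teichm\"uller expansions shows that the fixed ring of $F$ on $W(k;2)$ is $W(\F_2;2)=\Z_2$. This yields the stated equivalence.

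The only non-formal ingredients are Theorem \ref{TRRperfectchar2} and the identification of the Frobenius with the Witt Frobenius; the remainder is the same bookkeeping as at odd primes. The step I would be most careful with is the second paragraph: one must justify computing $\fib(\id-F)$ inside $H\underline{\Z}$--modules, so that the splitting of a two-term complex over the hereditary ring $\Z$ can be transported along $H\underline{(-)}$. This is precisely what forces the relevant $k$--invariant to vanish --- something that could fail in $\Sp^{\Z/2}$ for a general two-stage Postnikov tower with these homotopy Mackey functors, since the group $[\,H\underline{\coker(\id-F)},\Sigma^{2}H\underline{\Z_2}\,]_{\Z/2}$ of Steenrod-type operations need not be zero.
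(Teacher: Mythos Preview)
Your proof is correct and follows essentially the same route as the paper: identify $\TRR(k;2)\simeq H\underline{W(k;2)}$ via Theorem \ref{TRRperfectchar2}, recognise $F$ as the Witt Frobenius, and split the resulting two-stage fibre using a global-dimension-one argument (you work over $\Z$ and push along the exact functor $H\underline{(-)}$, the paper works directly over the Green functor $\underline{\Z_2}$ and invokes the universal coefficient theorem). One small correction: the identification of $F$ on $\underline{\pi}_0$ with the Witt Frobenius comes from Theorem \ref{pi0TRRp2} (which matches $\res^{D_{2^{n+1}}}_{D_{2^n}}$ with the Witt Frobenius), not Corollary \ref{TambTRRp2}, which only records the Mackey-functor structure and says nothing about $F$.
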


\begin{proof} It follows from Theorem \ref{pi0TRRp2} that $F \colon \TRR(k;2) \to \TRR(k;2)$ corresponds to the Witt vector Frobenius $H\underline{F} \colon H\underline{W(k;2)} \to H\underline{W(k;2)}$ under the equivalence of Theorem \ref{TRRperfectchar2}. It is an easy exercise in Witt vectors to see that $\ker(1-F) \cong W({\mathbb F}_2;2) \cong \Z_2$. Hence we get
\[\underline{\pi}_0\TCR(k;2) \cong \underline{\Z_2} \]
and 
\[\underline{\pi}_{-1}\TCR(k;2) \cong \underline{\coker(1-F)},\]
and all the other homotopy Mackey functors of $\TCR(k;2)$ vanish. Since $\underline{\coker(1-F)}$ is a $\underline{\Z_2}$-module coming from a $\Z_2$-module, its homological dimension over the Green functor $\underline{\Z_2}$ is less than or equal to $1$. The universal coefficient theorem in the category of modules over the Green functor $\underline{\Z_2}$ now implies that in fact $\TCR(k;2)$ splits as claimed (in case $k=\F_2$ this is obvious since $F=\id$). \end{proof}

\section{TCR of the integers and perfect rings}

In this section we will calculate the homotopy type of $\TCR(A;2)^{\phi \Z/2}$ where $A$ is either a perfect $\F_2$-algebra or $2$-torsion free ring with a perfect$\mod 2$ reduction (for example the Witt vectors of a perfect $\F_2$-algebra). We will first calculate $\TCR(\Z;2)^{{\phi \Z/2}}$, and then deduce $\TCR(A;2)^{\phi \Z/2}$ by a base-change formula from $\F_2$ and $\mathbb{Z}$.

\subsection{The geometric fixed-points of \texorpdfstring{$\TCR(\Z;2)$}{TCR(Z;2)}}\label{sec:TCRZ}


Let us denote by $NA:=N_{e}^{C_2}HA$ the $C_2$-equivariant norm of the Eilenberg-MacLane ring spectrum of a commutative ring $A$. We regard $H\underline{\Z}$ (the $C_2$-equivariant Eilenberg-MacLane spectrum for the constant Mackey functor $\underline{\Z}$) as an $N\Z$-module via the multiplication map $N\Z\to H\underline{\Z}$. We then consider $H\underline{\Z}^{{\phi \Z/2}}$ as an $H\Z$-module via the induced map on geometric fixed-points $H\Z \simeq (N\Z)^{{\phi \Z/2}} \to H\underline{\Z}^{{\phi \Z/2}}$, and obtain
a splitting of $H\Z$-modules
\[H\underline{\Z}^{{\phi \Z/2}} \simeq \bigoplus_{n \geq 0} \Sigma^{2n} H\F_2.\]
Again using the description of $\THR(\Z)^{\phi\Z/2}$ as the derived smash product of Lemma \ref{C2THRphi} and the splitting above just as in Lemma \ref{geofixgenuine} , we obtain an equivalence of genuine $C_2$-spectra
\[\THR(\Z)^{{\phi \Z/2}} \simeq \bigoplus_{n\geq 0}\Sigma^{2n\rho} ((N\F_2)\otimes_{N\Z}H\underline{\Z})    \oplus  \bigoplus_{{ \begin{smallmatrix} (n,m) \\ 0 \leq n  < m \end{smallmatrix}}}\Sigma^{2n+2m}(C_2)_+\otimes ((H\F_2 \otimes H\F_2 )\otimes_{H\Z \otimes H\Z}H\Z).\]
In order to apply Theorem \ref{tcrformula1} to compute $\TCR(\Z;2)^{{\phi \Z/2}}$, we need to understand the genuine $C_2$-fixed points of this spectrum.
By the Wirthm\"uller isomorphism, the genuine $C_2$-fixed point spectrum of the induced summands are 
\begin{align*}(\Sigma^{2n+2m}(C_2)_+\otimes ((H\F_2 \otimes H\F_2 )\otimes_{H\Z \otimes H\Z}H\Z))^{C_2}&\simeq \Sigma^{2n+2m} ((H\F_2 \otimes H\F_2 )\otimes_{H\Z \otimes H\Z}H\Z)
\\
&\simeq \Sigma^{2n+2m} H\F_2 \oplus \Sigma^{2n+2m+1} H\F_2.
\end{align*}
The genuine fixed points of the terms $\Sigma^{2n\rho} ((N\F_2)\otimes_{N\Z}H\underline{\Z})$ are more laborious.
\begin{lemma}\label{cofibrenorm} There is a fibre sequence of $C_2$-spectra
\[
H(\Z\oplus \Z/2,w)\xrightarrow{H(2,0)} H\underline{\Z}\longrightarrow (N\F_2)\otimes_{N\Z}H\underline{\Z}
\]
where the left-hand spectrum is the Eilenberg MacLane spectrum of the abelian group $\Z\oplus \Z/2$ with involution $w(a,x)=(a,[a]+x)$. In particular the homotopy Mackey functors of the cofibre are
\[
\underline{\pi}_0((N\F_2)\otimes_{N\Z}H\underline{\Z})\cong
\big(
\xymatrix{\Z/2\ar@<1ex>[r]&\Z/4\ar@<1ex>[l]}
\big)
\]
where the restriction is the canonical projection and the transfer is injective, $\underline{\pi}_1((N\F_2)\otimes_{N\Z}H\underline{\Z})\cong \underline{\Z/2}$ which is the constant Mackey functor of $\Z/2$,  and the other homotopy groups vanish.
\end{lemma}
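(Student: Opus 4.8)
The plan is to reduce everything to a single computation with the norm of the mod‑$2$ Moore spectrum. Write $N:=N^{C_2}_e$ for the Hill--Hopkins--Ravenel norm, which is symmetric monoidal $\Sp\to\Sp^{C_2}$. Since $H\F_2\simeq H\Z\otimes \mathbb{S}/2$ (the cofibre of $2\colon\mathbb{S}\to\mathbb{S}$ smashed with $H\Z$), monoidality of $N$ gives $N\F_2\simeq N\Z\otimes N(\mathbb{S}/2)$ as $N\Z$‑algebras, and hence
\[
(N\F_2)\otimes_{N\Z}H\underline{\Z}\simeq H\underline{\Z}\otimes N(\mathbb{S}/2),
\]
so it suffices to understand $N(\mathbb{S}/2)$ after smashing with $H\underline{\Z}$. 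First I would record the norm of a degree‑two map: $N(2\cdot\id_{\mathbb{S}})\simeq 2\cdot\id+\trf\circ\res$ as a self‑map of $\mathbb{S}_{C_2}=N\mathbb{S}$, where $\trf\circ\res$ is the transfer--restriction endomorphism (equivalently the element $[C_2/e]\in A(C_2)=\pi_0^{C_2}\mathbb{S}_{C_2}$); this is the standard computation of the multiplicative transfer $N\colon\Z=\pi_0^e\mathbb{S}\to A(C_2)$. Applying the filtration of the norm of a cofibre sequence (as in the norm computations of \cite{HHR}) to $\mathbb{S}\xrightarrow{2}\mathbb{S}\to\mathbb{S}/2$ then produces a cofibre sequence of $C_2$‑spectra
\[
(C_2)_+\otimes\mathbb{S}/2\longrightarrow \cof\big(N(2)\colon\mathbb{S}_{C_2}\to\mathbb{S}_{C_2}\big)\longrightarrow N(\mathbb{S}/2).
\]

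Next I would smash this cofibre sequence with $H\underline{\Z}$. Because $\trf\circ\res$ acts as multiplication by $|C_2|=2$ on the constant Mackey functor $\underline{\Z}$ (on both levels, by the double‑coset formula), the endomorphism $N(2)\otimes H\underline{\Z}$ of $H\underline{\Z}$ is multiplication by $4$; since $H\underline{\Z}$ is $0$‑truncated, endomorphisms are detected on $\underline{\pi}_0$, so $\cof(N(2))\otimes H\underline{\Z}\simeq\cof(4\colon H\underline{\Z}\to H\underline{\Z})\simeq H\underline{\Z/4}$. We thus obtain a cofibre sequence
\[
(C_2)_+\otimes H\F_2\xrightarrow{\ \phi\ }H\underline{\Z/4}\longrightarrow (N\F_2)\otimes_{N\Z}H\underline{\Z}.
\]
The crucial point is to pin down $\phi$. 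Both ends are Eilenberg--MacLane, so $\phi$ is a map of Mackey functors from $\underline{\pi}_0\big((C_2)_+\otimes H\F_2\big)$ — the induced Mackey functor with underlying group $\F_2^{2}$, restriction the diagonal and transfer the sum — to $\underline{\Z/4}$; an elementary check shows there are exactly two such maps, the zero map and one non‑trivial map $\phi_0$, with underlying map $(a,b)\mapsto 2(a+b)$ and trivial on fixed points. Since the underlying spectrum of $(N\F_2)\otimes_{N\Z}H\underline{\Z}$ is $H\F_2\otimes_{H\Z}H\F_2$, whose $\pi_0$ is $\F_2$ and not $\Z/4$, the map $\phi$ cannot be zero, so $\phi=\phi_0$.

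With $\phi=\phi_0$ in hand, the long exact sequence of homotopy Mackey functors of the cofibre sequence gives $\underline{\pi}_0\big((N\F_2)\otimes_{N\Z}H\underline{\Z}\big)=\coker\phi_0$, which is precisely the Mackey functor $\big(\Z/2\rightleftarrows\Z/4\big)$ of the statement (restriction the canonical projection, transfer injective), $\underline{\pi}_1=\ker\phi_0=\underline{\Z/2}$, and all remaining homotopy Mackey functors vanish. To exhibit the displayed fibre sequence with source $H(\Z\oplus\Z/2,w)$, I would then identify the fibre of $H\underline{\Z}\to (N\F_2)\otimes_{N\Z}H\underline{\Z}$ directly: it equals $H\underline{\Z}\otimes\fib\big(N\mathbb{S}\to N(\mathbb{S}/2)\big)$, i.e.\ $H\underline{\Z}$ smashed with the desuspension of the two‑cell complex $N(\mathbb{S}/2)/\mathbb{S}_{C_2}$ (a free $1$‑cell and a regular‑representation $2$‑cell). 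The same long exact sequence, together with the surjectivity of $\underline{\pi}_0(H\underline{\Z})\to\underline{\pi}_0\big((N\F_2)\otimes_{N\Z}H\underline{\Z}\big)$ (it is reduction mod $4$, resp.\ mod $2$) and the fact that $\underline{\Z}$ is $2$‑torsion free, forces this fibre to be concentrated in degree $0$ with value the fixed‑point Mackey functor of $(\Z\oplus\Z/2,w)$; tracking the inclusion of the bottom cell $\mathbb{S}_{C_2}=N\mathbb{S}\to N(\mathbb{S}/2)$ then identifies the structure map with $H(2,0)$. The main obstacle is exactly this final bookkeeping: a priori $\phi$ and the extension class computing the fibre are both ambiguous, and both ambiguities are resolved using the independently known underlying homotopy type together with naturality of the norm filtration; the only genuinely computational input beyond that is the formula $N(2\cdot\id_{\mathbb{S}})\simeq 2\cdot\id+\trf\circ\res$.
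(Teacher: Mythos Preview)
Your argument is correct and takes a genuinely different route from the paper. The paper builds the fibre directly: it realises $N\F_2$ as the total cofibre of a $C_2$-equivariant square with corners $N\Z$ and $H\Z\otimes H\Z$, tensors the resulting sequence $P\to N\Z\to N\F_2$ over $N\Z$ with $H\underline{\Z}$, and then explicitly identifies the pushout $P\otimes_{N\Z}H\underline{\Z}$ as $H(\Z\oplus\Z/2,w)$ by computing its underlying module and checking the geometric fixed points. The Mackey functors of the cofibre are then read off from the resulting fibre sequence. You instead invoke the norm filtration together with the formula $N(2)=2+[C_2/e]$ to produce the auxiliary cofibre sequence $(C_2)_+\otimes H\F_2\to H\underline{\Z/4}\to (N\F_2)\otimes_{N\Z}H\underline{\Z}$, pin down the attaching map $\phi$ by the underlying homotopy type, and compute $\underline{\pi}_0$ and $\underline{\pi}_1$ of the cofibre as $\coker\phi$ and $\ker\phi$; this makes the appearance of $\Z/4$ completely transparent.

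The one place your write-up is thin is the last paragraph. Knowing $\underline{\pi}_*$ of the cofibre shows that the fibre $F$ is $0$-truncated and that $\underline{\pi}_0 F$ sits in an extension of Mackey functors, but you still need to determine that extension. This can be completed cleanly with your own cofibre sequence: the octahedron for $H\underline{\Z}\to H\underline{\Z/4}\to (N\F_2)\otimes_{N\Z}H\underline{\Z}$ gives a cofibre sequence $H\underline{\Z}\to F\to (C_2)_+\otimes H\F_2$, and the boundary map $(C_2)_+\otimes H\F_2\to\Sigma H\underline{\Z}$ is $\phi$ followed by the mod-$4$ Bockstein, hence nontrivial on each underlying summand. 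Since $\Ext^1_{\mathrm{Mack}}(\mathrm{Ind}_e^{C_2}\F_2,\underline{\Z})\cong\Ext^1_\Z(\F_2,\Z)\cong\Z/2$ by the induction--restriction adjunction, there is a unique nontrivial extension, and one checks directly that it is the fixed-point Mackey functor of $(\Z\oplus\Z/2,w)$; the map to $H\underline{\Z}$ is then forced to be $H(2,0)$. So your sketch does go through, but it would be worth spelling out this extension step rather than folding it into ``bookkeeping''. The payoff of the paper's approach is that the fibre is produced first and in closed form, which feeds directly into the Bredon computation in the next lemma; the payoff of yours is a cleaner explanation of why $\Z/4$ appears.
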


\begin{proof}
Let us first calculate the fibre of the canonical map $N\Z\to N\F_2$ given by the norm of the reduction modulo $2$. There is a commutative $C_2$-equivariant diagram of spectra (see \cite[Section 1.1]{Gdiags} for details on $G$-diagrams)
\[
\xymatrix{
N\Z\ar[r]^{1\otimes 2}\ar[d]_{2\otimes 1}&H\Z\otimes H\Z\ar[d]^{2\otimes 1}
\\
H\Z\otimes H\Z\ar[r]_{1\otimes 2}&N\Z
}
\]
whose total cofibre is equivalent to $N\F_2$. Let us denote by $P$ the pushout of the punctured square above. There is therefore a fibre sequence of $N\Z$-modules $P\to N\Z\to N\F_2$, giving rise to a fibre sequence of $C_2$-spectra
\[
P\otimes_{N\Z}H\underline{\Z}\longrightarrow (N\Z)\otimes_{N\Z}H\underline{\Z}\longrightarrow (N\F_2)\otimes_{N\Z}H\underline{\Z},
\]
with the middle term equivalent to $H\underline{\Z}$. Let us compute the left-hand map. By applying the functor $(-)\otimes_{N\Z}H\underline{\Z}$ to the $C_2$-equivariant square above, we obtain a $C_2$-equivariant pushout square
\[
\xymatrix{
H\underline{\Z}\ar[r]^{2}\ar[d]_{2}&H\Z\ar[d]
\\
H\Z\ar[r]&P\otimes_{N\Z}H\underline{\Z}.
}
\]
Let us identify this pushout. The underlying spectrum is $H((\Z\oplus \Z)/(2,-2))$ with the involution $\tau$ which flips the two summands. This is equivalent to $\Z\oplus \Z/2$ by the map that sends $[a,b]$ to $(a+b,[b])$, and the involution $w$ sends $(a,x)$ to $(a,[a]+x)$. We claim that $H(\Z\oplus \Z/2,w)$ is in fact the equivariant pushout. 
We need to verify that the map from the equivariant pushout to $H(\Z\oplus \Z/2,w)$ is an equivalence on geometric fixed-points. By the formula of \cite[Proposition 1.6]{HigherEx} the geometric fixed-points of the equivariant pushout are equivalent to the geometric fixed-points of the initial vertex, and therefore we need to show that the composite map from the top left corner of the square to $H(\Z\oplus \Z/2,w)$ is an equivalence on geometric fixed-points. This is the map induced on $C_2$-equivariant Eilenberg MacLane spectra by
\[
(2,0)\colon (\Z,\id)\xrightarrow{(2,0)} ((\Z\oplus \Z)/(2,-2),\tau)\cong (\Z\oplus \Z/2,w).
\]
Let us compute its cofibre, and show that is has trivial geometric fixed-points. The quotient of $(2,0)$ is $\Z/2\oplus\Z/2$ with the involution $w(y,x)=(y,y+x)$. Its fixed-points are $0\oplus \Z/2$, which is also the quotient of the map $(2,0)\colon \Z\to (\Z\oplus \Z/2)^{C_2}\cong (2\Z)\oplus \Z/2$. It follows that there is a fibre sequence of $C_2$-spectra
\[
H\underline{\Z}\xrightarrow{(2,0)} H(\Z\oplus \Z/2,w)\longrightarrow H(\Z/2\oplus\Z/2,w).
\]
The cofibre is equivariantly equivalent to $H(\Z/2\oplus\Z/2,\tau)$ where $\tau$ flips the summands, and therefore its geometric fixed-points vanish.

From the identification of this pushout we obtain a fibre sequence of $C_2$-spectra
\[
H(\Z\oplus \Z/2,w)\longrightarrow H\underline{\Z}\longrightarrow (N\F_2)\otimes_{N\Z}H\underline{\Z}
\]
where the first map is the map of abelian groups with involution that sends $(a,[b])$ to $2a$. The cokernel and kernel of this map identify respectively the zero-th and first homotopy Mackey functor of the cofibre with those of the statement, and the others are zero.
\end{proof}

\begin{lemma} \label{Bredon for HZ}
The $C_2$-equivariant homotopy groups of  $\Sigma^{k\rho}((N\F_2)\otimes_{N\Z}H\underline{\Z})$ for even $k$ are
\[
\pi^{C_2}_\ast\Sigma^{k\rho}((N\F_2)\otimes_{N\Z}H\underline{\Z})=
\left\{\begin{array}{cl}
\Z/2& k\leq \ast\leq 2k-1
\\
\Z/4 &\ast=2k
\\
\Z/2&\ast=2k+1
\\
0& \mbox{otherwise.}
\end{array}\right.
\]
\end{lemma}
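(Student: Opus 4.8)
The plan is to deduce the statement from the fibre sequence of Lemma~\ref{cofibrenorm} by a cellular Bredon homology computation. Write $X:=(N\F_2)\otimes_{N\Z}H\underline\Z$ for brevity. Smashing
\[
H(\Z\oplus\Z/2,w)\xrightarrow{H(2,0)}H\underline\Z\longrightarrow X
\]
with $S^{k\rho}$ and applying $\pi^{C_2}_\ast$ yields a long exact sequence
\[
\cdots\to \widetilde H^{C_2}_\ast(S^{k\rho};\underline N)\xrightarrow{f_\ast}\widetilde H^{C_2}_\ast(S^{k\rho};\underline\Z)\to \pi^{C_2}_\ast(\Sigma^{k\rho}X)\to \widetilde H^{C_2}_{\ast-1}(S^{k\rho};\underline N)\to\cdots,
\]
where $\underline N=\underline\pi_0 H(\Z\oplus\Z/2,w)$ is the (fixed‑point) Mackey functor read off from the proof of Lemma~\ref{cofibrenorm}, namely $\underline N(C_2/e)=\Z\oplus\Z/2$ with $g$ acting by $w(a,x)=(a,[a]+x)$, $\underline N(C_2/C_2)=(\Z\oplus\Z/2)^{w}=2\Z\oplus\Z/2$, and $\operatorname{tr}(a,x)=(2a,[a])$; and $f_\ast$ is induced by $H(2,0)$, i.e.\ by $f\colon(a,x)\mapsto 2a$. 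It therefore suffices to compute the two Bredon homology groups and the map between them.

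For the Bredon homology I would use the minimal $C_2$-CW structure on $S^{k\rho}=S^k\wedge S^{k\sigma}$: one fixed $k$-cell realising $(S^{k\rho})^{C_2}=S^k$, and one free cell $C_2\times D^j$ in each dimension $k+1\le j\le 2k$. The associated Bredon chain complex with coefficients in a Mackey functor $\underline M$ is
\[
\cdots\to\underline M(C_2/e)\xrightarrow{1+g}\underline M(C_2/e)\xrightarrow{1-g}\underline M(C_2/e)\xrightarrow{\operatorname{tr}}\underline M(C_2/C_2),
\]
concentrated in degrees $2k,\dots,k+1,k$, with the alternation of $1\pm g$ forced by $\partial^2=0$ (and, since $k$ is even, degree $2k$ lands in a $1-g$ slot). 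For $\underline M=\underline\Z$ the maps $1\pm g$ are $0$ and $2$ and $\operatorname{tr}=2$, giving $\widetilde H^{C_2}_\ast(S^{k\rho};\underline\Z)\cong\Z/2$ for $\ast\in\{k,k+2,\dots,2k-2\}$, $\cong\Z$ for $\ast=2k$, and $0$ otherwise. For $\underline M=\underline N$ the same bookkeeping (now $1-g\colon(a,x)\mapsto(0,[a])$, $1+g\colon(a,x)\mapsto(2a,[a])$) gives $\widetilde H^{C_2}_\ast(S^{k\rho};\underline N)\cong\Z/2$ for $\ast\in\{k,k+2,\dots,2k-2\}$, $\cong\Z\oplus\Z/2$ for $\ast=2k$, and $0$ otherwise in $[k,2k]$; crucially all odd degrees in $[k+1,2k-1]$ vanish for both coefficient systems, so no extension problems arise below $2k$.

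Finally I would run the long exact sequence. The point is that $f_\ast$ vanishes in the degrees $\{k,k+2,\dots,2k-2\}$: in each such degree the class computing $\widetilde H^{C_2}_\ast(S^{k\rho};\underline N)$ is represented by an element of $2\Z\oplus\Z/2$ (a kernel of $1\mp g$, or a coset in $\operatorname{coker}(\operatorname{tr})$), which $f$ sends into $4\Z$, hence into the image of the corresponding differential $2\colon\Z\to\Z$ on the $\underline\Z$-side; so $f_\ast=0$ there and the long exact sequence contributes $\operatorname{coker}(f_\ast)=\Z/2$ to $\pi^{C_2}_j(\Sigma^{k\rho}X)$ for all $k\le j\le 2k-1$ (even $j$ from $\widetilde H^{C_2}_j(S^{k\rho};\underline\Z)$, odd $j$ from $\widetilde H^{C_2}_{j-1}(S^{k\rho};\underline N)$). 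In degree $2k$ one gets $\operatorname{coker}\big(f_\ast\colon\Z\oplus\Z/2\to\Z\big)$, and since $f_\ast$ has image exactly $4\Z$ this is $\Z/4$ — with no extension to resolve because the target is torsion free — while in degree $2k+1$ one gets $\ker\big(f_\ast\colon\Z\oplus\Z/2\to\Z\big)=\Z/2$, and everything else is zero.

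The step I expect to be the genuine obstacle is pinning the two chain complexes down correctly, in particular the genuine $C_2$-equivariant identification of $H(\Z\oplus\Z/2,w)$ with $H\underline N$ and the non‑trivial $g$-action on $\underline N(C_2/e)$: unlike for $\underline\Z$ this action is what produces the $\Z/4$ (rather than an elementary abelian group) in the top degree $2k$, and it has to be taken exactly as it arises in Lemma~\ref{cofibrenorm} rather than guessed. As a consistency check in degrees $<2k$ one can instead use the isotropy separation sequence together with $(\Sigma^{k\rho}X)^{\phi C_2}\simeq\Sigma^k(X^{\phi C_2})$, where $X^{\phi C_2}\simeq H\F_2\otimes_{H\Z}(H\underline\Z)^{\phi C_2}$ has $\pi_j=\F_2$ for all $j\ge 0$ by the splitting $(H\underline\Z)^{\phi C_2}\simeq\bigvee_{n\ge0}\Sigma^{2n}H\F_2$.
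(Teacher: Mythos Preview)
Your argument is correct and follows essentially the same route as the paper: both proofs suspend the fibre sequence of Lemma~\ref{cofibrenorm} by $S^{k\rho}$, compute the two Bredon chain complexes (with the top differential $1-g$ since $k$ is even), and read off the result from the long exact sequence, which breaks into short pieces because both coefficient systems give homology concentrated in even degrees. One small slip in wording: for odd $j\in[k+1,2k-1]$ the contribution is $\ker(f_\ast)$ in degree $j-1$, not $\operatorname{coker}(f_\ast)$; since $f_\ast=0$ there the answer is the same, but the parenthetical you wrote is the correct reason, not the label you gave it.
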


\begin{proof}
We calculate the equivariant homotopy groups from the fibre sequence of $C_2$-spectra
\[
\Sigma^{k\rho}H(\Z\oplus \Z/2,w)\xrightarrow{(2,0)} \Sigma^{k\rho}H\underline{\Z}\longrightarrow \Sigma^{k\rho}(N\F_2)\otimes_{N\Z}H\underline{\Z}
\]
from Lemma \ref{cofibrenorm}. We start by calculating the equivariant homotopy groups of the first two spectra. These are respectively the Bredon homology groups of $S^{k\rho}$ with coefficients in the Mackey functors of the abelian groups $\Z$ with the the trivial involution, and $\Z\oplus \Z/2$ with the involution $w(a,x)=(a,[a]+x)$.
These are respectively the homology of the chain complexes
\[
\big(0\to \Z\xrightarrow{0}\Z\xrightarrow{2}
\dots \xrightarrow{0}\Z\xrightarrow{2}\Z\xrightarrow{0}\Z\xrightarrow{2}\Z\to 0
\big)
\]
where the  non-zero groups are sitting between degree $k$ and $2k$ (and $k$ is even), and
\[
\big(0\to \Z\times \Z/2\xrightarrow{1-w}\Z\times \Z/2\xrightarrow{1+w}
\dots \xrightarrow{1-w}\Z\times \Z/2\xrightarrow{1+w}\Z\times \Z/2\xrightarrow{1-w}\Z\times \Z/2\xrightarrow{1+w}(2\Z)\times \Z/2\to 0
\big)
\]
with the non-zero groups sitting in the same degrees. The first complex has homology groups $\Z/2$ in even degrees between $k$ and $2k-2$, a $\Z$ in degree $2k$, and zero everywhere else. The differentials of the second complex are respectively
\[
(1+w)(a,x)=(2a,[a])\ \ \ \ \ \ \ \ \ \ \  (1-w)(a,x)=(0,[a])
\]
for all $(a,x)\in \Z\times \Z/2$. Its homology is then concentrated in even degrees between $k$ and $2k$, with
\[
((2\Z)\times \Z/2)/\langle (2a,[a]) \rangle \cong \Z/2
\]
in even degrees between $k$ and $2k-2$ and $(2\Z)\times\Z/2$ in degree $2k$. The long exact sequence of the above fibre sequence therefore splits and gives rise to exact sequences
\[
0\to  \pi_{2k+1}^{C_2}(\Sigma^{k\rho}(N\F_2)\otimes_{N\Z}H\underline{\Z})\longrightarrow (2\Z)\times\Z/2\xrightarrow{(2,0)}\Z\longrightarrow  \pi_{2k}^{C_2}(\Sigma^{k\rho}(N\F_2)\otimes_{N\Z}H\underline{\Z}) \to 0
\]
and 
\[
0\to  \pi_{2k-1}^{C_2}(\Sigma^{k\rho}(N\F_2)\otimes_{N\Z}H\underline{\Z})\longrightarrow \Z/2\stackrel{0}{\longrightarrow}\Z/2\longrightarrow  \pi_{2k-2}^{C_2}(\Sigma^{k\rho}(N\F_2)\otimes_{N\Z}H\underline{\Z}) \to 0
\]
\[
0\to  \pi_{2k-3}^{C_2}(\Sigma^{k\rho}(N\F_2)\otimes_{N\Z}H\underline{\Z})\longrightarrow \Z/2\stackrel{0}{\longrightarrow}\Z/2\longrightarrow  \pi_{2k-4}^{C_2}(\Sigma^{k\rho}(N\F_2)\otimes_{N\Z}H\underline{\Z}) \to 0
\]
\[\vdots\]
\[
0\to  \pi_{k+1}^{C_2}(\Sigma^{k\rho}(N\F_2)\otimes_{N\Z}H\underline{\Z})\longrightarrow \Z/2\stackrel{0}{\longrightarrow}\Z/2\longrightarrow  \pi_{k}^{C_2}(\Sigma^{k\rho}(N\F_2)\otimes_{N\Z}H\underline{\Z}) \to 0
\]
which give the groups of the statement.
\end{proof}

Since $H\underline{\Z} \to (N\F_2)\otimes_{N\Z}H\underline{\Z}$ is a map of $C_2$-equivariant algebras, the fixed point spectrum $(\Sigma^{k\rho}(N\F_2)\otimes_{N\Z}H\underline{\Z})^{C_2}$ is a module over $H\mathbb{Z}$ and therefore splits as a wedge of Eilenberg-MacLane spectra. As a consequence of Lemma \ref{Bredon for HZ}  we obtain an equivalence
\begin{align*}
(\THR(\Z)^{{\phi \Z/2}})^{C_2}& \simeq 
\bigoplus_{ \begin{smallmatrix} (n,m) \\ n>m\geq 0  \end{smallmatrix}}(\Sigma^{2n+2m}(H\F_2 \oplus \Sigma H\F_2)) \oplus \bigoplus_{n\geq 0}(\Sigma^{4n}(H\Z/4 \oplus \Sigma H\F_2))
\\
&\oplus   \bigoplus_{ \begin{smallmatrix} (n,m) \\ 0 \leq n  < m \end{smallmatrix}}(\Sigma^{2n+2m}(H\F_2 \oplus \Sigma H\F_2)).
\end{align*}
We recall that the underlying non-equivariant spectrum of $\THR(\Z)^{{\phi \Z/2}}$ is equivalent to 
\[\bigoplus_{ \begin{smallmatrix} (n,m) \\ n>m\geq 0 \end{smallmatrix}}(\Sigma^{2n+2m}(H\F_2 \oplus \Sigma H\F_2)) \oplus \bigoplus_{n\geq 0}(\Sigma^{4n}(H\F_2 \oplus \Sigma H\F_2)) \oplus   \bigoplus_{ \begin{smallmatrix} (n,m) \\ 0 \leq n  < m \end{smallmatrix}}(\Sigma^{2n+2m}(H\F_2 \oplus \Sigma H\F_2)),\]
and we now want to identify the maps $r,f \colon (\THR(\Z)^{{\phi \Z/2}})^{C_2}\to \THR(\Z)^{{\phi \Z/2}}$ under these splittings.

\begin{prop} \label{F for Z} Under the above equivalences the map $f \colon (\THR(\Z)^{{\phi \Z/2}})^{C_2} \to \THR(\Z)^{{\phi \Z/2}}$ corresponds to the map
\begin{align*} \bigoplus_{ \begin{smallmatrix} (n,m) \\ n>m\geq 0 \end{smallmatrix}}(\Sigma^{2n+2m}(H\F_2 \oplus \Sigma H\F_2)) \oplus \bigoplus_{n\geq 0}(\Sigma^{4n}(H\Z/4 \oplus \Sigma H\F_2)) \oplus   \bigoplus_{ \begin{smallmatrix} (n,m) \\ 0 \leq n  < m \end{smallmatrix}}(\Sigma^{2n+2m}(H\F_2 \oplus \Sigma H\F_2)) 
&
\to 
\\  \bigoplus_{ \begin{smallmatrix} (n,m) \\ n>m\geq 0\end{smallmatrix}}(\Sigma^{2n+2m}(H\F_2 \oplus \Sigma H\F_2)) \oplus \bigoplus_{n\geq 0}(\Sigma^{4n}(\hspace{.15cm}  H\F_2 \hspace{.2cm} \oplus \Sigma H\F_2))  \oplus   \bigoplus_{ \begin{smallmatrix} (n,m) \\ 0 \leq n  < m \end{smallmatrix}}(\Sigma^{2n+2m}(H\F_2 \oplus \Sigma H\F_2))  \end{align*}
which kills the $(n>m)$-summands, embeds diagonally  the $(n<m)$-summands into the sum of the summands $(n>m)$ and $(n<m)$, and on the remaining summands it  is given by
\[\pr \oplus \id \colon \Sigma^{4n}(H\Z/4 \oplus \Sigma H\F_2) \to \Sigma^{4n}(H\F_2 \oplus \Sigma H\F_2).\]
\end{prop}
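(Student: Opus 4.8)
The plan is to follow the blueprint of the proof of Proposition~\ref{lowerFR} and identify $f$ summand by summand. By construction $f$ is the forgetful map from the genuine $C_2$-fixed points of the $C_2$-spectrum $\THR(\Z)^{\phi\Z/2}$ to its underlying spectrum, so on homotopy it is the Mackey-functor restriction $\res^{C_2}_e$ of $\underline{\pi}_\ast\THR(\Z)^{\phi\Z/2}$. Since $\res^{C_2}_e$, genuine fixed points and geometric fixed points all commute with coproducts, and every spectrum that appears is a wedge of Eilenberg--MacLane spectra, it suffices to compute $f$ on homotopy groups on each $C_2$-equivariant wedge summand of
\[
\THR(\Z)^{\phi\Z/2}\simeq \bigvee_{n\geq 0}\Sigma^{2n\rho}\bigl((N\F_2)\otimes_{N\Z}H\underline{\Z}\bigr)\,\vee\,\bigvee_{0\leq n<m}\Sigma^{2n+2m}(C_2)_+\otimes Y ,
\]
where $Y:=(H\F_2\otimes H\F_2)\otimes_{H\Z\otimes H\Z}H\Z\simeq H\F_2\vee\Sigma H\F_2$ is the common underlying spectrum of both types of summand.

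On the induced summands the argument of Proposition~\ref{lowerFR} applies verbatim: $\res^{C_2}_e$ of an induced $C_2$-spectrum $(C_2)_+\otimes Z$ is the diagonal $\pi_\ast(Z)\to\pi_\ast(Z)\oplus\pi_\ast(Z)$. As the underlying spectrum of $\Sigma^{2n+2m}(C_2)_+\otimes Y$ is two copies of $\Sigma^{2n+2m}Y$, occurring in $\THR(\Z)^{\phi\Z/2}|_e$ as the $(n,m)$- and the $(m,n)$-summand, this gives the claimed diagonal embedding of the $(n<m)$-summands. On the fixed summand $\Sigma^{2n\rho}((N\F_2)\otimes_{N\Z}H\underline{\Z})$ the underlying spectrum is $\Sigma^{4n}Y$, the $n$-th diagonal summand of $\THR(\Z)^{\phi\Z/2}|_e$, with homotopy concentrated in degrees $4n$ and $4n+1$; by Lemma~\ref{Bredon for HZ} its $C_2$-fixed homotopy is $\Z/2$ for $2n\leq\ast\leq 4n-1$, $\Z/4$ in degree $4n$, $\Z/2$ in degree $4n+1$ and zero otherwise. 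The $\Z/2$'s in degrees $2n,\dots,4n-1$ reassemble (after reindexing) into the $(n>m)$-summands of $(\THR(\Z)^{\phi\Z/2})^{C_2}$, and $f$ kills them because the target vanishes in those degrees.

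It remains to identify $f$ in degrees $4n$ and $4n+1$. Here I would apply $\pi^{C_2}_\ast$ and underlying $\pi_\ast$ to the $2n\rho$-suspension of the fibre sequence
\[
H(\Z\oplus\Z/2,w)\xrightarrow{\,H(2,0)\,}H\underline{\Z}\longrightarrow(N\F_2)\otimes_{N\Z}H\underline{\Z}
\]
of Lemma~\ref{cofibrenorm} and compare the resulting long exact sequences through $\res^{C_2}_e$. The key input is that in top degree $\res^{C_2}_e$ on $\Sigma^{2n\rho}H\underline{M}$ is the Mackey restriction $\underline{M}(C_2/C_2)\to\underline{M}(C_2/e)$ of the coefficient system (a standard Bredon-homology computation, using that the $C_2$-action on $S^{2n\rho}$ is orientation-preserving): the identity of $\Z$ for $\underline{M}=\underline{\Z}$, and the inclusion $(2\Z)\oplus\Z/2\hookrightarrow\Z\oplus\Z/2$ for the Mackey functor attached to $(\Z\oplus\Z/2,w)$. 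A short diagram chase then shows that $f$ is, in degree $4n$, the reduction $\Z/4\to\Z/2$ induced from $\id_\Z$ on the cokernels of the two copies of $(a,x)\mapsto 2a$, and in degree $4n+1$ the identity $\Z/2\to\Z/2$ induced from the inclusion above on the common kernels --- that is, $f$ is $\pr\vee\id\colon\Sigma^{4n}(H\Z/4\vee\Sigma H\F_2)\to\Sigma^{4n}(H\F_2\vee\Sigma H\F_2)$ on that summand.

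The main obstacle is precisely this last computation in the two top degrees: it requires tracking the restriction and transfer maps of the Mackey functors $\underline{\Z}$ and $(\Z\oplus\Z/2,w)$ through the cellular filtration of $S^{2n\rho}$ (equivalently through Lemma~\ref{cofibrenorm}), the delicate point being that the transfer of $\underline{\Z}$ is multiplication by $2$ rather than an isomorphism --- which is exactly what produces the $\Z/4$ and makes $f$ restrict to a genuine reduction map, rather than an equivalence, on the diagonal summand. Everything else --- the reduction to summands, the induced summands and the vanishing in the lower degrees --- is formal. As a variant, one could instead use the map of real $2$-cyclotomic spectra $\THR(\Z)\to\THR(\F_2)$ together with Proposition~\ref{lowerFR} to handle all the $H\F_2$-summands simultaneously, leaving only the behaviour on the single $\Z/4$ in degree $4n$ to be checked by hand.
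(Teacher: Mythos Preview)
Your approach is essentially the same as the paper's: handle the induced $(n<m)$-summands via the Wirthm\"uller isomorphism, and for each fixed summand $\Sigma^{2n\rho}((N\F_2)\otimes_{N\Z}H\underline{\Z})$ apply $\res^{C_2}_e$ to the fibre sequence of Lemma~\ref{cofibrenorm} and read off the answer. One small caveat: your opening claim that ``it suffices to compute $f$ on homotopy groups'' is not literally correct, since $[H\Z/4,\Sigma H\F_2]_{H\Z}\cong\Ext^1_\Z(\Z/4,\F_2)\cong\Z/2$ is nonzero (and indeed such a Bockstein component \emph{does} appear in $r$, see Proposition~\ref{R for Z}); the paper circumvents this by explicitly identifying the entire left-hand square of the map of fibre sequences as $((\incl\oplus\id)\vee 0,\ \id\vee 0)$ at the level of $H\Z$-modules and then taking horizontal cofibres---which is exactly what your ``diagram chase'' amounts to once you make it spectrum-level rather than just on $\pi_\ast$.
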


\begin{proof} That $f$ sends the $(n<m)$-summands diagonally into the sum of the summands $(n>m)$ and $(n<m)$ follows from the Wirthm\"uller isomorphism. For the remaining summands, we need to understand the restriction map
\[\res^{C_2}_e \colon (\Sigma^{2n\rho}((N\F_2)\otimes_{N\Z}H\underline{\Z}))^{C_2} \to \Sigma^{4n}((H\F_2 \otimes H\F_2)\otimes_{H\Z \otimes H\Z}H\Z).\]
For every fixed $n\geq 0$, the sequence
\[
\Sigma^{2n\rho}H(\Z\oplus \Z/2,w)\xrightarrow{(2,0)} \Sigma^{2n\rho}H\underline{\Z}\longrightarrow \Sigma^{2n\rho}(N\F_2)\otimes_{N\Z}H\underline{\Z}
\]
is a fibre sequence of $H\underline{\Z}$-modules, and it thus induces a commutative diagram of $H\Z$-modules
\[\xymatrix@C=15pt{(\Sigma^{2n\rho}H(\Z\oplus \Z/2,w))^{C_2} \ar[d]_-{\res^{C_2}_e} \ar[r]^-{(2,0)} & (\Sigma^{2n\rho}H\underline{\Z})^{C_2} \ar[d]^{\res^{C_2}_e}  \ar[r] & (\Sigma^{2n\rho}(N\F_2)\otimes_{N\Z}H\underline{\Z})^{C_2} \ar[r] \ar[d]^{\res^{C_2}_e}  & (\Sigma^{2n\rho+1}H(\Z\oplus \Z/2,w))^{C_2} \ar[d]^-{\res^{C_2}_e} \\ \Sigma^{4n}(H\Z \oplus H\Z/2) \ar[r]^-{(2,0)} & \Sigma^{4n}H\Z \ar[r] & \Sigma^{4n} (H\F_2 \oplus \Sigma H \F_2) \ar[r] & \Sigma^{4n+1}(H\Z \oplus H\Z/2).}  \]
Using the Bredon complexes in the proof of Lemma \ref{Bredon for HZ}, we see that the left hand square in the latter diagram is equivalent to the commutative square
\[\xymatrix@R=40pt@C=40pt{ 
\displaystyle
 \Sigma^{4n}(H(2\Z \oplus \Z/2)) \oplus (\bigoplus_{ \begin{smallmatrix} (n,m) \\ n>m\geq 0 \end{smallmatrix}} \Sigma^{2n+2m}H\F_2) \ar[d]_-{(\incl \oplus \id) \oplus 0} \ar[r]^-{(2,0) \oplus 0} & 
 \displaystyle
  \Sigma^{4n}H\Z \oplus (\bigoplus_{ \begin{smallmatrix} (n,m) \\ n>m\geq 0 \end{smallmatrix}}\Sigma^{2n+2m}H\F_2) \ar[d]^-{\id \oplus 0} \\ \Sigma^{4n}(H\Z \oplus H\F_2) \ar[r]^-{(2,0)} & \Sigma^{4n}H\Z.}\]
After taking horizontal cofibres it induces the map
\[ \res^{C_2}_e \colon \Sigma^{4n}H\Z/4 \oplus \Sigma^{4n+1} H\F_2 \oplus  \bigoplus_{ \begin{smallmatrix} (n,m) \\ n>m\geq 0 \end{smallmatrix}}(\Sigma^{2n+2m}(H\F_2 \oplus \Sigma H\F_2)) \to \Sigma^{4n}H\F_2 \oplus \Sigma^{4n+1} H\F_2,\]
which is given by $\pr \oplus \id \oplus 0$.
\end{proof}

The identification of the map $r \colon (\THR(\Z)^{{\phi \Z/2}})^{C_2} \to \THR(\Z)^{\phi \Z/2}$ in terms of the above splittings will contain higher stable cohomology operations, and this complicates the calculation of the equaliser of $r$ and $f$. However, like in the case of fields, it is possible to compute $r$ on homotopy groups and after identifying only a portion of the matrix describing $r$ we will be able to compute $\TCR(\Z;2)^{\phi \Z/2}$ using Theorem \ref{tcrformula1}. 

%

\begin{prop} \label{R for Z} Under the above splittings, the map $r \colon (\THR(\Z)^{{\phi \Z/2}})^{C_2} \to \THR(\Z)^{\phi \Z/2}$ corresponds to the map 
\begin{align*} r\colon& \bigoplus_{ \begin{smallmatrix} (n,m) \\ n>m\geq 0 \end{smallmatrix}}(\Sigma^{2n+2m}(H\F_2 \oplus \Sigma H\F_2)) \oplus \bigoplus_{n\geq 0}(\Sigma^{4n}(H\Z/4 \oplus \Sigma H\F_2)) \oplus   \bigoplus_{ \begin{smallmatrix} (n,m) \\ 0 \leq n  < m \end{smallmatrix}}(\Sigma^{2n+2m}(H\F_2 \oplus \Sigma H\F_2)) &\to 
\\
& \bigoplus_{ \begin{smallmatrix} (n,m) \\ n>m\geq 0 \end{smallmatrix}}(\Sigma^{2n+2m}(H\F_2 \oplus \Sigma H\F_2)) \oplus \bigoplus_{n\geq 0}(\Sigma^{4n}(\hspace{.15cm}H\F_2\hspace{.2cm} \oplus \Sigma H\F_2)) \oplus   \bigoplus_{ \begin{smallmatrix} (n,m) \\ 0 \leq n  < m \end{smallmatrix}}(\Sigma^{2n+2m}(H\F_2 \oplus \Sigma H\F_2)) \end{align*} 
with the following properties. It is zero on the $(n<m)$-summands. On the summands $(n>m)$ it has components
\[\Sigma^{2n+2m}(H\F_2 \oplus \Sigma H\F_2) \to \Sigma^{2n'+2m'}(H\F_2 \oplus \Sigma H\F_2) \]
which are zero if $n\neq n'$ or $m'<m$, and the identity if $n=n'$ and $m=m'$. The entry 
\[\Sigma^{4n}(H\Z/4 \oplus \Sigma H\F_2) \to \Sigma^{4n}(H\F_2 \oplus \Sigma H\F_2) \]
is given by the matrix $\left(\begin{smallmatrix}\pr &  0 \\ \Sigma^{4n}\beta & \id  \end{smallmatrix}\right)$, where $\beta \colon  H\Z/4 \to \Sigma H\F_2$ is the Bockstein associated to the short exact sequence
\[0 \to \Z/2 \to \Z/8 \to \Z/4 \to 0. \]
The remaining entries are zero on homotopy groups, but generally contain higher stable cohomology operations (cf. \cite[Section IV.1]{NS}). 
\end{prop}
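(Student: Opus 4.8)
The plan is to follow the template of the proofs of Propositions~\ref{lowerFR} and~\ref{F for Z}, analysing $r$ block by block along the $C_2$-equivariant splitting of $\THR(\Z)^{\phi\Z/2}$ furnished by Lemma~\ref{C2THRphi} into the ``norm summands'' $\Sigma^{2n\rho}((N\F_2)\otimes_{N\Z}H\underline\Z)$ and the induced summands $\Sigma^{2n+2m}(C_2)_+\otimes((H\F_2\otimes H\F_2)\otimes_{H\Z\otimes H\Z}H\Z)$. Recall that $r$ is the composite of the canonical map $(\THR(\Z)^{\phi\Z/2})^{C_2}\to(\THR(\Z)^{\phi\Z/2})^{\phi C_2}$ with the identification $(\THR(\Z)^{\phi\Z/2})^{\phi C_2}\simeq\THR(\Z)^{\phi\Z/2}$ coming from the residual cyclotomic structure. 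Since genuine and geometric fixed points commute with wedges, the first map is block-diagonal with respect to the index $n$ of the norm summands; only the cyclotomic identification mixes the blocks, and I would keep track of how it does so using the explicit (diagonal) description of the cyclotomic structure map of $\THR$.

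First, because $\phi C_2$ annihilates $C_2$-spectra induced up from the trivial subgroup and, by the Wirthm\"uller isomorphism, the $(n<m)$-summands of $(\THR(\Z)^{\phi\Z/2})^{C_2}$ are exactly the genuine fixed points of the induced summands, the composite $r$ vanishes on the $(n<m)$-summands. For the norm summands I would compute $\phi C_2$ directly: it is strong monoidal, sends $S^{2n\rho}$ to $S^{2n}$, and commutes with the $C_2$-norm and with base change, so $(\Sigma^{2n\rho}((N\F_2)\otimes_{N\Z}H\underline\Z))^{\phi C_2}\simeq\Sigma^{2n}\big(H\F_2\otimes_{H\Z}H\underline\Z^{\phi C_2}\big)\simeq\bigvee_{j\geq0}\Sigma^{2(n+j)}(H\F_2\vee\Sigma H\F_2)$, using the splitting $H\underline\Z^{\phi C_2}\simeq\bigvee_j\Sigma^{2j}H\F_2$ recalled above. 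Feeding this through the cyclotomic identification shows that the image of the $n$-th norm block lands in a prescribed family of target summands; a total-degree count then shows that on homotopy groups the only surviving components run between source and target summands of equal total degree, and matching these with the cyclotomic labelling forces the $(n,m)=(n',m')$ component of each $(n>m)$-block to be the identity (the splittings being set up so that it is literally the identity, which is legitimate because the canonical map $(-)^{C_2}\to(-)^{\phi C_2}$ is an isomorphism on the relevant $\F_2$-parts of $\pi_\ast$, as one sees by reducing to $H\underline\Z$ via Lemma~\ref{cofibrenorm}) and kills the components with $n'\neq n$ or $m'<m$. The components with $n'=n$, $m'>m$ carry no homotopy and, being maps between wedges of Eilenberg--MacLane spectra, are detected by higher stable cohomology operations; we do not determine them, and they are irrelevant to the ensuing computation of $\TCR(\Z;2)^{\phi\Z/2}$ via Theorem~\ref{tcrformula1}.

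It remains to pin down the block $\Sigma^{4n}(H\Z/4\vee\Sigma H\F_2)\to\Sigma^{4n}(H\F_2\vee\Sigma H\F_2)$. The $\Sigma H\F_2$-summand of the source maps by the identity by the previous paragraph, and the $\Sigma H\F_2\to H\F_2$ entry vanishes; both of these, together with the remaining column, I would extract by smashing the fibre sequence $H(\Z\oplus\Z/2,w)\xrightarrow{H(2,0)}H\underline\Z\to(N\F_2)\otimes_{N\Z}H\underline\Z$ of Lemma~\ref{cofibrenorm} with $S^{2n\rho}$ and comparing the canonical maps to $\phi C_2$: this reduces the $H\Z$-contribution to the classical fact that $(\Sigma^{2n\rho}H\underline\Z)^{C_2}\to(\Sigma^{2n\rho}H\underline\Z)^{\phi C_2}$ is, in degree $4n$, the surjection $\Z\twoheadrightarrow\F_2$ (cf.\ \cite[Example IV.1.2]{NS}), which yields the projection $\pr\colon H\Z/4\to H\F_2$. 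The off-diagonal entry $H\Z/4\to\Sigma H\F_2$ is then identified with the Bockstein $\beta$ of $0\to\Z/2\to\Z/8\to\Z/4\to0$ by tracking, via Lemma~\ref{Bredon for HZ}, the $\Z/8$ that glues the degree-$4n$ group $\Z/4$ to the degree-$(4n+1)$ group $\Z/2$ in the homotopy of the genuine fixed points, and comparing this gluing with the split homotopy of the target along the canonical map.

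I expect the main obstacle to be this last identification of the off-diagonal entry as precisely the Bockstein $\beta$ (rather than $0$ or another secondary operation): it forces one to work with the actual homotopy \emph{type} of $(\Sigma^{2n\rho}((N\F_2)\otimes_{N\Z}H\underline\Z))^{C_2}$ and of its image under the canonical map, not merely with homotopy groups, and to reconcile this with the chosen wedge splittings of $H\underline\Z^{\phi C_2}$ and of the fixed-point spectrum. By comparison, the vanishing statements and the degree and connectivity bookkeeping in the second paragraph are routine.
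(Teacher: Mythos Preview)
Your overall outline matches the paper's proof: vanishing on the induced $(n<m)$-summands, block-diagonality in $n$ for the norm summands, and then an analysis of each $\Sigma^{2n\rho}((N\F_2)\otimes_{N\Z}H\underline\Z)$ via the fibre sequence of Lemma~\ref{cofibrenorm}. Two points, however, are handled more cleanly in the paper than in your sketch, and in one of them your proposed mechanism is not quite correct.

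For the $(n>m)$-summands, rather than the somewhat vague ``total-degree count'' plus reduction to $H\underline\Z$, the paper uses a single connectivity fact: for any underlying connective $C_2$-spectrum $X$, the homotopy orbits $(\Sigma^{l\rho}X)_{hC_2}$ are $(2l-1)$-connected, so the canonical map $\pi^{C_2}_\ast(\Sigma^{l\rho}X)\to\pi_\ast((\Sigma^{l\rho}X)^{\phi C_2})$ is an isomorphism for $\ast<2l$ and a surjection for $\ast=2l$. Applied with $l=2n$, this says immediately that the $(n>m)$-parts of the fixed points, which live in degrees $2n+2m$ and $2n+2m+1<4n$, map isomorphically to the corresponding summands of the geometric fixed points. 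This one line replaces your appeal to Lemma~\ref{cofibrenorm} at this stage.

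For the diagonal block $\Sigma^{4n}(H\Z/4\vee\Sigma H\F_2)\to\Sigma^{4n}(H\F_2\vee\Sigma H\F_2)$, your suggestion to locate the Bockstein by ``tracking the $\Z/8$ that glues $\Z/4$ to $\Z/2$ in the homotopy of the genuine fixed points'' is not quite right: the source is an $H\Z$-module and hence \emph{is} the wedge $\Sigma^{4n}H\Z/4\vee\Sigma^{4n+1}H\F_2$, with no hidden $\Z/8$ extension. The Bockstein does not arise as a $k$-invariant of the source but as a component of the map $r$. The paper extracts it by writing down, from the $H\underline\Z$-linear fibre sequence of Lemma~\ref{cofibrenorm}, an explicit morphism of exact triangles in the $H\Z$-module category whose connecting maps are $\Sigma^{4n}\beta_{\Z}\vee\id$ on top (with $\beta_{\Z}$ the Bockstein for $0\to 2\Z\xrightarrow{2}\Z\to\Z/4\to0$) and $0\vee\id$ on the bottom, with vertical map $(\pr,\id)$ on the last term; composing $\beta_\Z$ with $\pr\colon H2\Z\to H\F_2$ then gives exactly the Bockstein for $0\to\Z/2\to\Z/8\to\Z/4\to0$. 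This triangle comparison is the step you flagged as the main obstacle, and it is how the $\Z/8$ actually enters the argument.
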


\begin{proof} The map $r$ vanishes on the summands $(n<m)$ since it factors through the geometric fixed-points of an induced spectrum which are zero. From the description of $r$ of Example \ref{minicycloTHR} we see that $r$ preserves the wedge decomposition over $n$, and  thus its components vanish for $n\neq n'$. It remains to identify 
\[
r\colon (\Sigma^{2n\rho}(N\F_2)\otimes_{N\Z}H\underline{\Z})^{C_2} \longrightarrow (\Sigma^{2n\rho}(N\F_2)\otimes_{N\Z}H\underline{\Z})^{\phi C_2} 
\]
for every fixed $n\geq 0$. The fibre sequence of Lemma \ref{cofibrenorm} induces a commutative diagram
\[\xymatrix@C=15pt{(\Sigma^{2n\rho}H(\Z\oplus \Z/2,w))^{C_2} \ar[d] \ar[r]^-{(2,0)} & (\Sigma^{2n\rho}H\underline{\Z})^{C_2} \ar[d]  \ar[r] & (\Sigma^{2n\rho}(N\F_2)\otimes_{N\Z}H\underline{\Z})^{C_2} \ar[r] \ar[d]  & (\Sigma^{2n\rho+1}H(\Z\oplus \Z/2,w))^{C_2} \ar[d] \\ \Sigma^{2n} H(\Z\oplus \Z/2,w)^{\phi C_2} \ar[r]^-{(2,0)} & \Sigma^{2n}H\underline{\Z}^{\phi C_2} \ar[r] & \Sigma^{2n} ((N\F_2)\otimes_{N\Z}H\underline{\Z})^{\phi C_2}  \ar[r] & \Sigma^{2n+1} H(\Z\oplus \Z/2,w)^{\phi C_2}.} \]
For any underlying connective $C_2$-spectrum $X$, the canonical map $\pi_*^{C_2}(\Sigma^{l\rho}  X) \to \pi_*((\Sigma^{l\rho}  X)^{\phi C_2})$
 induces an isomorphism in degrees $\ast < 2l$ and a surjection in degree $\ast=2l$, since the homotopy orbits of $\Sigma^{l\rho}  X$ are $(2l-1)$-connected. By applying this fact to the vertical maps of the commutative diagram above we obtain the description of $r$ on the summands $(n>m)$.
Let us finally compute the map 
\[r\colon \Sigma^{4n}(H\Z/4 \oplus \Sigma H\F_2) \to \Sigma^{4n}(H\F_2 \oplus \Sigma H\F_2). \]
Using that the fibre sequence of Lemma \ref{cofibrenorm}  is $H\underline{\Z}$-linear, by considering the relevant summands in the diagram above we get a morphism of exact triangles
\[ \xymatrix{  \Sigma^{4n} (H2\Z \oplus H\F_2) \ar[d]^-{(\pr,\id)} \ar[r]^-{(2,0)} & \Sigma^{4n}H\Z \ar[r]^-{\pr \oplus 0} \ar[d]^-{\pr} & \Sigma^{4n} H\Z/4 \oplus \Sigma^{4n+1} H\F_2 \ar[rr]^{\Sigma^{4n}\beta_{\Z} \oplus \id } \ar[d]^r & & \Sigma^{4n+1} (H2\Z \oplus H\F_2) \ar[d]^-{(\pr,\id)} \\ \Sigma^{4n}H\F_2 \ar[r]^0 & \Sigma^{4n}H\F_2 \ar[r]^-{\id \oplus 0} & \Sigma^{4n}H\F_2 \oplus \Sigma^{4n+1}H\F_2  \ar[rr]^{0 \oplus \id} & & \Sigma^{4n+1}H\F_2, }  \]
where $r$ is the map we are trying to compute, and $\beta_{\Z}$ is the Bockstein of
\[0 \to 2\Z \xrightarrow{2} \Z \to \Z/4 \to 0.\]
Composing the Bockstein $\beta_{\Z}$ with the projection $H2\Z \to H\F_2$  gives the Bockstein for
\[0 \to \Z/2 \to \Z/8 \to \Z/4 \to 0\]
which gives the desired result. \end{proof}

\begin{theorem} \label{geometric of TCRZ computation} There is an equivalence of spectra
\[\TCR(\Z;2)^{{\phi \Z/2}}\simeq \bigoplus_{n \geq 0} \big(\Sigma^{4n-1} H \F_2\oplus \Sigma^{4n} H \Z/8 \oplus \Sigma^{4n+1} H \F_2 \big). \]
\end{theorem}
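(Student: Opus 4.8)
The plan is to apply Theorem \ref{tcrformula1}, which exhibits $\TCR(\Z;2)^{\phi\Z/2}$ as the homotopy fibre of $f-r\colon X:=(\THR(\Z)^{\phi\Z/2})^{C_2}\to Y:=\THR(\Z)^{\phi\Z/2}$, and to compute this fibre directly from the explicit wedge decompositions of $X$ and $Y$ into Eilenberg--MacLane spectra recorded above together with the descriptions of $f$ in Proposition \ref{F for Z} and of $r$ in Proposition \ref{R for Z}.

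First I would split $X=X_{=}\vee X_{\neq}$ and $Y=Y_{=}\vee Y_{\neq}$ into the ``diagonal'' part ($X_{=}=\bigvee_{n\geq 0}\Sigma^{4n}(H\Z/4\vee\Sigma H\F_2)$, $Y_{=}=\bigvee_{n\geq 0}\Sigma^{4n}(H\F_2\vee\Sigma H\F_2)$) and the ``off--diagonal'' part ($X_{\neq}\simeq Y_{\neq}\simeq\bigvee_{n\neq m}\Sigma^{2n+2m}(H\F_2\vee\Sigma H\F_2)$), and record the block structure $f-r=\left(\begin{smallmatrix}A&B\\ C&D\end{smallmatrix}\right)$. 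By Propositions \ref{F for Z} and \ref{R for Z}, $f$ and $r$ send off--diagonal source summands only into off--diagonal target summands, so $C=0$; and on $X_{\neq}$ the map $A$ is an equivalence, since the diagonal--embedding part of $f$ carries the $(n<m)$--summands isomorphically onto the $(n<m)$-- and (via the flip $(a,b)\mapsto(b,a)$) the $(n>m)$--summands of $Y_{\neq}$, while $-r$ on the $(n>m)$--summands is for each fixed $n$ a finite upper--triangular matrix in $m$ with invertible diagonal. The cross term $B\colon X_{=}\to Y_{\neq}$, which carries the higher stable cohomology operations of Proposition \ref{R for Z} and is only asserted to vanish on homotopy groups, is then killed by precomposing with the unipotent automorphism $\left(\begin{smallmatrix}\id&-A^{-1}B\\ 0&\id\end{smallmatrix}\right)$ of $X$, so that $\fib(f-r)\simeq\fib(A)\vee\fib(D)\simeq\fib(D)$.

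Next I would compute $D$. Since $f$ and $r$ preserve the wedge over $n$ on the diagonal part, $D=\bigvee_{n\geq 0}D_n$, and by Propositions \ref{F for Z} and \ref{R for Z} the $\pr$-- and $\id$--entries cancel in $f-r$, leaving $D_n\colon\Sigma^{4n}(H\Z/4\vee\Sigma H\F_2)\to\Sigma^{4n}(H\F_2\vee\Sigma H\F_2)$ with a single non--zero entry (up to sign) $\Sigma^{4n}\beta\colon\Sigma^{4n}H\Z/4\to\Sigma^{4n+1}H\F_2$, where $\beta$ is the Bockstein associated to $0\to\Z/2\to\Z/8\to\Z/4\to 0$. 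As $D_n$ thus decomposes literally as a wedge of maps, $\fib(D_n)$ is the wedge of the fibres: the source summand $\Sigma^{4n+1}H\F_2$ mapping to zero contributes $\Sigma^{4n+1}H\F_2$, the unhit target summand $\Sigma^{4n}H\F_2$ contributes $\Sigma^{4n-1}H\F_2$, and $\fib(\Sigma^{4n}\beta)=\Sigma^{4n}H\Z/8$ by the defining cofibre sequence $H\Z/2\to H\Z/8\to H\Z/4\xrightarrow{\beta}\Sigma H\Z/2$. Assembling over $n$ (the wedges being locally finite, so fibres commute with them) yields $\TCR(\Z;2)^{\phi\Z/2}\simeq\bigvee_{n\geq 0}(\Sigma^{4n-1}H\F_2\vee\Sigma^{4n}H\Z/8\vee\Sigma^{4n+1}H\F_2)$.

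I expect the main obstacle to be the organisational step of showing that the higher stable cohomology operations in $r$ are harmless, i.e.\ that $A$ is an equivalence so that $B$ can be absorbed. Making this precise requires careful tracking of which target summands are hit by the diagonal--embedding part of $f$ and by the upper--triangular part of $-r$, and of the fact that in each fixed total degree only finitely many summands occur, which is what guarantees that the upper--triangular block is genuinely invertible. This is the analogue for $\Z$ of the step for perfect fields in Remark \ref{rem:geomTCRk}, where the analysis is easier because everything in sight is an $H\F_2$--module and no such operations arise.
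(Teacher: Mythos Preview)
Your overall strategy---split source and target into diagonal and off-diagonal parts, show the off-diagonal block $A$ of $f-r$ is an equivalence, and reduce to the diagonal block---is exactly the paper's. Your identification of $A$ as an equivalence (it is an isomorphism on homotopy groups, hence an equivalence of spectra) and your computation of $D$ and of $\fib(D)$ are correct.

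The gap is your assertion that $C=0$. You claim that ``$f$ and $r$ send off-diagonal source summands only into off-diagonal target summands'', but Proposition~\ref{R for Z} does not say this. For an $(n>m)$-source summand, the components of $r$ into $(n',m')$ are asserted to vanish only when $n'\neq n$ or $m'<m$; the case $n'=n$, $m'=n$ (the diagonal target $(n,n)$) is among the ``remaining entries'' which are zero on homotopy groups but ``generally contain higher stable cohomology operations''. Thus $C$ carries precisely the same kind of higher operations as $B$, and your column operation $\left(\begin{smallmatrix}\id&-A^{-1}B\\0&\id\end{smallmatrix}\right)$, which presupposes $C=0$, leaves a residual $C$-block and does not produce a block-diagonal matrix.

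The paper handles this by absorbing $C$ instead of $B$: a row operation on the target (replacing the projection $Y\to Y_{=}$ by $(-\varphi,\id)$ with $\varphi=CA^{-1}$) reduces the fibre of $r-f$ to the fibre of the induced map $M=D-CA^{-1}B$ on quotients. One then argues $CA^{-1}B=0$, not by showing $C$ or $B$ vanishes, but by a support argument: since $r$ preserves the $n$-coordinate and cohomology operations do not decrease degree, $B$ lands only in the $(n<m)$-part of $Y_{\neq}$, while $\varphi=CA^{-1}$ vanishes there. Once this is in place, your computation of $\fib(D)$ finishes the proof.
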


\begin{proof} For simplicity we use the symbol $\bigoplus_{n>m\geq 0}$ to denote $\bigoplus_{(n,m), n>m\geq 0}(\Sigma^{2n+2m}(H\F_2 \oplus \Sigma H\F_2))$ and a similar symbol for the summands indexed by the pairs $(n,m)$ with $0\leq n<m$. Consider the commutative diagram in the stable homotopy category, where the vertical sequences are fibre sequences:
\[ \xymatrix{ \bigoplus_{n>m\geq 0} \oplus   \bigoplus_{0 \leq n<m} \ar[d]^{\incl} \ar[r]^{\omega}_{\simeq} & \bigoplus_{n>m\geq 0} \oplus \bigoplus_{0 \leq n<m} \ar@<0.5ex>[d]^{\alpha=(\id, \varphi, \id)} 
\\
\displaystyle \bigoplus_{n>m\geq 0}\oplus \big( \bigoplus_{n\geq 0}(\Sigma^{4n}(H\Z/4 \oplus \Sigma H\F_2))\big) \oplus   \bigoplus_{0 \leq n<m} \ar[d]^{\pr}  \ar[r]^-{r-f}
  &
 \displaystyle   \bigoplus_{n>m\geq 0}\oplus \big( \bigoplus_{n\geq 0}\Sigma^{4n}(H\F_2 \oplus \Sigma H\F_2) \big)\oplus   \bigoplus_{0 \leq n<m} \ar@<0.5ex>[u]^{\pr} \ar[d]^{(-\varphi, \id)} 
    \\
     \bigoplus_{n\geq 0}\Sigma^{4n}(H\Z/4 \oplus \Sigma H\F_2) \ar@{-->}[r]^{M} & \bigoplus_{n\geq 0}\Sigma^{4n}(H\F_2 \oplus \Sigma H\F_2)} \]
We explain the maps in the diagram: The top map $\omega$ is the composite $\pr \circ (r-f) \circ \incl$ and is an equivalence since by  Propositions \ref{F for Z} and \ref{R for Z} it is an isomorphism on homotopy groups. The map $\alpha$ is then defined to be $(r-f) \circ \incl \circ \omega^{-1}$ and by construction is of the form $(\id, \varphi, \id)$, for some map
\[\varphi \colon \bigoplus_{n>m\geq 0} \oplus   \bigoplus_{0 \leq n<m} \to \bigoplus_{n\geq 0} \Sigma^{4n}(H\F_2 \oplus \Sigma H\F_2).\]
The lower right vertical map is $-\varphi$ on the outer summands and the identity on the middle summand. The map $M$  is the induced map on the cofibres.
Propositions \ref{F for Z} and \ref{R for Z} imply that the map $\varphi$ is zero on the summand $\bigoplus_{0 \leq n<m}$. On the other hand, 
the restriction of $r-f$ to the summand $\Sigma^{4n}(H\Z/4 \oplus \Sigma H\F_2)$ cannot hit $\bigoplus_{n>m\geq 0}$, since the cohomology operations do not decrease degrees and $r$ preserves the $n$-coordinate. Thus $M$ is given in matrix form by the wedge 
\[\bigoplus_{n \geq 0}\left(\begin{smallmatrix}0 &  0 \\ \Sigma^{4n}\beta & 0  \end{smallmatrix}\right).\]
The fibres of $r-f$ and $M$ are equivalent since $\omega$ is an equivalence. This completes the proof by Theorem \ref{tcrformula1}.
\end{proof}

\subsection{Flat base-change and perfect rings}

We recall that we always regard the geometric $\Z/2$-fixed points of a ring spectrum with anti-involution $A$ as a left $A$-module via the geometric fixed-points of the map of $\Z/2$-spectra $N^{\Z/2}_eA\otimes A\to A$, and similarly as a right $A$-module via $A\otimes N^{\Z/2}_eA\to A$. We call these respectively the left and right Frobenius module structures on $A^{\phi\Z/2}$. We will always denote by $\otimes_A$ the \emph{derived} tensor product of $A$-modules.

\begin{defn} A map $\alpha \colon A \to B$ of ring-spectra with anti-involution is called $\phi$-flat if the map 
\[
B\otimes_AA^{{\phi\Z/2}}\longrightarrow B^{{\phi\Z/2}},
\]
induced by the map of left $A$-modules $\alpha \colon A^{{\phi\Z/2}} \to \alpha^\ast B^{{\phi\Z/2}}$, is an equivalence of spectra. \end{defn}

\begin{example}\label{ex:commflat} Let $\alpha \colon A \to B$ be a map of commutative rings with trivial involution which is flat $2$-locally. Then the induced map on Eilenberg-MacLane spectra is $\phi$-flat precisely if the maps
\[B\otimes_A {}_\varphi A/2 \to {}_\varphi B/2 \ \ \ \ \ \ \ \ \ \  \ \ \mbox{and} \ \ \ \ \ \ \ \ \ \  \ \ \ B\otimes_A {}_\varphi A_2 \to {}_\varphi B_2\]
adjoint to $\alpha/2$
are isomorphisms, where ${}_\varphi(-)$ denotes the module structure $r\cdot x:=r^2x$, and $(-)_2$ the two-torsion. Indeed since $\alpha$ is flat $2$-locally
, $\alpha$ is $\phi$-flat precisely if
\[B\otimes_{A}\pi_\ast (H\underline{A}^{\phi \Z/2}) \to \pi_\ast (H\underline{B}^{\phi \Z/2})\]
is an isomorphism. Since $H\underline{A}^{\phi \Z/2}$ is the connective cover of $H\underline{A}^{t \Z/2}$ which is $2$-periodic, and similarly for $H\underline{B}^{\phi \Z/2}$, this is equivalent to showing that $B\otimes_A {}_\varphi \hat{H}^i(\Z/2,A) \to {}_\varphi\hat{H}^i(\Z/2,B)$ are isomorphisms for $i = 0,1$, and this is exactly the assumption above. In particular:
\begin{enumerate}
\item If $B$ is a perfect $\F_2$-algebra with trivial involution, then the map $\F_2\to B$ is $\phi$-flat. Indeed the maps above are both isomorphic to the Frobenius $(-)^2\colon B\to B$.
\item If $B$ is a commutative ring with trivial involution with no $2$-torsion, and $B/2$ is perfect, then $\Z\to B$ is $\phi$-flat. Indeed the maps above are in this case respectively the Frobenius of $B/2$ and the map $0\to B_2$.
\end{enumerate}
\end{example}

Recall that, as a $C_2$-spectrum, $\THR(A)^{{\phi\Z/2}}$ is equivalent to $B(A;N^{C_2}_eA;N^{C_2}_e(A^{\phi\Z/2}))$, where $A$ is regarded as a $C_2$-spectrum via the identification $C_2\cong \Z/2$ (see Lemma \ref{C2THRphi}). In particular $\THR(A)^{{\phi\Z/2}}$ is canonically a module over $A$ in the category of $C_2$-spectra, by acting on the left copy of $A$ in the bar construction.

\begin{prop}\label{generalphibase-change}
Let $\alpha \colon A \to B$ be a $\phi$-flat map of commutative $\Z/2$-equivariant ring spectra. Then the canonical map
\[
B\otimes_A(\THR(A)^{{\phi\Z/2}})\stackrel{\simeq}{\longrightarrow}\THR(B)^{{\phi\Z/2}}
\]
induced by $\alpha\colon \THR(A)^{{\phi\Z/2}}\to \alpha^\ast \THR(B)^{{\phi\Z/2}}$ is an equivalence of $C_2$-spectra. Here $B$ is considered as a $C_2$-spectrum via the isomorphism $C_2 \cong \Z/2$ (see Lemma \ref{C2THRphi}). 
\end{prop}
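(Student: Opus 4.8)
The plan is to reduce the statement to the base-change formula for the Hill--Hopkins--Ravenel norm and then bootstrap through the bar-construction description of $\THR(-)^{\phi\Z/2}$ from Lemma~\ref{C2THRphi}. Recall that as a $C_2$-spectrum $\THR(A)^{\phi\Z/2}\simeq B(A,N^{C_2}_eA,N^{C_2}_e(A^{\phi\Z/2}))$, and similarly for $B$. The map in the statement is induced levelwise on these simplicial objects by $\alpha$, so it suffices to show that
\[
B\otimes_A\big(A\otimes_{N^{C_2}_eA}N^{C_2}_e(A^{\phi\Z/2})\big)\longrightarrow B\otimes_{N^{C_2}_eB}N^{C_2}_e(B^{\phi\Z/2})
\]
is an equivalence, and since both sides are realisations of simplicial $C_2$-spectra whose $n$-simplices are $B\otimes A^{\otimes n}\otimes N^{C_2}_e(A^{\phi\Z/2})$ and $B\otimes B^{\otimes n}\otimes N^{C_2}_e(B^{\phi\Z/2})$ respectively (after using $N^{C_2}_eA\otimes_{N^{C_2}_eA}(-)\simeq(-)$ to absorb the iterated bar terms), it is enough to prove the equivalence on simplices. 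Concretely, the key claim is that the canonical map
\[
B\otimes_A N^{C_2}_e(A^{\phi\Z/2})\;\stackrel{\simeq}{\longrightarrow}\; N^{C_2}_e(B^{\phi\Z/2})
\]
is an equivalence of $C_2$-spectra, where on the left $A$ acts on $N^{C_2}_e(A^{\phi\Z/2})$ through the norm of the left Frobenius action $A\otimes A^{\phi\Z/2}\to A^{\phi\Z/2}$ and the unit $A\to N^{C_2}_eA$.

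To prove this claim I would first use that $N^{C_2}_e$ is symmetric monoidal, so that $N^{C_2}_e(B^{\phi\Z/2})\simeq N^{C_2}_e(B\otimes_A A^{\phi\Z/2})$ by $\phi$-flatness of $\alpha$, and then invoke the diagonal/base-change behaviour of the norm: for a map of $e$-spectra (underlying spectra) $X\to Y$ and an $X$-module $M$, there is a natural equivalence $N^{C_2}_e(Y\otimes_X M)\simeq N^{C_2}_e(Y)\otimes_{N^{C_2}_e(X)}N^{C_2}_e(M)$, because the two-sided bar construction computing $Y\otimes_X M$ is built from smash products and $N^{C_2}_e$ commutes with geometric realisation and with smash products. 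Applying this with $X=A$ (viewed via $C_2\cong\Z/2$ underlying), $Y=B$, $M=A^{\phi\Z/2}$ gives
\[
N^{C_2}_e(B^{\phi\Z/2})\simeq N^{C_2}_e(B)\otimes_{N^{C_2}_e(A)}N^{C_2}_e(A^{\phi\Z/2}),
\]
and then extending scalars along $A\to N^{C_2}_eA$ and using $B\otimes_A N^{C_2}_eA\simeq N^{C_2}_eB$ (again the monoidality of the norm applied to the flat-ish map, or directly the commutative-algebra base-change $N^{C_2}_e(B)\simeq B\otimes_A N^{C_2}_e(A)$ when $B$ is an $A$-algebra) yields exactly $B\otimes_A N^{C_2}_e(A^{\phi\Z/2})$. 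I would be careful here about which ring acts on which side: the norm of the \emph{left} Frobenius action is used, matching the statement of Lemma~\ref{C2THRphi}, and the identification $B\otimes_A N^{C_2}_e A\simeq N^{C_2}_e B$ requires that $B$ is a commutative $A$-algebra, which holds by hypothesis.

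Having established the equivalence on simplices, I would then assemble it back up: both $\THR(A)^{\phi\Z/2}$ and $\THR(B)^{\phi\Z/2}$ are realisations of the bar constructions above, the map $B\otimes_A(-)$ preserves geometric realisations of simplicial $C_2$-spectra (it is a colimit), and the comparison map is compatible with the simplicial structure, so it is an equivalence in each simplicial degree and hence an equivalence of realisations. This gives the claimed equivalence of $C_2$-spectra. The main obstacle I anticipate is not any single deep input but rather bookkeeping: making sure the module structures on the norm terms are the correct Frobenius ones (left versus right), tracking the identification $C_2\cong\Z/2$ so that $B$ as a $C_2$-spectrum means the genuine $\Z/2$-spectrum $B$ regarded through this isomorphism, and verifying that the base-change isomorphism $N^{C_2}_e(Y\otimes_X M)\simeq N^{C_2}_e Y\otimes_{N^{C_2}_e X}N^{C_2}_e M$ holds at the derived/point-set level with flat models (using the flat model structures of \cite{Sto} or \cite{HHR} so that the norm is homotopical on the relevant objects). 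Once those identifications are pinned down, the argument is a formal consequence of the symmetric monoidality of $N^{C_2}_e$ together with $\phi$-flatness of $\alpha$.
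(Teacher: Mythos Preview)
Your overall strategy of working directly with the $C_2$-equivariant bar construction of Lemma~\ref{C2THRphi} is reasonable, but your ``key claim'' and the identities you invoke to support it are false. You assert that $B\otimes_A N^{C_2}_e(A^{\phi\Z/2})\simeq N^{C_2}_e(B^{\phi\Z/2})$, but this fails already on underlying spectra: with any $A$-module structure on $N^{C_2}_e(A^{\phi\Z/2})$ factoring through one tensor factor, the left-hand side has underlying spectrum $B^{\phi\Z/2}\otimes A^{\phi\Z/2}$ (by $\phi$-flatness applied once), not $B^{\phi\Z/2}\otimes B^{\phi\Z/2}$. Relatedly, there is no natural ``unit'' map $A\to N^{C_2}_e A$ of $C_2$-ring spectra; the structural map goes the other way, $N^{C_2}_e A\to A$. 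And your claimed identity $B\otimes_A N^{C_2}_e A\simeq N^{C_2}_e B$ is false for the same reason: on underlying spectra it would say $B\otimes A\simeq B\otimes B$.

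The fix is to change base over the norm rather than over $A$. Applying $B\otimes_A(-)$ to the bar construction replaces the left-hand $A$ by $B$, giving $B\otimes_{N^{C_2}_e A} N^{C_2}_e(A^{\phi\Z/2})$, and the correct comparison is then
\[
B\otimes_{N^{C_2}_e A} N^{C_2}_e(A^{\phi\Z/2})
\simeq B\otimes_{N^{C_2}_e B}\big(N^{C_2}_e B\otimes_{N^{C_2}_e A} N^{C_2}_e(A^{\phi\Z/2})\big)
\simeq B\otimes_{N^{C_2}_e B} N^{C_2}_e(B\otimes_A A^{\phi\Z/2})
\simeq B\otimes_{N^{C_2}_e B} N^{C_2}_e(B^{\phi\Z/2}),
\]
using symmetric monoidality of $N^{C_2}_e$ (and its compatibility with bar constructions) followed by $\phi$-flatness. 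This repaired argument is genuinely different from the paper's, which avoids the norm entirely and instead checks the map is an equivalence separately on underlying spectra and on $C_2$-geometric fixed-points, each time using the description $\THR(A)^{\phi\Z/2}\simeq A^{\phi\Z/2}\otimes_A A^{\phi\Z/2}$ together with two direct applications of $\phi$-flatness. Your (corrected) route works at the $C_2$-equivariant level in one pass and is more conceptual; the paper's route is more elementary and sidesteps any verification that the norm is compatible with relative tensor products.
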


\begin{proof}
Let us first show that the map is an equivalence on underlying spectra. This is the map $B\otimes_A(A^{{\phi\Z/2}}\otimes_AA^{\phi\Z/2})\to B^{{\phi\Z/2}}\otimes_BB^{\phi\Z/2}$ induced by the map of left $A$-modules $\alpha\otimes \alpha\colon A^{{\phi\Z/2}}\otimes_AA^{\phi\Z/2}\to \alpha^\ast (B^{{\phi\Z/2}}\otimes_BB^{\phi\Z/2})$, where the left $A$-module structure on the source is the left Frobenius structure on the right $A^{\phi\Z/2}$-factor (or equivalently the right one on the left factor), and similarly for the $B$-module structure on the target. 
Since $A$ is commutative this $A$-module structure agrees with the left Frobenius structure on the first   $A^{\phi\Z/2}$-factor, and therefore the map factors as
\begin{align*}
B\otimes_A(A^{{\phi\Z/2}}\otimes_AA^{\phi\Z/2})&=(B\otimes_AA^{{\phi\Z/2}})\otimes_AA^{\phi\Z/2}\xrightarrow{\simeq} B^{{\phi\Z/2}}\otimes_AA^{\phi\Z/2}
\\&\simeq B^{{\phi\Z/2}}\otimes_B B\otimes_AA^{\phi\Z/2}\xrightarrow{\simeq} B^{{\phi\Z/2}}\otimes_B B^{\phi\Z/2}
\end{align*}
where the two right pointing arrows are equivalences since $\alpha$ is $\phi$-flat.

Let us now verify that this map is an equivalence on $C_2$-geometric fixed-points. From the bar construction we see that, this is the map
\[
(B\otimes_A(\THR(A)^{{\phi\Z/2}}))^{\phi C_2}\simeq B^{\phi\Z/2}\otimes_{A^{\phi\Z/2}} \THR(A)^{{\phi\Z/2}}
\stackrel{}{\longrightarrow}\THR(B)^{{\phi\Z/2}}\simeq (\THR(B)^{{\phi\Z/2}})^{\phi C_2}
\]
induced by the map $\alpha\colon \THR(A)^{{\phi\Z/2}}
\to \alpha^\ast \THR(B)^{{\phi\Z/2}}$, where $\THR(A)^{{\phi\Z/2}}=A^{\phi\Z/2}\otimes_AA^{\phi\Z/2}$ is a left $A^{\phi\Z/2}$-module via left multiplication on the left factor (notice that $A^{\phi\Z/2}$ is a ring spectrum since $A$ is commutative), and similarly for the left $B^{\phi\Z/2}$-module structure on the target.
This map then factors as
\begin{align*}
B^{\phi\Z/2}\otimes_{A^{\phi\Z/2}} \THR(A)^{{\phi\Z/2}}&=
B^{\phi\Z/2}\otimes_{A^{\phi\Z/2}} A^{\phi\Z/2}\otimes_AA^{\phi\Z/2}
\simeq 
B^{\phi\Z/2}\otimes_AA^{\phi\Z/2}
\\
&\simeq  B^{\phi\Z/2}\otimes_BB\otimes_AA^{\phi\Z/2}
\xrightarrow{\simeq} B^{\phi\Z/2}\otimes_BB^{\phi\Z/2}=\THR(B)^{{\phi\Z/2}}
\end{align*}
where for the last equivalence we used that $\alpha$ is $\phi$-flat.
\end{proof}

\begin{prop}
Under the assumptions of \ref{generalphibase-change}, suppose moreover that the restriction maps $A^{\Z/2}\to A$ and $B^{\Z/2}\to B$ are equivalences (for example if $A$ and $B$ are the Eilenberg-MacLane spectra of commutative rings with trivial involutions). Then there is an equivalence
\[
(B\otimes_A(\THR(A)^{{\phi\Z/2}}))^{C_2}\simeq B^{C_2}\otimes_{A^{C_2}}((\THR(A)^{{\phi\Z/2}}))^{C_2},
\]
and the maps $f,r\colon (\THR(B)^{{\phi\Z/2}})^{C_2}\to\THR(B)^{{\phi\Z/2}}$ correspond under the equivalences of Proposition \ref{generalphibase-change} respectively to the tensor of the restriction maps
\[
f\colon B^{C_2}\otimes_{A^{C_2}}((\THR(A)^{{\phi\Z/2}}))^{C_2}\xrightarrow{\res\otimes\res} B\otimes_A(\THR(A)^{{\phi\Z/2}})
\]
and to the tensor of the canonical map to the geometric fixed-points and the map $r$ of $\THR(A)^{{\phi\Z/2}}$
\begin{align*}
r\colon &B^{C_2}\otimes_{A^{C_2}}((\THR(A)^{{\phi\Z/2}}))^{C_2}\xrightarrow{can\otimes r}  B^{{\phi\Z/2}}\otimes_{A^{{\phi\Z/2}}}(\THR(A)^{{\phi\Z/2}})\simeq
\\& B\otimes_AA^{{\phi\Z/2}}\otimes_{A^{{\phi\Z/2}}}(\THR(A)^{{\phi\Z/2}}) \simeq B\otimes_A(\THR(A)^{{\phi\Z/2}}),
\end{align*}
where the first equivalence is from the fact that $\alpha$ is $\phi$-flat, and the second is the canonical one.
\end{prop}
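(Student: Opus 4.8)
The plan is to deduce both assertions from the isotropy separation sequence, together with the module-theoretic identifications that already appear in the proof of Proposition~\ref{generalphibase-change}. Write $M:=\THR(A)^{\phi\Z/2}$, regarded as a module over $A$ in $\Sp^{C_2}$ via Lemma~\ref{C2THRphi}. By the extra hypothesis the restriction maps give $A^{C_2}\simeq HA$ and $B^{C_2}\simeq HB$, so that $B^{C_2}\otimes_{A^{C_2}}(-)$ is computed by $HB\otimes_{HA}(-)$ with $HB$ an $HA$-algebra along $\alpha$. Lax symmetric monoidality of $(-)^{C_2}$ provides a natural comparison map
\[
\Phi\colon B^{C_2}\otimes_{A^{C_2}}M^{C_2}\longrightarrow (B\otimes_A M)^{C_2},
\]
and the first step is to show that $\Phi$ is an equivalence of spectra.

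For this I would apply $B^{C_2}\otimes_{A^{C_2}}(-)$ to the isotropy separation sequence $M^e_{hC_2}\to M^{C_2}\to M^{\phi C_2}$ (using the Adams isomorphism $(EC_2{}_+\otimes M)^{C_2}\simeq M^e_{hC_2}$), and compare it via $\Phi$ with the isotropy separation sequence of $B\otimes_A M$, obtaining a map of cofibre sequences
\[
\xymatrix@C=14pt{
B^{C_2}\otimes_{A^{C_2}}M^e_{hC_2}\ar[r]\ar[d]_-{\Phi}&B^{C_2}\otimes_{A^{C_2}}M^{C_2}\ar[r]\ar[d]_-{\Phi}&B^{C_2}\otimes_{A^{C_2}}M^{\phi C_2}\ar[d]^-{\Phi}\\
(B\otimes_A M)^e_{hC_2}\ar[r]&(B\otimes_A M)^{C_2}\ar[r]&(B\otimes_A M)^{\phi C_2}.
}
\]
The left vertical map is an equivalence because restriction is monoidal, so $(B\otimes_A M)^e\simeq HB\otimes_{HA}M^e$ with the residual $C_2$-action supported on the $M^e$-factor, and $HB\otimes_{HA}(-)$ commutes with homotopy orbits. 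The right vertical map is an equivalence by $\phi$-flatness: $(-)^{\phi C_2}$ is monoidal, so $(B\otimes_A M)^{\phi C_2}\simeq HB^{\phi\Z/2}\otimes_{HA^{\phi\Z/2}}M^{\phi C_2}$, while the $A^{C_2}$-action on $M^{\phi C_2}$ factors through the ring map $HA\simeq (N^{C_2}_eA)^{\phi C_2}\to HA^{\phi\Z/2}$, so that
\[
B^{C_2}\otimes_{A^{C_2}}M^{\phi C_2}\simeq \big(HB\otimes_{HA}HA^{\phi\Z/2}\big)\otimes_{HA^{\phi\Z/2}}M^{\phi C_2}\simeq HB^{\phi\Z/2}\otimes_{HA^{\phi\Z/2}}M^{\phi C_2}
\]
by the defining property of $\phi$-flatness; these are exactly the identifications carried out in the proof of Proposition~\ref{generalphibase-change}. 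The five lemma then shows that the middle $\Phi$ is an equivalence, which is the first assertion.

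The descriptions of $f$ and $r$ then follow by naturality, using Proposition~\ref{generalphibase-change} to identify $\THR(B)^{\phi\Z/2}$ with $B\otimes_A M$. The map $f$ is the forgetful map $X^{C_2}\to X^e$ for $X=\THR(B)^{\phi\Z/2}$; being a lax monoidal natural transformation it is compatible with $\Phi$, so under $(B\otimes_A M)^e\simeq HB\otimes_{HA}M^e$ it becomes the tensor $\res\otimes\res$ of the restriction map of $B$ with the forgetful map $M^{C_2}\to M^e$ of $\THR(A)^{\phi\Z/2}$. Likewise $r$ is the canonical map $X^{C_2}\to X^{\phi C_2}$ followed by the cyclotomic equivalence; the canonical map is again lax monoidal natural, so it corresponds under $\Phi$ to the tensor of $\mathrm{can}\colon B^{C_2}\to B^{\phi\Z/2}$ with the analogous map for $\THR(A)^{\phi\Z/2}$, landing in $HB^{\phi\Z/2}\otimes_{HA^{\phi\Z/2}}M^{\phi C_2}$. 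Postcomposing with the cyclotomic equivalence of $\THR(A)$ on the second factor — which is natural in the ring, hence matches the cyclotomic equivalence of $\THR(B)$ through the identifications of Proposition~\ref{generalphibase-change} — and then reading $HB^{\phi\Z/2}\otimes_{HA^{\phi\Z/2}}\THR(A)^{\phi\Z/2}\simeq B\otimes_A A^{\phi\Z/2}\otimes_{A^{\phi\Z/2}}\THR(A)^{\phi\Z/2}\simeq B\otimes_A\THR(A)^{\phi\Z/2}$ via $\phi$-flatness yields the stated formula for $r$.

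The step I expect to be the main obstacle is the bookkeeping of the several $HA$-module structures in play — on $HA^{\phi\Z/2}$, on $M^e$, and on $M^{\phi C_2}$: one must verify that the $A$-module structure on $M=\THR(A)^{\phi\Z/2}$ in $\Sp^{C_2}$ induces, after restriction and after geometric fixed points, module structures that factor through the ring $HA^{\phi\Z/2}$ in exactly the way needed for the $\phi$-flatness hypothesis to apply, and that these factorizations are compatible with $(-)^{C_2}$, with the isotropy separation sequence, and with the cyclotomic structure maps. This is precisely the ``care with the different module structures on $HA^{\phi\Z/2}$'' flagged in the introduction, and it is already the technical core of Proposition~\ref{generalphibase-change}, so the present proposition should require no genuinely new input beyond organizing those identifications systematically.
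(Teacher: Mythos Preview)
Your argument via the isotropy separation sequence is a valid route, but it differs from the paper's and is more laborious than necessary. The paper does not analyse the specific module $M=\THR(A)^{\phi\Z/2}$ at all; instead it proves the stronger statement that the comparison
\[
B^{C_2}\otimes_{A^{C_2}}X^{C_2}\longrightarrow (B\otimes_A X)^{C_2}
\]
is an equivalence for \emph{every} $A$-module $X$ in $\Sp^{C_2}$. Since both sides commute with homotopy colimits in $X$, this reduces to the generators $X=A$ and $X=A\otimes (C_2)_+$, where it is immediate from the hypothesis that the restriction maps $A^{\Z/2}\to A$ and $B^{\Z/2}\to B$ are equivalences. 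In particular the paper's proof of the first assertion does \emph{not} use $\phi$-flatness at all, whereas your argument invokes it to handle the geometric-fixed-points leg of the isotropy separation. This is where your approach costs you: the step ``$HB\otimes_{HA}HA^{\phi\Z/2}\simeq HB^{\phi\Z/2}$ by $\phi$-flatness'' requires matching the $A^{C_2}$-module structure on $A^{\phi\Z/2}$ (via the canonical map $A^{C_2}\to A^{\phi C_2}$) with the Frobenius module structure used in the definition of $\phi$-flatness, which is exactly the bookkeeping you flag as the main obstacle. The paper's generator argument sidesteps this entirely. For the identification of $f$ and $r$, the paper simply says ``by naturality and unravelling the definitions'', which is essentially what you spell out in your second and third paragraphs.
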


\begin{proof}
The first statement follows from the fact that if the restriction maps of $A$ and $B$ are equivalences, the canonical map
\[
B^{C_2}\otimes_{A^{C_2}}X^{C_2}\longrightarrow(B\otimes_AX)^{C_2}
\]
is an equivalence for every $A$-module $X$ (which is cofibrant under our standing assumption). Indeed since the source and target of this map commute with homotopy colimits in $X$, it is sufficient to check it on the generators $A$ and $A\otimes (C_2)_+$ of the category of $A$-modules. For $A$ this is the canonical equivalence
\[
B^{C_2}\otimes_{A^{C_2}}A^{C_2}\simeq B^{C_2}\simeq (B\otimes_AA)^{C_2}.
\]
For $A\otimes (C_2)_+$ this is the map
\[
B^{C_2}\otimes_{A^{C_2}}(A\otimes (C_2)_+)^{C_2}\simeq B^{C_2}\otimes_{A^{C_2}}A
\longrightarrow
B\simeq  ((B\otimes (C_2)_+))^{C_2}\simeq (B\otimes_A(A\otimes (C_2)_+))^{C_2}
\]
where the arrow is induced by the map of $A^{C_2}$-modules $\alpha\colon A\to \alpha^\ast B$, where $A$ is an $A^{C_2}$-module via the restriction $A^{C_2}\to A$, and similarly for $B$.  This is an equivalence since the restrictions of $A$ and $B$ are.
The identifications of $f$ and $r$ follow by naturality and unravelling the definitions, using Example \ref{minicycloTHR} for the cyclotomic structure.
\end{proof}

\begin{cor}\label{TCRbase-change}
Let $\alpha \colon A \to B$ be a $\phi$-flat map of commutative flat $\Z/2$-equivariant ring spectra, and suppose that the restriction maps $A^{\Z/2}\to A$ and $B^{\Z/2}\to B$ are equivalences. Then there is an equaliser diagram
\[
\xymatrix{
\TCR(B;2)^{{\phi\Z/2}}\ar[r]&B^{C_2}\otimes_{A^{C_2}}(\THR(A)^{{\phi\Z/2}})^{C_2}\ar@<1ex>[rr]^-{res\otimes f}\ar@<-1ex>[rr]_-{(\nu^{-1}can)\otimes r}&& B\otimes_{A}\THR(A)^{{\phi\Z/2}}
}
\]
where $\nu\colon B\otimes_AA^{{\phi\Z/2}}\to B^{\phi\Z/2}$ is the equivalence from the $\phi$-flatness condition, and $can\colon B^{C_2}\to B^{{\phi C_2}}$ is the canonical map.
\end{cor}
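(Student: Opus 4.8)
The plan is to obtain this Corollary by substituting the $\phi$-flat base-change identifications into the equaliser formula of Theorem \ref{tcrformula1}. First I would apply Theorem \ref{tcrformula1} to the real $2$-cyclotomic spectrum $T=\THR(B)$. This requires $\THR(B)$ to be a bounded below real $2$-cyclotomic spectrum, which holds as soon as $B$ and $B^{\Z/2}$ are bounded below (in particular for the Eilenberg--MacLane spectra of commutative rings to which we ultimately apply the Corollary), giving
\[
\TCR(B;2)^{\phi\Z/2}\simeq eq\left(\xymatrix{(\THR(B)^{\phi\Z/2})^{C_2}\ar@<1ex>[r]^-f\ar@<-1ex>[r]_-r&\THR(B)^{\phi\Z/2}}\right).
\]

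Next I would rewrite both terms of this equaliser using that $\alpha$ is $\phi$-flat. Proposition \ref{generalphibase-change} provides an equivalence of $C_2$-spectra $B\otimes_A(\THR(A)^{\phi\Z/2})\xrightarrow{\simeq}\THR(B)^{\phi\Z/2}$, which already identifies the right-hand term with $B\otimes_A\THR(A)^{\phi\Z/2}$, with $B$ regarded as a $C_2$-spectrum via $C_2\cong\Z/2$. Taking genuine $C_2$-fixed-points of this equivalence, and using the hypothesis that the restriction maps of $A$ and $B$ are equivalences, the Proposition immediately preceding the Corollary identifies the left-hand term $(\THR(B)^{\phi\Z/2})^{C_2}$ with $B^{C_2}\otimes_{A^{C_2}}(\THR(A)^{\phi\Z/2})^{C_2}$.

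The last step is to transport the maps $f$ and $r$ along these equivalences, and here I would simply invoke that same preceding Proposition, which computes them: $f$ corresponds to the tensor $\res\otimes\res$ of the two restriction maps --- that is, $\res\otimes f$ with $f$ the forgetful map appearing in the equaliser formula for $A$ --- and $r$ corresponds to $can\otimes r$ followed by the equivalence $\nu$ coming from $\phi$-flatness, i.e.\ to $(\nu^{-1}can)\otimes r$ as in the statement. Feeding these descriptions into the equaliser above produces exactly the claimed diagram, and naturality in $\alpha$ follows from the naturality of all the equivalences used.

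I do not expect a genuine obstacle: this Corollary is a bookkeeping consequence of Theorem \ref{tcrformula1}, Proposition \ref{generalphibase-change} and the Proposition preceding it. The only points that need a little care are checking the uniform bounded-belowness of $\THR(B)$ needed to invoke Theorem \ref{tcrformula1}, and keeping track of the various module structures involved --- the left versus right Frobenius structures on $A^{\phi\Z/2}$, and the use of commutativity of $A$ to move the $B$-action between the two smash factors --- so that the map $r$ is matched with $(\nu^{-1}can)\otimes r$ and not some variant; but both of these are handled precisely as in the proof of the preceding Proposition.
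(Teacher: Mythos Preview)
Your proposal is correct and follows precisely the approach the paper intends: the Corollary is stated without its own proof because it is an immediate bookkeeping consequence of Theorem \ref{tcrformula1}, Proposition \ref{generalphibase-change}, and the unnamed Proposition immediately preceding it, exactly as you outline. Your care about the bounded-below hypothesis and the module structures is well placed, though the paper leaves the former implicit in the flat/commutative setup and handles the latter in the proof of the preceding Proposition.
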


\begin{rem}
One cannot conclude from Corollary \ref{TCRbase-change} that $\TCR(B;2)^{{\phi\Z/2}}$ is the base-change of $\TCR(A;2)^{{\phi\Z/2}}$, nor that it is a $B$-module. This is because the maps $f$ and $r$ computing $\TCR(A;2)^{{\phi\Z/2}}$ are $A$-linear with respect to two different $A$-module structures.
\end{rem}

\begin{cor}
Let $B$ be a perfect $\F_2$-algebra with the trivial involution. Then there is an equivalence of spectra
\[
\TCR(B;2)^{\phi\Z/2}\simeq \bigoplus_{n\geq 0}(\Sigma^{2n-1}\coker(\id+(-)^2))\oplus\Sigma^{2n}(\ker(\id+(-)^2))
\]
where $(-)^2\colon B\to B$ is the Frobenius of $B$.
\end{cor}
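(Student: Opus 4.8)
The strategy is to apply the flat base-change result Corollary~\ref{TCRbase-change} with $A=\F_2$ and $B$ the given perfect $\F_2$-algebra (both with trivial involution), using that $\F_2\to B$ is $\phi$-flat by Example~\ref{ex:commflat}(1). This gives an equaliser presentation
\[
\TCR(B;2)^{\phi\Z/2}\simeq eq\big(\xymatrix{B^{C_2}\otimes_{\F_2^{C_2}}(\THR(\F_2)^{\phi\Z/2})^{C_2}\ar@<1ex>[r]^-{f}\ar@<-1ex>[r]_-{r}&B\otimes_{\F_2}\THR(\F_2)^{\phi\Z/2}}\big),
\]
where $f=\res\otimes f$ and $r=(\nu^{-1}can)\otimes r$ in the notation of Corollary~\ref{TCRbase-change}. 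So the first step is simply to feed the explicit description of $\THR(\F_2)^{\phi\Z/2}$, its $C_2$-fixed points, and the maps $r,f$ from Lemma~\ref{geofixgenuine} and Proposition~\ref{lowerFR} (specialised to $k=\F_2$) into this formula.

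The second step is to carry out the base change to $B$ degreewise. Since $\F_2$ has trivial Frobenius, Proposition~\ref{lowerFR} describes $r$ and $f$ for $\F_2$ purely in terms of identity maps and diagonals on a wedge of shifted Eilenberg-MacLane spectra; tensoring up along $\F_2\to B$ replaces each $H\F_2$ summand by $HB$, replaces each inverse Frobenius on $\F_2$ by the (inverse of the) Frobenius $(-)^2$ on $B$, and leaves the combinatorics of which summands map to which unchanged. Concretely, $\pi_\ast$ of the two terms of the equaliser become $\bigoplus_{0\le j\le n,\ j+n=\ast}B\ \oplus\ \bigoplus_{0\le n<m,\ n+m=\ast}B$ mapping to $\bigoplus_{n,m\ge 0,\ n+m=\ast}B$, with $r$ the inverse Frobenius of $B$ on the $j\le n$ summands and zero on the rest, and $f$ the identity on $j=n$, zero on $j<n$, and the diagonal on the $n<m$ summands — exactly as in the proof of Remark~\ref{rem:geomTCRk} but with $k$ replaced by $B$. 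One then runs the same long exact sequence argument: $r-f$ is an isomorphism when restricted and corestricted to the summands with $n\neq m$, so the Mayer--Vietoris/long exact sequence breaks into
\[
0\to\pi_{2l}\TCR(B;2)^{\phi\Z/2}\to\!\!\bigoplus_{n+m=2l}\!\!B\xrightarrow{r-f}\!\!\bigoplus_{n+m=2l}\!\!B\to\pi_{2l-1}\TCR(B;2)^{\phi\Z/2}\to 0
\]
for every $l\ge 0$, and the only surviving contribution comes from the diagonal summand $n=m$, on which $r-f$ is $\id-\sqrt{(-)}\colon B\to B$, equivalently (since $B$ is perfect) $\id+(-)^2\colon B\to B$ up to the Frobenius automorphism. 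Hence $\pi_{2n}\TCR(B;2)^{\phi\Z/2}\cong\ker(\id+(-)^2)$, $\pi_{2n-1}\TCR(B;2)^{\phi\Z/2}\cong\coker(\id+(-)^2)$, and everything below degree $-1$ vanishes.

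The third step is to upgrade this computation of homotopy groups to a splitting into Eilenberg--MacLane spectra. For this I would argue as in Remark~\ref{rem:geomTCRk}: $\TCR(B;2)^{\phi\Z/2}$ is an $H\F_2$-module (it sits in the equaliser of maps between $HB$-modules, and $B$ is an $\F_2$-algebra), hence automatically a wedge of suspensions of $H\F_2$-modules, and since $\ker(\id+(-)^2)$ and $\coker(\id+(-)^2)$ are $\F_2$-vector spaces this pins down the homotopy type as the stated wedge $\bigvee_{n\ge 0}\big(\Sigma^{2n-1}H\coker(\id+(-)^2)\vee\Sigma^{2n}H\ker(\id+(-)^2)\big)$. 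The main obstacle is the careful bookkeeping in the second step: one must check that the $\phi$-flat base change really does commute past the formation of $C_2$-fixed points and past the equaliser in the way Corollary~\ref{TCRbase-change} asserts, and that under this the map $r$ picks up exactly the Frobenius $(-)^2$ of $B$ (not some twist of it) on the diagonal summands — this is where the identification of $\nu^{-1}can$ with the reduction-to-geometric-fixed-points map, together with the computation that this map induces the inverse Frobenius in the relevant degrees (cf. \cite[Example~IV.1.2]{NS}), is essential. Once that identification is secured, the homotopy-group computation and the Eilenberg--MacLane splitting are routine.
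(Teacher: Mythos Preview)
Your proposal is correct and follows essentially the same route as the paper's proof: apply Corollary~\ref{TCRbase-change} with $A=\F_2$ using the $\phi$-flatness from Example~\ref{ex:commflat}(i), identify the resulting maps $r,f$ on homotopy groups with those of Proposition~\ref{lowerFR} (now valued in $B$ rather than $k$), and then rerun the kernel/cokernel computation of Remark~\ref{rem:geomTCRk}. Your write-up is in fact more explicit than the paper's on two points it leaves implicit: that $(\nu^{-1}can)$ is what introduces the Frobenius of $B$ into the map $r$, and that the passage from homotopy groups to a wedge of Eilenberg--MacLane spectra uses an $H\F_2$-module structure (for which the cleanest justification is that $\TCR(B;2)^{\phi\Z/2}$ is a module over $\TCR(\F_2;2)^{\phi\Z/2}$, whose $\pi_0$ is $\F_2$).
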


\begin{proof}
By Example \ref{ex:commflat} we can apply Corollary \ref{TCRbase-change} and find that the maps $r$ and $f$ computing $\TCR(B;2)^{{\phi\Z/2}}$ are, on homotopy groups, given by the same maps
\[
\bigoplus_{\begin{smallmatrix}(n,m)\\
0 \leq n,m\\
n+m=\ast\end{smallmatrix}}B\rightrightarrows\bigoplus_{\begin{smallmatrix}(n,m)\\ 0 \leq n,m\\
n+m=\ast\end{smallmatrix}}B
\]
as in the case of perfect fields of Proposition \ref{lowerFR}. The calculation then proceeds exactly as in Remark \ref{rem:geomTCRk}.
%
\end{proof}

\begin{cor}
Let $B$ be a ring with no $2$-torsion and such that $B/2$ is perfect. Then $\TCR(B;2)^{{\phi \Z/2}}$ is a wedge of Eilenberg-MacLane spectra, with homotopy groups given for all $l\geq 0$ by
\[\pi_n\TCR(B;2)^{{\phi \Z/2}}\cong\left\{
\begin{array}{ll}
B/\langle x+x^2 \vert\ x \in B \rangle & n=4l-1
\\
\ker\big(\pr+\pr^2\colon B/\langle 4(x+x^2) \vert\ x \in B \rangle \to B/2 \big) & n=4l
\\
\ker\big(\id+(-)^2\colon B/2\to B/2\big) & n=4l+1
\\
0& n=4l+2
\end{array}
\right.
\]
and where $\pi_n\TCR(B;2)^{{\phi \Z/2}}=0$ for $n\leq -2$.
\end{cor}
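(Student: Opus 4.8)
The plan is to reduce to the case $B=\Z$ already treated in Theorem~\ref{geometric of TCRZ computation} (together with the perfect-field identification of the maps $r,f$) by means of the flat base-change formula.

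First, since $B$ has no $2$-torsion and $B/2$ is perfect, Example~\ref{ex:commflat}(2) shows that the unit map $H\Z\to HB$ of $\Z/2$-equivariant commutative ring spectra — both with trivial involution, and both with restriction an equivalence — is $\phi$-flat. Hence Corollary~\ref{TCRbase-change} presents $\TCR(B;2)^{\phi\Z/2}$ as the equaliser of two maps
\[
HB\otimes_{H\Z}\big(\THR(\Z)^{\phi\Z/2}\big)^{C_2}\;\rightrightarrows\;HB\otimes_{H\Z}\THR(\Z)^{\phi\Z/2},
\]
(the fixed-point subscripts $H\Z^{C_2}$, $HB^{C_2}$ collapsing to $H\Z$, $HB$ since the restrictions are equivalences), the ``$f$''-map being $\res\otimes\res$ and the ``$r$''-map being $(\nu^{-1}\,can)\otimes r$. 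Because $B$ is $2$-torsion free, every derived base-change $HB\otimes_{H\Z}(-)$ occurring here is underived; applying it to the wedge decompositions of $(\THR(\Z)^{\phi\Z/2})^{C_2}$ and of $\THR(\Z)^{\phi\Z/2}$ obtained before Proposition~\ref{F for Z} simply replaces each $H\F_2$ by $H(B/2)$, each $H\Z/4$ by $H(B/4)$, and (using that $0\to B/2\to B/8\to B/4\to0$ is exact, again by $2$-torsion-freeness) each $H\Z/8$ by $H(B/8)$, so that the Bockstein $\beta$ of Proposition~\ref{R for Z} base-changes to the Bockstein $\beta_B\colon H(B/4)\to\Sigma H(B/2)$ of $0\to B/2\to B/8\to B/4\to0$.

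Next I would identify the base-changed maps on these decompositions. By the proposition preceding Corollary~\ref{TCRbase-change} together with Propositions~\ref{F for Z} and \ref{R for Z}, they retain the block-triangular shape of the $\Z$-case with coefficients reduced as above. The genuinely delicate point — precisely the one flagged in the Remark after Corollary~\ref{TCRbase-change} — is that $r$ is linear for the \emph{Frobenius} $\Z$-module structure on $\THR(\Z)^{\phi\Z/2}$ whereas $f$ is linear for the standard one. This is what forces the domain of the relevant entry of the base-changed $r$ to be $H(B/4(x+x^2))$ rather than $H(B/4)$, turns its components along the diagonal $(n>m)$-summands into the inverse Frobenius $\sqrt{-}$ of $B/2$ — an isomorphism exactly because $B/2$ is perfect (compare Proposition~\ref{lowerFR} with the $\Z$-statement of Proposition~\ref{R for Z}) — and makes the surviving $\pi_\ast$-level entries the maps $\pr+\pr^2$ and $\id+(-)^2$ appearing in the statement. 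Getting this bookkeeping right (which of $B/2$, $B/4$, $B/4(x+x^2)$, $B/8$ appears where, and where the Frobenius is inserted) is the main obstacle of the proof.

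Finally, with $r$ and $f$ pinned down, the computation runs exactly as the proof of Theorem~\ref{geometric of TCRZ computation}. One forms the commutative diagram of fibre sequences there with $\omega:=\pr\circ(r-f)\circ\incl$; this $\omega$ is triangular with $\sqrt{-}$ (respectively $\pm\id$) along the diagonal, hence a $\pi_\ast$-isomorphism, hence an equivalence. Therefore $\TCR(B;2)^{\phi\Z/2}$ is the fibre of the induced map $M$ on cofibres, a wedge over $n\ge0$ of $\Sigma^{4n}$ of a $2\times2$ matrix map whose only nonzero entries are $\pr+\pr^2$ and the (twisted) Bockstein $\beta_B$. Taking fibres summand by summand, using $\fib(\beta_B)\simeq H(B/8)$ and reading off the residual $\pi_\ast$-maps exactly as in Remark~\ref{rem:geomTCRk}, yields $\ker(\pr+\pr^2\colon B/4(x+x^2)\to B/2)$ in degrees $4l$, $\ker(\id+(-)^2\colon B/2\to B/2)$ in degrees $4l+1$, and the cokernel $B/(x+x^2)$ in degrees $4l-1$ — the $n=0$ summand accounting for degrees $-1,0,1$ — while the whole spectrum is a wedge of $\Sigma^{-1}$ of connective Eilenberg–MacLane pieces, so the homotopy vanishes in degrees $\le-2$.
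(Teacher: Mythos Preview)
Your overall strategy --- invoke Example~\ref{ex:commflat}(ii), apply Corollary~\ref{TCRbase-change} to base-change the equaliser description from $\Z$ to $B$, and then rerun the argument of Theorem~\ref{geometric of TCRZ computation} --- is exactly the paper's approach, and you are right that the inverse Frobenius of $B/2$ appears on the $r$-side and that this is where perfectness of $B/2$ is used.

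There is, however, a concrete bookkeeping error. The domain of the map $M$ is \emph{not} $H(B/4(x+x^2))$: the summand $H\Z/4$ in $(\THR(\Z)^{\phi\Z/2})^{C_2}$ base-changes to $HB/4$, full stop --- no Frobenius twist changes this, since over $\Z$ the Frobenius and standard module structures on $\Z/4$ coincide. The paper obtains the fibre sequence
\[
\TCR(B;2)^{\phi\Z/2}\longrightarrow\bigvee_{n\ge0}\Sigma^{4n}(HB/4\vee\Sigma HB/2)\xrightarrow{\;\bigvee\Sigma^{4n}\left(\begin{smallmatrix}\pr+\sqrt{\pr}&0\\\sqrt{\beta}&\id+\sqrt{(-)}\end{smallmatrix}\right)\;}\bigvee_{n\ge0}\Sigma^{4n}(HB/2\vee\Sigma HB/2),
\]
with three nonzero entries (not two), and then trades roots for squares by postcomposing with the Frobenius of $B/2$. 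The quotient $B/4(x+x^2)$ does not sit in the domain; it appears only when computing $\pi_0$ of the fibre of this lower-triangular matrix via an iterated pullback: if $P$ is the pullback of $\beta\colon HB/4\to\Sigma HB/2$ along $\id+(-)^2\colon\Sigma HB/2\to\Sigma HB/2$, then comparing the long exact sequences of $\beta$ and of $P\to\Sigma HB/2$ gives $\pi_0P\cong (B/8)/4(x+x^2)=B/4(x+x^2)$, and one then reads off $\pi_0\TCR(B;2)^{\phi\Z/2}$ as $\ker(\pr+\pr^2\colon B/4(x+x^2)\to B/2)$ from the Mayer--Vietoris of the outer pullback. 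Your sketch gestures toward this in the last sentence but does not supply it, and the mechanism you propose for the appearance of $B/4(x+x^2)$ is not the correct one.
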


\begin{proof}
By Example \ref{ex:commflat} and Corollary \ref{TCRbase-change} the maps $r$ and $f$ computing $\TCR(B;2)^{{\phi\Z/2}}$ are maps
\begin{align*} r,f\colon& \bigoplus_{ \begin{smallmatrix} (n,m) \\ n>m\geq 0 \end{smallmatrix}}\Sigma^{2n+2m}(HB/2 \oplus \Sigma HB/2) \oplus \bigoplus_{n\geq 0}\Sigma^{4n}(HB/4 \oplus \Sigma HB/2) \oplus   \bigoplus_{ \begin{smallmatrix} (n,m) \\ 0 \leq n  < m \end{smallmatrix}}\Sigma^{2n+2m}(HB/2 \oplus \Sigma HB/2) \to 
\\
& \bigoplus_{ \begin{smallmatrix} (n,m) \\ n>m\geq 0 \end{smallmatrix}}\Sigma^{2n+2m}(HB/2 \oplus \Sigma HB/2) \oplus \bigoplus_{n\geq 0}\Sigma^{4n}(HB/2 \oplus \Sigma HB/2) \oplus   \bigoplus_{ \begin{smallmatrix} (n,m) \\ 0 \leq n  < m \end{smallmatrix}}\Sigma^{2n+2m}(HB/2 \oplus \Sigma HB/2).
\end{align*} 
On homotopy groups they are described by the same projections and diagonals as in the case for $\Z$ of Propositions \ref{F for Z} and \ref{R for Z}, except that $r$ is postcomposed with the root isomorphism of the perfect $\F_2$-algebra $B/2$. The same argument of the proof of \ref{geometric of TCRZ computation} gives a fibre sequence
\[
\TCR(B;2)^{{\phi \Z/2}}\longrightarrow 
\bigoplus_{n\geq 0}\Sigma^{4n}(HB/4 \oplus \Sigma HB/2) 
\xrightarrow{\bigoplus_{n \geq 0}\Sigma^{4n}\left(\begin{smallmatrix}\pr+\sqrt{\pr} &  0 \\\sqrt{\beta} & \id+\sqrt{(-)}  \end{smallmatrix}\right)}
\bigoplus_{n\geq 0}\Sigma^{4n}(HB/2 \oplus \Sigma HB/2),
\]
and in particular $\TCR(B;2)^{{\phi \Z/2}}$ splits as a wedge of Eilenberg-MacLane spectra, since the projection map in the fibre sequence is $H\Z$-linear.
Moreover by composing with the Frobenius of $B/2$, which is an isomorphism, we can trade $\sqrt{\beta}$ for $\beta$, and replace all the other roots by squares.
The homotopy groups non-congruent to $0$ modulo $4$ follow immediately from the long exact sequence on homotopy groups, and $\pi_{4l}$ is isomorphic to $\pi_0$ for all $l\geq 0$. In order to calculate $\pi_0$ we observe that the fibre of a triangular matrix such as the one above can be calculated by the iterated pullback
\[
\xymatrix@C=40pt@R=15pt{
&&\TCR(B;2)^{\phi\Z/2}\ar[dl]\ar[dr]
\\
&\fib(\pr+\pr^2)\ar[dr]^-{\iota}\ar[dl]&&P\ar[dl]_-{a}\ar[dr]^-b
\\
\ast\ar[dr]&&HB/4\ar[dl]^-{\pr+\pr^2}\ar[dr]_-{\beta}&&\Sigma HB/2\ar[dl]^-{\id+(-)^2}
\\
&HB/2&&\Sigma HB/2
}
\]
where the three squares are pullbacks. By the Mayer-Vietoris sequence of the top square we see that there is an isomorphism
\[
\pi_0\TCR(B;2)^{\phi\Z/2}\cong\ker\big( (\ker(\pr+\pr^2)\times \pi_0P\xrightarrow{\iota-a}B/4).
\big)\]
By looking at the long exact sequences induced by $\beta$ and $b$, the right square gives a commutative diagram with exact rows
\[
\xymatrix{
B/2\ar[d]^-{\id+(-)^2}\ar[r]^-{\partial}&\pi_0\fib(b)\ar[d]^{\cong}\ar[r]&\pi_0P\ar[d]^a\ar[r]&0
\\
B/2\ar[r]^4&B/8\ar[r]&B/4\ar[r]&0\rlap{\ .}
}
\]
Thus $\pi_0 P\cong (B/8)/\im (4 \circ (\id+(-)^2))=B/\langle 4(x+x^2) \vert x \in B \rangle$, and the map $a$ is the reduction modulo $4$. Thus $\pi_0\TCR(B;2)^{\phi\Z/2}$ consists of those elements $y$ of $B/\langle 4(x+x^2) \vert x \in B \rangle$ such that $y=y^2$ modulo $2$.
\end{proof}

\begin{rem}
In \S\ref{sec:TCRk2} we have computed the $\Z/2$-equivariant homotopy type of $\TRR(k;2)$ and $\TCR(k;2)$ for perfect fields $k$ of characteristic $2$. We built our proof onto our knowledge of $\THR(k)^{\phi\Z/2}$ and $\TR(k;2)$ without ever needing to know the equivariant homotopy type of $\THR(k)$.
We can in fact use the base-change results of this section to show that as a $\Z/2$-spectrum 
\[\THR(k)\simeq k \otimes_{\F_2} \THR(\F_2)\simeq\bigoplus_{n\geq 0}\Sigma^{n\rho}Hk.\]
Indeed the canonical map
\[
 k \otimes_{\F_2} \THR(\F_2) \longrightarrow \THR(k)
\]
is an equivalence on $\Z/2$-geometric fixed-points by Proposition \ref{generalphibase-change} and its proof. It is also an equivalence on underlying spectra 
by \cite[Corollary 5.5]{Wittvect}.
Finally, the equivariant homotopy type of $\THR(\F_2)$ is computed in \cite{THRmodels}.
\end{rem}

\appendix

\bibliographystyle{amsalpha}
\bibliography{bib}

\end{document}